\numberwithin{equation}{section} 
\newtheorem{theorem}{Theorem}[section]
\newtheorem{assumption}{Assumption}
\newtheorem{lemma}[theorem]{Lemma}
\newtheorem{proposition}[theorem]{Proposition}
\newtheorem{corollary}[theorem]{Corollary}
\newtheorem{definition}[theorem]{Definition}
\theoremstyle{remark} 
\newtheorem{remark}[theorem]{Remark}
\journal{ArXiv}
\begin{document}

\begin{frontmatter}
	
	\title{Stability Results for Bounded Stationary Solutions of Reaction-Diffusion-ODE Systems} 
	
	\author[1]{Chris Kowall} 
	\ead{kowall@math.uni-heidelberg.de}
	\author[1]{Anna Marciniak-Czochra} 
	\ead{anna.marciniak@iwr.uni-heidelberg.de} 
	\author[3]{Finn M\"unnich}
	\ead{finn.muennich@stud.uni-heidelberg.de}


\address[1]{Institute of Mathematics and IWR, Heidelberg University,
	Im Neuenheimer Feld 205, 69120 Heidelberg, Germany}
\address[3]{Institute of Mathematics, Heidelberg University,
	Im Neuenheimer Feld 205, 69120 Heidelberg, Germany}

\begin{abstract}
	Reaction-diffusion equations coupled to ordinary differential equations (ODEs) may exhibit spatially low-regular stationary solutions. This work provides a comprehensive theory of asymptotic stability of bounded, discontinuous or continuous, stationary solutions of reaction-diffusion-ODE systems. We characterize the spectrum of the linearized operator and relate its spectral properties to the corresponding semigroup properties. Considering the function spaces $L^\infty(\Omega)^{m+k}, L^\infty(\Omega)^m \times C(\overline{\Omega})^k$ and $C(\overline{\Omega})^{m+k}$, we establish a sign condition on the spectral bound of the linearized operator, which implies nonlinear stability or instability of the stationary pattern. 
\end{abstract}

\begin{keyword}
	Reaction-diffusion equation \sep  stationary solution \sep stability \sep instability \sep asymptotic behavior \sep semigroup theory 
\end{keyword}

\end{frontmatter}


\section{Introduction} \label{sec:intro}

Reaction-diffusion-type equations form a class of pattern formation models arising from ecological and biological applications. Interaction of diffusive and non-diffusive processes leads to reaction-diffusion equations coupled with ordinary differential equations (reaction-diffusion-ODE models)  \cite{He, Klika, Marasco, Takagi21, White}. The lack of diffusion may cause emergence of bounded model solutions with low spatial regularity and, in particular, patterns with jump-discontinuities in space \cite{HMCT, He, KMCT20}. Analysis of the pattern formation phenomena usually focuses on existence and asymptotic stability of  spatially heterogeneous stationary solutions.  
While stability of linearized systems is well-established in the case of reaction-diffusion-ODE systems, the passage to nonlinear stability is accomplished only for specific models or under restriction of the regularity of the stationary solution \cite{CMCKSregular, KMCT20, MKS17, Takagi21, Wang}. To close this gap, this paper concerns nonlinear stability and nonlinear instability of bounded stationary solutions of reaction-diffusion-ODE systems, accounting for both, discontinuous and continuous solutions. Among others, the established general approach allows applications to systems with spatially heterogeneous parameters arising from modeling of processes in heterogeneous habitats  \cite{Cantrell, He, Takagi21}.  

\subsection*{Mathematical challenges}

Linearization at a stationary solution of a reaction-diffusion-ODE system leads us to an unbounded linear (degenerated differential) operator and a nonlinear remainder on an infinite-dimensional function space. Usually, the linear operator is well-studied on a function space such as $L^p(\Omega)^{m+k}, 1 < p < \infty,$ which is less regular than the domain of the nonlinear part, where the latter may even neither be well-defined nor continuous in the less regular space \cite{Friedlander, Henry}. In such infinite-dimensional cases, linear stability does not necessarily reflect the properties of the nonlinear model. There exist examples of  linear exponentially stable stationary solutions that become nonlinearly unstable \cite{Zwart}, and conversely, of linear exponentially unstable stationary solutions stabilized by the nonlinearity \cite{Rodrigues}.

In case of bounded stationary solutions of a reaction-diffusion-ODE system, there exists an instability result on $L^\infty(\Omega)^{m+k}$ using a spectral gap of the linear operator studied on $L^p(\Omega)^{m+k}$, see \cite{MKS17} based on results of \cite{Friedlander}. Since in the case of reaction-diffusion-ODE systems the spectrum of the linear operator might not solely consist of discrete eigenvalues, such a spectral gap condition is not always fulfilled. Moreover, classical nonlinear stability results as in \cite{Shatah, Webb} only apply in the case where the linear and the nonlinear operator is studied on the same function space. Consequently, in the case of a continuous steady state, the choice of the space $C(\overline{\Omega})^{m+k}$ is expedient for stability and instability results.
In the case of jump-discontinuous stationary solutions, analysis of the problem takes place on $L^\infty(\Omega)^{m+k}$.
While the nonlinearity is Fr{\'e}chet differentiable on that space, the study of the linear operator is more delicate. The linear operator does not generate a strongly continuous semigroup on $L^\infty(\Omega)^{m+k}$, however, this semigroup still features analytic properties for $t>0$. A priori, it is not clear how to link the spectral properties of the linear operator to the growth properties of the corresponding semigroup. As shown in this paper,  the growth of the semigroup can still  be controlled by the spectral bound of the linear operator.
We adapt the works \cite{Shatah, Webb}, using continuity of solutions to the nonlinear problem for $t>0$, to finally accomplish the transfer from linear to nonlinear stability and instability.

\subsection*{State of the art}

Spectral analysis of a linear reaction-diffusion-ODE operator is presented in \cite{CMCKSregular, MKS17} in the case of a system of ODEs coupled to one reaction-diffusion equation. It is well-known that the spectrum consists of a (possibly non-discrete) part determined by a multiplication operator corresponding to the ODE subsystem and a remainder of eigenvalues of the full linear operator. In the presence of a spectral gap and a positive spectral bound of the linearized operator, \cite{MKS17} 
derives nonlinear instability of discontinuous, bounded steady states. 
Instability of all sufficiently regular non-constant stationary solutions of certain reaction-diffusion-ODE systems is shown in \cite{CMCKSregular, Rendall, KMCT20, MKS17, JWang19}, caused by a positive spectral bound of a linearized operator. This instability result seems to be an effect of the coupling of the ODEs to a single reaction-diffusion equation in a  homogeneous environment. It is well-known that a scalar reaction-diffusion equation on a convex domain features a similar instability result for homogeneous environments, see references in \cite{CMCKSregular}. However, in heterogeneous environments, there exist continuous non-constant stationary solutions of reaction-diffusion-ODE systems which are stable \cite{He, Takagi21}. In accordance with classical reaction-diffusion systems, a similar stabilizing effect is possible when ODEs are coupled to a system of reaction-diffusion equations. 

Asymptotic stability of constant stationary solutions is considered in \cite{Wang, Xu}, based on a linearization, and in \cite{Rendall, Hattaf, JWang21}, based on the knowledge of a specific Lyapunov function.  Asymptotic stability is also studied in \cite{He} for constant and non-constant stationary solutions of a reaction-diffusion-ODE model with heterogeneous model parameters. Furthermore, concerning exponential stability, explicit error estimates are obtained for certain reaction-diffusion-ODE models in \cite{HMCT, KMCT20, Takagi21}. The most recent nonlinear stability result for a jump-discontinuous bounded steady state of a reaction-diffusion-ODE system using linearization has been established using explicit estimates on the space $L^\infty(\Omega)^m \times C(\overline{\Omega})^k$ for $k=1$, under an additional stability assumption on the scalar diffusive model component, \cite{CMCKSirregular}.

\subsection*{Problem formulation}

The aim of this paper is a comprehensive nonlinear stability theory for bounded, continuous and jump-discontinuous,  solutions of coupled reaction-diffusion-ODE systems with spatially heterogeneous parameters,
\begin{align} \label{fullsys}
	\begin{split}
		\frac{\partial \mathbf{u}}{\partial t} & = \mathbf{f}( \mathbf{u} , \mathbf{v}, x) \quad \mbox{in} \quad \Omega_T, \qquad \mathbf{u}(\cdot, 0) =\textbf{u}^0 \quad \text{in} \quad \Omega,  \\
		\frac{\partial \mathbf{v}}{\partial t} - \mathbf{D}^v \Delta \mathbf{v} & =  \mathbf{g} ( \mathbf{u} , \mathbf{v} ,x) \quad \mbox{in} \quad \Omega_T, \qquad \mathbf{v}(\cdot, 0) =\textbf{v}^0 \quad \text{in} \quad \Omega,\\
		\frac{\partial \mathbf{v}}{\partial \mathbf{n}} & =\mathbf{0} \quad \mbox{on} \quad \partial \Omega \times (0,T),
	\end{split}
\end{align}
where $\mathbf{u}: \Omega_T \to \mathbb{R}^m$, $\mathbf{v}: \Omega_T \to \mathbb{R}^k, m,k \in \mathbb{N},$ are vector-valued functions on the domain $\Omega_T:=\Omega \times (0,T)$ endowed with zero Neumann boundary conditions for each diffusive component. The vector $\mathbf{n}$ denotes the outward unit normal on $\partial \Omega$. We consider a bounded domain $\Omega \subset \mathbb{R}^n$, i.e., an open and connected set, with a Lipschitz boundary $\partial \Omega \in C^{0,1}$ and $n \in \mathbb{N}$. The Laplace operator $\Delta$ is defined componentwise and is multiplied by a diagonal matrix $\mathbf{D}^v \in  \mathbb{R}_{> 0}^{k \times k}$ with positive entries. The model nonlinearities 
\begin{align*}
	\mathbf{f}: \mathbb{R}^{m+k} \times \overline{\Omega}  \to \mathbb{R}^m \quad \text{and} \quad \mathbf{g}: \mathbb{R}^{m+k} \times \overline{\Omega} \to \mathbb{R}^k
\end{align*}
are given functions which may depend on the unknown solution and the space variable, further details are given in Assumption \ref{ass:N} below. Recall that solutions to reaction-diffusion-ODE system \eqref{fullsys} may feature low regularity in space, even in homogeneous environments \cite{Steffenthesis, dynspike}. Hence, mild solutions are considered in this work, similar to \cite{Rothe}. Accordingly,  we assume the following local Lipschitz condition

\begin{assumption}[Local Lipschitz condition] 
	\label{ass:Exist} Let the function $\mathbf{h} = (\mathbf{f}, \mathbf{g})(\mathbf{u}, \mathbf{v}, \cdot)$ be measurable in $x \in {\overline \Omega}$ for every fixed $(\mathbf{u},\mathbf{v}) \in \mathbb{R}^{m+k}$. Moreover, for every bounded set $B \subset \mathbb{R}^{m+k} \times \overline{\Omega}$, let there exist a constant $L(B)>0$ such that for all $(\mathbf{u}, \mathbf{v},x), (\mathbf{y}, \mathbf{z},x) \in B$ there holds
	\begin{align*} 
		|\mathbf{h}(\mathbf{u}, \mathbf{v}, x)| & \le L(B),\\
		|\mathbf{h}(\mathbf{u}, \mathbf{v}, x)-\mathbf{h}(\mathbf{y}, \mathbf{z}, x)| & \le L(B)\left( |\mathbf{u}- \mathbf{y}| + |\mathbf{v} - \mathbf{z}| \right).
	\end{align*} 
\end{assumption}

\noindent If Assumption \ref{ass:Exist} is satisfied by $(\mathbf{f}, \mathbf{g})$, existence and uniqueness of a mild solution is guaranteed by \cite[Part II, Theorem 1]{Rothe}. In view of the linearization procedure, we make the following stronger regularity assumption 

\begin{assumption}[Nonlinearity] \label{ass:N} Let the nonlinearities $\mathbf{f}, \mathbf{g}$ of system \eqref{fullsys} be once continuously differentiable with respect to the unknown variables $\mathbf{u}, \mathbf{v}$. Furthermore, we assume that the local Lipschitz condition (Assumption \ref{ass:Exist}) holds for $(\mathbf{f}, \mathbf{g})$ and its derivatives $\nabla_\mathbf{u} (\mathbf{f}, \mathbf{g}), \nabla_\mathbf{v} (\mathbf{f},\mathbf{g})$.
\end{assumption}

\noindent Assumption \ref{ass:N} is, for instance, satisfied by functions $\mathbf{f}, \mathbf{g}$ which are twice continuously differentiable with respect to the unknown variables $\mathbf{u}, \mathbf{v}$ and which do not depend explicitly on the space variable $x$.\\

\noindent In the case of a continuous steady state of system \eqref{fullsys}, we assume the following analog of Assumption \ref{ass:N}.

\begin{assumption}[Continuous case] \label{ass:Nc} Let Assumption \ref{ass:N} hold, based on the local Lipschitz condition (Assumption \ref{ass:Exist}) in which we replace the term \emph{measurable in $x \in \overline{\Omega}$} by the term \emph{continuous in $x \in \overline{\Omega}$}.
\end{assumption}

\subsection*{Main objectives of the work}

In this work, we consider a bounded stationary solution $(\overline{\mathbf{u}}, \overline{\mathbf{v}}) \in L^\infty(\Omega)^{m+k}$ of system \eqref{fullsys}, i.e., a bounded solution of the problem 
\begin{align} \label{steady}
	\begin{split}
		\mathbf{0} & = \mathbf{f}( \mathbf{u} , \mathbf{v},x) \quad \mbox{in} \quad \Omega, \\
		- \mathbf{D}^v \Delta \mathbf{v} & =  \mathbf{g} ( \mathbf{u} , \mathbf{v},x) \quad \mbox{in} \quad \Omega, \qquad  \frac{\partial \mathbf{v}}{\partial \mathbf{n}} =\mathbf{0} \quad \mbox{on} \quad \partial \Omega.
	\end{split}
\end{align}
Note that elliptic regularity from \cite{Nittka} implies that the component $\overline{\mathbf{v}}$ is at least H\"older continuous in $\overline{\Omega}$. We compare solutions of system \eqref{fullsys} with a solution of system \eqref{steady} with respect to the uniform topology induced by $L^\infty(\Omega)^{m+k}$ or $L^\infty(\Omega)^m \times C(\overline{\Omega})^k$ and, in the case of a continuous stationary solution $(\overline{\mathbf{u}}, \overline{\mathbf{v}}) \in C(\overline{\Omega})^{m+k}$, additionally with respect to the norm on $ C(\overline{\Omega})^{m+k}$. This work provides a unified approach to study nonlinear stability and instability of bounded steady states of system \eqref{fullsys} using linearization. 

While the spectrum is purely discrete in the case of classical reaction-diffusion systems, we may lose the spectral gap in the case of a reaction-diffusion-ODE system. Hence, results from \cite{MKS17} are not applicable to discontinuous stationary solutions in the absence of a spectral gap. 
Motivated by this observation, we show nonlinear instability in the case of a positive spectral bound of the linearized operator 
but regardless of a spectral gap condition. This result is achieved for systems of reaction-diffusion equations coupled to a system of ODEs, including discontinuous and continuous patterns.

Since the patterns emerging in reaction-diffusion-ODE models may be both, spatially jump-discontinuous and continuous stationary solutions, \cite{HMCT, He, KMCT20, Takagi21}, the second objective of the current study is to establish a rigorous nonlinear stability result that can be applied to the different classes of patterns. More precisely,  we show in different functional settings that a negative spectral bound of the linearized operator implies a local exponential stability result. 
The robust notion of stability allows considering discontinuous and continuous perturbations of the steady state. Hence, 
the results are applicable to study stability of continuous patterns of reaction-diffusion-type systems such as \cite{Anma, He, Klika, Marasco, Smith}, including classical reaction-diffusion systems, and jump-discontinuous patterns of reaction-diffusion-ODE systems \cite{HMCT, He, KMCT20, Takagi21, White}. We indicate possible generalizations of the results in Remark \ref{rem:general}.

\subsection*{Structure of the paper}

In Section \ref{sec:Lin}, we linearize the reaction-diffusion-ODE system \eqref{fullsys} at a bounded stationary solution and study the linear case, including spectral properties and semigroup properties on the spaces $L^\infty(\Omega)^{m+k}$, $L^\infty(\Omega)^m \times C(\overline{\Omega})^k$ and $C(\overline{\Omega})^{m+k}$. Within this study, we focus on the challenging case of a discontinuous stationary solution. Section \ref{sec:nonlinearstab} is devoted to nonlinear stability and instability results, while we apply these results to two model examples in Section \ref{sec:appl}. Basic results for the heat semigroup on $L^p(\Omega)$ and $C(\overline{\Omega})$, existence of mild solutions and spectral analysis of a matrix multiplication operator on uniformly continuous functions is deferred to the Appendix.

\section{Linearization} \label{sec:Lin}

Existing linear stability and instability principles, e.g., \cite[Ch. VII, Theorem 2.1]{Daleckii}, \cite[Theorem 5.1.1]{Henry}, \cite[Proposition 4.17]{Webb}, and \cite[Theorem 1]{Shatah}, strongly depend on the choice of the underlying function spaces, e.g., $L^p(\Omega)$ or $W^{1,p}(\Omega)$ as in \cite{
MKS17} or, in more abstract words, the domain of the fractional operator $(a I - \mathbf{D} \Delta)^\alpha$ for some $a, \alpha>0$ \cite[Definition 1.5.4]{Henry}.\\

By \cite{Steffenthesis, dynspike}, the ODE component of a stationary solution of problem \eqref{fullsys} may exhibit jump-discontinuities in space. Thus, we consider a steady state $(\overline{\mathbf{u}}, \overline{\mathbf{v}}) \in L^\infty(\Omega)^{m} \times C(\overline{\Omega})^{k}$. We consider a linearization of the reaction-diffusion-ODE system at a steady state $(\overline{\mathbf{u}}, \overline{\mathbf{v}})$,
\begin{align}
	\frac{\partial \xi}{\partial t} = \mathbf{L} \xi + \mathbf{N}(\xi), \qquad \xi(0) = \xi^0 \in L^\infty(\Omega)^{m+k} \label{LNequation}
\end{align}
for $\xi= (\mathbf{u}, \mathbf{v}) - (\overline{\mathbf{u}}, \overline{\mathbf{v}})$. Nonlinear stability of the stationary solution depends on the properties of the linear part $\mathbf{L}$ which induces a semigroup and of the nonlinear part $\mathbf{N}$. For a general discontinuous steady state, the nonlinear operator is not well-defined as an operator from $L^\infty(\Omega)^m \times C(\overline{\Omega})^k$ to itself. 
Hence, admitting general bounded steady states, we consider the nonlinear remainder $\mathbf{N}$ on $L^\infty(\Omega)^{m+k}$. It is well-defined and satisfies a certain estimate for small arguments, see Lemma \ref{nonlinearity}. Accordingly, to deduce nonlinear stability from the linearization, we are interested in growth estimates of the corresponding semigroup restricted to $L^\infty(\Omega)^{m+k}$. Since the semigroup is neither analytic nor strongly continuous on $L^\infty(\Omega)^{m+k}$, validity of the spectral mapping theorem is a priori not guaranteed. Nevertheless, we will show in Proposition \ref{SBeGB} that the growth bound of this semigroup equals the spectral bound of its generator. As the growth of the semigroup determines linear and nonlinear stability, we additionally characterize the spectrum of the linearized operator in Proposition \ref{RDODEspec}.\\

Whereas \cite{CMCKSregular} uses a linearization in the space $L^p(\Omega)^{m} \times L^p(\Omega)$, we depart from this approach and start from a linearization in $L^\infty(\Omega)^m \times L^p(\Omega)^{k}$ in the case of a discontinuous steady state. The generated semigroups are both analytic, however, the latter space allows considering a restriction of the generators and its semigroups to $L^\infty(\Omega)^{m+k}$ and $L^\infty(\Omega)^{m} \times C(\overline{\Omega})^{k}$ on which the spectrum remains the same. While the semigroup restricted to $L^\infty(\Omega)^{m+k}$ is not strongly continuous, a further restriction leads to an analytic semigroup on $L^\infty(\Omega)^{m} \times C(\overline{\Omega})^{k}$. The fact that the semigroup operators restricted to $L^\infty(\Omega)^{m+k}$ are the limit of regularized semigroup operators acting on $L^\infty(\Omega)^m \times C(\overline{\Omega})^k$ allows deriving semigroup estimates on $L^\infty(\Omega)^{m+k}$. These results enable us studying nonlinear stability of bounded, discontinuous steady states in Section \ref{sec:nonlinearstab} from results obtained for its linearization. In the case of a continuous steady state, the spectral analysis on the function space $C(\overline{\Omega})^{m+k}$ cannot easily be deduced from a restriction of the linear operator considered on $L^\infty(\Omega)^m \times L^p(\Omega)^{k}$. The analysis is performed similar to the case $L^p(\Omega)^{m+k}$ studied in \cite[Proposition 5.13]{Kowall}, however, on the function space $C(\overline{\Omega})^{m+k}$. Details are deferred to Proposition \ref{RDODEspecC}.\\

The linearization of system \eqref{fullsys} at a bounded steady state $(\overline{\mathbf{u}}, \overline{\mathbf{v}}) \in L^\infty(\Omega)^{m+k}$ is induced by the operator
\begin{align}
	(\mathbf{L} 
	\xi)(x) & = \mathbf{D} \Delta \xi(x)
	+ \mathbf{J}(x) \xi(x), \quad x \in \Omega.  \label{Linear1}
\end{align}
Here, we use the Jacobian of $(\mathbf{f}, \mathbf{g})$, 
\begin{align}
	\mathbf{J}(x) = \begin{pmatrix}
		\nabla_\mathbf{u} \mathbf{f}(\overline{\mathbf{u}}(x), \overline{\mathbf{v}}(x), x) &  \nabla_\mathbf{v} \mathbf{f}(\overline{\mathbf{u}}(x), \overline{\mathbf{v}}(x), x)\\
		\nabla_\mathbf{u} \mathbf{g}(\overline{\mathbf{u}}(x), \overline{\mathbf{v}}(x), x) &  \nabla_\mathbf{v} \mathbf{g}(\overline{\mathbf{u}}(x), \overline{\mathbf{v}}(x), x)
	\end{pmatrix} =: \begin{pmatrix}
		\mathbf{A}_{\ast}(x)&  \mathbf{B}_{\ast}(x) \\
		\mathbf{C}_{\ast}(x) &\mathbf{D}_{\ast}(x) 
	\end{pmatrix}, \label{Jacobian}
\end{align}
evaluated at the steady state $(\overline{\mathbf{u}}, \overline{\mathbf{v}})$. Moreover, the diagonal diffusion matrix $\mathbf{D}= \mathrm{diag}(\mathbf{0}, \mathbf{D}^v)$ consists of $m$ zeroes and the diagonal matrix $\mathbf{D}^v$. Boundedness of the steady state $(\overline{\mathbf{u}}, \overline{\mathbf{v}})$ results in a multiplication operator $\mathbf{J}$ induced by the Jacobian in formula \eqref{Jacobian}. In the case of $L^p(\Omega)^{m+k}$, the linearized operator $\mathbf{L}$ is a bounded perturbation of the sectorial operator $\mathbf{D} \Delta$ on $L^p(\Omega)^{m+k}$, hence, generates an analytic semigroup. 
However, to solve nonlinear problems, we need boundedness of the ODE component. For this reason, we consider the product space 
\[
Z_p:= L^\infty(\Omega)^m \times L^p(\Omega)^k. 
\]
On the space $Z_p$, unfortunately, the multiplication operator induced by the Jacobian $\mathbf{J}$ is unbounded. Hence, it is not trivial that the linearized operator $\mathbf{L}$ still generates a strongly continuous semigroup on $Z_p$ for a certain domain induced by the degenerated differential operator $\mathbf{D}\Delta$. In order to define the domain of the unbounded operator $\mathbf{D} \Delta$ on the space $L^\infty(\Omega)^m \times L^p(\Omega)^k$, we use the fact that $\mathbf{D} \Delta$ is the generator of a strongly continuous semigroup on $L^\infty(\Omega)^m \times L^p(\Omega)^k$. Indeed, using the heat semigroup $(S_\Delta(t))_{t \in \mathbb{R}_{\ge 0}}$ from Proposition \ref{heathom}, we can define the semigroup $\mathbf{S}(t)=(\mathbf{S}^u(t), \mathbf{S}^v(t))$ by its components
\begin{equation}
S^u_i(t) = I \qquad \text{and} \qquad S_j^v(t) = S_\Delta(D_j^v t)  \label{semigroup}
\end{equation} 
for $ i = 1, \dots, m$ and $j=1, \dots, k$, provided $I$ is the identity operator on $L^\infty(\Omega)$ and 
\[
\mathbf{D}^v:= \mathrm{diag}(D_1^v, \dots, D_k^v) \in \mathbb{R}_{> 0}^{k \times k}.
\] 
The semigroup $(\mathbf{S}^u(t))_{t \in \mathbb{R}_{\ge 0}}$ is time-independent and analytic on $L^p(\Omega)^m$ for $1 \le p \le \infty$. Clearly, its generator $\mathbf{0}$ is bounded. According to Proposition \ref{heathom}, the semigroup $(\mathbf{S}^v(t))_{t \in \mathbb{R}_{\ge 0}}$ is strongly continuous on $L^p(\Omega)^{k}$ for $1 \le p < \infty$ and analytic for each $1 <p <\infty$. It is well-known that its generator 
\[
\mathbf{D}^v \Delta:  \mathcal{D}(A_p)^k \subset L^p(\Omega)^k \to L^p(\Omega)^k
\]
is a densely defined, closed, linear operator \cite[Ch. II, Theorem 1.4]{Engel}. All in all, $(\mathbf{S}(t))_{t \in \mathbb{R}_{\ge 0}}$ is strongly continuous on $L^\infty(\Omega)^m \times L^p(\Omega)^{k}$ for $1 \le p < \infty$ and even analytic for each $1 <p <\infty$ \cite[Theorem 2.16]{Yagi}. Let us define $(\mathbf{D}\Delta, \mathcal{D}(\mathbf{D}\Delta))$ as the generator of the semigroup $(\mathbf{S}(t))_{t \in \mathbb{R}_{\ge 0}}$, i.e.,
\[
\mathbf{D}\Delta :=\mathrm{diag}(\mathbf{0}, \mathbf{D}^v \Delta): L^\infty(\Omega)^m \times \mathcal{D}(A_p)^k \subset L^\infty(\Omega)^m \times L^p(\Omega)^k \to L^\infty(\Omega)^m \times L^p(\Omega)^k.
\]
Properties of the full linearized operator $\mathbf{L}$ defined by \eqref{Linear1} are summarized in the next lemma.

\begin{lemma} \label{Lclosed}
Let the linear operator $\mathbf{L}: \mathcal{D}(\mathbf{L}) \subset Z_p =  L^\infty(\Omega)^m \times L^p(\Omega)^k \to Z_p$ be defined by
\begin{align}
	(\mathbf{L} 
	\xi) (x) & =  \begin{pmatrix}
		\mathbf{0} \\
		\mathbf{D}^v \Delta \xi_2(x)
	\end{pmatrix}
	+  \begin{pmatrix}
		\mathbf{A}_{\ast}(x) \xi_1(x) +  \mathbf{B}_{\ast}(x) \xi_2(x)\\
		\mathbf{C}_{\ast}(x) \xi_1(x) + \mathbf{D}_{\ast}(x) \xi_2(x)
	\end{pmatrix}, \quad x \in \Omega, \label{Linear2}
\end{align}
where $\xi= (\xi_1, \xi_2) \in \mathcal{D}(\mathbf{L}):= L^\infty(\Omega)^{m} \times \mathcal{D}(A_p)^{k}$ (see Proposition \ref{heathom} for definition of the domain $\mathcal{D}(A_p)$). Herein, the diffusion matrix $\mathbf{D}^v \in \mathbb{R}_{> 0}^{k \times k}$ is diagonal and $\mathbf{A}_{\ast}, \mathbf{B}_{\ast}, \mathbf{C}_{\ast},\mathbf{D}_{\ast}$ are matrices with entries in $L^\infty(\Omega)$. Let $n^\ast:= \max\{n/2,2\}$, then for each $n^\ast<p< \infty$ the operator $\mathbf{L}$ is closed and densely defined on $Z_p$ and generates a strongly continuous semigroup $(\mathbf{T}(t))_{t \in \mathbb{R}_{\ge 0}}$ on $Z_p$ which is even analytic.
\end{lemma}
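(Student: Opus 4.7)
The plan is to leverage the fact that $\mathbf{D}\Delta$, with the domain indicated, was already identified (in the discussion just preceding the lemma) as a closed, densely defined generator of an analytic strongly continuous semigroup $(\mathbf{S}(t))_{t\ge 0}$ on $Z_p$ for every $1<p<\infty$. It then suffices to interpret $\mathbf{L}$ as a perturbation $\mathbf{L}=\mathbf{D}\Delta+\mathbf{J}$ of this generator by the matrix multiplication operator $\mathbf{J}$ induced by the Jacobian in \eqref{Jacobian}, and to invoke a standard analytic-perturbation theorem (e.g.\ \cite[Ch.~III, Theorem~2.10]{Engel}). This reduces the lemma to verifying that $\mathbf{J}$ is $(\mathbf{D}\Delta)$-bounded with relative bound zero.

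First I would dissect $\mathbf{J}$ block by block. Since $\mathbf{A}_\ast,\mathbf{B}_\ast,\mathbf{C}_\ast,\mathbf{D}_\ast$ have $L^\infty(\Omega)$-entries, multiplication by $\mathbf{A}_\ast$ is bounded $L^\infty(\Omega)^m\to L^\infty(\Omega)^m$, by $\mathbf{D}_\ast$ is bounded $L^p(\Omega)^k\to L^p(\Omega)^k$, and by $\mathbf{C}_\ast$ is bounded $L^\infty(\Omega)^m\to L^\infty(\Omega)^k\hookrightarrow L^p(\Omega)^k$ (using that $\Omega$ is bounded). These three blocks together define a bounded operator on $Z_p$, hence trivially contribute a zero-relative-bound perturbation. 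The genuine difficulty — and the main obstacle — is the off-diagonal block $\mathbf{B}_\ast(x)\xi_2(x)$, which a priori only takes values in $L^p(\Omega)^m$, whereas the first coordinate of $Z_p$ requires the stronger $L^\infty$-regularity.

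This is exactly where the hypothesis $p>n^\ast$ enters. Elliptic regularity for the Neumann Laplacian on the Lipschitz domain $\Omega$, together with the Sobolev embedding $W^{2,p}(\Omega)\hookrightarrow C(\overline{\Omega})$ valid for $p>n/2$, yields $\mathcal{D}(A_p)\hookrightarrow L^\infty(\Omega)$; consequently $\mathbf{B}_\ast\xi_2\in L^\infty(\Omega)^m$ and $\mathbf{J}\colon\mathcal{D}(\mathbf{L})\to Z_p$ is well-defined and bounded. To upgrade this to a relative bound zero estimate, I would choose an intermediate index $s$ with $n/p<s<2$ (possible precisely because $p>n/2$) and apply a Gagliardo--Nirenberg / Ehrling interpolation,
\begin{equation*}
	\|\xi_2\|_{L^\infty(\Omega)^k} \;\le\; C\,\|\xi_2\|_{W^{2,p}(\Omega)^k}^{\,s/2}\,\|\xi_2\|_{L^p(\Omega)^k}^{\,1-s/2} \;\le\; \varepsilon\,\|\mathbf{D}^v\Delta\xi_2\|_{L^p(\Omega)^k} \,+\, C_\varepsilon\,\|\xi_2\|_{L^p(\Omega)^k},
\end{equation*}
valid for every $\varepsilon>0$, where in the last step I use Young's inequality and the elliptic a priori estimate controlling $\|\xi_2\|_{W^{2,p}}$ by the graph norm of $A_p$.

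With relative bound zero established, the perturbation theorem for analytic semigroups applies: $\mathbf{L}$ is closed on $\mathcal{D}(\mathbf{L})=\mathcal{D}(\mathbf{D}\Delta)$, density is inherited from $\mathbf{D}\Delta$, and the generated semigroup $(\mathbf{T}(t))_{t\ge 0}$ on $Z_p$ is analytic and strongly continuous, concluding the proof. The role of the two constraints in $n^\ast=\max\{n/2,2\}$ is clearly separated: the bound $p>n/2$ is the geometric content of the argument (Sobolev embedding into $L^\infty$), while $p>2$ is only a background requirement ensuring analyticity of the unperturbed semigroup $\mathbf{S}^v$ and the $W^{2,p}$-identification of $\mathcal{D}(A_p)$ on a Lipschitz domain.
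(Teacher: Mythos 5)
Your overall strategy coincides with the paper's: write $\mathbf{L}=\mathbf{D}\Delta+\mathbf{J}$, observe that the blocks $\mathbf{A}_\ast,\mathbf{C}_\ast,\mathbf{D}_\ast$ are bounded and that only the off-diagonal block $\mathbf{B}_\ast:\mathcal{D}(A_p)^k\to L^\infty(\Omega)^m$ is problematic, prove that $\mathbf{J}$ is $\mathbf{D}\Delta$-bounded with relative bound zero, and invoke \cite[Ch.~III, Theorem~2.10]{Engel}. Up to that point the two proofs are the same.

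There is, however, a genuine gap in the step where you establish the relative bound zero. Your interpolation inequality is phrased in terms of $\|\xi_2\|_{W^{2,p}(\Omega)^k}$, and you close the argument by asserting an elliptic a priori estimate controlling $\|\xi_2\|_{W^{2,p}}$ by the graph norm of $A_p$. On a bounded domain with merely Lipschitz boundary $\partial\Omega\in C^{0,1}$ — the standing assumption of the paper — this $W^{2,p}$-regularity for the Neumann Laplacian is not available: the paper points out explicitly (after Proposition \ref{heathom}, citing \cite[Section 4.4.3]{Grisvard} and \cite{Nittka, Wood}) that elements of $\mathcal{D}(A_p)$ need not even lie in $W^{1,p}(\Omega)$, and that $\mathcal{D}(A_2)=H^2_N(\Omega)$ requires $\partial\Omega\in C^{1,1}$. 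So the identification $\mathcal{D}(A_p)\subset W^{2,p}(\Omega)$ on which your Gagliardo--Nirenberg step rests is false in the generality needed, and your closing remark that ``$p>2$ ensures the $W^{2,p}$-identification of $\mathcal{D}(A_p)$ on a Lipschitz domain'' is incorrect. The paper circumvents this by using Nittka's result (Lemma \ref{domchar}), which gives only the continuous embedding $\mathcal{D}(A_p)\hookrightarrow C^{0,\gamma}(\overline{\Omega})$ for some $\gamma>0$ and $p>n/2$ on Lipschitz domains, and then obtains the $\varepsilon$-estimate
\begin{align*}
\|\xi_2\|_{C(\overline{\Omega})^k}\le \varepsilon\,\|\xi_2\|_{C^{0,\gamma}(\overline{\Omega})^k}+C_\varepsilon\,\|\xi_2\|_{L^p(\Omega)^k}
\end{align*}
from Ehrling's lemma applied to the compact embedding $C^{0,\gamma}(\overline{\Omega})\hookrightarrow C(\overline{\Omega})$, rather than from a multiplicative Sobolev interpolation. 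Your argument is repairable by substituting exactly this Hölder-plus-Ehrling route for the $W^{2,p}$-based interpolation, but as written the key estimate rests on regularity that the hypotheses do not supply.
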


\begin{proof}
In order to show that the operator $\mathbf{L}$ generates an analytic semigroup on $Z_p$, we apply perturbation theory \cite[Ch. III, Theorem 2.10]{Engel}. Although $\mathbf{A}_{\ast}, \mathbf{B}_{\ast}, \mathbf{C}_{\ast}, \mathbf{D}_{\ast}$ are assumed to be bounded matrices which compose the Jacobian $\mathbf{J}$, the operator $\mathbf{D}\Delta$ is perturbed by a an unbounded multiplication operator $\mathbf{J}$ on $Z_p$. Nevertheless, we will show that $\mathbf{J}$ is bounded with respect to the operator $\mathbf{D}\Delta$ in the sense of \cite[Ch. III, Definition 2.1]{Engel}. Finally, this implies that the perturbation $\mathbf{L}= \mathbf{D}\Delta + \mathbf{J}$ generates an analytic semigroup, and is closed and densely defined on $Z_p$ by \cite[Ch. II, Theorem 1.4]{Engel}.

The Jacobian $\mathbf{J}$ consists of the bounded multiplication operator $\mathbf{A}_\ast$ on $L^\infty(\Omega)^m$, while the multiplication operator
\begin{align}
	\mathbf{B}_\ast : \mathcal{D}(\mathbf{B}_\ast) \subset L^p(\Omega)^k \to L^\infty(\Omega)^m \label{Bast}
\end{align}
is unbounded, densely defined and closed for the domain 
\begin{align*}
	\mathcal{D}(\mathbf{B}_\ast) = \{ \xi_2 \in L^p(\Omega)^k \mid \mathbf{B}_\ast \xi_2 \in L^\infty(\Omega)^m\}. 
\end{align*}
Indeed, the subset $C_c^\infty(\Omega)^k \subset \mathcal{D}(\mathbf{B}_\ast)$ is dense in $L^p(\Omega)^k$. 
To show closedness, take a sequence $(\psi_j)_{j \in \mathbb{N}}$ in $\mathcal{D}(\mathbf{B}_\ast)$ which converges in $L^p(\Omega)^k$ to some $\psi \in L^p(\Omega)^k$ such that $\mathbf{B}_\ast \psi_j \to \Psi$ in $L^\infty(\Omega)^m$ for some $\Psi \in L^\infty(\Omega)^m$. Due to boundedness of the multiplication operator in $L^p(\Omega)^k$ \cite[Appendix B.2]{KMCMnonlinear}, 
we have
\[
\|\mathbf{B}_\ast \psi - \Psi\|_p \le \|\mathbf{B}_\ast(\psi-\psi_j)\|_p + \|\mathbf{B}_\ast \psi_j - \Psi \|_p  \le \|\mathbf{B}_\ast\|_\infty \|\psi-\psi_j\|_p + \|\mathbf{B}_\ast \psi_j - \Psi \|_p.
\]
Since the right-hand side converges to $0$ as $j \to \infty$, we obtain $\mathbf{B}_\ast \psi = \Psi$ a.e. in $\Omega$. Thus, $\mathbf{B}_\ast \psi =\Psi \in L^\infty(\Omega)^m$, $\psi \in \mathcal{D}(\mathbf{B}_\ast)$ and $(\mathbf{B}_\ast, \mathcal{D}(\mathbf{B}_\ast))$ is closed. The remaining multiplication operators
\begin{align}
	\mathbf{C}_\ast : L^\infty(\Omega)^m \to L^p(\Omega)^k \qquad \text{and} \qquad \mathbf{D}_\ast : L^p(\Omega)^k \to L^p(\Omega)^k  \label{Cast}
\end{align}
are bounded by the continuous embedding $L^\infty(\Omega) \hookrightarrow L^p(\Omega)$ and
\[
\|\mathbf{C}_\ast \xi_1\|_{L^p(\Omega)^k} \le \|\mathbf{C}_\ast\|_\infty \|\xi_1\|_{L^p(\Omega)^m} \le C \|\xi_1\|_{L^\infty(\Omega)^m}.
\]
To show $\mathbf{D} \Delta$-boundedness of the operator 
$
\mathbf{J}: \mathcal{D}(\mathbf{J}):= L^\infty(\Omega)^m \times \mathcal{D}(\mathbf{B}_{\ast}) \subset Z_p \to Z_p
$
in the sense of \cite[Ch. III, Definition 2.1]{Engel}, we use the continuous embedding $\mathcal{D}(A_p) \hookrightarrow C^{0,\gamma}(\overline{\Omega})$ from Lemma \ref{domchar} and the compact embedding $C^{0,\gamma}(\overline{\Omega}) \hookrightarrow C(\overline{\Omega})$ for $\gamma>0$ from \cite[Theorem 1.34]{Adams}. Ehrling's compactness lemma in \cite[Lemma III.1.1]{Showalter} implies that for each $\varepsilon>0$ there exists a constant $C_\varepsilon>0$ such that
\[
\|\xi_2\|_{C(\overline{\Omega})^k} \le \varepsilon \|\xi_2\|_{C^{0, \gamma}(\overline{\Omega})^k} + C_\varepsilon \|\xi_2\|_{L^p(\Omega)^k} \qquad \forall \; \xi_2 \in C^{0, \gamma}(\overline{\Omega})^k.
\]
Since $\mathbf{J}$ is also a bounded multiplication operator on $Z_\infty :=L^\infty(\Omega)^{m+k}$ by \cite[Appendix B.2]{KMCMnonlinear}
, the embedding $L^\infty(\Omega) \hookrightarrow L^p(\Omega)$ implies
$
\|\mathbf{J} \xi \|_p \le C\|\mathbf{J} \xi \|_\infty \le C \|\mathbf{J}\|_\infty \|\xi \|_\infty. 
$
Herein, we denoted by $\| \cdot \|_p$ and $\| \cdot \|_\infty$ the corresponding norm on $Z_p$ and $Z_\infty$, respectively. By Lemma \ref{domchar} and the above compactness estimate, we obtain for each $\varepsilon>0$ a constant $b_\varepsilon>0$ such that
\[
\|\mathbf{J} \xi \|_p \le \varepsilon \|\mathbf{D}\Delta \xi\|_p + b_\varepsilon \|\xi\|_p \qquad \forall \; \xi \in \mathcal{D}(\mathbf{D}\Delta).
\]
This inequality yields that $\mathbf{J}$ is $\mathbf{D}\Delta$-bounded with bound $0$, since $\varepsilon>0$ was arbitrary. Hence, \cite[Ch. III, Theorem 2.10]{Engel} applies to $\mathcal{D}(\mathbf{D} \Delta) \subset \mathcal{D}(\mathbf{J})$ and the operator $\mathbf{L}$ generates an analytic semigroup on $Z_p$.
\end{proof}

{Before starting with the analysis of the linear system, let us summarize properties of the remaining nonlinear operator $\mathbf{N}$.} 

\begin{lemma} \label{nonlinearity}
Let Assumption \ref{ass:N} hold. For a steady state $(\overline{\mathbf{u}}, \overline{\mathbf{v}}) \in L^\infty(\Omega)^{m+k}$ of system \eqref{fullsys}, define the nonlinear operator 
\begin{align*}
	\mathbf{N}:  L^\infty(\Omega)^{m+k} & \to L^\infty(\Omega)^{m+k}, \quad \mathbf{N}(\xi)(x)  = \mathbf{h} (\mathbf{u},  \mathbf{v}, x)- \mathbf{h} (\overline{\mathbf{u}}, \overline{\mathbf{v}}, x)
	- \nabla \mathbf{h} (\overline{\mathbf{u}}, \overline{\mathbf{v}}, x) \xi  
\end{align*}
for $\xi= (\mathbf{u}, \mathbf{v}) - (\overline{\mathbf{u}}, \overline{\mathbf{v}}), x \in \Omega$. Then, the operator $\mathbf{N}$ is locally Lipschitz continuous and satisfies $\mathbf{N}(\mathbf{0})= \mathbf{0}$. Moreover, there exist constants $\rho, C >0$ such that the superlinear decay estimate 
\begin{align}
	\|\mathbf{N}(\xi)\|_\infty \le C \| \xi\|_\infty^{2} \qquad \forall \; \xi \in L^\infty(\Omega)^{m+k}, \|\xi\|_\infty < \rho \label{estimateN}
\end{align}
holds as $\xi \to 0$.
\end{lemma}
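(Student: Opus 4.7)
The plan is to realize $\mathbf{N}(\xi)(x)$ as a first-order Taylor remainder in the variables $(\mathbf{u},\mathbf{v})$ at the point $(\overline{\mathbf{u}}(x),\overline{\mathbf{v}}(x))$, pointwise in $x$, and then convert the quantitative bounds of Assumption \ref{ass:N} on $\nabla\mathbf{h}$ into uniform bounds via the $L^\infty$ norm. The three assertions ($\mathbf{N}(\mathbf{0})=\mathbf{0}$, local Lipschitz continuity, superlinear decay) will all fall out of the same representation formula.

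First I would fix $R>0$ and work on the ball $\{\|\xi\|_\infty\le R\}$ in $L^\infty(\Omega)^{m+k}$. Setting $M:=\|(\overline{\mathbf{u}},\overline{\mathbf{v}})\|_\infty$, every admissible point $(\overline{\mathbf{u}}(x)+\xi_1(x),\overline{\mathbf{v}}(x)+\xi_2(x))$ lies in the fixed bounded set $B=\overline{B_{M+R}(0)}\times\overline{\Omega}\subset\mathbb{R}^{m+k}\times\overline{\Omega}$, so Assumption \ref{ass:N} provides a single constant $L=L(B)$ controlling $\nabla\mathbf{h}$ and its Lipschitz modulus on $B$, uniformly in $x$. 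The fundamental-theorem-of-calculus identity
\begin{equation*}
\mathbf{h}(\mathbf{u},\mathbf{v},x)-\mathbf{h}(\overline{\mathbf{u}},\overline{\mathbf{v}},x)=\int_0^1\nabla\mathbf{h}\bigl(\overline{\mathbf{u}}(x)+s\xi_1(x),\overline{\mathbf{v}}(x)+s\xi_2(x),x\bigr)\,\xi(x)\,ds
\end{equation*}
holds pointwise in $x$ by the $C^1$-regularity in $(\mathbf{u},\mathbf{v})$, and measurability/continuity of the integrand in $x$ is inherited from Assumption \ref{ass:Exist}. Subtracting $\nabla\mathbf{h}(\overline{\mathbf{u}}(x),\overline{\mathbf{v}}(x),x)\xi(x)=\int_0^1\nabla\mathbf{h}(\overline{\mathbf{u}}(x),\overline{\mathbf{v}}(x),x)\,\xi(x)\,ds$ yields
\begin{equation*}
\mathbf{N}(\xi)(x)=\int_0^1\bigl[\nabla\mathbf{h}(\overline{\mathbf{u}}(x)+s\xi_1(x),\overline{\mathbf{v}}(x)+s\xi_2(x),x)-\nabla\mathbf{h}(\overline{\mathbf{u}}(x),\overline{\mathbf{v}}(x),x)\bigr]\xi(x)\,ds.
\end{equation*}

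From this representation the three properties follow directly. Setting $\xi=0$ makes the bracket vanish, giving $\mathbf{N}(\mathbf{0})=\mathbf{0}$. Applying the Lipschitz bound on $\nabla\mathbf{h}$ from Assumption \ref{ass:N} gives $|{\cdot}|\le L\,s\,|\xi(x)|$ inside the bracket, hence
\begin{equation*}
|\mathbf{N}(\xi)(x)|\le L\,|\xi(x)|^{2}\int_0^1 s\,ds=\tfrac{L}{2}|\xi(x)|^{2}\le\tfrac{L}{2}\|\xi\|_\infty^{2},
\end{equation*}
which proves \eqref{estimateN} with $C=L(B)/2$ and any $\rho\le R$ (note $L$ depends on $R$, but $R$ is fixed).

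For local Lipschitz continuity, I would repeat the argument with the difference
\begin{equation*}
\mathbf{N}(\xi)(x)-\mathbf{N}(\eta)(x)=\int_0^1\bigl[\nabla\mathbf{h}(\overline{\mathbf{u}}(x)+\eta(x)+s(\xi-\eta)(x),x)-\nabla\mathbf{h}(\overline{\mathbf{u}}(x),\overline{\mathbf{v}}(x),x)\bigr](\xi-\eta)(x)\,ds,
\end{equation*}
obtained from the same Taylor formula applied to the endpoints $(\overline{\mathbf{u}},\overline{\mathbf{v}})+\eta$ and $(\overline{\mathbf{u}},\overline{\mathbf{v}})+\xi$; here I write $(\overline{\mathbf{u}},\overline{\mathbf{v}})$ for the pair inside $\nabla\mathbf h$ for brevity. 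For $\|\xi\|_\infty,\|\eta\|_\infty\le R$ the Lipschitz estimate on $\nabla\mathbf{h}$ bounds the bracket by $L(|\eta(x)|+s|\xi(x)-\eta(x)|)\le 2LR$, so $\|\mathbf{N}(\xi)-\mathbf{N}(\eta)\|_\infty\le 2LR\,\|\xi-\eta\|_\infty$. I do not expect a real obstacle: the only mild point is checking that every intermediate argument $(\overline{\mathbf{u}}(x)+\eta(x)+s(\xi-\eta)(x),x)$ stays in the fixed bounded set $B$, which is immediate from the triangle inequality since the first component is bounded by $M+R$.
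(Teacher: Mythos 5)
Your proof is correct and follows essentially the same route as the paper: both write $\mathbf{N}(\xi)$ as the integral Taylor remainder $\int_0^1[\nabla\mathbf{h}(\overline{\mathbf{u}}+s\xi_1,\overline{\mathbf{v}}+s\xi_2,x)-\nabla\mathbf{h}(\overline{\mathbf{u}},\overline{\mathbf{v}},x)]\,\mathrm{d}s\cdot\xi$ and then invoke the local Lipschitz continuity of $\nabla\mathbf{h}$ from Assumption \ref{ass:N} to get the quadratic bound and the local Lipschitz property of $\mathbf{N}$. (Only cosmetic nit: in the Lipschitz step the intermediate point $(1-s)\eta+s\xi$ has norm at most $R$ by convexity, so the bracket is bounded by $LR$ rather than via your $|\eta|+s|\xi-\eta|\le 2R$ chain, but the stated conclusion holds regardless.)
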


\begin{proof}
Using Taylor's expansion, we write for $\mathbf{h}=(\mathbf{f}, \mathbf{g})$ and $\xi=(\mathbf{u}-\overline{\mathbf{u}}, \mathbf{v}-\overline{\mathbf{v}})$
\begin{align*}
	\mathbf{h} (\mathbf{u},  \mathbf{v}, x)- \mathbf{h} (\overline{\mathbf{u}}, \overline{\mathbf{v}}, x)
	& =  \nabla \mathbf{h} (\overline{\mathbf{u}}, \overline{\mathbf{v}}, x) \xi  + \mathbf{H} (\xi,x),
\end{align*}
where the remainder $\mathbf{H}=(\mathbf{F}, \mathbf{G})$ is given due to the mean value theorem by
\begin{align}
	\mathbf{H}(\xi,x) =  \int_0^1 \nabla \mathbf{h} (\overline{\mathbf{u}} + t \xi_1, \overline{\mathbf{v}} + t \xi_2,x)- \nabla \mathbf{h} (\overline{\mathbf{u}}, \overline{\mathbf{v}},x) \; \mathrm{d}t \cdot \xi. \label{Taylor}
\end{align}
The nonlinear operator $\mathbf{N}$ in equation \eqref{LNequation} then reads
\begin{align*}
	\mathbf{N} \begin{pmatrix} \xi_1\\ \xi_2 \end{pmatrix}  = 
	\begin{pmatrix}
		\mathbf{f}(\overline{u}+\xi_1,\overline{v}+\xi_2,\cdot) - \mathbf{f}(\overline{u},\overline{v}.\cdot)\\
		\mathbf{g}(\overline{u}+\xi_1,\overline{v}+\xi_2,\cdot) - \mathbf{g}(\overline{u},\overline{v},\cdot)
	\end{pmatrix}
	-  \begin{pmatrix}
		\mathbf{A}_{\ast}(\cdot) \xi_1 + \mathbf{B}_{\ast}(\cdot) \xi_2\\
		\mathbf{C}_{\ast}(\cdot) \xi_1 + \mathbf{D}_{\ast}(\cdot) \xi_2
	\end{pmatrix} =\mathbf{H} (\xi, \cdot)
\end{align*}
for $\xi =(\xi_1,\xi_2) \in L^\infty(\Omega)^{m+k}$. To prove estimate \eqref{estimateN} for some $\rho>0$ and all $\xi \in L^\infty(\Omega)^{m+k}$ with $\|\xi\|_\infty < \rho$, we use the local Lipschitz continuity of the gradients of $\mathbf{h}$ from Assumption \ref{ass:Exist}. This assumption allows to estimate the integral in \eqref{Taylor} and we obtain some constant $C=C(\rho)>0$ such that 
\begin{align*}
	\|\mathbf{N}(\xi)\|_\infty \le C \|\xi\|_\infty^2 \qquad \forall \; \xi \in L^\infty(\Omega)^{m+k}, \|\xi\|_\infty < \rho.
\end{align*}
Note that this constant $C$ also depends on the bounded steady state $(\overline{\mathbf{u}}, \overline{\mathbf{v}})$. Additionally, local Lipschitz continuity transfers from $\mathbf{h}$ to $\mathbf{N}$.
\end{proof}

Results on existence of mild solutions to equation \eqref{LNequation} can be obtained in a similar way to \cite[Part II, Theorem 1]{Rothe} by a Picard iteration on $L^\infty(\Omega)^{m+k}$, see Proposition \ref{Rothesol}. However, the notion of a mild solution deserves special attention since solutions are not continuous up to $t=0$ with respect to $L^\infty(\Omega)^{m+k}$, see Definition \ref{mildsol}.


\subsection{Spectral properties} \label{sec:specanalysis} 

In this section, we first characterize the spectrum of the linear operator $\mathbf{L}$ defined in Lemma \ref{Lclosed}. Unfortunately, the proof in \cite{MKS17} seems not adaptable to systems with more than one reaction-diffusion equation. The current proof is adapted from \cite[Proposition B.2]{KMCMnonlinear}, using the close relation of the reaction-diffusion-ODE system \eqref{fullsys} to its shadow limit approximation. Note that the methods used to analyze the spectrum of the linearized operator on $L^\infty(\Omega)^m \times L^p(\Omega)^{k}$ are also applicable to different function spaces such as $L^p(\Omega)^{m+k}$ in \cite{Kowall} or $C(\overline{\Omega})^{m+k}$ in Proposition \ref{RDODEspecC}. In addition to the spectral characterization, we study restrictions of the operator $\mathbf{L}$ to the function spaces $L^\infty(\Omega)^{m+k}$ and $L^\infty(\Omega)^m \times C(\overline{\Omega})^{k}$. Due to elliptic regularity theory and $p$-independence of the spectrum of multiplication operators, the spectrum of the operator $\mathbf{L}$ equals on all these function spaces for all $p \ge 2$ big enough. The basic idea of restricting the operator $\mathbf{L}$ to $L^\infty(\Omega)^{m+k}$ and $L^\infty(\Omega)^m \times C(\overline{\Omega})^{k}$ is the possibility to determine the growth bound of the semigroup $(\mathbf{T}(t))_{t \in \mathbb{R}_{\ge 0}}$ restricted to $L^\infty(\Omega)^{m+k}$ by means of the spectral bound $s(\mathbf{L})$ of the linear operator $\mathbf{L}$. This issue is addressed in Section \ref{sec:semigroupest} which is devoted to semigroup properties.\\

{Recall formula \eqref{Linear2} with bounded coefficients $\mathbf{A}_{\ast}, \mathbf{B}_{\ast}, \mathbf{C}_{\ast},\mathbf{D}_{\ast}$. In order to determine spectral elements $\lambda \in \sigma(\mathbf{L})$, we study the problem
\[
(\lambda I - \mathbf{L}) \xi = \psi  \quad \Leftrightarrow \quad \begin{cases}
	\qquad \qquad \,  (\lambda I -\mathbf{A}_{\ast}) \xi_1 -\mathbf{B}_{\ast} \xi_2 = \psi_1,\\
	- \mathbf{C}_{\ast} \xi_1   + (\lambda I -\mathbf{D}^v \Delta -  \mathbf{D}_{\ast}) \xi_2 = \psi_2
\end{cases}
\]
for $\lambda \in \mathbb{C}$. Let us denote by $\mathbf{A}_{\ast}$ the multiplication operator induced by $\mathbf{A}_{\ast}$ on $L^\infty(\Omega)^m$ \cite[Appendix B.2]{KMCMnonlinear}. 
The following result is inspired by the case considered in \cite[Section 4]{CMCKSregular} concerning a system of $m+1$ equations. Recall the following characterization of $\sigma(\mathbf{A}_\ast)$; there exists a null set $N \subset \Omega$ such that
\begin{align*}
	\sigma(\mathbf{A}_\ast) = \overline{ \bigcup_{x \in \Omega \setminus N} \sigma(\mathbf{A}_\ast(x)) }
\end{align*}
and the whole spectrum is essential in the sense of Schechter, i.e., for every $\lambda \in \sigma(\mathbf{A}_{\ast})$ the operator $\lambda I - \mathbf{A}_{\ast}$ is not a Fredholm operator of index zero \cite[Appendix B.2]{KMCMnonlinear}. For definition and properties of essential spectra and the discrete spectrum $\sigma_d$, we refer to \cite[Section 7.1]{Jeribi}.

\begin{proposition} \label{RDODEspec}
	Let $\mathbf{A}_{\ast}, \mathbf{B}_{\ast}, \mathbf{C}_{\ast}, \mathbf{D}_{\ast}$ be matrix-valued functions which have entries in $L^\infty(\Omega)$ according to the linear operator $\mathbf{L}$ defined by \eqref{Linear2} on $L^\infty(\Omega)^m \times L^p(\Omega)^{k}$ for some $n^\ast <  p < \infty$. Then, using the same notation for the multiplication operator induced by $\mathbf{A}_{\ast}$ on $L^\infty(\Omega)^{m}$, we have $\sigma(\mathbf{A}_\ast) \cap \sigma_d(\mathbf{L}) = \emptyset$ and
	\begin{align*}
		\sigma(\mathbf{L}) = \sigma (\mathbf{A}_{\ast}) \mathop{\dot{\cup}} \Sigma
	\end{align*}
	for some set of eigenvalues $\Sigma \subset \sigma_p(\mathbf{L})$. If $\rho(\mathbf{A}_\ast)$ is a connected set in $\mathbb{C}$, then $\Sigma$ coincides with the discrete spectrum $\sigma_d(\mathbf{L})$ of $\mathbf{L}$.
\end{proposition}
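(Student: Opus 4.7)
The plan is to reduce the spectral problem for $\mathbf{L}$ via a block Schur-complement manipulation and then invoke analytic Fredholm theory. Writing the resolvent equation $(\lambda I - \mathbf{L})\xi = \psi$ componentwise gives
\begin{align*}
    (\lambda I - \mathbf{A}_\ast)\xi_1 - \mathbf{B}_\ast \xi_2 &= \psi_1,\\
    -\mathbf{C}_\ast \xi_1 + (\lambda I - \mathbf{D}^v \Delta - \mathbf{D}_\ast)\xi_2 &= \psi_2.
\end{align*}
For $\lambda \in \rho(\mathbf{A}_\ast)$ the first equation is uniquely solvable in $L^\infty(\Omega)^m$, interpreting $(\lambda I - \mathbf{A}_\ast)^{-1}$ as the pointwise inverse multiplication operator, and gives $\xi_1 = (\lambda I - \mathbf{A}_\ast)^{-1}(\psi_1 + \mathbf{B}_\ast \xi_2)$. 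Substituting into the second equation yields a shadow-type problem
\[
\mathbf{S}(\lambda)\xi_2 := \bigl(\lambda I - \mathbf{D}^v\Delta - \mathbf{D}_\ast - \mathbf{C}_\ast (\lambda I - \mathbf{A}_\ast)^{-1}\mathbf{B}_\ast\bigr)\xi_2 = \psi_2 + \mathbf{C}_\ast(\lambda I - \mathbf{A}_\ast)^{-1}\psi_1
\]
on $L^p(\Omega)^k$. The first task is to verify that these manipulations are reversible, so that $\lambda \in \rho(\mathbf{L})$ if and only if $\mathbf{S}(\lambda)$ is boundedly invertible on $L^p(\Omega)^k$, and that $\ker(\lambda I - \mathbf{L})$ is in bijection with $\ker \mathbf{S}(\lambda)$ via the lift $\xi_2 \mapsto ((\lambda I - \mathbf{A}_\ast)^{-1}\mathbf{B}_\ast \xi_2, \xi_2)$.

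The second step is to analyze $\mathbf{S}(\lambda)$ for $\lambda \in \rho(\mathbf{A}_\ast)$. Its zeroth-order coefficient is a bounded $L^\infty$-valued multiplication operator, and by Lemma \ref{domchar} together with the condition $p > n^\ast$ the domain $\mathcal{D}(A_p)$ embeds compactly into $L^p(\Omega)$, so the Neumann Laplacian $\mathbf{D}^v\Delta$ has compact resolvent on $L^p(\Omega)^k$. Therefore $\mathbf{S}(\lambda)$ is a relatively compact perturbation of $\lambda I - \mathbf{D}^v\Delta$ and hence Fredholm of index zero. Consequently $\mathbf{S}(\lambda)$ fails to be invertible precisely when it has a nontrivial kernel, which corresponds to an eigenvalue of $\mathbf{L}$ of finite algebraic multiplicity; this yields $\Sigma := \sigma(\mathbf{L}) \cap \rho(\mathbf{A}_\ast) \subset \sigma_p(\mathbf{L})$. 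Since $\lambda \mapsto \mathbf{S}(\lambda)$ depends holomorphically on $\lambda$, and $\mathbf{S}(\lambda)$ is invertible for $\mathrm{Re}(\lambda)$ sufficiently large (because $\mathbf{L}$ generates the analytic semigroup from Lemma \ref{Lclosed}), the analytic Fredholm theorem applied on a connected $\rho(\mathbf{A}_\ast)$ forces $\Sigma$ to be a discrete set of finite-multiplicity eigenvalues, which coincides with $\sigma_d(\mathbf{L}) \cap \rho(\mathbf{A}_\ast)$ as claimed.

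It remains to show $\sigma(\mathbf{A}_\ast) \subset \sigma(\mathbf{L})$ together with $\sigma(\mathbf{A}_\ast) \cap \sigma_d(\mathbf{L}) = \emptyset$, and this is the step I expect to be the main obstacle. The strategy is to use the recalled characterization that the whole spectrum $\sigma(\mathbf{A}_\ast)$ is essential in Schechter's sense: for every $\lambda \in \sigma(\mathbf{A}_\ast)$ and every $\varepsilon>0$ there exists a positive-measure set $E \subset \Omega$ and a unit vector $\mathbf{w} \in \mathbb{C}^m$ with $|\mathbf{A}_\ast(x)\mathbf{w} - \lambda \mathbf{w}| < \varepsilon$ on $E$. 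Localizing to a countable collection of pairwise disjoint sub-pieces $E^{(j)} \subset E$ and setting $\xi_j := (\chi_{E^{(j)}}\mathbf{w}, 0) \in Z_p$, one builds a Weyl-type singular sequence for $\lambda I - \mathbf{L}$ (the diffusive component stays trivial, so the coupling through $\mathbf{C}_\ast$ only contributes an $L^p$-small right-hand side controlled by the measure of $E^{(j)}$). Because the pieces are disjoint, no subsequence converges modulo a finite-codimensional subspace, so $\lambda I - \mathbf{L}$ fails to be Fredholm of index zero, which rules out $\lambda \in \rho(\mathbf{L}) \cup \sigma_d(\mathbf{L})$ and simultaneously gives both the inclusion and the disjointness. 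The delicate point is carrying out this singular-sequence argument cleanly in the mixed $L^\infty$--$L^p$ topology of $Z_p$; once this is done, combining with the second step yields $\sigma(\mathbf{L}) = \sigma(\mathbf{A}_\ast) \,\dot\cup\, \Sigma$.
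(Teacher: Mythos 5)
Your proposal is correct in substance but follows a genuinely different route from the paper. The paper proves the identity $\sigma_{e5}(\mathbf{L})=\sigma_{e5}(\mathbf{A}_\ast)=\sigma(\mathbf{A}_\ast)$ by invoking the abstract transfer theorem for block operator matrices \cite[Theorem 10.1.3 (i)]{Jeribi}: after permuting the blocks it identifies the Schur complement $S(\mu)=\mathbf{A}_\ast+M(\mu)$ with $M(\mu)$ compact, and reads off the Wolf, Schechter and Browder essential spectra of $\mathbf{L}$ from those of $\mathbf{A}_\ast$; the set $\Sigma$ and the connectedness statement then fall out of the $\sigma_{e5}$/$\sigma_{e6}$ dichotomy. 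You instead do the two halves by hand: the Schur-complement reduction to $\mathbf{S}(\lambda)$ on $L^p(\Omega)^k$ plus the analytic Fredholm theorem (this is essentially what the paper relegates to Proposition \ref{Sigma} in the appendix via the Gohberg--Shmul'yan theorem), and a direct Weyl singular-sequence construction for $\sigma(\mathbf{A}_\ast)\subset\sigma_{e4}(\mathbf{L})$. Both approaches hinge on the same compactness, namely $\mathcal{D}(A_p)\hookrightarrow\hookrightarrow C(\overline{\Omega})$ for $p>n^\ast$. Your route is more elementary and makes the persistence mechanism transparent (the test functions $(\chi_{E^{(j)}}\mathbf{w},\mathbf{0})$ lie in $\mathcal{D}(\mathbf{L})$ with vanishing diffusive component, and the coupling through $\mathbf{C}_\ast$ lands in $L^p$ with norm $\lesssim|E^{(j)}|^{1/p}$); the paper's route packages the bookkeeping of which essential spectra transfer and handles the non-connected case uniformly. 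Two small points to tighten: (i) in your singular-sequence step, if you fix one $\varepsilon$ and one $E$ and only subdivide, the defect on the spanned infinite-dimensional subspace is of order $\varepsilon+\|\mathbf{C}_\ast\|_\infty|E|^{1/p}$, so you must also shrink $|E|=|E_\varepsilon|$ with $\varepsilon$ (always possible, since any positive-measure subset of $E$ inherits the pointwise estimate) before contradicting the lower bound on a finite-codimensional subspace that upper semi-Fredholmness would provide; (ii) your remark that a nontrivial kernel of $\mathbf{S}(\lambda)$ gives an eigenvalue of \emph{finite algebraic} multiplicity is only justified via the analytic Fredholm theorem on connected components --- in general Fredholmness of index zero yields finite geometric multiplicity, and on a bounded component of $\rho(\mathbf{A}_\ast)$ the eigenvalues in $\Sigma$ need be neither isolated nor of finite algebraic multiplicity, which is consistent with the proposition since it only claims $\Sigma\subset\sigma_p(\mathbf{L})$ there.
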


\begin{proof}
	The proof is based on the properties of essential spectra established in the book of Jeribi \cite{Jeribi}. We use the notion of essential spectra from there.
	We work here with 3 concepts of the essential spectrum, i.e. the Wolf spectrum defined by
	\begin{align*}
		\sigma_{e4}(\mathbf{L}) = \{\lambda \in \mathbb{C} \mid \lambda I - \mathbf{L} \text{ is not a Fredholm operator}\},
	\end{align*}
	the Schechter spectrum which can be characterized as follows
	\begin{align*}
		\sigma_{e5}(\mathbf{L}) = \{\lambda \in \mathbb{C} \mid \lambda I - \mathbf{L} \text{ is not a Fredholm operator with index } 0\}
	\end{align*}
	(see \cite[Proposition 7.1.1]{Jeribi}) and lastly the Browder spectrum (see \cite[Section 7.1]{Jeribi} for a definition) that provides the whole set
	\begin{align*}
		\sigma_{e6}(\mathbf{L}) = \sigma(\mathbf{L}) \setminus \sigma_d(\mathbf{L}).
	\end{align*}
	Here, $\sigma_d(\mathbf{L})$ denotes the discrete spectrum
	\begin{align*}
		\sigma_d(\mathbf{L}) = \{\lambda \in \mathbb{C} \mid \lambda \text{ is an isolated eigenvalues of } \mathbf{L} \text{ with a finite algebraic multiplicity}\}.
	\end{align*}
	It holds $\sigma_{e4}(\mathbf{L}) \subset \sigma_{e5}(\mathbf{L}) \subset \sigma_{e6}(\mathbf{L}) \subset \sigma(\mathbf{L})$.\\
	For the Wolf spectrum it was shown in \cite[Appendix B.2]{KMCMnonlinear} that $\sigma(\mathbf{A}_{\ast}) = \sigma_{e4}(\mathbf{A}_{\ast})$.
	Consequently, we have $\sigma(\mathbf{A}_{\ast}) = \sigma_{e5}(\mathbf{A}_{\ast})$, since
	\begin{align*}
		\sigma(\mathbf{A}_{\ast}) = \sigma_{e4}(\mathbf{A}_{\ast}) \subset \sigma_{e5}(\mathbf{A}_{\ast}) \subset \sigma(\mathbf{A}_{\ast}).
	\end{align*}
	In the next step, we want to show that $\sigma_{e5}(\mathbf{A}_{\ast}) = \sigma_{e5}(\mathbf{L})$.
	To this end, we apply results established in \cite[Theorem 10.1.3 (i)]{Jeribi} for an operator in a block form
	\begin{align*}
		L = \begin{pmatrix}A&B\\C&D\end{pmatrix} \quad \text{on } X_1\times X_2,
	\end{align*}
	where we need some good properties of the resolvent of $A$.\\ 
	Using the notation of the book \cite{Jeribi} for the operator $\mathbf{L}$ given by \eqref{Linear2}, we take $A:= \mathbf{D}^v \Delta + \mathbf{D}_\ast$ on $X_1:=L^p(\Omega)^{k}$.
	Since $A$ is a bounded perturbation of the generator of the heat semigroup from Proposition \ref{heathom}, $A$ is a densely defined, closed operator \cite[Ch. II, Theorem 1.4]{Engel}. Furthermore, $\rho(A) \not= \emptyset$ by \cite[Ch. II, Theorem 1.10]{Engel}. From \cite[Theorem 1.6.3]{Davies}, \cite[Ch. II, Theorem 4.29]{Engel} and perturbation theory from \cite[Ch. III, Theorem 1.12]{Engel}, we obtain compactness of the resolvent $R(\lambda, A)$.\\
	Moreover, we have the bounded multiplication operator $D := \mathbf{A}_{\ast}$ on $X_2:= L^\infty(\Omega)^{m}$.
	The remaining operators $B:= \mathbf{C}_\ast: X_2 \to X_1$ and $C:= \mathbf{B}_\ast: X_1 \to X_2$ consist of a bounded and an unbounded, closed and densely defined multiplication operator, respectively; see operators in \eqref{Bast} and \eqref{Cast}.\\
	This means the above block setting is obtained by a permutation of the operator matrix $\mathbf{L}$. Such a permutation does not change the essential spectra which can be seen as follows.
	Let us consider a permutation matrix $\mathbf{P} \in \mathbb{R}^{(m+k) \times (m+k)}$ with $\mathbf{P}^2 =I$ that interchanges the diffusive subsystem with the ODE subsystem. As an operator, $\mathbf{P}$ is an isomorphism from $L^p(\Omega)^k \times L^\infty(\Omega)^{m}$ to $L^\infty(\Omega)^{m} \times L^p(\Omega)^k$. Then $\lambda I - \mathbf{L}$ is a Fredholm operator with index zero if and only if $\lambda I - \tilde{\mathbf{L}}$ is Fredholm with index zero where $\tilde{\mathbf{L}} = \mathbf{P}^{-1}\mathbf{L} \mathbf{P}$, hence $\sigma_{e4}(\mathbf{L}) = \sigma_{e4}(\tilde{\mathbf{L}})$ and $\sigma_{e5}(\mathbf{L}) = \sigma_{e5}(\tilde{\mathbf{L}})$. The former is a consequence of the fact that the invertible operators $\mathbf{P}, \mathbf{P}^{-1}$ are (bounded) Fredholm operators and $\lambda I - \tilde{\mathbf{L}} = \mathbf{P}^{-1} (\lambda I - \mathbf{L}) \mathbf{P}$ is a reversible composition with Fredholm operators \cite[Theorem 2.2.40]{Jeribi}.\\
	These properties allow applying \cite[Theorem 10.1.3 (i)]{Jeribi}. The theorem provides the equivalence $\sigma_{ei}(\mathbf{L}) = \sigma_{ei}(S_0)$ for $i = 4,5$ for an operator $S_0$ obtained by a decomposition of the bounded operator
	\begin{align*}
		S(\mu) :=  D - C(A-\mu I)^{-1}B : X_2 \to X_2
	\end{align*}
	for $\mu\in\rho(A)$.
	Moreover, the theorem \cite[Theorem 10.1.3 (i)]{Jeribi} yields $\sigma_{e6}(\mathbf{L}) = \sigma_{e6}(S_0)$ in case of $\mathbb{C}\setminus\sigma_{e5}(S_0)$ is a connected set and
	neither $\rho(S_0)$ nor $\rho(S(\mu))$ is empty.
	In our case, $S(\mu)$ can be decomposed into the bounded operator $S_0 = \mathbf{A}_{\ast}$ and a compact operator $M(\mu) = S(\mu) - S_0$.
	Hence, $\rho(S_0)$ and $\rho(S(\mu))$ are both not empty in our case. Thus, connectedness of $\mathbb{C}\setminus\sigma_{e5}(S_0) = \rho(\mathbf{A}_{\ast})$ is sufficient for $\sigma_{e6}(\mathbf{L}) = \sigma_{e6}(\mathbf{A}_{\ast})$.\\
	Since $\mathbf{A}_{\ast}$ is bounded, it is left to show compactness of
	\begin{align*}
		M(\mu) = -C(A- \mu I)^{-1} B : X_2 \to X_2.
	\end{align*}
	By \cite[Theorem 1.34]{Adams} and Lemma \ref{domchar}, the embedding $\iota: (\mathcal{D}(A_p)^k, \| \cdot\|_{\mathbf{D}^v \Delta}) \to L^\infty(\Omega)^k$ is compact for $p>n^\ast$, using the graph norm of the operator $\mathbf{D}^v\Delta$. Recall that the graph norm of $\mu I - A$ and $\| \cdot\|_{\mathbf{D}^v \Delta}$ are equivalent for $\mu \in \rho(A)$. Since $\mu \in \rho(A)$, the operator $R_\mu \psi=-(A-\mu I)^{-1} \psi$ yields a bounded isomorphism $R_\mu: L^p(\Omega)^k \to (\mathcal{D}(A_p)^k, \| \cdot\|_{\mathbf{D}^v \Delta})$ with bounded inverse $\mu I - A$. Lastly, consider the restriction $C_|: L^\infty(\Omega)^k \to L^\infty(\Omega)^m$ of the operator $C$ to $L^\infty(\Omega)^k$. Then, since $C_|, B$ and $R_\mu$ are bounded, the operator $M(\mu) = C_| \iota R_\mu B$ is compact \cite[Appendix B]{Arendt}.
	Thus, $\sigma_{e5}(\mathbf{L}) = \sigma_{e5}(S_0) = \sigma_{e5}(\mathbf{A}_{\ast})$ is a consequence of \cite[Theorem 10.1.3 (i)]{Jeribi}.\\
	Using the relation $\sigma_{e5}(\mathbf{L}) \subset \sigma_{e6}(\mathbf{L})$ for the Browder essential spectrum $\sigma_{e6}(\mathbf{L}) = \sigma(\mathbf{L}) \setminus \sigma_d(\mathbf{L})$, we obtain
	\[
	\sigma (\mathbf{A}_{\ast}) =  \sigma_{e5} (\mathbf{A}_{\ast}) =  \sigma_{e5}(\mathbf{L}) \subset \sigma(\mathbf{L}) \setminus \sigma_d(\mathbf{L}). 
	\]
	Since $\sigma (\mathbf{A}_{\ast})= \sigma_{e5}(\mathbf{L})$, the remainder of the spectrum $\Sigma := \sigma(\mathbf{L}) \setminus \sigma (\mathbf{A}_{\ast})$ consists of those $\lambda \in \rho(\mathbf{A}_{\ast}) \cap \sigma(\mathbf{L})$ for which $\lambda I - \mathbf{L}$ is a Fredholm operator of index zero. Hence, the operator $\lambda I - \mathbf{L}$ cannot be injective and $\lambda \in \sigma_p(\mathbf{L})$, thus $\Sigma\subset\sigma_p(\mathbf{L})$.\\
	Finally, if $\rho(\mathbf{A}_\ast)$ is a connected set, \cite[Theorem 10.1.3 (i)]{Jeribi} shows $\sigma_{e6}(\mathbf{L}) = \sigma_{e6}(\mathbf{A}_\ast)$. Due to the relation $\sigma(\mathbf{A}_\ast) = \sigma_{e5}(\mathbf{A}_{\ast})  \subset \sigma_{e6}(\mathbf{A}_\ast) \subset \sigma (\mathbf{A}_\ast)$, we infer 
	\begin{align*}
		\Sigma = \sigma(\mathbf{L}) \setminus \sigma(\mathbf{A}_\ast) = \sigma(\mathbf{L}) \setminus \sigma_{e6}(\mathbf{L}) = \sigma_d(\mathbf{L}).
	\end{align*}
\end{proof}
}


This characterization of the spectrum of $\mathbf{L}$ coincides with the spectrum of its extension to $L^p(\Omega)^{m+k}$ by \cite[Proposition 5.13]{Kowall}, using elliptic regularity for $p>n^\ast$, compare also \cite{CMCKSregular} for $k=1$. It should be stressed here that the set $\Sigma$ does not have to be discrete if the resolvent set $\rho(\mathbf{A}_\ast)$ is not connected. We refer to Appendix \ref{sec:specchar} for further discussions. Similar to the $p$-independence of the spectrum of the Laplacian on $L^p(\Omega)$ from \cite[Example 1.2]{Arendtspec}, we can verify such a property for the spectrum $\sigma(\mathbf{L})$. 

\begin{corollary} \label{independence}
The spectrum $\sigma(\mathbf{L})$ characterized in Proposition \ref{RDODEspec} for the linear operator $\mathbf{L}$ defined by \eqref{Linear2} on $L^\infty(\Omega)^m \times L^p(\Omega)^{k}$ is independent of $n^\ast < p< \infty$.  
\end{corollary}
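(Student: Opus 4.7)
The plan is to leverage the decomposition $\sigma(\mathbf{L}) = \sigma(\mathbf{A}_\ast) \mathop{\dot{\cup}} \Sigma$ from Proposition \ref{RDODEspec} and to establish $p$-independence of each piece separately. The first piece $\sigma(\mathbf{A}_\ast)$ is defined purely through the multiplication operator $\mathbf{A}_\ast$ acting on $L^\infty(\Omega)^m$, and hence does not involve $p$ at all. The work therefore reduces to showing that the eigenvalue component $\Sigma \subset \rho(\mathbf{A}_\ast) \cap \sigma_p(\mathbf{L})$ does not depend on $p$.

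Fix $n^\ast < p, q < \infty$, denote by $\mathbf{L}_p, \mathbf{L}_q$ the corresponding realizations on $Z_p, Z_q$, and let $\Sigma_p, \Sigma_q$ be the associated eigenvalue parts. Given $\lambda \in \Sigma_p$ with a nontrivial eigenvector $\xi = (\xi_1, \xi_2) \in L^\infty(\Omega)^m \times \mathcal{D}(A_p)^k$, the components satisfy
\begin{align*}
(\lambda I - \mathbf{A}_\ast)\xi_1 &= \mathbf{B}_\ast \xi_2, \\
\mathbf{D}^v \Delta \xi_2 &= (\lambda I - \mathbf{D}_\ast)\xi_2 - \mathbf{C}_\ast \xi_1.
\end{align*}
By Lemma \ref{domchar}, $\xi_2 \in C^{0,\gamma}(\overline{\Omega})^k \subset L^\infty(\Omega)^k$, so the right-hand side of the second line belongs to $L^\infty(\Omega)^k \hookrightarrow L^q(\Omega)^k$. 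Since $\xi_2$ also inherits the weak Neumann boundary condition from its membership in $\mathcal{D}(A_p)^k$, this forces $\xi_2 \in \mathcal{D}(A_q)^k$. Consequently $\xi$ is a nontrivial eigenvector of $\mathbf{L}_q$ at the same eigenvalue $\lambda$, which together with $\lambda \in \rho(\mathbf{A}_\ast)$ places $\lambda$ in $\Sigma_q$. Swapping the roles of $p$ and $q$ yields the converse inclusion, and combined with the already-noted equality of the $\sigma(\mathbf{A}_\ast)$-parts this finishes the proof.

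The only real technical issue is the elliptic bootstrap allowing an eigenvector of $\mathbf{L}_p$ to qualify as an eigenvector of $\mathbf{L}_q$. This is exactly where the threshold $p > n^\ast$ is used: the embedding $\mathcal{D}(A_p) \hookrightarrow C^{0,\gamma}(\overline{\Omega})$ from Lemma \ref{domchar} immediately puts $\xi_2$ into $L^\infty$, after which a single reinsertion into the second eigenvalue equation upgrades $\Delta \xi_2$ to arbitrary $L^q$ regularity. No additional structural properties of the linearization are needed.
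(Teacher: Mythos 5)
Your proposal is correct and follows essentially the same route as the paper: reduce to the eigenvalue part $\Sigma$ via Proposition \ref{RDODEspec}, solve the ODE-component equation for $\xi_1$ using $\lambda \in \rho(\mathbf{A}_\ast)$, and bootstrap $\xi_2$ through the elliptic equation with bounded right-hand side using the embedding $\mathcal{D}(A_p)\hookrightarrow C^{0,\gamma}(\overline{\Omega})$ from Lemma \ref{domchar} to conclude $\xi_2\in\mathcal{D}(A_q)^k$ for all $q$. The only cosmetic difference is that the paper substitutes $\xi_1$ back to obtain a closed problem $-\mathbf{D}^v\Delta\xi_2=\mathbf{F}\xi_2$ with $\mathbf{F}\in L^\infty(\Omega)^{k\times k}$, whereas you keep $\xi_1$ explicit; both yield the same $L^\infty$ bound on $\Delta\xi_2$.
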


\begin{proof}
In view of Proposition \ref{RDODEspec} and the spectral characterization of $\sigma(\mathbf{A}_\ast)$ in \cite[Appendix B.2]{KMCMnonlinear}, it remains to consider the set $\Sigma$. Each $\lambda \in \Sigma$ is an eigenvalue  of $\mathbf{L}$, i.e., there exists an eigenfunction $\xi = (\xi_1, \xi_2) \in L^\infty(\Omega)^{m} \times \mathcal{D}(A_p)^{k}$ such that
$
(\lambda I- \mathbf{L}) \xi = \mathbf{0}
$
holds in the weak sense. Since $\lambda \in \Sigma \subset \rho(\mathbf{A}_\ast)$, the second equation of this eigenvalue problem leads to $\xi_1 = (\lambda I- \mathbf{A}_{\ast})^{-1} \mathbf{B}_{\ast} \xi_2$ and
\begin{align*}
	(\lambda I - \mathbf{D}^v \Delta -  \mathbf{D}_{\ast} - \mathbf{C}_{\ast}(\lambda I - \mathbf{A}_{\ast})^{-1} \mathbf{B}_{\ast}) \xi_2 =  \mathbf{0} 
\end{align*}
for $\xi_2 \in \mathcal{D}(A_p)^{k} \setminus \{\mathbf{0}\}$. This elliptic problem can be reformulated as
$
- \mathbf{D}^v \Delta \xi_2 = \mathbf{F}(x) \xi_2 
$
where $\mathbf{F} \in L^\infty(\Omega)^{k \times k}$. Using elliptic regularity from Lemma \ref{domchar}, we infer that $\mathcal{D}(A_p) \subset L^q(\Omega)$ for all $1 \le q \le \infty$. Hence, $\xi_2 \in \mathcal{D}(A_q)^{k}$ and
$\xi \in L^\infty(\Omega)^m \times \mathcal{D}(A_q)^k$ for all $2 \le q \le \infty$ by Lemma \ref{domchar}, and $(\lambda, \xi)$ is an eigenpair independent of the exponent $p$.
\end{proof}

The $p$-independence of the spectrum of the linear operator allows us to restrict the operator $\mathbf{L}$, which is defined on $L^\infty(\Omega)^m \times L^p(\Omega)^{k}$, to the subspaces $L^\infty(\Omega)^{m+k}$, $L^\infty(\Omega)^m \times C(\overline{\Omega})^{k}$ and $C(\overline{\Omega})^{m+k}$ on which the spectrum remains the same, see Proposition \ref{specrestricted} and \ref{RDODEspecC}, respectively. The advantage of these restrictions is that we are able to relate the spectral bound of the linear operator to the growth bound of the semigroup $(\mathbf{T}(t))_{t \in \mathbb{R}_{\ge 0}}$ restricted to $L^\infty(\Omega)^{m+k}$.
The main properties of the restricted operators and semigroups is depicted in Figure \ref{fig:Restrictions}. Lemmata \ref{restrclosed}--\ref{resolventestrestr} are concerned with the restriction $\mathbf{L}_\infty$, the part of $\mathbf{L}$ in $L^\infty(\Omega)^{m+k}$. Since this operator is not densely defined, it is not the generator of an analytic semigroup. Nevertheless, {the semigroup has analytic properties for $t>0$, as shown in Lemma \ref{restrsemigroup}.} A further restriction to the closed subspace $Z_c= \overline{\mathcal{D}(\mathbf{L}_\infty)}=L^\infty(\Omega)^m \times C(\overline{\Omega})^{k}$ yields again a generator, denoted by $\mathbf{L}_c$, of an analytic semigroup. 
This regularized semigroup is used to determine the growth bound of the non-continuous semigroup generated by $\mathbf{L}_\infty$, see Proposition \ref{SBeGB}.

\begin{figure}[h]
\begin{center}
	\begin{tikzcd}
		(\mathbf{L}, \mathcal{D}(\mathbf{L})) \enspace \text{on} \enspace Z_p = L^\infty(\Omega)^m \times L^p(\Omega)^k \arrow[leftrightarrow]{r}[swap]{\text{Lemma} \hspace{.2777em} \ref{Lclosed}}  
		&[4em] (\mathbf{T}(t))_{t \in \mathbb{R}_{\ge 0}} \subset \mathcal{L}(Z_p) \\[2em]
		(\mathbf{L}_{\infty}, \mathcal{D}(\mathbf{L}_{\infty}))  \enspace \text{on} \enspace Z_\infty	= L^\infty(\Omega)^m \times L^\infty(\Omega)^k \arrow[leftrightarrow]{r}[swap]{\text{Lemma} \hspace{.2777em} \ref{restrsemigroup}}  \arrow[leftarrow]{u}{\text{Lemma} \hspace{.2777em}  \ref{restrclosed}}[swap]{\sigma(\mathbf{L}_{\infty})=\sigma(\mathbf{L})}
		& (\mathbf{T}_\infty(t))_{t \in \mathbb{R}_{\ge 0}} \subset \mathcal{L}(Z_\infty)\arrow[leftarrow]{u}{w_0^{\infty}=s(\mathbf{L})}[swap]{\text{Lemma} \hspace{.2777em} \ref{restrsemigroup}} 
		\\[2em]
		(\mathbf{L}_c, \mathcal{D}(\mathbf{L}_c))  \enspace \text{on} \enspace Z_c = L^\infty(\Omega)^m \times C(\overline{\Omega})^k \arrow[leftrightarrow]{r}[swap]{\text{Lemma} \hspace{.2777em} \ref{restrsemigroup2}}  \arrow[leftarrow]{u}{\text{Lemma} \hspace{.2777em} \ref{restrclosed2}}[swap]{\sigma(\mathbf{L}_c)=\sigma(\mathbf{L}_{\infty})}
		& (\mathbf{T}_c(t))_{t \in \mathbb{R}_{\ge 0}} \subset \mathcal{L}(Z_c)\arrow[leftrightarrow]{u}{w_0^c= s(\mathbf{L})}[swap]{\text{Lemma} \hspace{.2777em} \ref{limitrep}}
	\end{tikzcd}
\end{center} 
\caption{Relation between restricted semigroups and its generators.} \label{fig:Restrictions}
\end{figure}
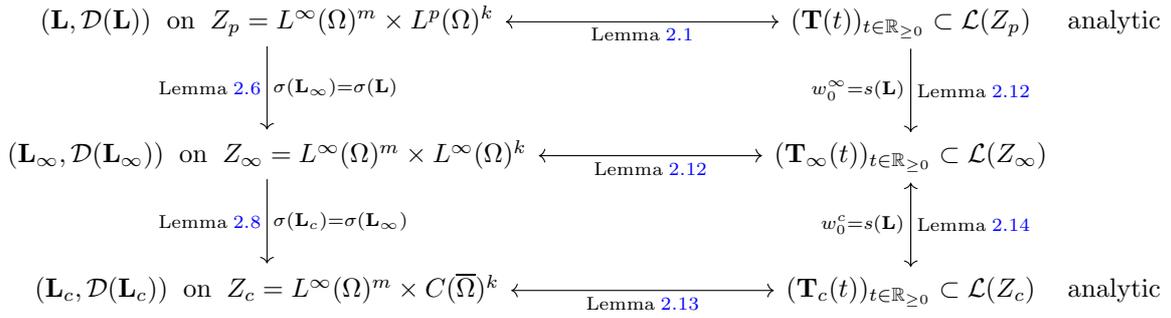

First, we look at the part of the linear operator $\mathbf{L}$ defined by \eqref{Linear2} in $Z_\infty= L^\infty(\Omega)^{m+k}$ given by the operator
\begin{align}
\mathbf{L}_{\infty} : \mathcal{D}(\mathbf{L}_{\infty}) := \{ \xi \in\mathcal{D}(\mathbf{L})\cap Z_\infty \mid \mathbf{L}\xi \in Z_\infty\}\subset Z_\infty \to Z_\infty,\ \xi \mapsto \mathbf{L}\xi. \label{definitionLinfty}
\end{align}
In view of Corollary \ref{independence}, the part of $\mathbf{L}$ in $Z_\infty$ is independent of $p>n^\ast$. 

\begin{lemma} \label{restrclosed}
The restricted operator $\mathbf{L}_{\infty}$ defined in \eqref{definitionLinfty} is closed with domain 
\[
\mathcal{D}(\mathbf{L}_{\infty})  = L^\infty(\Omega)^m \times \mathcal{D}(A_\infty)^k,
\]
where $A_\infty$ is defined in Lemma \ref{resolvestLaplace}. However, $\mathbf{L}_\infty$ is not densely defined on $L^\infty(\Omega)^{m+k}$.
\end{lemma}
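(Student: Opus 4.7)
The plan is to address the three assertions in the statement separately: (i) closedness of $\mathbf{L}_\infty$, (ii) the identification $\mathcal{D}(\mathbf{L}_\infty) = L^\infty(\Omega)^m \times \mathcal{D}(A_\infty)^k$, and (iii) the failure of density. Each step relies on Lemma \ref{Lclosed} together with elementary properties of $\mathcal{D}(A_\infty)$ from Lemma \ref{resolvestLaplace}.

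For closedness I would exploit the continuous embedding $Z_\infty \hookrightarrow Z_p$ that is valid for any $p \in (n^\ast, \infty)$. Given a sequence $(\xi_n) \subset \mathcal{D}(\mathbf{L}_\infty)$ with $\xi_n \to \xi$ and $\mathbf{L}_\infty \xi_n \to \eta$ in the $Z_\infty$-norm, both convergences carry over to $Z_p$. Closedness of $(\mathbf{L}, \mathcal{D}(\mathbf{L}))$ from Lemma \ref{Lclosed} then yields $\xi \in \mathcal{D}(\mathbf{L})$ and $\mathbf{L}\xi = \eta$, while the limits $\xi$ and $\eta$ manifestly belong to $Z_\infty$. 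The definition \eqref{definitionLinfty} therefore places $\xi \in \mathcal{D}(\mathbf{L}_\infty)$ with $\mathbf{L}_\infty \xi = \eta$.

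For the domain characterization, decompose $\xi = (\xi_1,\xi_2) \in \mathcal{D}(\mathbf{L}) \cap Z_\infty$, so that $\xi_1 \in L^\infty(\Omega)^m$ and $\xi_2 \in \mathcal{D}(A_p)^k \cap L^\infty(\Omega)^k$. Inspecting the components of $\mathbf{L}\xi$ in \eqref{Linear2}, the first $m$ components $\mathbf{A}_{\ast}\xi_1 + \mathbf{B}_{\ast}\xi_2$ lie automatically in $L^\infty(\Omega)^m$ by the boundedness of the Jacobian entries together with $\xi \in Z_\infty$. For the remaining $k$ components $\mathbf{D}^v \Delta \xi_2 + \mathbf{C}_{\ast}\xi_1 + \mathbf{D}_{\ast}\xi_2$, the lower-order part is already bounded, so $\mathbf{L}\xi \in Z_\infty$ is equivalent to $\Delta \xi_2 \in L^\infty(\Omega)^k$. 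Combined with $\xi_2 \in L^\infty(\Omega)^k$ and the Neumann conditions built into $\mathcal{D}(A_p)$, this precisely matches the defining condition of $\mathcal{D}(A_\infty)^k$. The reverse inclusion is immediate: for $\xi_2 \in \mathcal{D}(A_\infty)^k$, one has $\xi_2, \Delta \xi_2 \in L^\infty(\Omega)^k \subset L^p(\Omega)^k$ on the bounded domain $\Omega$, hence $\xi_2 \in \mathcal{D}(A_p)^k$, and a direct check yields $\mathbf{L}\xi \in Z_\infty$.

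For non-density, Lemma \ref{resolvestLaplace} gives $\mathcal{D}(A_\infty) \subset C(\overline{\Omega})$ via elliptic regularity, so $\mathcal{D}(\mathbf{L}_\infty) \subset L^\infty(\Omega)^m \times C(\overline{\Omega})^k$. Choosing any discontinuous $w \in L^\infty(\Omega) \setminus C(\overline{\Omega})$, for instance the characteristic function of a measurable subset of $\Omega$ with non-empty interior and non-empty complementary interior, the element $\xi^\ast \in Z_\infty$ having $w$ in the $(m+1)$-th slot and zero elsewhere cannot be approximated uniformly by continuous functions, since uniform limits of continuous functions are continuous. Hence $\xi^\ast$ has strictly positive distance to $\mathcal{D}(\mathbf{L}_\infty)$ in the $Z_\infty$-norm, showing non-density. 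The main obstacle I foresee is the precise matching in step (ii), i.e.\ reconciling the $p$-dependent description inherited from $\mathcal{D}(\mathbf{L})$ with the intrinsic, $p$-independent characterization of $\mathcal{D}(A_\infty)$; this rests on consistency of the various $L^q$-realizations of the Neumann Laplacian and elliptic regularity guaranteeing continuity of bounded solutions with bounded Laplacian, while closedness and non-density are comparatively routine.
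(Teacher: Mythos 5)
Your proposal is correct and follows essentially the same route as the paper: closedness via the standard fact that the part of a closed operator in a continuously embedded subspace is closed (the paper cites Arendt for this), the domain identity by observing that the bounded multiplication terms are harmless so that $\mathbf{L}\xi \in Z_\infty$ reduces to $\mathbf{D}^v\Delta\xi_2 \in L^\infty(\Omega)^k$, and non-density from $\mathcal{D}(\mathbf{L}_\infty) \subset L^\infty(\Omega)^m \times C(\overline{\Omega})^k$ together with the fact that $C(\overline{\Omega})$ is a proper closed subspace of $L^\infty(\Omega)$. You merely spell out the details the paper delegates to citations, including a concrete witness (a characteristic function) for the non-density claim.
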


\begin{proof}
Closedness of the part $\mathbf{L}_{\infty}$ follows from \cite[Section 3.10, p. 184]{Arendt} since $\mathbf{L}$ is closed and we have the continuous embedding $L^\infty(\Omega)^{m+k} \hookrightarrow L^\infty(\Omega)^m \times L^p(\Omega)^k$. {The identity $\mathcal{D}(\mathbf{L}_\infty) = L^\infty(\Omega)^m \times \mathcal{D}(A_\infty)^k$ can be deduced from the relations of $A_p$ and $A_\infty$ considered in Lemma \ref{domchar} and \ref{resolvestLaplace}}. Finally, by Lemma \ref{domchar}, we obtain $\mathcal{D}(\mathbf{L}_{\infty}) \subset L^\infty(\Omega)^m \times C(\overline{\Omega})^k$. Hence, the inclusion $\overline{\mathcal{D}(\mathbf{L}_{\infty})} \subset L^\infty(\Omega)^m \times C(\overline{\Omega})^k \subsetneq Z_\infty$ holds and the operator $\mathbf{L}_{\infty}$ cannot be densely defined on $Z_\infty$.
\end{proof}

Since the operator $\mathbf{L}_{\infty}$ is not densely defined on $L^\infty(\Omega)^{m +k}$, $\mathbf{L}_{\infty}$ cannot generate a strongly continuous semigroup on $L^\infty(\Omega)^{m +k}$ \cite[Ch. II, Theorem 1.4]{Engel}. However, it satisfies a similar resolvent estimate as in the case of a generator of an analytic semigroup. {The following result can be compared to \cite[Ch. III, Theorem 2.10]{Engel}, and to \cite[Corollary 3.7.17]{Arendt} for non-continuous holomorphic semigroups.} Such an estimate will enable us to identify the growth bound of the semigroup $(\mathbf{T}(t))_{t \in \mathbb{R}_{\ge 0}}$ restricted to $L^\infty(\Omega)^{m +k}$ with the spectral bound $s(\mathbf{L}_{\infty})$ of the operator $\mathbf{L}_{\infty}$.

\begin{lemma} \label{resolventestrestr}
There exist a closed sector $\Sigma_\omega :=  \{ \lambda \in \mathbb{C} \mid |\mathrm{arg} \, \lambda| \le \omega\}$ for some angle $\frac{\pi}{2} < \omega < \pi$ and constants $M, r >0$ such that $
\Sigma_\omega \cap \{ \lambda \in \mathbb{C} \mid |\lambda|>r\} \subset \rho(\mathbf{L}_{\infty})
$
with the resolvent estimate
\begin{align}
	\|R(\lambda, \mathbf{L}_{\infty})\| \le \frac{M}{|\lambda|} \qquad \forall \; \lambda \in \Sigma_\omega \cap \{ \lambda \in \mathbb{C} \mid |\lambda|>r\}. \label{resolventest}
\end{align} 
\end{lemma}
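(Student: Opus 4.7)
The plan is to realize $\mathbf{L}_{\infty}$ as a bounded perturbation of the part of $\mathbf{D}\Delta$ in $Z_\infty$ and to transport a sectoriality-type resolvent bound from the scalar Neumann Laplacian on $L^\infty$ up to the full system. Write $\mathbf{L}_{\infty} = \mathbf{D}\Delta_\infty + \mathbf{J}$, where $\mathbf{D}\Delta_\infty$ denotes the diagonal diffusion operator with domain $L^\infty(\Omega)^m\times\mathcal{D}(A_\infty)^k$ identified in Lemma \ref{restrclosed}, and $\mathbf{J}$ is the multiplication operator induced by the Jacobian. Since every entry of $\mathbf{J}$ lies in $L^\infty(\Omega)$, the operator $\mathbf{J}$ is bounded on $Z_\infty$ by \cite[Appendix B.2]{KMCMnonlinear}, with $\|\mathbf{J}\|_\infty := \mathrm{ess\,sup}_{x\in\Omega}|\mathbf{J}(x)|<\infty$.

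First I would establish the sector estimate for $\mathbf{D}\Delta_\infty$ by treating its two diagonal blocks separately. On the ODE component the operator is $\mathbf{0}$ on $L^\infty(\Omega)^m$, whose resolvent $\lambda^{-1} I$ trivially satisfies $\|R(\lambda,\mathbf{0})\|\le 1/|\lambda|$ on $\mathbb{C}\setminus\{0\}$. On the PDE component $\mathbf{D}^v\Delta$ with Neumann boundary conditions on $L^\infty(\Omega)^k$, Lemma \ref{resolvestLaplace} provides a sector $\Sigma_{\omega_0}$ of opening $\pi/2<\omega_0<\pi$ and constants $M_0,r_0>0$ such that $\Sigma_{\omega_0}\cap\{|\lambda|>r_0\}\subset\rho(\mathbf{D}^v\Delta)$ and $\|R(\lambda,\mathbf{D}^v\Delta)\|\le M_0/|\lambda|$ there; the diagonal $\mathbf{D}^v$ with positive entries and componentwise application do not affect the sectoriality. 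Combining the two blocks and setting $M_1:=\max\{1,M_0\}$, the same estimate holds for $\mathbf{D}\Delta_\infty$ on $\Sigma_{\omega_0}\cap\{|\lambda|>r_0\}$.

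Next I would incorporate the bounded perturbation $\mathbf{J}$ by a Neumann series. Choose $r\ge r_0$ so large that $M_1\|\mathbf{J}\|_\infty/r<1/2$. Then for any $\lambda\in\Sigma_{\omega_0}\cap\{|\lambda|>r\}$, the operator $I-\mathbf{J}R(\lambda,\mathbf{D}\Delta_\infty)$ is boundedly invertible on $Z_\infty$ with inverse of norm at most $2$. Using $\mathcal{D}(\mathbf{L}_{\infty})=\mathcal{D}(\mathbf{D}\Delta_\infty)$ from Lemma \ref{restrclosed}, the factorization
\[
\lambda I-\mathbf{L}_{\infty} \;=\; \bigl(I-\mathbf{J}R(\lambda,\mathbf{D}\Delta_\infty)\bigr)(\lambda I-\mathbf{D}\Delta_\infty)
\]
holds on this domain and yields $\lambda\in\rho(\mathbf{L}_{\infty})$ together with
\[
R(\lambda,\mathbf{L}_{\infty}) \;=\; R(\lambda,\mathbf{D}\Delta_\infty)\bigl(I-\mathbf{J}R(\lambda,\mathbf{D}\Delta_\infty)\bigr)^{-1},
\]
so $\|R(\lambda,\mathbf{L}_{\infty})\|\le 2M_1/|\lambda|$. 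Taking $\omega:=\omega_0$ and $M:=2M_1$ gives \eqref{resolventest}.

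The main obstacle is truly the input ingredient, namely the sectoriality estimate for the Neumann Laplacian on $L^\infty(\Omega)^k$ supplied by Lemma \ref{resolvestLaplace}; this is the classical but nontrivial fact that on Lipschitz domains the Neumann Laplacian is a non-densely-defined sectorial operator on $L^\infty$ with sector angle strictly exceeding $\pi/2$. Once this is granted, the remaining argument is a standard bounded-perturbation computation, and the only technical care needed is verifying that the domain identification $\mathcal{D}(\mathbf{L}_{\infty})=\mathcal{D}(\mathbf{D}\Delta_\infty)$ from Lemma \ref{restrclosed} permits the factorization above.
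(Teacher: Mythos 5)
Your proposal is correct and follows essentially the same route as the paper: block-diagonal sectoriality of $\mathbf{D}\Delta$ on $L^\infty(\Omega)^{m+k}$ (the zero block trivially, the diffusive block via Lemma \ref{resolvestLaplace}), followed by absorbing the bounded multiplication operator $\mathbf{J}$ for $|\lambda|$ large. The paper simply cites the standard bounded-perturbation lemmas (Engel--Nagel, Ch.~III, Lemma 2.6; Pazy, Ch.~3, Theorem 2.1) where you write out the Neumann-series factorization explicitly, and your threshold $r=2M_1\|\mathbf{J}\|_\infty$ and constant $2M_1$ match the paper's $r=2\|\mathbf{J}\|_\infty\tilde{M}$ and $2\tilde{M}$.
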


\begin{proof} Let us start with the operator $\mathbf{D}^v \Delta$ on $L^\infty(\Omega)^k$. Applying the estimate from Lemma \ref{resolvestLaplace} to each component $\mathbf{D}^v_j \Delta$ on $L^\infty(\Omega)$ for $j=1, \dots, k$, we obtain a resolvent estimate of the form
\begin{align}
	\|(\lambda I- \mathbf{D}^v \Delta)^{-1}\|_{L^\infty(\Omega)^k} \le \frac{\tilde{M}}{|\lambda|} \qquad \forall \; \lambda \in \Sigma_\omega \setminus \{0\}. \label{sectorial}
\end{align}
This is similar to sectorial operators on some sector $\Sigma_\omega = \{ \lambda \in \mathbb{C} \mid |\mathrm{arg} \, \lambda| \le \omega\}$ with angle $\omega > \frac{\pi}{2}$. Apart from a dense domain, the operator $\mathbf{D}^v \Delta$ restricted to $L^\infty(\Omega)^k$ satisfies all other conditions of sectoriality in \cite[Ch. II, Definition 4.1]{Engel}. Since the zero operator on $L^\infty(\Omega)^m$ is also sectorial, the operator $\mathbf{D}\Delta$ restricted to $L^\infty(\Omega)^{m +k}$ satisfies a resolvent estimate similar to \eqref{sectorial}. By \cite[Ch. III, Lemma 2.6]{Engel}, the bounded perturbation $\mathbf{J}$ in $L^\infty(\Omega)^{m +k}$ implies a similar resolvent estimate for the operator $\mathbf{L}_{\infty} = \mathbf{D} \Delta + \mathbf{J}$. More precisely, as in \cite[Ch. 3, Theorem 2.1]{Pazy}, we obtain
\begin{align*}
	\|R(\lambda, \mathbf{L}_{\infty})\| \le \frac{2\tilde{M}}{|\lambda|} \qquad \forall \; \lambda \in \Sigma_\omega \cap \{ \lambda \in \mathbb{C} \mid |\lambda|>r\}
\end{align*}  
for $r= 2\|\mathbf{J}\|_\infty \tilde{M}>0$ and $\tilde{M}$ from estimate \eqref{sectorial}.
\end{proof}

Let us now consider a further restriction of the operator $\mathbf{L}_{\infty}$ to its subspace of strong continuity. The above resolvent estimate for the operator $\mathbf{L}_{\infty}$ shows existence of a generator of an analytic semigroup on the closed subspace $Z_c:= \overline{\mathcal{D}(\mathbf{L}_{\infty})}$ of $Z_\infty=L^\infty(\Omega)^{m +k}$, see Lemma \ref{restrsemigroup2} and compare \cite[Corollary 3.1.24]{Lunardi}. In fact, the space $(Z_c, \| \cdot \|_\infty)$ is again a Banach space with continuous embedding $Z_c \hookrightarrow Z_\infty$. Let us define the part of $\mathbf{L}_{\infty}$ in $Z_c$ by
\begin{align}
\mathbf{L}_c \xi & := \mathbf{L}_{\infty}\xi \qquad \forall \; \xi \in \mathcal{D}(\mathbf{L}_c) := \{ \xi \in \mathcal{D}(\mathbf{L}_{\infty}) \cap Z_c \mid \mathbf{L}_{\infty} \xi \in Z_c \}. \label{definitionL0}
\end{align}
We obtain the following result for $\mathbf{L}_c$.

\begin{lemma} \label{restrclosed2}
The operator $\mathbf{L}_c$ defined in \eqref{definitionL0} is closed and densely defined on the space $Z_c  = L^\infty(\Omega)^m \times C(\overline{\Omega})^k$. The operator $\mathbf{L}_c$ is the part of $\mathbf{L}_{\infty}$ resp. $\mathbf{L}$ in $Z_c$.
\end{lemma}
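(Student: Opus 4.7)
The plan has three ingredients: first, identify $Z_c$ explicitly as $\overline{\mathcal{D}(\mathbf{L}_\infty)}$ in $Z_\infty$; second, derive closedness of $\mathbf{L}_c$ from closedness of $\mathbf{L}_\infty$; third, establish density of $\mathcal{D}(\mathbf{L}_c)$ in $Z_c$ using the resolvent estimate of Lemma \ref{resolventestrestr}. The claim that $\mathbf{L}_c$ is the part of $\mathbf{L}$ in $Z_c$ then follows by transitivity of the ``part of'' construction, since $\mathbf{L}_\infty$ is the part of $\mathbf{L}$ in $Z_\infty$ and $Z_c \subset Z_\infty$.

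For the first item, Lemma \ref{restrclosed} gives $\mathcal{D}(\mathbf{L}_\infty) = L^\infty(\Omega)^m \times \mathcal{D}(A_\infty)^k$ and the one-sided inclusion $\overline{\mathcal{D}(\mathbf{L}_\infty)} \subset L^\infty(\Omega)^m \times C(\overline{\Omega})^k = Z_c$. The reverse inclusion reduces to density of $\mathcal{D}(A_\infty)$ in $C(\overline{\Omega})$, which I would read off from the strong continuity of the Neumann heat semigroup on $C(\overline{\Omega})$ provided by Proposition \ref{heathom}. Closedness of $\mathbf{L}_c$ is then an abstract consequence: $Z_c$ is a closed subspace of $Z_\infty$ with continuous embedding, so the part of the closed operator $\mathbf{L}_\infty$ in $Z_c$ is itself closed (the standard argument via convergent sequences in $\mathcal{D}(\mathbf{L}_c)$ transfers the graph limit from $Z_\infty$ to $Z_c$, using that $Z_c$ is norm-closed).

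The main work lies in the density statement. Here I would mimic the standard approximation argument used for sectorial operators without dense domain as in \cite[Proposition 3.1.9]{Lunardi}. Fix a ray in $\Sigma_\omega \cap \rho(\mathbf{L}_\infty)$ with $|\lambda_n| \to \infty$ and set $T_n := \lambda_n R(\lambda_n, \mathbf{L}_\infty)$; by Lemma \ref{resolventestrestr}, these operators are uniformly bounded on $Z_\infty$. For $\xi \in Z_c$, the vector $\eta_n := R(\lambda_n, \mathbf{L}_\infty)\xi$ lies in $\mathcal{D}(\mathbf{L}_\infty) \subset Z_c$, and the resolvent identity $\mathbf{L}_\infty \eta_n = \lambda_n \eta_n - \xi$ shows $\mathbf{L}_\infty \eta_n \in Z_c$; hence $\eta_n \in \mathcal{D}(\mathbf{L}_c)$, and a second application of the same identity places $T_n \xi = \lambda_n \eta_n$ in $\mathcal{D}(\mathbf{L}_c)$ as well. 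On the dense subspace $\mathcal{D}(\mathbf{L}_\infty) \subset Z_c$, the identity $T_n \xi - \xi = R(\lambda_n, \mathbf{L}_\infty)\mathbf{L}_\infty \xi$ combined with $\|R(\lambda_n, \mathbf{L}_\infty)\| = O(|\lambda_n|^{-1})$ yields $T_n \xi \to \xi$. Uniform boundedness of $(T_n)$ then extends this convergence to every $\xi \in Z_c$ by a routine three--$\varepsilon$ argument, proving $\xi \in \overline{\mathcal{D}(\mathbf{L}_c)}$.

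The main obstacle I anticipate is the bookkeeping in the density argument: one must verify not merely that $R(\lambda_n, \mathbf{L}_\infty)\xi \in \mathcal{D}(\mathbf{L}_\infty)$ but that it actually lies in the strictly smaller domain $\mathcal{D}(\mathbf{L}_c)$, which requires both $\eta_n \in Z_c$ and $\mathbf{L}_\infty \eta_n \in Z_c$. The decisive fact that makes both conditions automatic is the inclusion $\mathcal{D}(\mathbf{L}_\infty) \subset Z_c$ already recorded in Lemma \ref{restrclosed}; without that inclusion, the argument would need a separate regularity step to force the resolvent image into the continuous component.
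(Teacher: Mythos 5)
Your proof is correct and follows essentially the same route as the paper: closedness of the part of a closed operator in a continuously embedded closed subspace, the identification $Z_c = \overline{\mathcal{D}(\mathbf{L}_\infty)}$ via the inclusion $L^\infty(\Omega)^m \times \mathcal{D}(A_c)^k \subset \mathcal{D}(\mathbf{L}_\infty)$, and density of $\mathcal{D}(\mathbf{L}_c)$ from the sectorial-type resolvent estimate of Lemma \ref{resolventestrestr} --- where the paper simply cites \cite[Lemma 3.3.12]{Arendt}, you unfold exactly that approximation argument with $\lambda_n R(\lambda_n,\mathbf{L}_\infty)$, and your bookkeeping (that $R(\lambda_n,\mathbf{L}_\infty)\xi$ lands in $\mathcal{D}(\mathbf{L}_c)$ and not merely $\mathcal{D}(\mathbf{L}_\infty)$) is sound. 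The only slip is a misattribution: strong continuity of the Neumann heat semigroup on $C(\overline{\Omega})$, i.e.\ density of $\mathcal{D}(A_c)$, comes from Lemma \ref{LaplacianC}, not Proposition \ref{heathom}.
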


\begin{proof}
Since $\mathbf{L}_{\infty}$ is a closed operator on $Z_\infty$ by Lemma \ref{restrclosed}, the continuous embedding $Z_c \hookrightarrow Z_\infty$ yields closedness of the part $\mathbf{L}_c$ \cite[Section 3.10, p. 184]{Arendt}. By the inclusion $\mathcal{D}(\mathbf{L}_{\infty}) \subset Z_c \subset Z_\infty$, we obtain that $\mathbf{L}_c$ is also the part of $\mathbf{L}$ in $Z_c$, i.e.,
\[
\mathcal{D}(\mathbf{L}_c) = \{ \xi \in \mathcal{D}(\mathbf{L}) \cap Z_c \mid \mathbf{L} \xi \in Z_c \}.
\]
In order to show that $\mathbf{L}_c$ is densely defined, it remains to verify $\overline{\mathcal{D}(\mathbf{L}_c)} = Z_c$. In view of estimate \eqref{resolventest} for $R(\lambda, \mathbf{L}_{\infty})$ on $Z_\infty$, this follows from \cite[Lemma 3.3.12]{Arendt} applied to the operator $\mathbf{L}_{\infty}$ and $Z_c = \overline{\mathcal{D}(\mathbf{L}_{\infty})}$. 
Finally, we show the equality $Z_c  = L^\infty(\Omega)^m \times C(\overline{\Omega})^k$. 
Using the restriction of the Laplacian $A_p$ to $C(\overline{\Omega})$ from Lemma \ref{LaplacianC}, one can verify the inclusions $L^\infty(\Omega)^m \times \mathcal{D}(A_c)^k \subset \mathcal{D}(\mathbf{L}_\infty) \subset L^\infty(\Omega)^m \times C(\overline{\Omega})^k$. Taking the closure with respect to the norm $\| \cdot \|_\infty$ yields $L^\infty(\Omega)^m \times C(\overline{\Omega})^k = \overline{\mathcal{D}(\mathbf{L}_{\infty})} = Z_c$.
\end{proof}

The next result makes use of the fact that we consider the operator $\mathbf{L}$ on the space $L^\infty(\Omega)^m \times L^p(\Omega)^k$ and not on $L^p(\Omega)^{m+k}$. Equality of the ODE domain {and elliptic regularity} allows showing equality of the spectrum of $\mathbf{L}$ and its parts $\mathbf{L}_{\infty}$ and $\mathbf{L}_c$ in $L^\infty(\Omega)^{m+k}$ and $L^\infty(\Omega)^m \times C(\overline{\Omega})^k$, respectively.

\begin{proposition} \label{specrestricted}
Let $\mathbf{L}$ be the linear operator defined in Lemma \ref{Lclosed}. Let the restrictions $\mathbf{L}_\infty$ and $\mathbf{L}_c$ be defined as in \eqref{definitionLinfty} and \eqref{definitionL0}, respectively. Then, the spectra $\sigma(\mathbf{L}),  \sigma(\mathbf{L}_{\infty})$, and $\sigma(\mathbf{L}_c)$ coincide. Furthermore, the resolvents can be determined by the restrictions
\begin{align*}
	R(\lambda, \mathbf{L}_{\infty}) = R(\lambda, \mathbf{L})_{|L^{\infty}(\Omega)^{m+k}} \quad \text{and} \quad R(\lambda, \mathbf{L}_{c}) = R(\lambda, \mathbf{L})_{|L^\infty(\Omega)^m \times C(\overline{\Omega})^k} \quad \forall \; \lambda \in \rho(\mathbf{L}). 
\end{align*}
\end{proposition}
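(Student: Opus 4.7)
My plan is to establish the three spectral equalities by pinning down a chain of inclusions between the resolvent sets $\rho(\mathbf{L}),\ \rho(\mathbf{L}_\infty),\ \rho(\mathbf{L}_c)$, together with the corresponding resolvent identities. The key tool throughout is the elliptic regularity embedding $\mathcal{D}(A_p) \hookrightarrow C^{0,\gamma}(\overline{\Omega})$ from Lemma \ref{domchar}, which forces diffusive components of any resolvent image to be continuous on $\overline{\Omega}$ and hence bounded.

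\textbf{Easy direction.} First I would prove $\rho(\mathbf{L}) \subset \rho(\mathbf{L}_\infty)\cap\rho(\mathbf{L}_c)$ with $R(\lambda,\mathbf{L}_\infty) = R(\lambda,\mathbf{L})|_{Z_\infty}$ and $R(\lambda,\mathbf{L}_c) = R(\lambda,\mathbf{L})|_{Z_c}$. For $\lambda \in \rho(\mathbf{L})$ and $\psi \in Z_\infty$ (respectively $\psi \in Z_c$) I set $\xi := R(\lambda,\mathbf{L})\psi \in \mathcal{D}(\mathbf{L}) = L^\infty(\Omega)^m \times \mathcal{D}(A_p)^k$. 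Lemma \ref{domchar} yields $\xi_2 \in C(\overline{\Omega})^k$, so $\xi \in Z_c \subset Z_\infty$. Because $\mathbf{L}\xi = \lambda\xi - \psi$ inherits membership in $Z_\infty$ (respectively $Z_c$), Lemmata \ref{restrclosed} and \ref{restrclosed2} place $\xi$ in $\mathcal{D}(\mathbf{L}_\infty)$ (respectively $\mathcal{D}(\mathbf{L}_c)$) and show it solves the corresponding resolvent equation. Injectivity of $\lambda I - \mathbf{L}_\infty$ and $\lambda I - \mathbf{L}_c$ is immediate since both are restrictions of the injective operator $\lambda I - \mathbf{L}$. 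This part simultaneously establishes the two resolvent formulas in the statement and gives the inclusion $\sigma(\mathbf{L}_\infty)\cup\sigma(\mathbf{L}_c) \subset \sigma(\mathbf{L})$.

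\textbf{Reverse direction.} The inclusion $\sigma(\mathbf{L}) \subset \sigma(\mathbf{L}_\infty) \cap \sigma(\mathbf{L}_c)$ I would split via Proposition \ref{RDODEspec} as $\sigma(\mathbf{L}) = \sigma(\mathbf{A}_\ast) \dot\cup \Sigma$. For an eigenvalue $\lambda \in \Sigma$, the bootstrap in the proof of Corollary \ref{independence} provides an eigenfunction $\xi \in L^\infty(\Omega)^m \times \mathcal{D}(A_q)^k$ for every $2 \le q \le \infty$; hence $\xi \in \mathcal{D}(\mathbf{L}_c) \subset \mathcal{D}(\mathbf{L}_\infty)$ with $\mathbf{L}\xi = \lambda \xi$, so $\lambda$ is an eigenvalue of both restricted operators. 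For the essential part $\sigma(\mathbf{A}_\ast)$ I would repeat the Jeribi block-operator argument of Proposition \ref{RDODEspec}, now applied to $\mathbf{L}_c$ on $Z_c$ with $X_1 = C(\overline{\Omega})^k$ and $X_2 = L^\infty(\Omega)^m$: the generator $A = \mathbf{D}^v\Delta + \mathbf{D}_\ast$ considered on $C(\overline{\Omega})^k$ via Lemma \ref{LaplacianC} has nonempty resolvent set and the compactness of $M(\mu) = C_|\iota R_\mu B$ persists because the embedding $\iota:(\mathcal{D}(A_c)^k,\|\cdot\|_{\mathbf{D}^v\Delta}) \hookrightarrow C(\overline{\Omega})^k$ is compact by the chain $\mathcal{D}(A_c) \hookrightarrow C^{0,\gamma}(\overline{\Omega}) \hookrightarrow C(\overline{\Omega})$ from Lemma \ref{domchar} and \cite[Theorem 1.34]{Adams}. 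This gives $\sigma_{e5}(\mathbf{L}_c) = \sigma(\mathbf{A}_\ast)$, so $\sigma(\mathbf{A}_\ast) \subset \sigma(\mathbf{L}_c)$. Finally the inclusion $\sigma(\mathbf{L}_c) \subset \sigma(\mathbf{L}_\infty)$ follows from the same Schur-complement reduction applied pointwise: for $\lambda \in \rho(\mathbf{A}_\ast)$, solvability of $(\lambda I - \mathbf{L}_\infty)\xi = \psi$ for $\psi \in Z_\infty$ is equivalent, via $\xi_1 = (\lambda I - \mathbf{A}_\ast)^{-1}(\psi_1 + \mathbf{B}_\ast\xi_2)$, to invertibility of the Schur complement $S(\lambda) = \lambda I - \mathbf{D}^v\Delta - \mathbf{D}_\ast - \mathbf{C}_\ast(\lambda I - \mathbf{A}_\ast)^{-1}\mathbf{B}_\ast$ on $L^\infty(\Omega)^k$, and elliptic regularity together with Lemma \ref{domchar} shows this invertibility is equivalent to invertibility of $S(\lambda)$ on $C(\overline{\Omega})^k$.

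The \emph{main obstacle} is precisely the reverse inclusion via essential spectra: $\mathbf{L}_\infty$ is not densely defined on $Z_\infty$, so one cannot invoke Jeribi's theory directly there. My workaround is to carry out the Fredholm analysis on the smaller space $Z_c$, where $\mathbf{L}_c$ is densely defined and analytic semigroup theory applies, and then to transfer the conclusion to $\mathbf{L}_\infty$ via the Schur-complement identity. Combining every inclusion above yields $\sigma(\mathbf{L}) = \sigma(\mathbf{L}_\infty) = \sigma(\mathbf{L}_c)$ together with both resolvent restriction formulas.
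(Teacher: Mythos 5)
Your ``easy direction'' is correct and is essentially what the paper does for that half: $\lambda\in\rho(\mathbf{L})$ forces $R(\lambda,\mathbf{L})\psi\in\mathcal{D}(\mathbf{L})\subset L^\infty(\Omega)^m\times C(\overline{\Omega})^k$ by Lemma \ref{domchar}, so $Z_\infty$ and $Z_c$ are invariant under the resolvent and the restriction formulas follow. The problem is your reverse direction. The paper does not redo any Fredholm analysis there: since $\mathcal{D}(\mathbf{L})\subset Z_c\subset Z_\infty\subset Z_p$ and $\rho(\mathbf{L})\neq\emptyset$, the abstract result on parts of operators \cite[Proposition 3.10.3]{Arendt} already gives $\sigma(\mathbf{L}_\infty)=\sigma(\mathbf{L}_c)=\sigma(\mathbf{L})$ in one stroke -- the reverse inclusion $\rho(\mathbf{L}_c)\subset\rho(\mathbf{L})$ is automatic because every element of $\mathcal{D}(\mathbf{L})$, in particular every candidate eigenfunction and every resolvent image, already lives in the smallest space. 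Your detour through Jeribi's block-operator theorem on $Z_c$ is not only unnecessary, it does not go through: the coefficient matrices $\mathbf{B}_\ast,\mathbf{C}_\ast,\mathbf{D}_\ast$ have entries only in $L^\infty(\Omega)$, so with $X_1=C(\overline{\Omega})^k$ the entries $B=\mathbf{C}_\ast\colon L^\infty(\Omega)^m\to X_1$ and $A=\mathbf{D}^v\Delta+\mathbf{D}_\ast$ do not map into $C(\overline{\Omega})^k$, and $\mathcal{D}(\mathbf{L}_c)$ is not a product domain (the condition $\mathbf{L}\xi\in Z_c$ couples $\xi_1$ and $\xi_2$ through the discontinuous coefficients). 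The paper explicitly flags this obstruction before Lemma \ref{restrsemigroup2} and only carries out the $C(\overline{\Omega})$-block analysis in Proposition \ref{RDODEspecC} under the extra continuity hypothesis of Assumption \ref{ass:Nc}. The same defect undermines your final Schur-complement step, where $S(\lambda)$ likewise fails to act on $C(\overline{\Omega})^k$ for merely bounded coefficients.

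Your treatment of the eigenvalue part $\Sigma$ via the bootstrap of Corollary \ref{independence} is fine as far as it goes, but it only covers $\Sigma$, not $\sigma(\mathbf{A}_\ast)$, so the gap above is load-bearing. The lesson is that the containment $\mathcal{D}(\mathbf{L})\subset Z_c$ is the whole proof: once the domain of the large operator sits inside the subspace, injectivity and surjectivity of $\lambda I-\mathbf{L}$ on $Z_p$ and of its part on $Z_c$ (or $Z_\infty$) are equivalent by elementary manipulations with a fixed $\mu\in\rho(\mathbf{L})$, with no need to identify essential spectra at all.
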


\begin{proof}
By Corollary \ref{independence}, we know that the spectrum of $\mathbf{L}$ is independent of $p$. Hence, we fix $n^\ast < p <\infty$ and prove the equality of the spectra for this specific $p$ by applying \cite[Proposition 3.10.3]{Arendt}. 
By H\"older's inequality, we have $\|z\|_p \le \max\{|\Omega|, 1\} \|z\|_\infty$ for all $z \in L^\infty(\Omega)^{m+k}$ with a constant only depending on the Lebesgue measure of $\Omega$ but not on $p\ge 1$. This shows the continuous embeddings 
\[
L^\infty(\Omega)^m \times C(\overline{\Omega})^k \hookrightarrow  L^\infty(\Omega)^{m+k} \hookrightarrow  L^\infty(\Omega)^m \times L^p(\Omega)^k.
\]
Moreover, Lemma \ref{domchar} yields $\mathcal{D}(\mathbf{L}) \subset L^\infty(\Omega)^m \times C(\overline{\Omega})^k$ for $p>n^\ast$. This shows invariance of the sets $L^\infty(\Omega)^{m+k}$ and $L^\infty(\Omega)^m \times C(\overline{\Omega})^k$ for the resolvent $R(\lambda, \mathbf{L})$ for each $\lambda \in \rho(\mathbf{L})$. Note that the resolvent set $\rho(\mathbf{L})$ is not empty by Proposition \ref{RDODEspec}. Finally, we apply \cite[Proposition 3.10.3]{Arendt} to obtain the above equalities.
\end{proof}

In the case of a continuous stationary solution of system \eqref{fullsys}, the operator $\mathbf{L}$ can be restricted to the space $C(\overline{\Omega})^{m+k}$, too. If Assumption \ref{ass:Nc} holds, we obtain the restriction of the operator $\mathbf{L}$ by a bounded perturbation of the degenerated differential operator $\mathbf{D}\Delta$ on $C(\overline{\Omega})^{m+k}$.

\begin{lemma} \label{Lcontinuous} 
Let $(\overline{\mathbf{u}}, \overline{\mathbf{v}}) \in C(\overline{\Omega})^{m+k}$ be a stationary solution of system \eqref{fullsys} and assume that the coefficient matrices $\mathbf{A}_\ast, \mathbf{B}_\ast, \mathbf{C}_\ast, \mathbf{D}_\ast$ in formula \eqref{Jacobian} have entries in $C(\overline{\Omega})$. The part of the operator $\mathbf{L}$, defined in Lemma \ref{Lclosed}, in $C(\overline{\Omega})^{m+k}$ is given by
\begin{align*}
	\mathbf{L}^{c} : \mathcal{D}(\mathbf{L}^{c}) = C(\overline{\Omega})^{m} \times \mathcal{D}(A_c)^k \subset C(\overline{\Omega})^{m+k} \to C(\overline{\Omega})^{m+k}, \  \xi \mapsto \mathbf{L}\xi, 
\end{align*}
see Lemma \ref{LaplacianC} for definition of $\mathcal{D}(A_c)$. The operator $\mathbf{L}^c$ is closed and densely defined on $C(\overline{\Omega})^{m+k}$ and generates a strongly continuous semigroup 
on $C(\overline{\Omega})^{m+k}$ which is even analytic. This semigroup coincides with the restriction of the semigroup $(\mathbf{T}(t))_{t \in \mathbb{R}_{\ge 0}}$ defined in Lemma \ref{Lclosed} onto $C(\overline{\Omega})^{m+k}$.
\end{lemma}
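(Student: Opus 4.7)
The plan is to build $\mathbf{L}^c$ as a bounded perturbation of a known generator on $C(\overline{\Omega})^{m+k}$, then identify it with the part of $\mathbf{L}$ in that space, and finally match the semigroups by a Cauchy-problem uniqueness argument. The strategy closely parallels the proof of Lemma \ref{Lclosed}, with the crucial simplification that continuity of the coefficients $\mathbf{A}_\ast,\mathbf{B}_\ast,\mathbf{C}_\ast,\mathbf{D}_\ast$ makes the multiplication operator $\mathbf{J}$ globally bounded on $C(\overline{\Omega})^{m+k}$, so no $\mathbf{D}\Delta$-boundedness estimate is needed.

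First I would consider the operator $\mathbf{D}\Delta = \mathrm{diag}(\mathbf{0},\mathbf{D}^v\Delta)$ on $C(\overline{\Omega})^{m+k}$ with domain $C(\overline{\Omega})^m \times \mathcal{D}(A_c)^k$. The zero operator trivially generates the identity semigroup (analytic) on $C(\overline{\Omega})^m$, while $\mathbf{D}^v\Delta$ with Neumann boundary conditions on $C(\overline{\Omega})^k$ generates a strongly continuous analytic semigroup on $C(\overline{\Omega})^k$ by Lemma \ref{LaplacianC}. The product semigroup is therefore strongly continuous and analytic on $C(\overline{\Omega})^{m+k}$ (\cite[Theorem 2.16]{Yagi}). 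Since the coefficient matrices lie entry-wise in $C(\overline{\Omega})$, the map $\mathbf{J}$ sends $C(\overline{\Omega})^{m+k}$ into itself and satisfies $\|\mathbf{J}\xi\|_\infty \le \|\mathbf{J}\|_\infty\|\xi\|_\infty$. By the bounded perturbation theorem for analytic semigroup generators (\cite[Ch.~III, Theorem 2.10]{Engel}), the operator $\mathbf{L}^c := \mathbf{D}\Delta + \mathbf{J}$ with domain $C(\overline{\Omega})^m \times \mathcal{D}(A_c)^k$ generates an analytic semigroup $(\mathbf{T}^c(t))_{t \ge 0}$ on $C(\overline{\Omega})^{m+k}$, and \cite[Ch.~II, Theorem 1.4]{Engel} yields that $\mathbf{L}^c$ is closed and densely defined.

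Next I would verify that $\mathbf{L}^c$ is exactly the part of $\mathbf{L}$ in $C(\overline{\Omega})^{m+k}$. One inclusion is immediate: Lemma \ref{LaplacianC} gives $\mathcal{D}(A_c) \subset \mathcal{D}(A_p)$ for $p > n^\ast$, so $\mathcal{D}(\mathbf{L}^c) \subset \mathcal{D}(\mathbf{L})$, and $\mathbf{L}^c\xi = \mathbf{L}\xi \in C(\overline{\Omega})^{m+k}$ on that domain. For the reverse inclusion, let $\xi=(\xi_1,\xi_2) \in \mathcal{D}(\mathbf{L})$ with $\xi \in C(\overline{\Omega})^{m+k}$ and $\mathbf{L}\xi \in C(\overline{\Omega})^{m+k}$. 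Rearranging the second row of \eqref{Linear2} yields
\begin{align*}
\mathbf{D}^v\Delta\,\xi_2 \;=\; (\mathbf{L}\xi)_2 - \mathbf{C}_\ast\xi_1 - \mathbf{D}_\ast\xi_2 \;\in\; C(\overline{\Omega})^k,
\end{align*}
since the right-hand side is a sum of products of continuous functions. Combined with $\xi_2 \in C(\overline{\Omega})^k \cap \mathcal{D}(A_p)^k$, the characterization of $\mathcal{D}(A_c)$ in Lemma \ref{LaplacianC} forces $\xi_2 \in \mathcal{D}(A_c)^k$, hence $\xi \in \mathcal{D}(\mathbf{L}^c)$. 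This is the one step I expect to require the most care, because it depends on the precise description of $\mathcal{D}(A_c)$ as $\{u \in \mathcal{D}(A_p)\cap C(\overline{\Omega}) : A_p u \in C(\overline{\Omega})\}$; if Lemma \ref{LaplacianC} does not phrase things in exactly that form, one has to argue through the resolvent $R(\lambda,A_c) = R(\lambda,A_p)|_{C(\overline{\Omega})}$.

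Finally, to show $\mathbf{T}^c(t) = \mathbf{T}(t)|_{C(\overline{\Omega})^{m+k}}$, I would pick $\xi \in \mathcal{D}(\mathbf{L}^c)$ and note that $u(t) := \mathbf{T}^c(t)\xi$ is a classical solution of $u'=\mathbf{L}^c u,\ u(0)=\xi$ in $C(\overline{\Omega})^{m+k}$. Via the continuous embedding $C(\overline{\Omega})^{m+k} \hookrightarrow Z_p$ and the identification of $\mathbf{L}^c$ with the part of $\mathbf{L}$, the same $u$ is a classical solution of $u'=\mathbf{L}u,\ u(0)=\xi$ in $Z_p$. Since $\mathbf{L}$ generates the strongly continuous semigroup $(\mathbf{T}(t))$ on $Z_p$ by Lemma \ref{Lclosed}, uniqueness of classical solutions (\cite[Ch.~II, Proposition 6.2]{Engel}) gives $\mathbf{T}^c(t)\xi = \mathbf{T}(t)\xi$ for such $\xi$. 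Density of $\mathcal{D}(\mathbf{L}^c)$ in $C(\overline{\Omega})^{m+k}$, together with the uniform boundedness of $\mathbf{T}^c(t)$ on $C(\overline{\Omega})^{m+k}$ and of $\mathbf{T}(t)$ on $Z_p$ on compact time intervals, extends this identity to every $\xi \in C(\overline{\Omega})^{m+k}$ and completes the proof.
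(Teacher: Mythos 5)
Your proposal is correct, but it takes a genuinely different route from the paper's proof. The paper argues top-down: it obtains closedness of the part of $\mathbf{L}$ in $C(\overline{\Omega})^{m+k}$ from abstract restriction results, uses the explicit resolvent formula \eqref{resolvexplicit} to show that $C(\overline{\Omega})^{m+k}$ is invariant under $R(\lambda,\mathbf{L})$ and that the part inherits the sectorial estimate \eqref{resolventest}, invokes Tanabe's generation theorem for the analytic semigroup, and then identifies it with the restriction of $(\mathbf{T}(t))_{t\in\mathbb{R}_{\ge 0}}$ via the invariant-subspace theorem \cite[Ch.~4, Theorem 5.5]{Pazy}. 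You instead build $\mathbf{L}^c$ bottom-up as a bounded perturbation of $\mathbf{D}\Delta$ on $C(\overline{\Omega})^{m+k}$, which delivers generation, closedness and dense definition in one stroke, and you match the semigroups by uniqueness of classical solutions of the Cauchy problem plus density. Your route is more self-contained given Lemma \ref{LaplacianC} and makes transparent that continuity of the coefficients is exactly what turns $\mathbf{J}$ into a bounded operator here, in contrast to the relative-boundedness argument needed on $Z_p$ in Lemma \ref{Lclosed}; the paper's route buys uniformity, reusing the resolvent machinery already set up for the restrictions $\mathbf{L}_\infty$ and $\mathbf{L}_c$. The one step both proofs share, and which you correctly single out as the delicate point, is the domain identification $\mathcal{D}(\mathbf{L}^c)=C(\overline{\Omega})^m\times\mathcal{D}(A_c)^k$; your argument via the second row of \eqref{Linear2} and the characterization of $A_c$ as the part of $A_p$ in $C(\overline{\Omega})$ is sound, since Lemma \ref{LaplacianC} does provide exactly that description of $\mathcal{D}(A_c)$.
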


\begin{proof}
Since $C(\overline{\Omega})^{m+k} \hookrightarrow L^\infty(\Omega)^{m+k}$ and $\mathbf{L}_\infty$ is closed, we obtain closedness of the part of $\mathbf{L}$ in $C(\overline{\Omega})^{m+k}$. Note that the Jacobian $\mathbf{J}$ consists of continuous entries for a continuous steady state. In this way, one verifies the identity 
\[
\mathcal{D}(\mathbf{L}^c)= \{ \xi \in\mathcal{D}(\mathbf{L})\cap C(\overline{\Omega})^{m+k} \mid \mathbf{L}\xi \in C(\overline{\Omega})^{m+k} \} =  C(\overline{\Omega})^{m} \times \mathcal{D}(A_c)^k.
\] 
By Lemma \ref{LaplacianC}, the operator $\mathbf{L}^c$ is densely defined. Recall for $\lambda \in \rho(\mathbf{L}) \subset \rho(\mathbf{A}_\ast)$ that
\begin{align} \label{resolvexplicit}
	(\lambda I - \mathbf{L}) \xi = \psi  \; \; \Leftrightarrow \; \;  \begin{cases}
		\qquad \qquad \quad \; \xi_1 = (\lambda I - \mathbf{A}_{\ast})^{-1} (\psi_1 + \mathbf{B}_{\ast} \xi_2),\\
		(\lambda I -\mathbf{D}^v \Delta) \xi_2 = \psi_2 + \mathbf{C}_{\ast} \xi_1   + \mathbf{D}_{\ast} \xi_2.
	\end{cases}
\end{align}
From this explicit form of the resolvent $R(\lambda, \mathbf{L})$, we infer that $C(\overline{\Omega})^{m+k}$ is an invariant subspace for the resolvent $R(\lambda, \mathbf{L})$. Moreover, the part of $\mathbf{L}$ on the space $C(\overline{\Omega})^{m+k}$ satisfies the resolvent estimate \eqref{resolventest}. By \cite[Theorem 1.6]{Tanabe}, this part is the generator of an analytic semigroup on $C(\overline{\Omega})^{m+k}$. Finally, we apply \cite[Ch. 4, Theorem 5.5]{Pazy} to see that the analytic semigroup on $C(\overline{\Omega})^{m+k}$ is just a restriction of the semigroup $(\mathbf{T}(t))_{t\in\mathbb{R}_{\ge0}}$.
\end{proof}

Unfortunately, methods used in the proof of Proposition \ref{specrestricted} only imply $\sigma(\mathbf{L}^c) \subset \sigma(\mathbf{L})$ due to minor regularity of the ODE component in definition of $\mathbf{L}$.
Analog to Proposition \ref{RDODEspec}, where a similar spectral decomposition is verified for the operator $\mathbf{L}$ on $L^\infty(\Omega)^m \times L^p(\Omega)^k$, we obtain the following spectral characterization. 
We are now able to conclude that the spectrum of the operator $\mathbf{L}$ and $\mathbf{L}^c$ coincide, too.

\begin{proposition} \label{RDODEspecC}
Let $m, k \in \mathbb{N}$ and $\mathbf{D}^v \in \mathbb{R}_{>0}^{k \times k}$. Let $\mathbf{A}_{\ast}, \mathbf{B}_{\ast}, \mathbf{C}_{\ast}, \mathbf{D}_{\ast}$ be matrix-valued functions with entries in $C(\overline{\Omega})$ according to the linear operator $\mathbf{L}^c$ defined in Lemma \ref{Lcontinuous} on $C(\overline{\Omega})^{m+k}$. Then, the spectrum of the operator $\mathbf{L}^c$ is given by
\[
\sigma(\mathbf{L}^c) = \sigma (\mathbf{A}_{\ast}) \mathop{\dot{\cup}} \Sigma = \sigma(\mathbf{L}),
\]
where $\sigma(\mathbf{L})$ is characterized in Proposition \ref{RDODEspec}.
\end{proposition}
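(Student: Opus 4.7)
The plan is to mirror the block-operator / essential spectrum argument used in the proof of Proposition \ref{RDODEspec}, now on the function spaces $X_1 := C(\overline{\Omega})^k$ and $X_2 := C(\overline{\Omega})^m$, and then compare the resulting decomposition with the one already established on $L^\infty(\Omega)^m \times L^p(\Omega)^k$. First I would write $\mathbf{L}^c$ in block form with $A := \mathbf{D}^v \Delta + \mathbf{D}_\ast$ on $X_1$ and $D := \mathbf{A}_\ast$ on $X_2$; by Lemma \ref{Lcontinuous} and continuity of the Jacobian entries, the off-diagonal pieces $B = \mathbf{C}_\ast:X_2\to X_1$ and $C = \mathbf{B}_\ast:X_1\to X_2$ are bounded multiplication operators, so the setting is in fact simpler than in Proposition \ref{RDODEspec} (no unbounded off-diagonal piece appears). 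Using Lemma \ref{LaplacianC} together with \cite[Ch.~II, Theorem~1.4]{Engel} and standard perturbation theory, $A$ is a closed, densely defined generator of an analytic semigroup on $C(\overline{\Omega})^k$ with nonempty resolvent set and compact resolvent, the latter via the compact embedding $\mathcal{D}(A_c) \hookrightarrow C^{0,\gamma}(\overline{\Omega}) \hookrightarrow C(\overline{\Omega})$.

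Next I would check that the multiplication operator $\mathbf{A}_\ast$ on $C(\overline{\Omega})^m$ satisfies $\sigma(\mathbf{A}_\ast) = \bigcup_{x \in \overline{\Omega}} \sigma(\mathbf{A}_\ast(x))$ and that its whole spectrum is essential in the sense of Schechter; this is precisely the content of the Appendix result on matrix multiplication operators on uniformly continuous functions (referenced as ``spectral analysis of a matrix multiplication operator on uniformly continuous functions'' in the introduction). Since $\overline{\Omega}$ is compact and $x\mapsto \mathbf{A}_\ast(x)$ is continuous, this union is automatically closed and coincides as a set with the $L^\infty$-spectrum $\overline{\bigcup_{x}\sigma(\mathbf{A}_\ast(x))}$ from \cite[Appendix B.2]{KMCMnonlinear}. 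I would then apply \cite[Theorem~10.1.3(i)]{Jeribi} to the permuted block operator exactly as in the proof of Proposition \ref{RDODEspec}: the Schur complement $S(\mu) = \mathbf{A}_\ast - \mathbf{B}_\ast(A-\mu I)^{-1}\mathbf{C}_\ast$ decomposes as $S_0 + M(\mu)$ with $S_0 = \mathbf{A}_\ast$ and $M(\mu)$ compact, the compactness being inherited directly from the compact resolvent of $A$ (no Hölder detour is needed since $\mathbf{B}_\ast,\mathbf{C}_\ast$ already act between $C(\overline{\Omega})$ spaces). Since $\rho(S_0)$ and $\rho(S(\mu))$ are nonempty and $\rho(\mathbf{A}_\ast)$ is a connected resolvent set (or at worst one argues only at the $\sigma_{e5}$ level as in Proposition \ref{RDODEspec}), this yields $\sigma_{e5}(\mathbf{L}^c) = \sigma(\mathbf{A}_\ast)$ and $\Sigma^c := \sigma(\mathbf{L}^c) \setminus \sigma(\mathbf{A}_\ast) \subset \sigma_p(\mathbf{L}^c)$.

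Finally, to pass from $\sigma(\mathbf{L}^c) = \sigma(\mathbf{A}_\ast) \mathop{\dot\cup} \Sigma^c$ to $\sigma(\mathbf{L}^c) = \sigma(\mathbf{L})$, the essential parts already match by the previous paragraph, so it remains to identify $\Sigma^c$ with the set $\Sigma$ from Proposition \ref{RDODEspec}. For $\Sigma \subset \Sigma^c$, I would bootstrap regularity of any eigenpair $(\lambda,\xi)$ of $\mathbf{L}$ with $\lambda\in\rho(\mathbf{A}_\ast)$ via the Schur identities in \eqref{resolvexplicit} with $\psi=0$: by Lemma \ref{domchar} one has $\xi_2 \in \mathcal{D}(A_p)^k \subset C(\overline{\Omega})^k$, and then $\xi_1 = (\lambda I - \mathbf{A}_\ast)^{-1}\mathbf{B}_\ast \xi_2 \in C(\overline{\Omega})^m$ because $(\lambda I - \mathbf{A}_\ast(x))^{-1}$ depends continuously on $x$ on $\overline{\Omega}$. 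Hence $\xi\in\mathcal{D}(\mathbf{L}^c)$ and $\lambda\in\Sigma^c$; the reverse inclusion $\Sigma^c\subset\Sigma$ is immediate from $\mathcal{D}(\mathbf{L}^c)\subset\mathcal{D}(\mathbf{L})$. The hardest step I expect is the faithful transfer of the Jeribi framework to the $C(\overline{\Omega})$-based product space, specifically verifying the compactness and resolvent hypotheses for the $C(\overline{\Omega})^k$-realization of $A$; the remaining bookkeeping (comparison of the two multiplication spectra, bootstrap of eigenfunctions) is routine once Lemma \ref{LaplacianC} and the appendix characterization of $\sigma(\mathbf{A}_\ast)$ are in hand.
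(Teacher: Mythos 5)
Your proposal is correct and follows essentially the same route as the paper: rerun the Jeribi block-operator/Schechter essential spectrum argument on $X_1=C(\overline{\Omega})^k$, $X_2=C(\overline{\Omega})^m$ (where all off-diagonal multiplication operators are bounded and $A_c$ has compact resolvent by Lemma \ref{LaplacianC}), invoke the appendix characterization of $\sigma(\mathbf{A}_\ast)$ as essential on $C(\overline{\Omega})^m$, and then identify $\Sigma$ with $\Sigma^c$ by bootstrapping eigenfunctions through the explicit resolvent form \eqref{resolvexplicit}. The paper's proof is exactly this, so no further comment is needed.
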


\begin{proof}
Since $A_c$ has compact resolvent on $C(\overline{\Omega})$ by Lemma \ref{LaplacianC}, the proof of Proposition \ref{RDODEspec} literally applies to characterize the spectrum of $\mathbf{L}^c$. It is again sufficient to show $\sigma_{e5}(\mathbf{A}_{\ast}) = \sigma_{e5}(\mathbf{L}^c)$ for the Schechter essential spectrum, since we infer $\sigma(\mathbf{A}_{\ast}) = \sigma_{e5}(\mathbf{A}_{\ast})$ from Proposition \ref{essspectrumC}. Using the notation of Proposition \ref{RDODEspec}, we consider $X_2:= C(\overline{\Omega})^{m}$ and $X_1:=C(\overline{\Omega})^{k}$ instead and, hence, only bounded multiplication operators are used on these spaces apart from the elliptic operator. This yields $\sigma(\mathbf{L}^c) = \sigma (\mathbf{A}_{\ast}) \mathop{\dot{\cup}} \Sigma^c$ for some set of eigenvalues $\Sigma^c \subset \sigma_p(\mathbf{L}^c)$.\\ 
The inclusion $\sigma(\mathbf{L}^c) \subset \sigma(\mathbf{L})$ is clear from $\mathcal{D}(A_c) \subset \mathcal{D}(A_p)$ and the fact that the spectrum of the multiplication operator $\mathbf{A}_\ast$ is the same on $L^p(\Omega)^m$ and $C(\overline{\Omega})^m$ by Proposition \ref{specunion} and \cite[Appendix B.2]{KMCMnonlinear}. 
Hence, it remains to verify $\Sigma \subset \Sigma^{c}$. For $\lambda \in \Sigma \subset \rho(\mathbf{A}_{\ast})$, there exists an eigenfunction $\xi \in L^\infty(\Omega)^m \times \mathcal{D}(A_p)^k$ which is an element of $L^\infty(\Omega)^m \times C(\overline{\Omega})^k$ for $p>n^\ast$. Using the explicit form \eqref{resolvexplicit} of the resolvent leads to 
$\xi_1 = (\lambda I - \mathbf{A}_{\ast})^{-1} \mathbf{B}_{\ast} \xi_2 \in C(\overline{\Omega})^m$ and hence $\lambda \in \sigma_p(\mathbf{L}^c)$ by $\xi_2 \in \mathcal{D}(A_c)^k$.
\end{proof}


\subsection{Semigroup properties} \label{sec:semigroupest}

In the case of a continuous stationary solution, it is a simple consequence of the spectral mapping theorem for analytic semigroups that the growth bound of the semigroup $(\mathbf{T}(t))_{t\in\mathbb{R}_{\ge0}}$ restricted to $C(\overline{\Omega})^{m+k}$ equals the spectral bound of the linear operator $\mathbf{L}^c$ defined in Lemma \ref{Lcontinuous}.
The aim of this section is to connect the growth bound of the semigroup $(\mathbf{T}(t))_{t\in\mathbb{R}_{\ge0}}$ restricted to $L^\infty(\Omega)^{m+k}$ to the spectral bound of the linear operator $\mathbf{L}$.
This allows showing stability results in Section \ref{sec:nonlinearstab} in the case of a discontinuous stationary solution. We make use of the parts $\mathbf{L}_{\infty}$ in $Z_\infty=L^\infty(\Omega)^{m+k}$ and $\mathbf{L}_c$ in $Z_c=L^\infty(\Omega)^{m} \times C(\overline{\Omega})^k$ of the operator $\mathbf{L}$ defined in Lemma \ref{Lclosed}. 
Since $(\mathbf{T}(t))_{t \in \mathbb{R}_{\ge 0}}$ is analytic on $L^\infty(\Omega)^{m} \times L^p(\Omega)^{k}$ by Lemma \ref{Lclosed}, the spectral mapping theorem 
\[
\mathrm{e}^{\sigma(\mathbf{L})t} = \sigma(\mathbf{T}(t)) \setminus \{0\} \qquad \forall \; t \in \mathbb{R}_{\ge 0}
\] 
is valid \cite[Ch. IV, Corollary 3.12]{Engel}. From this, we infer that the spectral bound 
\[
s(\mathbf{L}) := \sup \{ \mathrm{Re} \, \lambda \mid \lambda \in \sigma(\mathbf{L})\}
\]
of the operator $\mathbf{L}$ equals the growth bound $w_0^p$ of $(\mathbf{T}(t))_{t \in \mathbb{R}_{\ge 0}}$, i.e.,
\[
s(\mathbf{L}) = w_0^p := \inf \{ w \in \mathbb{R} \mid \exists \, M_w \ge 1 \; \text{such that} \; \|\mathbf{T}(t)\|_p \le M_w \mathrm{e}^{w t} \; \text{for all} \; t \in \mathbb{R}_{\ge 0} \}.
\]
First, we show that the analytic semigroup $(\mathbf{T}(t))_{t \in \mathbb{R}_{\ge 0}}$ on $L^\infty(\Omega)^{m} \times L^p(\Omega)^{k}$ from Lemma \ref{Lclosed} is $Z_\infty$-invariant for $n^\ast = \max\{n/2,2\} <p<\infty$ and can be restricted to $Z_\infty$.

\begin{lemma} \label{restrsemigroup}
The semigroup $(\mathbf{T}_\infty(t))_{t \in \mathbb{R}_{\ge 0}}$ defined as the restriction of $(\mathbf{T}(t))_{t \in \mathbb{R}_{\ge 0}}$ to $L^\infty(\Omega)^{m+k}$ is not strongly continuous. However, the map $t \mapsto \mathbf{T}_\infty(t) \in \mathcal{L}(L^\infty(\Omega)^{m+k})$ is norm-continuous for all $t>0$ and is exponentially bounded, i.e., there exist constants $M \ge 1$, $w \in \mathbb{R}$ such that 
\begin{align}
	\| \mathbf{T}_\infty(t)\| \le M \mathrm{e}^{w t} \qquad \forall \; t \in  \mathbb{R}_{\ge 0}. \label{expgrowthTI}
\end{align}
\end{lemma}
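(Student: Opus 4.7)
My plan is to exploit the resolvent estimate \eqref{resolventest} together with the resolvent identification in Proposition \ref{specrestricted} to represent the restricted semigroup by a Dunford-type contour integral on $L^\infty(\Omega)^{m+k}$. This representation immediately delivers norm-continuity and the exponential bound for $t>0$, while the failure of strong continuity at $t=0$ is forced by the non-density of $\mathcal{D}(\mathbf{L}_\infty)$ established in Lemma \ref{restrclosed}.

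Concretely, I would fix $r'>r$ and take the contour $\Gamma \subset \Sigma_\omega \cap \{|\lambda|>r\} \subset \rho(\mathbf{L}_\infty)$ consisting of the two rays $\{\rho e^{\pm i\omega}\mid \rho\ge r'\}$ joined by the arc $\{r' e^{i\theta}\mid |\theta|\le \omega\}$, oriented counterclockwise with respect to $\sigma(\mathbf{L}_\infty)$. Because $\omega>\pi/2$, the real part $\mathrm{Re}\,\lambda = |\lambda|\cos\omega$ is strictly negative and unbounded below along the rays, so Lemma \ref{resolventestrestr} shows that
\[
U(t)\xi := \frac{1}{2\pi i}\int_\Gamma e^{\lambda t} R(\lambda,\mathbf{L}_\infty)\xi\,\mathrm{d}\lambda,\qquad \xi\in L^\infty(\Omega)^{m+k},\ t>0,
\]
converges as a Bochner integral in $L^\infty(\Omega)^{m+k}$ and defines an operator-norm analytic function of $t$ on $\{\mathrm{Re}\,t>0\}$. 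Since $(\mathbf{T}(t))_{t\ge 0}$ is analytic on $Z_p$ by Lemma \ref{Lclosed}, the same contour furnishes its Dunford representation there, and the identification $R(\lambda,\mathbf{L}_\infty) = R(\lambda,\mathbf{L})_{|L^\infty(\Omega)^{m+k}}$ from Proposition \ref{specrestricted} then yields $U(t)=\mathbf{T}(t)_{|L^\infty(\Omega)^{m+k}}=\mathbf{T}_\infty(t)$. Operator-norm continuity for $t>0$ is thus automatic.

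For the exponential bound \eqref{expgrowthTI}, I would combine two standard estimates of the integrand. Parametrising the two rays and using $\cos\omega<0$ shows that their contribution is bounded uniformly in $t\ge 0$, while the arc contributes at most $M e^{r' t}$ up to a numerical constant, so $\|\mathbf{T}_\infty(t)\|\le M' e^{r' t}$ for all $t\ge t_0>0$. To absorb the behaviour near $t=0$, I would rescale the arc radius by $\lambda=\mu/t$ on the contour $\Gamma_t$ (valid once $1/t>r$); the rescaled integral is then $t$-independent and yields $\sup_{0<t\le t_0}\|\mathbf{T}_\infty(t)\|<\infty$. Combining both regimes produces constants $M\ge 1$ and $w\in\mathbb{R}$ (one may take $w=r'$) such that \eqref{expgrowthTI} holds.

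Finally, failure of strong continuity is a straight contradiction argument: if $(\mathbf{T}_\infty(t))_{t\ge 0}$ were a $C_0$-semigroup, its generator would be densely defined by \cite[Ch. II, Theorem 1.4]{Engel}; but the generator must coincide with $\mathbf{L}_\infty$ (as the Laplace transform of $\mathbf{T}_\infty$ recovers $R(\lambda,\mathbf{L}_\infty)$ from the contour representation), and Lemma \ref{restrclosed} shows $\overline{\mathcal{D}(\mathbf{L}_\infty)}=L^\infty(\Omega)^m\times C(\overline{\Omega})^k\subsetneq L^\infty(\Omega)^{m+k}$. The main technical obstacle I foresee is the clean verification that $U(t)$ genuinely equals the restriction of $\mathbf{T}(t)$ and that the contour integral may be manipulated (shifted, rescaled, differentiated in $t$) directly in the $L^\infty$-norm despite the lack of strong continuity; once the integral representation is anchored through Proposition \ref{specrestricted} and Lemma \ref{resolventestrestr}, the three assertions reduce to sectorial bookkeeping.
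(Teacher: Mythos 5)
Your proposal is correct and follows essentially the same route as the paper: the paper likewise constructs the semigroup on $L^\infty(\Omega)^{m+k}$ from the sectorial resolvent estimate of Lemma \ref{resolventestrestr} via the contour-integral representation (citing \cite[Theorem 1.6]{Tanabe} rather than writing the Dunford integral out), identifies it with the restriction of $(\mathbf{T}(t))_{t\in\mathbb{R}_{\ge 0}}$ through the resolvent identity $R(\lambda,\mathbf{L}_\infty)=R(\lambda,\mathbf{L})_{|L^\infty(\Omega)^{m+k}}$ of Proposition \ref{specrestricted} with the same integration path, and deduces the failure of strong continuity from the non-density of $\mathcal{D}(\mathbf{L}_\infty)$ in Lemma \ref{restrclosed}. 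Your explicit contour estimates and rescaling near $t=0$ are just the bookkeeping hidden inside the cited theorem.
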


\begin{proof}
{Since  $(\mathbf{T}(t))_{t \in \mathbb{R}_{\ge 0}}$ is an analytic semigroup on $L^\infty(\Omega)^{m} \times L^p(\Omega)^{k}$, it follows that $\mathbf{T}(t)$ maps into $\mathcal{D}(\mathbf{L})$ for all $t>0$ \cite[Corollary 3.7.21]{Arendt}. 
	Recall $\mathcal{D}(\mathbf{L}) \subset L^\infty(\Omega)^m \times C(\overline{\Omega})^k$ for $p>n^\ast$ from Proposition \ref{specrestricted}. Since $\mathbf{T}(0)= I$, we obtain $\mathbf{T}(t)L^\infty(\Omega)^{m+k} \subset L^\infty(\Omega)^{m+k}$ for all $t\ge0$ and the restricted semigroup $(\mathbf{T}_\infty(t))_{t \in \mathbb{R}_{\ge 0}}$ is well-defined.\\ 
	According to \cite[Theorem 1.6]{Tanabe}
	, the resolvent estimate in Lemma \ref{resolventestrestr} implies existence of a semigroup $(\mathbf{T}_|(t))_{t \in \mathbb{R}_{> 0}}$ of bounded operators on $L^\infty(\Omega)^{m+k}$ which is generated by the closed operator $\mathbf{L}_{\infty}$. Moreover, the semigroup can be extended to an analytic semigroup on some sector in $\mathbb{C}$ which includes the positive half-line $\{t \mid t \in \mathbb{R}_{>0}\}$, compare also \cite[Corollary 3.7.17]{Arendt}. By \cite[Definition 3.2.5]{Arendt}, this semigroup is strongly continuous on $\mathbb{R}_{>0}$ and exponentially bounded. However, the semigroup $(\mathbf{T}_|(t))_{t \in \mathbb{R}_{\ge 0}}$ with $\mathbf{T}_|(0):= I$ is not strongly continuous on $\mathbb{R}_{\ge 0}$ by Lemma \ref{restrclosed}. Analyticity of the semigroup $(\mathbf{T}(t))_{t \in \mathbb{R}_{\ge 0}}$ yields that $\mathbf{T}(t)$ is given by a complex contour integral over the resolvent $R(\lambda, \mathbf{L})$ and a similar formula is valid for $(\mathbf{T}_|(t))_{t \in \mathbb{R}_{\ge 0}}$ in terms of $R(\lambda, \mathbf{L}_\infty)$ \cite[Theorem 1.6]{Tanabe}. Since the restriction of the resolvent $R(\lambda, \mathbf{L})$ to $L^\infty (\Omega)^{m+k}$ equals $R(\lambda, \mathbf{L}_\infty)$ by Proposition \ref{specrestricted}, we obtain that $(\mathbf{T}_|(t))_{t \in \mathbb{R}_{\ge 0}}$ and $(\mathbf{T}_\infty(t))_{t \in \mathbb{R}_{\ge 0}}$ coincide on $L^\infty(\Omega)^{m+k}$. Note that we can choose the same path of integration in view of $\rho(\mathbf{L}) = \rho(\mathbf{L}_\infty)$ from Proposition \ref{specrestricted}. By analyticity of $(\mathbf{T}_\infty(t))_{t \in \mathbb{R}_{> 0}}$, it follows that $t \mapsto \mathbf{T}_\infty(t) \in \mathcal{L}(L^\infty(\Omega)^{m+k})$ is norm-continuous for $t>0$.} 
\end{proof}

To show equality of the growth bound $w_0^\infty$ of the semigroup $(\mathbf{T}_\infty(t))_{t \in \mathbb{R}_{\ge 0}}$ with the spectral bound $s(\mathbf{L})$, we use the fact that each semigroup operator $\mathbf{T}_\infty(t)$ is the limit of regularized semigroup operators \cite{Bobrowski97}. 
This regularization is obtained by a further restriction of the semigroup $(\mathbf{T}_{\infty}(t))_{t \in \mathbb{R}_{\ge 0}}$ to the closed subspace $L^\infty(\Omega)^{m} \times C(\overline{\Omega})^k$ of $L^\infty(\Omega)^{m+k}$. Although it could not be verified that the Jacobian $\mathbf{J}$ is a $\mathbf{D}\Delta$-bounded perturbation on $L^\infty(\Omega)^{m} \times C(\overline{\Omega})^k$, the operator $\mathbf{L}_c$ is the generator of an analytic semigroup.

\begin{lemma} \label{restrsemigroup2}
The semigroup $(\mathbf{T}_\infty(t))_{t \in \mathbb{R}_{\ge 0}}$ from Lemma \ref{restrsemigroup} defined on $L^\infty(\Omega)^{m+k}$ can be restricted to a strongly continuous semigroup $(\mathbf{T}_c(t))_{t \in \mathbb{R}_{\ge 0}}$ on $Z_c =  L^\infty(\Omega)^{m} \times C(\overline{\Omega})^k$. This semigroup is even analytic and is generated by the operator $\mathbf{L}_c$ defined in \eqref{definitionL0}. Moreover, the semigroup $(\mathbf{T}_c(t))_{t \in \mathbb{R}_{\ge 0}}$ coincides with the semigroup $(\mathbf{T}(t))_{t \in \mathbb{R}_{\ge 0}}$ defined in Lemma \ref{Lclosed} restricted to $L^\infty(\Omega)^{m} \times C(\overline{\Omega})^k$.
\end{lemma}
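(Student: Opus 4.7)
The plan is to build $(\mathbf{T}_c(t))_{t \ge 0}$ in two equivalent ways and then identify them. On one hand, restrict $(\mathbf{T}_\infty(t))$ to $Z_c$; on the other, generate an analytic semigroup abstractly from the part $\mathbf{L}_c$ of the (non-densely-defined) sectorial operator $\mathbf{L}_\infty$.

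First, I would verify $Z_c$-invariance of $(\mathbf{T}_\infty(t))$. For $t>0$, Lemma~\ref{restrsemigroup} gives analyticity of $\mathbf{T}_\infty(t)$, hence $\mathbf{T}_\infty(t) Z_\infty \subset \mathcal{D}(\mathbf{L}_\infty) \subset \overline{\mathcal{D}(\mathbf{L}_\infty)}= Z_c$ by Lemmata~\ref{restrclosed} and \ref{restrclosed2}. For $t=0$ invariance is trivial. Define $\mathbf{T}_c(t) := \mathbf{T}_\infty(t)|_{Z_c}$; the semigroup law and norm-continuity for $t>0$ are inherited from $(\mathbf{T}_\infty(t))$, and the exponential bound \eqref{expgrowthTI} passes to $(\mathbf{T}_c(t))$.

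Second, I would apply the theory of non-densely-defined sectorial operators. By Lemma~\ref{resolventestrestr}, $\mathbf{L}_\infty$ satisfies a sectorial resolvent estimate on $Z_\infty$. By Lemma~\ref{restrclosed2}, $\mathbf{L}_c$ is closed and densely defined on $Z_c = \overline{\mathcal{D}(\mathbf{L}_\infty)}$. Standard results, e.g.\ \cite[Proposition~2.1.4 and Corollary~3.1.24]{Lunardi} or \cite[Corollary~3.7.17]{Arendt}, then furnish a strongly continuous analytic semigroup on $Z_c$ generated by $\mathbf{L}_c$. Both this semigroup and $\mathbf{T}_c(t)$ admit Dunford--Taylor contour integral representations along a common path $\Gamma \subset \Sigma_\omega$ lying in $\rho(\mathbf{L}) = \rho(\mathbf{L}_\infty) = \rho(\mathbf{L}_c)$ (Proposition~\ref{specrestricted}). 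Since the restriction of $R(\lambda,\mathbf{L}_\infty)$ to $Z_c$ equals $R(\lambda,\mathbf{L}_c)$, the integrals coincide for all $t>0$, and hence both semigroups agree on $Z_c$. In particular, $(\mathbf{T}_c(t))$ is strongly continuous at $t=0$ and generated by $\mathbf{L}_c$.

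Third, to identify $\mathbf{T}_c(t)$ with $\mathbf{T}(t)|_{Z_c}$, I would run the same contour-integral argument one level up. The semigroup $(\mathbf{T}(t))$ on $Z_p$ is analytic (Lemma~\ref{Lclosed}), so it has a contour representation in terms of $R(\lambda,\mathbf{L})$. Proposition~\ref{specrestricted} says $R(\lambda,\mathbf{L})|_{Z_c} = R(\lambda,\mathbf{L}_c)$ on $\rho(\mathbf{L})$, so restricting the integrand and using the continuous embedding $Z_c \hookrightarrow Z_p$ yields $\mathbf{T}(t)\xi = \mathbf{T}_c(t)\xi$ for all $\xi \in Z_c$ and $t>0$; at $t=0$ both equal the identity.

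The main obstacle is verifying strong continuity at $t = 0^+$ on the full space $Z_c$: continuity on the dense core $\mathcal{D}(\mathbf{L}_c)$ is immediate from $\frac{d}{dt}\mathbf{T}_c(t)\xi = \mathbf{L}_c \mathbf{T}_c(t)\xi$ combined with the uniform bound \eqref{expgrowthTI}, but passing to arbitrary $\xi \in Z_c$ requires an $\varepsilon/3$ density argument that relies crucially on $Z_c = \overline{\mathcal{D}(\mathbf{L}_\infty)}$ together with the sectorial estimate of Lemma~\ref{resolventestrestr}. The rest of the proof is routine bookkeeping, with the contour-integral identifications providing the clean glue between the three semigroups.
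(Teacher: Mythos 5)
Your proposal is correct and follows essentially the same route as the paper: both rely on the resolvent estimate of Lemma \ref{resolventestrestr} passing to $\mathbf{L}_c$, the density of $\mathcal{D}(\mathbf{L}_c)$ in $Z_c$ from Lemma \ref{restrclosed2}, an abstract generation theorem for analytic semigroups, and the resolvent restriction identities of Proposition \ref{specrestricted} to identify the restricted semigroup with the abstractly generated one. The only cosmetic difference is that you identify the semigroups via Dunford--Taylor contour integrals while the paper invokes the invariant-subspace theorem \cite[Ch.\ 4, Theorem 5.5]{Pazy}; these are equivalent mechanisms here.
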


\begin{proof} 
We already considered the restriction of $\mathbf{L}_{\infty}$ to $Z_c$ in Proposition \ref{specrestricted}. Recall 
\begin{align}
	R(\lambda, \mathbf{L}_c) = R(\lambda, \mathbf{L}_{\infty})_{|Z_c} = R(\lambda, \mathbf{L})_{|Z_c}\qquad \forall \; \lambda \in \rho(\mathbf{L}). \label{resolvent1}
\end{align}
Since $\|R(\lambda, \mathbf{L}_c)\| \le \|R(\lambda, \mathbf{L}_{\infty})\|$ due to $Z_c \subset L^\infty(\Omega)^{m+k}$, the resolvent estimate derived in Lemma \ref{resolventestrestr} holds also for $R(\lambda, \mathbf{L}_c)$. The results from Lemma \ref{resolventestrestr} for $\mathbf{L}_c$ and density of the domain $\mathcal{D}(\mathbf{L}_c)$ in $Z_c$ from Lemma \ref{restrclosed2} imply existence of an analytic semigroup $(\mathbf{T}_c(t))_{t \in \mathbb{R}_{\ge 0}}$ generated by $\mathbf{L}_c$ \cite[Theorem 1.6]{Tanabe}.\\ 
In order to deduce that $(\mathbf{T}_c(t))_{t \in \mathbb{R}_{\ge 0}}$ is the restriction of the semigroup $(\mathbf{T}(t))_{t \in \mathbb{R}_{\ge 0}}$ to $Z_c$, we apply \cite[Ch. 4, Theorem 5.5]{Pazy}. In view of the generator property of the part $\mathbf{L}_c$ of $\mathbf{L}$ in $Z_c$, it remains to verify that the space $Z_c$ is invariant for the resolvent $R(\lambda, \mathbf{L})$. However, this follows from \eqref{resolvent1}, 
and we obtain $\mathbf{T}(t)_{|Z_c} = \mathbf{T}_c(t)$ for all $t \in \mathbb{R}_{\ge 0}$. As the semigroup $(\mathbf{T}_\infty(t))_{t \in \mathbb{R}_{\ge 0}}$ defined in Lemma \ref{restrsemigroup} is a restriction of the semigroup $(\mathbf{T}(t))_{t \in \mathbb{R}_{\ge 0}}$ to $L^\infty(\Omega)^{m+k}$, we also infer $\mathbf{T}_\infty(t)_{|Z_c}=\mathbf{T}_c(t)$ for all $t \in \mathbb{R}_{\ge 0}$.
\end{proof}

\begin{lemma} \label{limitrep}
The semigroup $(\mathbf{T}_\infty(t))_{t \in \mathbb{R}_{\ge 0}}$ is determined by its restriction on the space $L^\infty(\Omega)^{m} \times C(\overline{\Omega})^k$ due to the pointwise limit
\begin{align}
	\mathbf{T}_\infty(t) \xi = \lim_{\lambda \to \infty} \mathbf{T}_c(t) \lambda R(\lambda, \mathbf{L}_{\infty}) \xi \qquad \forall \; t \in \mathbb{R}_{>0}, \xi \in L^\infty(\Omega)^{m+k}. \label{semigrouplimit}
\end{align} 
\end{lemma}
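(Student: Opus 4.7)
The plan is to identify the approximants $\lambda R(\lambda,\mathbf{L}_\infty)\xi$ as a Yosida-type regularization which, for $\xi\in L^\infty(\Omega)^{m+k}$, already lies in the smaller space $Z_c$, and then transport the limit from $Z_c$ (where $\mathbf{L}_c$ generates a $C_0$-semigroup, so the Yosida approximation works) to $L^\infty(\Omega)^{m+k}$ using that $\mathbf{T}_\infty(t)\xi\in Z_c$ for every $t>0$.

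First I would observe that the resolvent of $\mathbf{L}$ maps into $\mathcal{D}(\mathbf{L})\subset L^\infty(\Omega)^m\times C(\overline{\Omega})^k=Z_c$ by Lemma \ref{domchar}. Combined with the identity $R(\lambda,\mathbf{L}_\infty)=R(\lambda,\mathbf{L})_{|L^\infty(\Omega)^{m+k}}$ from Proposition \ref{specrestricted}, this shows that $\lambda R(\lambda,\mathbf{L}_\infty)\xi$ is a well-defined element of $Z_c$, so that $\mathbf{T}_c(t)\lambda R(\lambda,\mathbf{L}_\infty)\xi$ makes sense. Next, since $(\mathbf{T}(t))_{t\ge0}$ is an analytic semigroup on $Z_p$ generated by $\mathbf{L}$, the resolvent commutes with the semigroup: $\mathbf{T}(t)R(\lambda,\mathbf{L})=R(\lambda,\mathbf{L})\mathbf{T}(t)$ on all of $Z_p$ for $\lambda\in\rho(\mathbf{L})$. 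Restricting to $\xi\in L^\infty(\Omega)^{m+k}$ and using Lemma \ref{restrsemigroup2} (which gives $\mathbf{T}_c(t)=\mathbf{T}(t)_{|Z_c}$) together with Proposition \ref{specrestricted}, I obtain
\begin{equation*}
\mathbf{T}_c(t)\lambda R(\lambda,\mathbf{L}_\infty)\xi \;=\; \lambda R(\lambda,\mathbf{L})\mathbf{T}(t)\xi \;=\; \lambda R(\lambda,\mathbf{L}_\infty)\mathbf{T}_\infty(t)\xi.
\end{equation*}

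Now, for $t>0$, analyticity of $(\mathbf{T}(t))_{t\ge 0}$ on $Z_p$ forces $\mathbf{T}(t)\xi\in\mathcal{D}(\mathbf{L})\subset Z_c$, hence $\mathbf{T}_\infty(t)\xi\in Z_c$. Since $\mathbf{L}_c$ generates the $C_0$-semigroup $(\mathbf{T}_c(t))_{t\ge 0}$ on $Z_c$ by Lemma \ref{restrsemigroup2} and is densely defined there by Lemma \ref{restrclosed2}, the standard Yosida-approximation property yields $\lambda R(\lambda,\mathbf{L}_c)y\to y$ as $\lambda\to\infty$ for every $y\in Z_c$. Applied to $y=\mathbf{T}_\infty(t)\xi$ and combined with $R(\lambda,\mathbf{L}_c)=R(\lambda,\mathbf{L}_\infty)_{|Z_c}$ from \eqref{resolvent1}, this gives
\begin{equation*}
\lim_{\lambda\to\infty}\lambda R(\lambda,\mathbf{L}_\infty)\mathbf{T}_\infty(t)\xi \;=\; \mathbf{T}_\infty(t)\xi,
\end{equation*}
which together with the previous identity is the claim.

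The only delicate point is verifying that $\mathbf{T}_\infty(t)\xi$ belongs to $Z_c$ for every $t>0$, so that the Yosida approximation on $Z_c$ is applicable; this is exactly where the analyticity of the ambient semigroup on $Z_p$ and the smoothing $\mathcal{D}(\mathbf{L})\subset Z_c$ granted by elliptic regularity for $p>n^\ast$ are essential. The commutation of resolvent and semigroup is a routine consequence of analyticity on $Z_p$, and the remaining manipulations are bookkeeping with the identifications established in Proposition \ref{specrestricted} and Lemmata \ref{restrsemigroup}--\ref{restrsemigroup2}.
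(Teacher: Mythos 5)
Your proof is correct, and it takes a genuinely different and more elementary route than the paper. The paper constructs the once-integrated semigroup $\mathbf{U}(t)=\int_0^t \mathbf{T}_\infty(\tau)\,\mathrm{d}\tau$, identifies $\mathbf{L}_\infty$ as its generator, and then verifies the hypotheses of Bobrowski's generation theorem for non-continuous semigroups, whose conclusion (3.5) is precisely the limit \eqref{semigrouplimit}; a by-product of that route, which the paper exploits in the remark following the lemma, is that $(\mathbf{T}_\infty(t))_{t\in\mathbb{R}_{\ge 0}}$ is \emph{the} non-continuous semigroup uniquely determined by $\mathbf{L}_\infty$ in Bobrowski's framework. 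Your argument bypasses all of that: you use only the commutation $\mathbf{T}(t)R(\lambda,\mathbf{L})=R(\lambda,\mathbf{L})\mathbf{T}(t)$ (which already holds for any $C_0$-semigroup on $Z_p$, analyticity is not needed for this step), the smoothing property $\mathbf{T}_\infty(t)\xi\in\mathcal{D}(\mathbf{L})\subset Z_c$ for $t>0$, and the Yosida-type convergence $\lambda R(\lambda,\mathbf{L}_c)y\to y$ on $Z_c=\overline{\mathcal{D}(\mathbf{L}_c)}$, which is justified by the density from Lemma \ref{restrclosed2} together with the uniform bound $\|\lambda R(\lambda,\mathbf{L}_c)\|\le M$ for large real $\lambda$ inherited from estimate \eqref{resolventest}. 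All identifications of restricted resolvents and semigroups you invoke are exactly those established in Proposition \ref{specrestricted} and Lemmata \ref{restrsemigroup}--\ref{restrsemigroup2}, and none of them depends on the present lemma, so there is no circularity. What your approach buys is a short, self-contained proof of the stated limit; what it does not deliver is the uniqueness statement tied to Bobrowski's construction, which the paper uses only as a side remark and not in the subsequent proof of Proposition \ref{SBeGB}.
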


\begin{proof}
{According to \cite[p. 125]{Arendt} and properties derived for the semigroup $(\mathbf{T}_\infty(t))_{t \in \mathbb{R}_{\ge 0}}$ in Lemma \ref{restrsemigroup}, the family $(\mathbf{U}(t))_{t \in \mathbb{R}_{\ge 0}}$ of linear, bounded operators on $Z_\infty$ given by
	\[
	\mathbf{U}(t):= \int_0^t \mathbf{T}_\infty(\tau)\; \mathrm{d}\tau \qquad \forall \; t \in \mathbb{R}_{\ge 0}
	\] 
	is a once integrated semigroup \cite[Definition 3.2.1]{Arendt}. Moreover, following \cite[Lemma 2.1.6]{Lunardi}, the semigroup $(\mathbf{U}(t))_{t \in \mathbb{R}_{\ge 0}}$ is generated by $\mathbf{L}_{\infty}$ and the resolvent is given by
	\[
	R(\lambda, \mathbf{L}_{\infty}) = \lambda \int_0^\infty \mathrm{e}^{-\lambda \tau} \mathbf{U}(\tau) \; \mathrm{d}\tau =   \int_0^\infty \mathrm{e}^{-\lambda \tau} \mathbf{T}_|(\tau) \; \mathrm{d}\tau \qquad \forall \; \lambda > w.
	\]
	By growth estimate \eqref{expgrowthTI}, the once integrated semigroup $(\mathbf{U}(t))_{t \in \mathbb{R}_{\ge 0}}$ satisfies estimates of the resolvent derivatives in \cite[Lemma 1]{Bobrowski97}. By choosing $\omega=w+\varepsilon$ and $\varphi \in L^1(\mathbb{R}_{\ge 0}) \cap L^\infty(\mathbb{R}_{\ge 0})$ with $\varphi(t)=M \mathrm{e}^{-\varepsilon t}$ for a small parameter $\varepsilon>0$ and recalling estimate \eqref{resolventest}, we have verified all assumptions of \cite[Theorem 3]{Bobrowski97}.} By analyticity of the semigroup $(\mathbf{T}_c(t))_{t \in \mathbb{R}_{\ge 0}}$, there holds $D=Z_c$ in \cite[Lemma 2]{Bobrowski97} and condition e) in \cite[Theorem 3]{Bobrowski97} applies for $\mathcal{D}(\mathbf{L}_{\infty}) \subset Z_c$. Then the limit \eqref{semigrouplimit} is a consequence of the latter theorem, see property (3.5) in \cite[Theorem 3]{Bobrowski97}.
\end{proof}

From this proof it is clear that the semigroup $(\mathbf{T}_\infty(t))_{t \in \mathbb{R}_{\ge 0}}$ is the non-continuous semigroup constructed in \cite[Theorem 3]{Bobrowski97}, which is uniquely determined by the operator $\mathbf{L}_{\infty}$. Finally, the latter result can be used to relate the spectral bound of the operator $\mathbf{L}_c$ to the growth bound of the semigroup $(\mathbf{T}_\infty(t))_{t \in \mathbb{R}_{\ge 0}}$.

\begin{proposition} \label{SBeGB}
The spectral bound $s(\mathbf{L})$ of the operator $\mathbf{L}$ equals the growth bound $w_0^\infty$ of the restricted semigroup $(\mathbf{T}_\infty(t))_{t \in \mathbb{R}_{\ge 0}}$ on $L^\infty(\Omega)^{m+k}$. 
\end{proposition}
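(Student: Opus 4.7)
The plan is to prove the two inequalities $s(\mathbf{L}) \le w_0^\infty$ and $w_0^\infty \le s(\mathbf{L})$ separately, each exploiting a different piece of the infrastructure already built up for $\mathbf{L}_\infty$.

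For the inequality $s(\mathbf{L}) \le w_0^\infty$, I would use the Laplace-transform representation already recorded in the proof of Lemma \ref{limitrep}, namely
\[
R(\lambda, \mathbf{L}_\infty) \;=\; \int_0^\infty \mathrm{e}^{-\lambda \tau}\, \mathbf{T}_\infty(\tau)\, \mathrm{d}\tau
\]
valid for $\mathrm{Re}\,\lambda$ larger than the constant $w$ from \eqref{expgrowthTI}. Since $(\mathbf{T}_\infty(t))_{t>0}$ is norm-continuous and exponentially bounded, this integral in fact converges in $\mathcal{L}(Z_\infty)$ for every $\mathrm{Re}\,\lambda > w_0^\infty$ and represents $R(\lambda, \mathbf{L}_\infty)$ there. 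Consequently $\{\lambda \in \mathbb{C} : \mathrm{Re}\,\lambda > w_0^\infty\} \subset \rho(\mathbf{L}_\infty)$, which yields $s(\mathbf{L}_\infty) \le w_0^\infty$. Proposition \ref{specrestricted} gives $\sigma(\mathbf{L}) = \sigma(\mathbf{L}_\infty)$, so $s(\mathbf{L}) \le w_0^\infty$.

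For the reverse inequality $w_0^\infty \le s(\mathbf{L})$, the key ingredient is the pointwise limit from Lemma \ref{limitrep}. Fix $\varepsilon > 0$ and set $w := s(\mathbf{L}) + \varepsilon$. The semigroup $(\mathbf{T}_c(t))_{t \ge 0}$ is an analytic $C_0$-semigroup on $Z_c$ by Lemma \ref{restrsemigroup2}, so the spectral mapping theorem applies to it and gives $w_0^c = s(\mathbf{L}_c)$. Combining with $s(\mathbf{L}_c) = s(\mathbf{L})$ from Proposition \ref{specrestricted}, I can select $M_w \ge 1$ such that $\|\mathbf{T}_c(t)\|_{\mathcal{L}(Z_c)} \le M_w \mathrm{e}^{w t}$ for all $t \ge 0$. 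Lemma \ref{resolventestrestr} guarantees the uniform bound $\|\lambda R(\lambda, \mathbf{L}_\infty)\|_{\mathcal{L}(Z_\infty)} \le M$ for $\lambda > 0$ sufficiently large (since the sector $\Sigma_\omega$ contains the positive real axis near infinity). Plugging $\xi \in Z_\infty$ into the limit formula
\[
\mathbf{T}_\infty(t)\xi \;=\; \lim_{\lambda \to \infty} \mathbf{T}_c(t)\, \lambda R(\lambda, \mathbf{L}_\infty)\,\xi, \qquad t > 0,
\]
and using that $\lambda R(\lambda, \mathbf{L}_\infty)\xi \in Z_c$ (since $\mathcal{D}(\mathbf{L}_\infty) \subset Z_c$ by Lemma \ref{restrclosed}) so the $Z_c$-bound for $\mathbf{T}_c(t)$ is applicable, I estimate
\[
\|\mathbf{T}_\infty(t)\xi\|_\infty \;\le\; M_w \mathrm{e}^{w t}\, \liminf_{\lambda \to \infty} \|\lambda R(\lambda, \mathbf{L}_\infty)\xi\|_\infty \;\le\; M M_w \mathrm{e}^{w t} \|\xi\|_\infty.
\]
Taking the supremum over $\|\xi\|_\infty \le 1$ forces $w_0^\infty \le w = s(\mathbf{L}) + \varepsilon$; sending $\varepsilon \to 0$ closes the argument.

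The main obstacle is the interchange step in the second inequality: the pointwise convergence provided by Lemma \ref{limitrep} must be turned into an operator-norm estimate on $\mathbf{T}_\infty(t)$. This is made legitimate by the clean factorization $\mathbf{T}_c(t) \circ [\lambda R(\lambda, \mathbf{L}_\infty)]$, in which the second factor is the "lift" into $Z_c$ that is uniformly bounded on $Z_\infty$ by the sectorial resolvent estimate of Lemma \ref{resolventestrestr}, while the first factor enjoys the spectrally sharp growth bound $w_0^c = s(\mathbf{L})$ supplied by analyticity on $Z_c$. The remaining verifications are soft consequences of Propositions \ref{specrestricted} and Lemmata \ref{restrsemigroup}--\ref{limitrep}.
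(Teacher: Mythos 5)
Your proposal follows essentially the same route as the paper: the inequality $w_0^\infty \le s(\mathbf{L})$ via the spectral mapping theorem for the analytic semigroup $(\mathbf{T}_c(t))_{t\ge 0}$ combined with the limit formula of Lemma \ref{limitrep} and the uniform bound $\|\lambda R(\lambda,\mathbf{L}_\infty)\|\le M$, and the converse inequality via the Laplace-transform representation of the resolvent. The only place you are thinner than the paper is the assertion that the integral $\int_0^\infty \mathrm{e}^{-\lambda\tau}\mathbf{T}_\infty(\tau)\,\mathrm{d}\tau$ not only converges but \emph{represents} $R(\lambda,\mathbf{L}_\infty)$ on all of $\{\mathrm{Re}\,\lambda> w_0^\infty\}$: since $(\mathbf{T}_\infty(t))_{t\ge 0}$ is not strongly continuous at $t=0$, the standard generation theorem cannot be quoted, and the paper devotes the second half of its proof to verifying by hand that this integral is a two-sided inverse of $\lambda I-\mathbf{L}_\infty$ (right inverse via the generator $\mathbf{L}$ on $Z_p$, left inverse via approximating proper integrals and closedness of $\mathbf{L}_\infty$). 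That verification, or an equivalent analytic-continuation argument, should be supplied to make your first inequality complete.
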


\begin{proof}
Let us first show $w_0^\infty \le s(\mathbf{L})$. By \cite[Ch. IV, Corollary 3.12]{Engel}, the spectral mapping theorem holds for the analytic semigroup $(\mathbf{T}_c(t))_{t \in \mathbb{R}_{\ge 0}}$. Proposition \ref{specrestricted} implies $s(\mathbf{L})= s(\mathbf{L}_c)=w_0^c$ for the growth bound $w_0^c$ of the semigroup $(\mathbf{T}_c(t))_{t \in \mathbb{R}_{\ge 0}}$. Consequently, by definition of the growth bound
, for each $\varepsilon>0$ there exists a constant $C_\varepsilon \ge 1$ such that
\begin{align*}
	\|\mathbf{T}_c(t) \xi \|_\infty \le C_\varepsilon \mathrm{e}^{(s(\mathbf{L}) +\varepsilon)t} \|\xi\|_\infty \qquad \forall \; \xi \in L^\infty(\Omega)^{m} \times C(\overline{\Omega})^k, t \in \mathbb{R}_{\ge 0}. 
\end{align*} 
Using the limit \eqref{semigrouplimit} as well as the resolvent estimate \eqref{resolventest}, we deduce for $t \in \mathbb{R}_{> 0}$ that
\[
\|\mathbf{T}_\infty(t) \xi \|_\infty = \lim_{\lambda \to \infty} \| \mathbf{T}_c(t) \lambda R(\lambda, \mathbf{L}_{\infty}) \xi \|_\infty \le M C_\varepsilon \mathrm{e}^{(s(\mathbf{L}) +\varepsilon)t} \|\xi\|_\infty \quad \forall \; \xi \in L^\infty(\Omega)^{m+k}.
\]
This estimate shows that the growth bound $w_0^\infty$ of the semigroup $(\mathbf{T}_\infty(t))_{t \in\mathbb{R}_{\ge 0}}$ satisfies the inequality $w_0^\infty \le s(\mathbf{L}) +\varepsilon$. Letting $\varepsilon \to 0$ yields the first claim $w_0^\infty \le s(\mathbf{L})$.\\
It remains to prove the converse inequality $s(\mathbf{L}_{\infty}) \le w_0^\infty$. Let us assume that $s(\mathbf{L}_{\infty})> w_0^\infty$. Consider an arbitrary $\lambda \in \mathbb{C}$ with $\mathrm{Re}\, \lambda>w$, where $w \in \mathbb{R}$ with $w_0^\infty < w < s(\mathbf{L}_{\infty})$. From the definition of the growth bound $w_0^\infty$, we know that there is a constant $M \ge 1$ such that
$
\|\mathbf{T}_\infty(t)\|_\infty \le M \mathrm{e}^{w t}$ for all $t \in \mathbb{R}_{\ge 0}.
$
This estimate and continuity of the map $t \mapsto \mathbf{T}_\infty(t) \in \mathcal{L}(L^\infty(\Omega)^{m+k})$ for $t>0$ implies that the improper Riemann integral
\[
R(\lambda) f := \int_0^\infty \mathrm{e}^{-\lambda \tau} \mathbf{T}_\infty(\tau) f \; \mathrm{d}\tau
\]
exists for each $f \in Z_\infty=L^\infty(\Omega)^{m+k}$ with the estimate $\|R(\lambda)\|_\infty \le M/(\mathrm{Re}\, \lambda -w)$. We follow the proof of \cite[Ch. II, Theorem 1.10]{Engel} to obtain a contradiction to $w<s(\mathbf{L}_{\infty})$. We will verify that $\lambda \in \rho(\mathbf{L}_{\infty})$ and $R(\lambda)$ coincides with the resolvent of $\mathbf{L}_{\infty}$ at $\lambda$.\\
Since $Z_\infty \hookrightarrow L^\infty(\Omega)^m \times L^p(\Omega)^k$, we can consider the integral $R(\lambda)f$ as an element in $L^\infty(\Omega)^m \times L^p(\Omega)^k$ for each $f \in Z_\infty$. Using the semigroup $(\mathbf{T}(t))_{t \in \mathbb{R}_{\ge 0}}$, one shows that $R(\lambda)f \in \mathcal{D}(\mathbf{L})$ with
$
(\lambda I-\mathbf{L})R(\lambda)f =f, 
$
see the proof of \cite[Ch. II, Theorem 1.10]{Engel}. This implies that $R(\lambda)$ is the right inverse of $\lambda I - \mathbf{L}_{\infty}$. For the left inverse, let us consider $f \in \mathcal{D}(\mathbf{L}_{\infty})$ and the sequence $(f_j)_{j \in \mathbb{N}} \subset Z_\infty$ given by the proper integrals
\[
f_j:= \int_0^j  \mathrm{e}^{-\lambda \tau} \mathbf{T}_\infty(\tau) f \; \mathrm{d}\tau.
\] 
This sequence converges to $R(\lambda)f$ in $Z_\infty$ by definition of $R(\lambda)$.
Considering again the generator $(\mathbf{L}, \mathcal{D}(\mathbf{L}))$ of $(\mathbf{T}(t))_{t \in \mathbb{R}_{\ge 0}}$, \cite[Ch. II, Lemma 1.9]{Engel} implies $f_j \in \mathcal{D}(\mathbf{L}_{\infty})$ with
\[
(\lambda I - \mathbf{L}_{\infty})f_j =(\lambda I - \mathbf{L})f_j =  \int_0^j  \mathrm{e}^{-\lambda \tau} \mathbf{T}(\tau) (\lambda I - \mathbf{L})f \; \mathrm{d}\tau = \int_0^j  \mathrm{e}^{-\lambda \tau} \mathbf{T}_\infty(\tau) (\lambda I - \mathbf{L}_{\infty})f \; \mathrm{d}\tau. 
\]
By definition of $R(\lambda)$, this shows that $(\lambda I - \mathbf{L}_{\infty}) f_j \to R(\lambda)(\lambda I-\mathbf{L}_{\infty})f$ in $Z_\infty$ as $j \to \infty$. Closedness of $\mathbf{L}_{\infty}$ as shown in Lemma \ref{restrclosed} implies $R(\lambda)f= \lim_{j \to \infty}f_j \in \mathcal{D}(\mathbf{L}_{\infty})$ with 
\[
(\lambda I - \mathbf{L}_{\infty}) R(\lambda) f =(\lambda I -\mathbf{L}_{\infty}) \lim_{j \to \infty} f_j =\lim_{j \to \infty} (\lambda I -\mathbf{L}_{\infty}) f_j  =R(\lambda) (\lambda I - \mathbf{L}_{\infty}) f.
\]
In summary, $\lambda \in \rho(\mathbf{L}_{\infty})$ with resolvent $R(\lambda, \mathbf{L}_{\infty})= R(\lambda)$. The arbitrariness of $\lambda \in \mathbb{C}$ with $\mathrm{Re}\, \lambda>w$ and $w_0^\infty < w < s(\mathbf{L}_{\infty})$ finally yields $s(\mathbf{L}_{\infty}) \le w_0^\infty$.
\end{proof}


\section{Nonlinear stability analysis} \label{sec:nonlinearstab}

{After a linearization at a stationary solution $(\overline{\mathbf{u}},\overline{\mathbf{v}}) \in L^\infty(\Omega)^{m} \times C(\overline{\Omega})^k$ of the reaction-diffusion-ODE system \eqref{fullsys}, we reduced our considerations to equation \eqref{LNequation} with the shift $\xi = (\mathbf{u},\mathbf{v}) - (\overline{\mathbf{u}},\overline{\mathbf{v}})$ and the corresponding trivial steady state $\xi \equiv \mathbf{0}$, i.e.,
\begin{align*}
	\frac{\partial \xi}{\partial t} = \mathbf{L} \xi + \mathbf{N}(\xi), \qquad \xi(0) = \xi^0 \in L^\infty(\Omega)^{m+k}. 
\end{align*} 
Recall from Appendix \ref{sec:exmildsol} that a mild solution of this equation satisfies the implicit integral equation 
\begin{align*}
	\xi(\cdot,t) = \mathbf{T}(t)\xi^0 + \int_0^t \mathbf{T}(t-\tau) \mathbf{N}(\xi(\cdot,\tau)) \; \mathrm{d}\tau. 
\end{align*}
It is well-known that the control of the growth of the semigroup is fundamental for estimating the solution of the nonlinear problem. However, even for the linear case asymptotic stability and exponential asymptotic stability are non-equivalent concepts \cite{Xu}. 
The detailed examination of the operator $\mathbf{L}$ on $L^\infty(\Omega)^{m+k}$ and its corresponding semigroup $(\mathbf{T}_\infty(t))_{t \in \mathbb{R}_{\ge 0}}$ allows considering the implicit integral equation on this space, too. Recall that the nonlinear term $\mathbf{N}$ can be controlled on $L^\infty(\Omega)^{m+k}$ by estimates given in Lemma \ref{nonlinearity}. The nonlinear operator $\mathbf{N}$ is} locally Lipschitz continuous and satisfies $\mathbf{N}(\xi) =o(\|\xi\|_\infty)$ as $\|\xi\|_\infty \to 0$, i.e., there exist a constant $\rho>0$ and a continuous increasing function $b: \mathbb{R}_{\ge 0} \to \mathbb{R}_{\ge 0}$ with $b(0)=0$ such that 
\begin{align}
\|\mathbf{N}(\xi)\|_\infty \le b(\rho) \|\xi\|_\infty \qquad \forall \; \xi \in L^\infty(\Omega)^{m+k}, \| \xi\|_\infty < \rho. \label{estimateN2} 
\end{align}
This indicates that the behavior of a solution to equation \eqref{LNequation} is mainly governed by the growth of the semigroup $(\mathbf{T}(t))_{t \in \mathbb{R}_{\ge 0}}$ on $L^\infty(\Omega)^{m+k}$. Recall that the semigroup $(\mathbf{T}(t))_{t \in \mathbb{R}_{\ge 0}}$ restricted to $L^\infty(\Omega)^{m+k}$ is not strongly continuous. However, this semigroup still has a smoothing property for $t>0$ and the growth bound is determined by the spectral bound of $\mathbf{L}$, i.e., 
the equality $s(\mathbf{L})= w_0^\infty$ holds by Proposition \ref{SBeGB}. This leads us to the stability result in Theorem \ref{stab} and the instability result in Theorem \ref{instabtheorem} in the case of a discontinuous stationary solution. Note that both proofs rely on temporal continuity of the solution in $L^\infty(\Omega)^{m+k}$ for $t>0$. For this reason, we use the following notion of stability in $L^\infty(\Omega)^{m+k}$, similar to \cite[Ch. 9, p. 337]{Lunardi}.

\begin{definition} \label{stability}
The trivial solution $\xi \equiv \mathbf{0}$ of equation \eqref{LNequation} is called \emph{nonlinearly stable} in $L^\infty(\Omega)^{m+k}$ if for each $\varepsilon>0$ there exists a constant $\delta > 0$ such that an upper bound $\|\xi^0 \|_\infty < \delta$ for initial values $\xi^0 \in L^\infty(\Omega)^{m+k}$ implies existence and uniqueness of a mild solution $\xi$ in the sense of Definition \ref{mildsol} which satisfies
\begin{itemize}
	\item[(i)] the solution $\xi$ is defined globally in time, i.e., $T_{\max}=\infty$;
	
	\item[(ii)] $\|\xi(\cdot,t)\|_\infty < \varepsilon$ for all $t \in \mathbb{R}_{> 0}$.
\end{itemize}
The trivial solution $\xi \equiv \mathbf{0}$ is called \emph{nonlinearly unstable} if
it is not nonlinearly stable. Furthermore, a stationary solution $(\overline{\mathbf{u}},\overline{\mathbf{v}}) \in L^\infty(\Omega)^{m+k}$ of system \eqref{fullsys} is called nonlinearly stable (nonlinearly unstable) if the corresponding trivial solution of equation \eqref{LNequation} is nonlinearly stable (nonlinearly unstable).
\end{definition}

If the steady state $(\overline{\mathbf{u}},\overline{\mathbf{v}}) \in L^\infty(\Omega)^{m} \times C(\overline{\Omega})^k$ has a discontinuity, then a sufficiently small $\delta$-neighborhood in Definition \ref{stability} consists only of discontinuous initial conditions $\xi^0$ as well. 
Note that this notion of stability is stronger than in \cite[Section 2]{Friedlander}, in which condition (ii) holds only for almost every $t \in \mathbb{R}_{\ge 0}$. Further note that possible jump-discontinuities of the ODE component $\overline{\mathbf{u}}$ are not cut out in the used topology in Definition \ref{stability}. Hence, the notion of this work differs from $(\varepsilon, A)$ stability considered in \cite{HMCT, Takagi21, Weinberger}. That approach is used for a partly continuous approximation of the steady state but excludes a neighborhood of the discontinuity of the steady state. 

\begin{remark} \label{rem:regmildsol}
The regularity of the mild solution strongly depends on the regularity of the initial condition. Depending on the regularity of the initial perturbation, we consider three different notions of stability -- stability in $L^\infty(\Omega)^{m+k}$ in the sense of Definition \ref{stability}, Lyapunov stability in $L^\infty(\Omega)^m \times C(\overline{\Omega})^k$ or in $C(\overline{\Omega})^{m+k}$.

Recall that a solution of equation \eqref{fullsys} with initial values $\xi^0 \in L^\infty(\Omega)^{m+k}$ may be discontinuous at time $t=0$ with respect to $L^\infty(\Omega)^{m+k}$, see Proposition \ref{Rothesol}. Nevertheless, 
\[
\xi \in C(\mathbb{R}_{\ge 0}; L^\infty(\Omega)^m \times L^p(\Omega)^k) \cap C(\mathbb{R}_{> 0}; L^\infty(\Omega)^m \times C(\overline{\Omega})^k)
\]
by analyticity of the semigroup $(\mathbf{T}(t))_{t \in \mathbb{R}_{\ge 0}}$ from Lemma \ref{restrsemigroup} and \cite[Proposition 1.3.4]{Arendt}. 

In order to determine Lyapunov stability in $L^\infty(\Omega)^m \times C(\overline{\Omega})^k$, problem \eqref{LNequation} is endowed with initial values $\xi^0 \in L^\infty(\Omega)^m \times C(\overline{\Omega})^k$. Since the 
semigroup $(\mathbf{T}(t))_{t \in \mathbb{R}_{\ge 0}}$ restricted to the space $L^\infty(\Omega)^m \times C(\overline{\Omega})^k$ is strongly continuous by Lemma \ref{restrsemigroup2}, the solution $\xi$ of system \eqref{LNequation} satisfies $\xi \in C([0, T_{\max}); L^\infty(\Omega)^m \times C(\overline{\Omega})^k)$ by Proposition \ref{Rothesol} and \cite[Proposition 1.3.4]{Arendt}. Hence, the inequality in condition (ii) of Definition \ref{stability} holds for all $t \in \mathbb{R}_{\ge 0}$ in the classical sense of Lyapunov.  

In the case of a continuous steady state $(\overline{\mathbf{u}},\overline{\mathbf{v}}) \in C(\overline{\Omega})^{m+k}$, one may consider stability as in Definition \ref{stability} or Lyapunov stability in $L^\infty(\Omega)^m \times C(\overline{\Omega})^k$. Under Assumption \ref{ass:Nc}, we may also consider Lyapunov stability in $C(\overline{\Omega})^{m+k}$, see Corollary \ref{stabC} and Corollary \ref{instabC}. In this case, initial conditions $\xi^0 \in C(\overline{\Omega})^{m+k}$ are prescribed. By Lemma \ref{Lcontinuous} and validity of estimate \eqref{estimateN} for the nonlinear remainder in $C(\overline{\Omega})^{m+k}$, the solution $\xi$ of problem \eqref{LNequation} satisfies $\xi \in C([0, T_{\max}); C(\overline{\Omega})^{m+k})$. Here, we use a Picard iteration in $C(\overline{\Omega})^{m+k}$ in the proof of Proposition \ref{Rothesol}, compare also \cite[Ch. 6, Theorem 1.4]{Pazy}. Note that, compared to stability with respect to $W^{1,p}(\Omega)^{m+1}$ considered in \cite{CMCKSregular, MKS17}, we allow for a larger class of perturbations in Definition \ref{stability}. 
\end{remark}

{The works \cite{CMCKSirregular, KMCT20, Takagi21} consider Lyapunov stability in the space $L^\infty(\Omega)^m \times C(\overline{\Omega})$ as perturbations are smoother for the diffusing component. We briefly depict the possibility of perturbing a stationary solution with bounded initial conditions that are small with respect to the norm on $L^\infty(\Omega)^m \times L^p(\Omega)^k$ or $L^p(\Omega)^{m+k}$ for a certain model in Remark \ref{rem:Lpstability}. Since the nonlinear remainder even does not need to be well-defined on these spaces, this kind of perturbations is far from being understood. 
It remains an open question whether the transfer from linear to nonlinear stability is possible on these spaces. In that respect, we also refer to numerical simulations illustrated in \cite[Figure 4]{KMCT20}.}

\subsection{Stability of steady states} \label{sec:stab}

This section is devoted to stability of a bounded stationary solution $(\overline{\mathbf{u}}, \overline{\mathbf{v}}) \in L^\infty(\Omega)^m \times C(\overline{\Omega})^k$ of the reaction-diffusion-ODE system \eqref{fullsys}. We will show that a negative spectral bound of the linearized operator $\mathbf{L}$ in \eqref{LNequation} implies a {local exponential stability result. Consequently, unlike the specific study \cite{He},} we do not treat the degenerated case in which $0 \in \sigma(\mathbf{L})$. The section is based on the stability result \cite[Proposition 4.17]{Webb}. Since stability of a continuous steady state can be deduced from stability in $L^\infty(\Omega)^{m+k}$ or $L^\infty(\Omega)^m \times C(\overline{\Omega})^k$, we focus on the case of a discontinuous steady state for which the choice of the function space is challenging. The nonlinear operator $\mathbf{N}$ defined by equation \eqref{LNequation} is not well-defined on $L^\infty(\Omega)^m \times C(\overline{\Omega})^k$ for discontinuous steady states, 
while the nonlinear operator is continuous on $L^\infty(\Omega)^{m+k}$. On this space, however, the semigroup $(\mathbf{T}_\infty(t))_{t \in \mathbb{R}_{\ge 0}}$ is not strongly continuous and \cite{Webb} cannot directly be applied. Nevertheless, we can adapt the proof by applying semigroup estimates derived in the last section.

\begin{theorem} \label{stab}
Consider a steady state $(\overline{\mathbf{u}}, \overline{\mathbf{v}}) \in L^\infty(\Omega)^m \times C(\overline{\Omega})^k$ of system \eqref{fullsys}. Let Assumption \ref{ass:N} be satisfied. If the linearized operator $\mathbf{L}$ from Lemma \ref{Lclosed} has a negative spectral bound $s(\mathbf{L})<0$, then, the steady state is nonlinearly stable in the sense of Definition \ref{stability}. More precisely, we obtain local exponential stability in the sense that there exist some constants $\delta, \beta>0$ and $M \ge 1$ such that for initial conditions $(\mathbf{u}^0, \mathbf{v}^0) \in L^\infty(\Omega)^{m+k}$ with $\|(\mathbf{u}^0, \mathbf{v}^0)-(\overline{\mathbf{u}}, \overline{\mathbf{v}}) \|_\infty \le \delta$ the mild solution $\xi =(u,v)$ of system \eqref{fullsys} is globally defined and satisfies the estimate
\begin{align}
	\|(\mathbf{u}(\cdot,t), \mathbf{v}(\cdot,t)) - (\overline{\mathbf{u}}, \overline{\mathbf{v}})\|_\infty \le M \mathrm{e}^{-\beta t} \|(\mathbf{u}^0, \mathbf{v}^0)-(\overline{\mathbf{u}}, \overline{\mathbf{v}}) \|_\infty \quad \forall \; t \in \mathbb{R}_{\ge 0}. \label{expstab}
\end{align}
\end{theorem}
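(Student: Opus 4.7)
The plan is to adapt the classical linearized-stability scheme (in the spirit of \cite[Proposition 4.17]{Webb}) to the non-$C_0$ setting on $L^\infty(\Omega)^{m+k}$. The starting point is Proposition \ref{SBeGB}, which gives $w_0^\infty = s(\mathbf{L}) < 0$. Fix $\beta \in (0, -s(\mathbf{L}))$; then there exists a constant $M \ge 1$ with $\|\mathbf{T}_\infty(t)\|_\infty \le M \mathrm{e}^{-\beta t}$ for all $t \ge 0$. From Lemma \ref{nonlinearity}, pick $\rho, C > 0$ such that $\|\mathbf{N}(\xi)\|_\infty \le C\|\xi\|_\infty^{2}$ whenever $\|\xi\|_\infty < \rho$. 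Next choose $\varepsilon \in (0, \rho)$ satisfying $MC\varepsilon \le \beta/2$ and set $\delta := \varepsilon/(2M)$.

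Given $\xi^0$ with $\|\xi^0\|_\infty < \delta$, Proposition \ref{Rothesol} supplies a unique mild solution $\xi$ on a maximal interval $[0, T_{\max})$ satisfying the variation-of-constants formula in $L^\infty(\Omega)^{m+k}$. The plan is to run a bootstrap on
\[
T^\ast := \sup\{ t \in [0, T_{\max}) \mid \|\xi(s)\|_\infty \le \varepsilon \text{ for all } s \in [0,t]\}.
\]
Its positivity follows from the mild formula, the uniform bound $\|\mathbf{T}_\infty(t)\xi^0\|_\infty \le M\|\xi^0\|_\infty < \varepsilon/2$, and continuity of the convolution term at $t \to 0^+$. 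For $t \in [0, T^\ast)$, using the exponential decay of the semigroup and the superlinear estimate $\|\mathbf{N}(\xi(\tau))\|_\infty \le C\varepsilon \|\xi(\tau)\|_\infty$, one obtains
\[
\|\xi(t)\|_\infty \le M \mathrm{e}^{-\beta t}\|\xi^0\|_\infty + MC\varepsilon \int_0^t \mathrm{e}^{-\beta(t-\tau)} \|\xi(\tau)\|_\infty \, \mathrm{d}\tau.
\]
Multiplying by $\mathrm{e}^{\beta t}$ and applying Gronwall's inequality to $f(t) := \mathrm{e}^{\beta t}\|\xi(t)\|_\infty$ yields $\|\xi(t)\|_\infty \le M\|\xi^0\|_\infty \mathrm{e}^{-\beta t/2}$ on $[0, T^\ast)$. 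Since $M\|\xi^0\|_\infty < \varepsilon/2 < \varepsilon$, this closes the bootstrap and forces $T^\ast = T_{\max}$; the uniform bound in turn rules out $T_{\max} < \infty$ by the standard blow-up criterion supplied by Proposition \ref{Rothesol}. The resulting global estimate is precisely \eqref{expstab} with rate $\beta/2$.

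The principal obstacle is the lack of strong continuity of $(\mathbf{T}_\infty(t))_{t \in \mathbb{R}_{\ge 0}}$ at $t=0$, which rules out a direct invocation of $C_0$-semigroup stability theorems. To start and sustain the bootstrap one must lean on the weaker regularity identified earlier in the paper: by Lemma \ref{restrsemigroup} and Remark \ref{rem:regmildsol}, $\xi \in C((0, T_{\max}); L^\infty(\Omega)^{m+k})$, and the integrand $\mathbf{T}_\infty(t-\tau)\mathbf{N}(\xi(\tau))$ is continuous in $t$ even at $0$ because $\mathbf{T}_\infty$ is norm-continuous on $(0,\infty)$ while $\|\mathbf{N}(\xi(\tau))\|_\infty$ remains bounded near $\tau = 0$. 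This is exactly the trade-off enabled by Proposition \ref{SBeGB}: although $(\mathbf{T}_\infty(t))$ fails to be strongly continuous at $0$, its growth is still governed by the spectral bound of $\mathbf{L}$, which is precisely what is needed to make the Gronwall step succeed and yield \eqref{expstab}.
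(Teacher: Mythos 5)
Your proposal is correct and follows essentially the same route as the paper's proof: fix an exponential decay rate from Proposition \ref{SBeGB}, absorb the superlinear remainder via Lemma \ref{nonlinearity}, run a bootstrap on the first exit time from a small ball, close it with Gronwall applied to $\mathrm{e}^{\beta t}\|\xi(t)\|_\infty$, and rule out finite-time blow-up via Proposition \ref{Rothesol}. The handling of the failure of strong continuity at $t=0$ (continuity of the convolution term plus the bound $M\|\xi^0\|_\infty<\varepsilon$) matches the paper's argument for showing the exit time is positive.
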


\begin{proof}
We adapt the proof of \cite[Proposition 4.17]{Webb} to the linearized equation \eqref{LNequation} at $\xi=\mathbf{0}$. The proof is based on the choice of some constants $0<w<-w_0^\infty=-s(\mathbf{L})$ and $M=M_w \ge 1$ such that 
\[
\|\mathbf{T}(t) \xi\|_\infty \le M \mathrm{e}^{-w t} \|\xi\|_\infty \qquad \forall \; \xi \in L^\infty(\Omega)^{m+k}, t \in \mathbb{R}_{\ge 0}.
\]
In view of estimate \eqref{estimateN2}, we can choose $r>0$ such that $\|\mathbf{N}(\xi)\|_\infty \le \frac{w}{2M} \|\xi\|_\infty$ for all $\xi\in  L^\infty(\Omega)^{m+k}$ with $\|\xi\|_\infty < r$. Let $\delta = \frac{r}{M}$ and fix $\xi^0 \in  L^\infty(\Omega)^{m+k}$ with $\|\xi^0\|_\infty <\delta$. By Remark \ref{rem:regmildsol}, there exists a unique mild solution 
\[
\xi \in C([0,T_{\max});L^\infty(\Omega)^m \times L^p(\Omega)^k) \cap C((0,T_{\max});L^\infty(\Omega)^m \times C(\overline{\Omega})^k)
\]
of problem \eqref{LNequation} on a maximal existence interval $[0,T_{\max})$, where $T_{\max}$ depends on the initial conditions $\xi^0$. Let us define
\begin{align*}
	T:= \sup\{t>0 \mid  \|\xi(\cdot,\tau)\|_\infty \le r \ \text{for all}\  0<\tau\le t\}.
\end{align*}
Since $(\mathbf{T}_\infty(t))_{t\ge 0}$ is not strongly continuous, it is not obvious that $T< T_{\max}$ is a positive real number. The mild solution $\xi$ satisfies the implicit integral formulation \eqref{implicitT}. Estimating each term yields 
\begin{align*}
	\|\xi(t) \|_\infty & \le \|\mathbf{T}_{\infty}(t) \xi^0\|_\infty + \int_0^t \|\mathbf{T}_{\infty}(t-\tau) \mathbf{N}(\xi(\tau))\|_\infty \; \mathrm{d}\tau \\
	& \le M \mathrm{e}^{-w t} \|\xi^0 \|_\infty + M\int_0^t \mathrm{e}^{-w (t-\tau)} \|\mathbf{N}(\xi(\tau))\|_\infty \; \mathrm{d}\tau,
\end{align*}
where the last terms are continuous in $t \in \mathbb{R}_{\ge 0}$ and smaller than $r$ for $t=0$. This shows $T>0$ and we can further estimate the solution $\xi$ for $0\le t <T$. Continuity of $\mathbf{N}$ shows that the estimate $\|\mathbf{N}(\xi)\|_\infty \le \frac{w}{2M}\|\xi\|_\infty$ holds as long as $\|\xi\|_\infty \le r$, hence
\begin{align*}
	\mathrm{e}^{wt} \|\xi(\cdot, t)\|_\infty & \le M \|\xi^0 \|_\infty + \int_0^t M\mathrm{e}^{w\tau} \|\mathbf{N}(\xi(\cdot, \tau))\|_\infty \; \mathrm{d}\tau \\
	& \le M\|\xi^0\|_\infty + \frac{w}{2} \int_0^t \mathrm{e}^{w\tau}\|\xi(\cdot, \tau)\|_\infty \; \mathrm{d}\tau.
\end{align*}
Since $\tau \mapsto \xi(\cdot, \tau)$ is Bochner integrable on $L^\infty(\Omega)^{m+k}$, the function $\tau \mapsto \| \xi(\cdot, \tau) \|_\infty$ is a bounded, Lebesgue integrable function on $[0,T)$. By Gronwall's lemma, it follows
\begin{align*}
	\|\xi(\cdot, t) \|_\infty \le M \|\xi^0\|_\infty\mathrm{e}^{-\frac{w}{2}t} \le r \mathrm{e}^{-\frac{w}{2}t} \le r \qquad \forall \; 0\le t < T.
\end{align*}
If $T<T_{\max}$, we would have $\|\xi(\cdot, t)\|_\infty \to r$ for $t\to T$ by continuity of the solution $\xi \in C((0,T_{\max});  L^\infty(\Omega)^{m+k})$ and definition of $T$. However, this contradicts the estimate $\|\xi(\cdot, t)\|_\infty \le r\mathrm{e}^{-\frac{w}{2}t} \to r\mathrm{e}^{-\frac{w}{2}T} < r$ for $t\to T$. This shows $T \ge T_{\max}$ and no blow-up is possible as $t$ reaches $T_{\max}$ from below. By Proposition \ref{Rothesol} (iii), it follows $T = T_{\max} = \infty$. Hence, the exponential decay rate in estimate \eqref{expstab} can be chosen to be $\beta=w/2>0$. 
\end{proof}

{Using the notion of Lyapunov stability on certain subspaces of $L^\infty(\Omega)^{m+k}$, we gain the following result.}

\begin{corollary} \label{stabC}
Stability with respect to $L^\infty(\Omega)^{m+k}$ in the sense of Definition \ref{stability} implies Lyapunov stability in the subspace $L^\infty(\Omega)^m \times C(\overline{\Omega})^k$. Furthermore, concerning a continuous steady state in $C(\overline{\Omega})^{m+k}$ under Assumption \ref{ass:Nc}, Lyapunov stability in $L^\infty(\Omega)^m \times C(\overline{\Omega})^k$ implies Lyapunov stability in $C(\overline{\Omega})^{m+k}$.
\end{corollary}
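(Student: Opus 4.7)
The plan is to reduce both implications to a regularity upgrade for the mild solution and then use the fact that the $L^\infty$-norm of a continuous function coincides with its sup-norm on $\overline{\Omega}$. In both cases the decisive point is that the stability estimate $\|\xi(\cdot,t)\|_\infty<\varepsilon$ already carries all the quantitative information; what remains is to check in which space the solution actually evolves continuously in time, and to ensure that the estimate extends down to $t=0$.

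For the first implication I would proceed as follows. Fix $\varepsilon>0$ and pick the $\delta>0$ supplied by Definition \ref{stability}; shrinking $\delta$ if necessary I may assume $\delta\le\varepsilon$. Let $\xi^{0}\in L^{\infty}(\Omega)^{m}\times C(\overline{\Omega})^{k}$ with $\|\xi^{0}\|_\infty<\delta$. By Definition \ref{stability} the associated mild solution $\xi$ exists globally and satisfies $\|\xi(\cdot,t)\|_\infty<\varepsilon$ for all $t>0$. The additional ingredient is that $\xi^{0}\in Z_{c}$ and the semigroup $(\mathbf{T}_{c}(t))_{t\ge 0}$ on $Z_{c}$ is strongly continuous by Lemma \ref{restrsemigroup2}, so by Remark \ref{rem:regmildsol} we have $\xi\in C([0,\infty);Z_{c})$. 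In particular $\xi(\cdot,t)\to\xi^{0}$ in $\|\cdot\|_\infty$ as $t\to 0^{+}$, so $\|\xi(\cdot,0)\|_\infty=\|\xi^{0}\|_\infty<\delta\le\varepsilon$, which gives the missing inequality at $t=0$ and yields Lyapunov stability in $Z_{c}$.

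For the second implication I take a continuous steady state under Assumption \ref{ass:Nc} and consider $\xi^{0}\in C(\overline{\Omega})^{m+k}$. Under Assumption \ref{ass:Nc} the nonlinear remainder $\mathbf{N}$ maps $C(\overline{\Omega})^{m+k}$ into itself (its components are compositions of continuous functions), and by Lemma \ref{Lcontinuous} the part $\mathbf{L}^{c}$ of $\mathbf{L}$ in $C(\overline{\Omega})^{m+k}$ generates an analytic semigroup which is the restriction of $\mathbf{T}(t)$. A Picard iteration in $C(\overline{\Omega})^{m+k}$ as in Proposition \ref{Rothesol} therefore produces a mild solution $\tilde\xi\in C([0,\tilde T_{\max});C(\overline{\Omega})^{m+k})$. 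Since $C(\overline{\Omega})^{m+k}\hookrightarrow Z_{c}$ continuously and mild solutions are unique in $Z_{c}$, $\tilde\xi$ coincides with the $Z_{c}$-mild solution $\xi$ on their common interval of existence.

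Given $\varepsilon>0$, Lyapunov stability in $Z_{c}$ supplies $\delta>0$ such that $\|\xi^{0}\|_\infty<\delta$ forces $\|\xi(\cdot,t)\|_\infty<\varepsilon$ for all $t\ge0$ and guarantees global existence of $\xi$. For $\xi^{0}\in C(\overline{\Omega})^{m+k}$ the sup-norm and the $L^{\infty}$-norm agree, and because $\xi(\cdot,t)\in C(\overline{\Omega})^{m+k}$ for every $t\in[0,\tilde T_{\max})$ the bound $\|\xi(\cdot,t)\|_\infty<\varepsilon$ coincides with $\|\xi(\cdot,t)\|_{C(\overline{\Omega})^{m+k}}<\varepsilon$, ruling out blow-up in $C(\overline{\Omega})^{m+k}$ and hence forcing $\tilde T_{\max}=\infty$ by Proposition \ref{Rothesol}(iii). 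This gives Lyapunov stability in $C(\overline{\Omega})^{m+k}$. The only mildly delicate step is the uniqueness argument linking the Picard iterate in $C(\overline{\Omega})^{m+k}$ with the $Z_{c}$-mild solution; everything else is a direct comparison of norms on continuous functions.
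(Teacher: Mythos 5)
Your argument is correct and follows essentially the same route as the paper, which simply invokes the regularity of the mild solution from Remark \ref{rem:regmildsol} (strong continuity of $(\mathbf{T}_c(t))_{t\ge 0}$ on $Z_c$, respectively the Picard iteration in $C(\overline{\Omega})^{m+k}$ under Assumption \ref{ass:Nc}) together with the fact that all three spaces carry the same sup-norm. Your write-up merely makes explicit the uniqueness step identifying the $C(\overline{\Omega})^{m+k}$-iterate with the $Z_c$-mild solution, which the paper leaves implicit.
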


\begin{proof}
This is due to regularity of the solution discussed in Remark \ref{rem:regmildsol} and the fact that the same norm is used for each of these spaces.
\end{proof}

\subsection{Instability of steady states} \label{sec:instab}

In the nonlinear case, there are mainly two ways showing instability via linearization.\\ Superlinear decay of the nonlinear reaction term in combination with a non-empty intersection of the spectrum with the complex right half-plane is sufficient for instability, see \cite[Theorem 1]{Shatah} or \cite[Ch. VII, Theorem 2.3]{Daleckii} for bounded operators. Unfortunately, both the nonlinear term and the semigroup generated by the linearized operator need to be continuous on the same function space $X$. This seems to be the reason why \cite[Theorem 2.1]{MKS17} uses the space $X=W^{1,p}(\Omega)^{m+1}$, which requires a regular steady state $(\overline{u}, \overline{v}) \in X$ for the nonlinear term to be well-defined. In order to reduce regularity of the steady state, the authors of \cite{MKS17} choose a spectral gap condition described next.\\
A growth estimate of the nonlinear reaction term with respect to two different norms and a spectral gap next to the imaginary axis may lead to instability, see \cite[Theorem 2.1]{Friedlander} or \cite[Ch. VII, Theorem 2.2]{Daleckii} for bounded operators. The former method is used to provide the instability result \cite[Theorem 2.11]{MKS17} which can be generalized to systems and discontinuous, bounded steady states in view of Proposition \ref{RDODEspec}.\\ 

In this section, we aim for a generalization of the results from \cite{MKS17} to bounded, discontinuous and continuous, steady states of the reaction-diffusion-ODE system \eqref{fullsys} without requiring a spectral gap. 
We use the semigroup approach of \cite{Shatah} combined with the notion of a mild solution from \cite{Rothe} to show instability. First, we aim to verify semigroup estimates from \cite[Lemma 2, 3]{Shatah} {on $L^\infty(\Omega)^{m+k}$ in the case of a discontinuous stationary solution. Besides continuity of the solution for positive times, such estimates are essential for showing instability, see Theorem \ref{instabtheorem}.} \\

To obtain instability, let the spectrum of the linear operator $\mathbf{L}$ on $ L^\infty(\Omega)^{m} \times L^p(\Omega)^{k}$ defined in Lemma \ref{Lclosed} meet the right complex half-plane $\{\lambda \in \mathbb{C} \mid \mathrm{Re} \, \lambda >0\}$, i.e., we assume $s(\mathbf{L})>0$. In view of Proposition \ref{specrestricted}, $s(\mathbf{L}_c)=s(\mathbf{L})>0$ and the spectral radius of the restricted semigroup $(\mathbf{T}_c(t))_{t \in \mathbb{R}_{\ge 0}}$ satisfies $r(\mathbf{B})=\mathrm{e}^{w_0^c} >1$ for $\mathbf{B}:= \mathbf{T}_c(1)$ \cite[Ch. IV, Proposition 2.2]{Engel}. We choose some $\mu \in \sigma(\mathbf{B})$ on the outer boundary of the spectrum $\sigma(\mathbf{B})$ and obtain the following result similar to \cite[Lemma 2]{Shatah}. 

\begin{lemma} \label{Shatah2}
Let $\mu=\mathrm{e}^{\lambda} \in \sigma(\mathbf{T}_c(1))$ for $\lambda \in \mathbb{C}$ satisfy $|\mu|=r(\mathbf{T}_c(1))=\mathrm{e}^{s(\mathbf{L})}$ and $\mathrm{Re} \, \lambda = s(\mathbf{L}) >0$. Then for each $\gamma \in \mathbb{R}_{>0}$ and $m\in\mathbb{N}$ there exists a function $\mathbf{v} \in L^\infty(\Omega)^m \times C(\overline{\Omega})^k$ such that 
\begin{align*}
	\|\mathbf{T}_c(m) \mathbf{v} -\mathrm{e}^{\lambda m}  \mathbf{v} \|_\infty < \gamma \|\mathbf{v}\|_\infty  
\end{align*}
holds. Moreover, the same function $\mathbf{v}$ fulfills
\begin{align*}
	\|\mathbf{T}_c(t)\mathbf{v}\|_\infty  \le 2 K \mathrm{e}^{\mathrm{Re}  \lambda \,  t} \|\mathbf{v}\|_\infty \qquad \forall \; 0 \le t \le m, 
\end{align*}
where $K := \sup \{\|\mathbf{T}_c(s)\|_\infty \mid 0\le s \le1\}< \infty$.
\end{lemma}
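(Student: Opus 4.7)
The plan is to leverage the classical fact that every point on the topological boundary of the spectrum of a bounded linear operator on a Banach space belongs to its approximate point spectrum. Since $|\mu|=r(\mathbf{T}_c(1))$, the eigenvalue $\mu$ lies on the outer boundary of $\sigma(\mathbf{T}_c(1))$, hence is an approximate eigenvalue of $\mathbf{T}_c(1)$ on $Z_c=L^\infty(\Omega)^m\times C(\overline{\Omega})^k$. Thus for every $\varepsilon>0$ there exists $\mathbf{v}\in Z_c$ with $\|\mathbf{v}\|_\infty=1$ and
\[
\|\mathbf{T}_c(1)\mathbf{v}-\mu\mathbf{v}\|_\infty<\varepsilon.
\]
The same $\mathbf{v}$ will serve for both conclusions; $\varepsilon>0$ will be fixed at the end in terms of $\gamma$ and $m$.

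For the first estimate, I would apply the standard telescoping identity $T^m-\mu^m I=\sum_{j=0}^{m-1}\mu^{m-1-j}T^{j}(T-\mu I)$ with $T:=\mathbf{T}_c(1)$, so that $T^{j}=\mathbf{T}_c(j)$ by the semigroup law and $\mu^m=e^{\lambda m}$. Evaluating at $\mathbf{v}$ and taking the $Z_c$-norm yields
\[
\|\mathbf{T}_c(m)\mathbf{v}-e^{\lambda m}\mathbf{v}\|_\infty\le C_m\,\varepsilon,\qquad C_m:=\sum_{j=0}^{m-1}|\mu|^{m-1-j}\|\mathbf{T}_c(j)\|,
\]
where $C_m<\infty$ because $(\mathbf{T}_c(t))_{t\ge 0}$ is strongly continuous on $Z_c$ (Lemma~\ref{restrsemigroup2}) and therefore uniformly bounded on the compact interval $[0,m]$.

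For the second estimate, for any $t\in[0,m]$ decompose $t=j+s$ with $j\in\{0,1,\dots,m\}$ integer and $s\in[0,1)$. The same telescoping identity, truncated at step $j$, gives
\[
\|\mathbf{T}_c(j)\mathbf{v}\|_\infty\le |\mu|^{j}\|\mathbf{v}\|_\infty+C_m\,\varepsilon.
\]
Applying $\mathbf{T}_c(t)=\mathbf{T}_c(s)\mathbf{T}_c(j)$ and $\|\mathbf{T}_c(s)\|_\infty\le K$ on $s\in[0,1]$, one gets
\[
\|\mathbf{T}_c(t)\mathbf{v}\|_\infty\le K\bigl(|\mu|^{j}+C_m\varepsilon\bigr)\|\mathbf{v}\|_\infty.
\]
Since $|\mu|=e^{s(\mathbf{L})}=e^{\mathrm{Re}\,\lambda}$ and $\mathrm{Re}\,\lambda>0$, the monotonicity in the exponent gives $|\mu|^{j}=e^{\mathrm{Re}\,\lambda\cdot j}\le e^{\mathrm{Re}\,\lambda\cdot t}$, and further $e^{\mathrm{Re}\,\lambda\cdot t}\ge 1$ on $[0,m]$, so the $\varepsilon$-term is bounded above by $K e^{\mathrm{Re}\,\lambda\cdot t}$ as soon as $C_m\varepsilon\le 1$.

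It then suffices to choose $\varepsilon\le\min\{\gamma/C_m,\,1/C_m\}$: the first choice makes the $L^\infty$-difference below $\gamma\|\mathbf{v}\|_\infty$, the second makes the growth bound $\le 2Ke^{\mathrm{Re}\,\lambda\cdot t}\|\mathbf{v}\|_\infty$. The only non-mechanical step is the invocation of the boundary-of-spectrum result, which requires that $\mathbf{T}_c(1)$ is a bounded operator on the Banach space $(Z_c,\|\cdot\|_\infty)$; this is guaranteed by Lemma~\ref{restrsemigroup2}, while the fact that $\mu$ sits on $\partial\sigma(\mathbf{T}_c(1))$ follows simply from $|\mu|=r(\mathbf{T}_c(1))$. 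Everything else is telescoping and careful bookkeeping of constants depending on $m$.
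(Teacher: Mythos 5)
Your argument is correct and is essentially the paper's approach: the paper's proof consists of the single line ``apply \cite[Lemma 2]{Shatah} to the bounded operator $\mathbf{T}_c(1)$ on $Z_c$'', and what you have written is precisely a self-contained reconstruction of that cited lemma (boundary of the spectrum lies in the approximate point spectrum, then the telescoping identity and the bound $|\mu|^{j}\le \mathrm{e}^{\mathrm{Re}\,\lambda\, t}$ using $\mathrm{Re}\,\lambda>0$). The only cosmetic point is to pick $\varepsilon$ strictly below $\gamma/C_m$ (or note that the strict inequality in the approximate-eigenvector estimate already propagates) so that the first conclusion is strict.
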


\begin{proof}
Apply \cite[Lemma 2]{Shatah} to the bounded operator $\mathbf{B}=\mathbf{T}_c(1)$ on $L^\infty(\Omega)^m \times C(\overline{\Omega})^k$, see Lemma \ref{restrsemigroup2}.
\end{proof}

While Lemma \ref{Shatah2} provides initial data which lead us to instability, the following inequalities are used for an estimation of the implicit integral equation of the mild solution in $L^\infty(\Omega)^{m+k}$.

\begin{lemma} \label{Shatah3}
Let $\lambda \in \mathbb{C}$ with $\mathrm{Re} \, \lambda = s(\mathbf{L})>0$
. Then for each $\varepsilon>0$ there is a constant $C_{\varepsilon}>0$ such that 
\begin{align*}
	\mathrm{e}^{\mathrm{Re}  \lambda \, t} \le \|\mathbf{T}_\infty(t)\|_\infty  \le C_{\varepsilon} \mathrm{e}^{(\mathrm{Re} \lambda + \varepsilon) t} \qquad \; \forall \; t \in \mathbb{R}_{\ge 0}. 
\end{align*}
\end{lemma}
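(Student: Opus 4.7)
The plan is to establish the upper and lower bounds separately; both will be immediate consequences of results already assembled in Section~\ref{sec:Lin}, and the entire proof essentially packages Proposition~\ref{SBeGB} with Proposition~\ref{specrestricted}.

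For the upper bound, I would invoke Proposition~\ref{SBeGB}, which identifies the growth bound $w_0^\infty$ of the semigroup $(\mathbf{T}_\infty(t))_{t \ge 0}$ with the spectral bound $s(\mathbf{L})$. By the very definition of the growth bound, for each $\varepsilon > 0$ there exists a constant $C_\varepsilon \ge 1$ such that $\|\mathbf{T}_\infty(t)\|_\infty \le C_\varepsilon \mathrm{e}^{(w_0^\infty + \varepsilon)t}$ for every $t \ge 0$. Substituting the hypothesis $w_0^\infty = s(\mathbf{L}) = \mathrm{Re}\,\lambda$ yields the asserted right-hand inequality.

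For the lower bound, I would pass to the restricted analytic semigroup $(\mathbf{T}_c(t))_{t \ge 0}$ on $Z_c = L^\infty(\Omega)^m \times C(\overline{\Omega})^k$ furnished by Lemma~\ref{restrsemigroup2}. Since $\mathbf{T}_c(t)$ is the restriction of $\mathbf{T}_\infty(t)$ to $Z_c$ and both spaces carry the $L^\infty$-norm, one has directly $\|\mathbf{T}_c(t)\|_{\mathcal{L}(Z_c)} \le \|\mathbf{T}_\infty(t)\|_{\mathcal{L}(Z_\infty)}$. Analyticity of $(\mathbf{T}_c(t))$ allows me to invoke the spectral mapping theorem \cite[Ch.~IV, Corollary~3.12]{Engel}, giving $\sigma(\mathbf{T}_c(t)) \setminus \{0\} = \mathrm{e}^{t\sigma(\mathbf{L}_c)}$. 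Combining this with the equality $\sigma(\mathbf{L}_c) = \sigma(\mathbf{L})$ from Proposition~\ref{specrestricted}, I obtain $r(\mathbf{T}_c(t)) = \mathrm{e}^{t\, s(\mathbf{L})}$. Applying the elementary bound $r(A) \le \|A\|$ to $A = \mathbf{T}_c(t)$ and chaining with the restriction inequality then gives
\[
\mathrm{e}^{\mathrm{Re}\,\lambda\, t} \;=\; \mathrm{e}^{s(\mathbf{L})\, t} \;=\; r(\mathbf{T}_c(t)) \;\le\; \|\mathbf{T}_c(t)\|_\infty \;\le\; \|\mathbf{T}_\infty(t)\|_\infty,
\]
which is the required left-hand inequality.

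I do not anticipate any genuine obstacle. The one point requiring care is that spectral mapping must be invoked on the analytic strongly continuous semigroup $(\mathbf{T}_c(t))$ on $Z_c$ rather than directly on $(\mathbf{T}_\infty(t))$, since the latter fails to be strongly continuous at $t = 0$, so the standard spectral mapping theorem for analytic semigroups is not immediately available on $Z_\infty$. The restriction inequality $\|\mathbf{T}_c(t)\|_\infty \le \|\mathbf{T}_\infty(t)\|_\infty$ then serves as the bridge that transports the spectral lower bound from $Z_c$ back to $Z_\infty$.
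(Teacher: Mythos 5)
Your proposal is correct. The upper bound is exactly the paper's argument: Proposition \ref{SBeGB} gives $w_0^\infty = s(\mathbf{L})$ and the definition of the growth bound does the rest. For the lower bound, however, you take a genuinely different route. The paper stays on $Z_\infty$: it applies the subadditivity lemma \cite[Ch.~IV, Lemma~2.3]{Engel} to the exponentially bounded (but not strongly continuous) semigroup $(\mathbf{T}_\infty(t))$ to obtain $w_0^\infty = \log r(\mathbf{T}_\infty(1))$, concludes $\mathrm{e}^{\mathrm{Re}\,\lambda} = r(\mathbf{T}_\infty(1)) = \lim_{m\to\infty}\|\mathbf{T}_\infty(m)\|^{1/m}$, and then finishes as in \cite[Lemma~3]{Shatah} via $r(\mathbf{T}_\infty(t)) \le \|\mathbf{T}_\infty(t)\|$. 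You instead detour through the analytic, strongly continuous restriction $(\mathbf{T}_c(t))$ on $Z_c$, where the spectral mapping theorem is available off the shelf, and transport the bound back via $\|\mathbf{T}_c(t)\|_{\mathcal{L}(Z_c)} \le \|\mathbf{T}_\infty(t)\|_{\mathcal{L}(Z_\infty)}$ (valid since $Z_c$ is a closed subspace carrying the same norm and $\mathbf{T}_c(t) = \mathbf{T}_\infty(t)|_{Z_c}$). What your route buys: the lower bound becomes independent of Proposition \ref{SBeGB} entirely, needing only $\sigma(\mathbf{L}_c)=\sigma(\mathbf{L})$ from Proposition \ref{specrestricted} and Lemma \ref{restrsemigroup2}, and it avoids having to justify the identity $w_0^\infty = \log r(\mathbf{T}_\infty(1))$ for a semigroup that is not $C_0$ (the paper can only say this is ``similar to'' the proof of \cite[Ch.~IV, Proposition~2.2]{Engel}, which is stated for strongly continuous semigroups). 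What the paper's route buys: it works entirely on the space where the inequality is asserted and mirrors Shatah's original lemma more closely. Both arguments are sound.
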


\begin{proof} 
Proposition \ref{SBeGB} shows $w_0^\infty=s(\mathbf{L})$ for the growth bound $w_0^\infty$ of $(\mathbf{T}_\infty(t))_{t \in \mathbb{R}_{\ge 0}}$. Applying \cite[Ch. IV, Lemma 2.3]{Engel} to the exponentially bounded semigroup $(\mathbf{T}_\infty(t))_{t \in \mathbb{R}_{\ge 0}}$ on $L^\infty(\Omega)^{m+k}$ yields $w_0^\infty = \log r(\mathbf{T}_\infty(1))$, similar to the proof of \cite[Ch. IV, Proposition 2.2]{Engel}. Hence, the definition of the spectral radius implies	
\[
\mathrm{e}^{\mathrm{Re} \lambda} = |\mathrm{e}^{\lambda}| = r(\mathbf{T}_\infty(1)) = \lim_{m\to\infty} \|\mathbf{T}_\infty(m)\|^{1/m}.
\]
The rest is analog to the proof of \cite[Lemma 3]{Shatah}.
\end{proof}

Proposition \ref{SBeGB} shows the relation $w_0^\infty = s(\mathbf{L})$ between the growth bound $w_0^\infty$ of the semigroup  $(\mathbf{T}(t))_{t \in \mathbb{R}_{\ge 0}}$ restricted to $L^\infty(\Omega)^{m+k}$ and the spectral bound $s(\mathbf{L})$ of the linear operator $\mathbf{L}$. As a consequence, a positive spectral bound $s(\mathbf{L})$ implies a positive growth bound $w_0^\infty$, and hence instability.

\begin{theorem} \label{instabtheorem}
Consider a steady state $(\overline{\mathbf{u}}, \overline{\mathbf{v}}) \in L^\infty(\Omega)^m \times C(\overline{\Omega})^k$ of system \eqref{fullsys}. Let Assumption \ref{ass:N} be satisfied. If the linear operator $\mathbf{L}$ from Lemma \ref{Lclosed} has a positive spectral bound $s(\mathbf{L})>0$, then the steady state is nonlinearly unstable in the sense of Definition \ref{stability} and in the sense of Lyapunov in $L^\infty(\Omega)^m \times C(\overline{\Omega})^k$.
\end{theorem}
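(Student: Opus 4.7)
The plan is to argue by contradiction following the Shatah--Strauss scheme, but adapted to the non--strongly-continuous semigroup $(\mathbf{T}_\infty(t))_{t\ge 0}$ on $L^\infty(\Omega)^{m+k}$. The three ingredients that make this adaptation possible are already in place: the identity $s(\mathbf{L})=w_0^\infty$ of Proposition \ref{SBeGB}, which transforms the spectral hypothesis $s(\mathbf{L})>0$ into an exponential lower bound for $\|\mathbf{T}_\infty(t)\|$ through Lemma \ref{Shatah3}; the analytic semigroup $(\mathbf{T}_c(t))_{t\ge 0}$ on $Z_c=L^\infty(\Omega)^m\times C(\overline{\Omega})^k$ of Lemma \ref{restrsemigroup2}, on which Lemma \ref{Shatah2} produces an almost-eigenvector; and the temporal continuity of mild solutions for $t>0$ noted in Remark \ref{rem:regmildsol}, which is needed to close the bootstrap below. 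Fix $\lambda\in\mathbb{C}$ with $\mathrm{Re}\,\lambda=s(\mathbf{L})>0$ on the peripheral spectrum of $\mathbf{T}_c(1)$, which exists since the analytic spectral mapping theorem applied to $(\mathbf{T}_c(t))_{t\ge 0}$ yields $r(\mathbf{T}_c(1))=e^{s(\mathbf{L})}$, and this supremum is attained because $\sigma(\mathbf{T}_c(1))$ is closed.

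Suppose the trivial solution of \eqref{LNequation} is nonlinearly stable. Choose small parameters $\varepsilon,\gamma\in(0,\mathrm{Re}\,\lambda/2)$, a large integer $m$ to be fixed, and via Lemma \ref{Shatah2} a vector $\mathbf{v}\in Z_c$ with $\|\mathbf{v}\|_\infty=1$ satisfying
\[
\|\mathbf{T}_c(m)\mathbf{v}-e^{\lambda m}\mathbf{v}\|_\infty<\gamma,\qquad \|\mathbf{T}_c(t)\mathbf{v}\|_\infty\le 2K\,e^{\mathrm{Re}\,\lambda\,t}\quad(0\le t\le m).
\]
Set $\theta:=\eta_0\,e^{-\mathrm{Re}\,\lambda\,m}$ for a small fixed $\eta_0>0$ to be chosen, and consider the initial datum $\xi^0:=\theta\mathbf{v}\in Z_c$, whose $L^\infty$-norm can be made arbitrarily small by enlarging $m$. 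By Proposition \ref{Rothesol} and Lemma \ref{restrsemigroup2}, the mild solution belongs to $C([0,T_{\max});Z_c)$; by the stability assumption, $T_{\max}=\infty$ and $\|\xi(t)\|_\infty$ stays in the regime where Lemma \ref{nonlinearity} applies.

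The heart of the argument is the bootstrap bound
\[
\|\xi(t)\|_\infty\le 3K\,\theta\, e^{\mathrm{Re}\,\lambda\,t}\qquad(0\le t\le m).
\]
I would prove it by inserting the implicit integral formula $\xi(t)=\theta\mathbf{T}_c(t)\mathbf{v}+\int_0^t\mathbf{T}_\infty(t-\tau)\mathbf{N}(\xi(\tau))\,\mathrm{d}\tau$ into the sup-norm, estimating the first term by the second inequality of Lemma \ref{Shatah2}, the semigroup factor in the integrand by $\|\mathbf{T}_\infty(t-\tau)\|_\infty\le C_\varepsilon e^{(\mathrm{Re}\,\lambda+\varepsilon)(t-\tau)}$ from Lemma \ref{Shatah3}, and the nonlinearity by the quadratic bound of Lemma \ref{nonlinearity}. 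A direct evaluation of $\int_0^t e^{(\mathrm{Re}\,\lambda-\varepsilon)\tau}\,\mathrm{d}\tau$ gives a contribution of order $\theta^2\,e^{2\mathrm{Re}\,\lambda\,t}$, so choosing $\eta_0$ small enough that the constant in front of this contribution times $\eta_0$ is at most $K$ closes the bound by a continuity/connectedness argument using that $\xi\in C([0,m];Z_c)$.

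With this control, I evaluate the integral equation at $t=m$ and split $\theta\mathbf{T}_c(m)\mathbf{v}=\theta e^{\lambda m}\mathbf{v}+\theta[\mathbf{T}_c(m)\mathbf{v}-e^{\lambda m}\mathbf{v}]$ to obtain
\[
\|\xi(m)\|_\infty\ge\theta\,e^{\mathrm{Re}\,\lambda\,m}-\theta\gamma-C'\theta^2 e^{2\mathrm{Re}\,\lambda\,m}=\eta_0-\gamma\eta_0 e^{-\mathrm{Re}\,\lambda\,m}-C'\eta_0^2,
\]
where $C'$ depends only on $K,C,C_\varepsilon$ and $\mathrm{Re}\,\lambda-\varepsilon$. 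Taking first $\eta_0$ so small that $C'\eta_0\le 1/4$ and then $m$ so large that $\gamma e^{-\mathrm{Re}\,\lambda\,m}\le 1/4$ forces $\|\xi(m)\|_\infty\ge\eta_0/2$, whereas $\|\xi^0\|_\infty=\theta\to 0$ as $m\to\infty$. Picking the stability threshold $\eta:=\eta_0/3$ in Definition \ref{stability} gives the required contradiction. Since $\xi^0\in Z_c$ and $\xi\in C([0,\infty);Z_c)$, the same computation also establishes Lyapunov instability in $L^\infty(\Omega)^m\times C(\overline{\Omega})^k$. I expect the main technical obstacle to be the bootstrap step: one has to propagate a bound whose only available ``propagation'' tool on $L^\infty(\Omega)^{m+k}$ is the non-strongly-continuous semigroup $(\mathbf{T}_\infty(t))_{t>0}$, which is why the norm-continuity of $t\mapsto\mathbf{T}_\infty(t)$ for $t>0$ (Lemma \ref{restrsemigroup}) and the continuity of $\xi$ for $t>0$ are indispensable for the closing continuity argument.
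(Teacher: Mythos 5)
Your proposal is correct and follows essentially the same route as the paper: the Shatah--Strauss contradiction argument built on the almost-eigenvector of Lemma \ref{Shatah2}, the growth estimate of Lemma \ref{Shatah3} (hence Proposition \ref{SBeGB}), the quadratic bound of Lemma \ref{nonlinearity}, and continuity of the mild solution for $t>0$. The only difference is bookkeeping: the paper tracks the deviation $\|\xi(\cdot,t)-\mathbf{T}(t)\mathbf{v}\|_\infty$ up to an exit time $T$ defined as a supremum, whereas you run a direct bootstrap on $\|\xi(\cdot,t)\|_\infty$ up to a fixed horizon $t=m$ chosen in advance; these are equivalent formulations of the same estimate.
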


\begin{proof}
The proof is basically analog to the proof of \cite[Theorem 1]{Shatah}, where a linearized equation of the form \eqref{LNequation} is considered. {In the following we depict only main adaptions of the proof.} We consider the restricted semigroup $(\mathbf{T}_c(t))_{t \in \mathbb{R}_{\ge 0}}$ in Lemma \ref{Shatah2} to provide bounded initial data $\mathbf{v} \in L^\infty(\Omega)^{m} \times C(\overline{\Omega})^{k}$ that lead us to instability. Assuming stability to the contrary, a global mild solution in the sense of Definition \ref{mildsol} satisfies 
\[
\xi \in C(\mathbb{R}_{\ge 0}; L^\infty(\Omega)^m \times L^p(\Omega)^k) \cap C(\mathbb{R}_{>0}; L^\infty(\Omega)^{m} \times C(\overline{\Omega})^{k}).
\]
However, the restricted semigroup $(\mathbf{T}_\infty(t))_{t\in \mathbb{R}_{\ge0}}$ is not strongly continuous on $L^\infty(\Omega)^{m+k}$. Nevertheless, from boundedness of the solution $\xi$, we infer that
\begin{align*}
	t \mapsto	 \xi(\cdot,t)-\mathbf{T}(t)\mathbf{v} = \int_{0}^{t} \mathbf{T}(t-\tau) \mathbf{N}(\xi(\cdot,\tau)) \; \mathrm{d}\tau \in L^\infty(\Omega)^{m+k}
\end{align*}
is continuous in $\mathbb{R}_{\ge 0}$ and equals $\mathbf{0}$ in $t=0$ \cite[Proposition 1.3.4]{Arendt}. Using $\sigma=s(\mathbf{L}) >0$, we define 
\begin{align*}
	T:= \sup\{t>0 \mid \|\xi(\cdot,\tau)-\mathbf{T}(\tau)\mathbf{v}\|_\infty < \frac{1}{2|\mu|}\delta \mathrm{e}^{\sigma \tau},  \|\xi(\cdot,\tau)\|_\infty < \frac{\rho}{2} \ \text{for all}\  0<\tau\le t\}.
\end{align*}
Whereas the first condition can be satisfied by continuity for all sufficiently small $t$, the second condition needs more arguments. Using Lemma \ref{Shatah2} and \ref{Shatah3}, we obtain
\begin{align*}
	\|\xi(\cdot,t)\|_\infty &\le \|\mathbf{T}(t)\mathbf{v}\|_\infty + \int_{0}^{t} \|\mathbf{T}(t-\tau) \mathbf{N}(\xi(\cdot,\tau))\|_\infty \; \mathrm{d}\tau \\
	& \le 2K\mathrm{e}^{\sigma t}\|\mathbf{v}\|_\infty + \int_{0}^{t} C_{\varepsilon}\mathrm{e}^{(\sigma + \varepsilon)(t-\tau)} \|\mathbf{N}(\xi(\cdot,\tau))\|_\infty \; \mathrm{d}\tau.
\end{align*}
The right-hand side is continuous in $t \in \mathbb{R}_{\ge 0}$ and equals $2K\|\mathbf{v}\|_\infty = 2K \delta$ in $t=0$. In order to ensure $T>0$ in above definition, we choose in case of $2K >1$
\[
0<\delta<\min\left\{\delta_0,k^{-1},\frac{\rho}{4K},1\right\}
\]
rather than $0<\delta<\min\{\delta_0,k^{-1},\frac{\rho}{2},1\}$ for initial conditions $\mathbf{v} \in L^\infty(\Omega)^{m} \times C(\overline{\Omega})^{k}$ with $\|\mathbf{v}\|_\infty = \delta$. Apart from these modifications, the remainder of the proof is identical to \cite{Shatah}. It is shown that there is some time point at which the solution leaves the presumed invariant region. This leads to a contradiction to stability in the sense of Definition \ref{stability}, and hence instability of the zero solution.\\
Concerning instability in the sense of Lyapunov in $L^\infty(\Omega)^m \times C(\overline{\Omega})^k$, we allow only perturbations in $L^\infty(\Omega)^m \times C(\overline{\Omega})^k$. Thus, the proof remains the same. Remark \ref{rem:regmildsol} yields continuity of the mild solution.
\end{proof}

The characterization of the spectrum of the linear operator $\mathbf{L}$ in Proposition \ref{RDODEspec} can be used to determine the sign of the spectral bound $s(\mathbf{L})$. Note that this spectrum is independent of the parameter $p$ by Corollary \ref{independence}. Recall that a spectral value $\lambda \in \sigma(\mathbf{A}_{\ast})$ of the ODE subsystem with $\mathrm{Re} \, \lambda >0$ already implies the instability condition $s(\mathbf{L})>0$.\\ 

Theorem \ref{instabtheorem} provides an instability result in the sense of Definition \ref{stability} and in the sense of Lyapunov in $L^\infty(\Omega)^m \times C(\overline{\Omega})^k$. This is a consequence of the choice of initial values from Lemma \ref{Shatah2}. In the case of a continuous stationary solution, one may also consider instability in the sense of Lyapunov in $C(\overline{\Omega})^{m+k}$. For this, however, we also need continuous initial values that lead us to instability. 

\begin{corollary} \label{instabC}
Consider a steady state $(\overline{\mathbf{u}}, \overline{\mathbf{v}}) \in C(\overline{\Omega})^{m+k}$ of system \eqref{fullsys} and let Assumption \ref{ass:Nc} hold. If the linear operator $\mathbf{L}$ from Lemma \ref{Lclosed} has a positive spectral bound $s(\mathbf{L})>0$, then the steady state is nonlinearly unstable in the sense of Lyapunov in $C(\overline{\Omega})^{m+k}$.
\end{corollary}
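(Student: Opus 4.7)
The plan is to redo the argument of Theorem \ref{instabtheorem}, but performed entirely on the function space $C(\overline{\Omega})^{m+k}$ rather than on the pair $L^\infty(\Omega)^{m+k}$ and $L^\infty(\Omega)^m \times C(\overline{\Omega})^k$. The decisive advantage of the continuous setting is that, by Lemma \ref{Lcontinuous}, the part $\mathbf{L}^c$ of $\mathbf{L}$ in $C(\overline{\Omega})^{m+k}$ is densely defined, closed, and generates a strongly continuous analytic semigroup $(\mathbf{T}^c(t))_{t \ge 0}$ which coincides with the restriction of $(\mathbf{T}(t))_{t \ge 0}$. Moreover, Proposition \ref{RDODEspecC} gives $\sigma(\mathbf{L}^c) = \sigma(\mathbf{L})$, hence $s(\mathbf{L}^c)=s(\mathbf{L})>0$, and by the spectral mapping theorem for analytic semigroups \cite[Ch. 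IV, Corollary 3.12]{Engel} the growth bound of $(\mathbf{T}^c(t))_{t \ge 0}$ coincides with $s(\mathbf{L})$.

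Next, I would set up the nonlinear remainder $\mathbf{N}$ on $C(\overline{\Omega})^{m+k}$. Under Assumption \ref{ass:Nc}, the Taylor expansion in the proof of Lemma \ref{nonlinearity} applies verbatim with $L^\infty$-norms replaced by the sup-norm on $C(\overline{\Omega})$: the remainder $\mathbf{H}(\xi,\cdot)$ is continuous in $x \in \overline{\Omega}$ whenever $\xi$ is, so $\mathbf{N}: C(\overline{\Omega})^{m+k}\to C(\overline{\Omega})^{m+k}$ is locally Lipschitz, vanishes at $\mathbf{0}$, and satisfies $\|\mathbf{N}(\xi)\|_\infty \le C\|\xi\|_\infty^2$ for small $\xi$. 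By the Picard-iteration variant of \cite[Ch. 6, Theorem 1.4]{Pazy} referenced in Remark \ref{rem:regmildsol}, the perturbed semilinear equation \eqref{LNequation} with initial data $\xi^0 \in C(\overline{\Omega})^{m+k}$ admits a unique mild solution $\xi \in C([0,T_{\max});C(\overline{\Omega})^{m+k})$ obeying the integral formula $\xi(t) = \mathbf{T}^c(t)\xi^0 + \int_0^t \mathbf{T}^c(t-\tau)\mathbf{N}(\xi(\tau))\,\mathrm{d}\tau$.

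With these ingredients in place, I would simply repeat Lemmas \ref{Shatah2} and \ref{Shatah3} on $C(\overline{\Omega})^{m+k}$. Writing $\mathbf{B}^c:=\mathbf{T}^c(1)$, the spectral radius satisfies $r(\mathbf{B}^c)=\mathrm{e}^{s(\mathbf{L})}>1$, so \cite[Lemma 2]{Shatah} applied to $\mathbf{B}^c$ produces, for every $\gamma>0$ and every $m\in\mathbb{N}$, a continuous initial datum $\mathbf{v} \in C(\overline{\Omega})^{m+k}$ with $\|\mathbf{T}^c(m)\mathbf{v}-\mathrm{e}^{\lambda m}\mathbf{v}\|_\infty < \gamma \|\mathbf{v}\|_\infty$ and $\|\mathbf{T}^c(t)\mathbf{v}\|_\infty \le 2K^c \mathrm{e}^{\mathrm{Re}\,\lambda\,t}\|\mathbf{v}\|_\infty$ for $0 \le t \le m$, where $K^c:=\sup_{0\le s\le 1}\|\mathbf{T}^c(s)\|$. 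Similarly, the two-sided exponential bound $\mathrm{e}^{\mathrm{Re}\,\lambda\, t}\le \|\mathbf{T}^c(t)\|\le C_\varepsilon \mathrm{e}^{(\mathrm{Re}\,\lambda+\varepsilon)t}$ follows directly from $s(\mathbf{L})=w_0^c$ and \cite[Ch. IV, Lemma 2.3]{Engel}.

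Finally, inserting these $C(\overline{\Omega})^{m+k}$-valued estimates into the contradiction argument of \cite[Theorem 1]{Shatah}, one defines $T:=\sup\{t>0 \mid \|\xi(\tau)-\mathbf{T}^c(\tau)\mathbf{v}\|_\infty<\tfrac{1}{2|\mu|}\delta \mathrm{e}^{\sigma\tau},\; \|\xi(\tau)\|_\infty<\rho/2 \text{ for } 0<\tau\le t\}$ with $\sigma=s(\mathbf{L})$. Continuity of $\xi$ in $C(\overline{\Omega})^{m+k}$ up to $t=0$, which is the one ingredient that failed in the $L^\infty$-setting of Theorem \ref{instabtheorem}, now comes for free because $(\mathbf{T}^c(t))_{t\ge 0}$ is strongly continuous. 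Hence $T>0$, and the computation of Shatah shows that $\xi$ must leave the $\rho/2$-ball in $C(\overline{\Omega})^{m+k}$ before $T$, contradicting Lyapunov stability in $C(\overline{\Omega})^{m+k}$. The one subtle point to handle carefully is the compatibility between $\mathbf{T}^c$ and the integral formula for the mild solution, but this is already guaranteed by Lemma \ref{Lcontinuous}; after that the proof is a transcription of the Shatah argument, which is why I expect no serious obstacle beyond bookkeeping.
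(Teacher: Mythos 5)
Your proposal is correct and follows essentially the same route as the paper: the paper's proof likewise invokes Lemma \ref{Lcontinuous} and Proposition \ref{RDODEspecC} to get the generator $\mathbf{L}^c$ with $\sigma(\mathbf{L}^c)=\sigma(\mathbf{L})$, notes that the nonlinear estimates of Lemma \ref{nonlinearity} hold on $C(\overline{\Omega})^{m+k}$ under Assumption \ref{ass:Nc}, and then applies \cite[Theorem 1]{Shatah} directly, since the semigroup is strongly continuous and analytic on $C(\overline{\Omega})^{m+k}$ so the spectral mapping theorem holds. You spell out the Shatah machinery (the analogues of Lemmas \ref{Shatah2} and \ref{Shatah3} and the contradiction argument) in more detail than the paper, but no new idea is needed.
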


\begin{proof}
The modified Assumption \ref{ass:N} implies that the operator $\mathbf{L}^c$ is defined as in Lemma \ref{Lcontinuous} with spectral characterization $\sigma(\mathbf{L}^c)= \sigma(\mathbf{L})$ by Proposition \ref{RDODEspecC}. Moreover, the assumptions imply validity of the estimates of the nonlinear remainder from Lemma \ref{nonlinearity} on $C(\overline{\Omega})^{m+k}$. Since the spectral mapping theorem holds in case of the analytic semigroup generated by $\mathbf{L}^c$, \cite[Theorem 1]{Shatah} can be applied on $C(\overline{\Omega})^{m+k}$.
\end{proof}

\begin{remark} \label{rem:general}
Due to flexibility of semigroup theory, other differential operators than the Laplace operator may be considered, also endowed with different boundary conditions. The spectral characterization given in Proposition \ref{RDODEspec} and Corollary \ref{independence} is mainly due to the compact resolvent and the regularizing effect of the Laplacian on a bounded domain. Further calculations in Section \ref{sec:specanalysis} are based on analyticity of the semigroup $(\mathbf{T}(t))_{t \in \mathbb{R}_{\ge 0}}$ on $L^\infty(\Omega)^m \times L^p(\Omega)^k$ with a continuous embedding $(\mathcal{D}(\mathbf{L}), \| \cdot\|_{\mathbf{L}}) \hookrightarrow L^\infty(\Omega)^{m+k}$ for the generator $\mathbf{L}$. We refer to \cite{Lunardi, Nittka, Ouhabaz} for differential operators which satisfy similar properties. 
The notion of stability and the corresponding stability results are based on existence and uniqueness of a mild solution $t \mapsto \xi(\cdot, t)$ of problem \eqref{LNequation} which is norm-continuous in $L^\infty(\Omega)^{m+k}$ for $t>0$. 
Moreover, the stability result is applicable to classical reaction-diffusion systems $(m=0)$, for which the spectrum $\sigma(\mathbf{L})$ is purely discrete by \cite[Ch. IV, Corollary 1.19]{Engel} and the compact resolvent of $\mathbf{L}$, compare \cite{Henry, Smoller}.
\end{remark}

\section{Applications} \label{sec:appl}

In this section, we show applicability of the developed theory to the analysis of pattern formation on two basic examples of reaction-diffusion-ODE problems exhibiting unstable continuous and stable discontinuous stationary solutions. We start with some general properties of the linearized model. To this end, we focus on the spectrum of the linearized operator $\mathbf{L}$ defined in Lemma \ref{Lclosed}, i.e., $\mathbf{L}: \mathcal{D}(\mathbf{L}) \subset L^\infty(\Omega)^{m} \times L^p(\Omega)^{k} \to L^\infty(\Omega)^{m} \times L^p(\Omega)^{k}$ given by
\begin{align*}
(\mathbf{L} 
\xi) (x) & =  \begin{pmatrix}
	\mathbf{0} \\
	\mathbf{D}^v \Delta \xi_2(x)
\end{pmatrix}
+  \begin{pmatrix}
	\mathbf{A}_{\ast}(x) \xi_1(x) +  \mathbf{B}_{\ast}(x) \xi_2(x)\\
	\mathbf{C}_{\ast}(x) \xi_1(x) + \mathbf{D}_{\ast}(x) \xi_2(x)
\end{pmatrix} 
\end{align*}
where $\xi= (\xi_1, \xi_2) \in \mathcal{D}(\mathbf{L})= L^\infty(\Omega)^{m} \times \mathcal{D}(A_p)^{k}$ for $p>n^\ast=\max\{n/2,2\}$. Recall that the coefficients $\mathbf{A}_{\ast}, \mathbf{B}_{\ast}, \mathbf{C}_{\ast},\mathbf{D}_{\ast}$ are matrices with entries in $L^\infty(\Omega)$. If $s(\mathbf{A}_{\ast}) >0$, then it follows $s(\mathbf{L})>0$ by Proposition \ref{RDODEspec}. 
In general, $s(\mathbf{A}_{\ast}) < 0$ does not imply a negative spectral bound of $\mathbf{L}$, see 
Theorem \ref{bistable} below. In such a case, the next lemma is helpful for a reaction-diffusion-ODE system consisting of one ordinary and one partial differential equation. This result {generalizes \cite[Corollary 3.10]{Steffenthesis}, \cite[Proposition 4.3]{CMCKSirregular}} and is adapted from \cite[Section 4]{Weinberger}. 

\begin{lemma}\label{Cor3.10Steffen}
Assume that there exists a constant $c>0$ such that 
\begin{align}
	A_{\ast}(x) \le -c, \quad D_{\ast}(x) \le 0 , \quad A_{\ast}(x)D_{\ast}(x) - B_{\ast}(x) C_{\ast}(x) > 0 \label{stabassumption} 
\end{align}
holds for almost every $x\in \Omega$. Then, the linear operator $\mathbf{L}$ defined in Lemma \ref{Lclosed} satisfies $s(\mathbf{L}) < 0$.
\end{lemma}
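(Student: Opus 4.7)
The plan is to combine the spectral decomposition $\sigma(\mathbf{L}) = \sigma(\mathbf{A}_{\ast}) \mathop{\dot{\cup}} \Sigma$ from Proposition \ref{RDODEspec} with an energy estimate applied to the reduced elliptic eigenvalue problem that governs elements of $\Sigma$. Since $A_\ast(x) \le -c$ a.e.\ and $\sigma(\mathbf{A}_\ast)$ is the closure of the essential range of the scalar function $A_\ast$, we immediately obtain $\sigma(\mathbf{A}_\ast) \subset (-\infty, -c]$. In particular, $\rho(\mathbf{A}_\ast)$ is the complement of a closed bounded subset of the real line, hence connected, so $\Sigma = \sigma_d(\mathbf{L})$ consists of isolated eigenvalues of $\mathbf{L}$.

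The heart of the proof is to rule out $\lambda \in \Sigma$ with $\mathrm{Re}\,\lambda \ge 0$. Given such a $\lambda = r + is$ and an eigenfunction $(\xi_1, \xi_2) \in L^\infty(\Omega) \times \mathcal{D}(A_p)$, the first component of $(\lambda I - \mathbf{L})\xi = 0$ yields $\xi_1 = B_\ast \xi_2/(\lambda - A_\ast)$ (valid since $\lambda \in \rho(\mathbf{A}_\ast)$), and substitution into the second gives the reduced problem
\begin{equation*}
-D^v \Delta \xi_2 + P(\lambda, \cdot)\, \xi_2 = 0 \quad \text{in } \Omega, \qquad \tfrac{\partial \xi_2}{\partial \mathbf{n}} = 0 \quad \text{on } \partial\Omega,
\end{equation*}
where $P(\lambda, x) := \lambda - D_\ast(x) - B_\ast(x)C_\ast(x)/(\lambda - A_\ast(x))$. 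I would test with $\overline{\xi_2}$, integrate by parts using the Neumann condition, and take real parts to obtain
\begin{equation*}
D^v \|\nabla \xi_2\|_{L^2}^2 + \int_\Omega \mathrm{Re}\,P(\lambda, x) \, |\xi_2|^2 \, dx = 0.
\end{equation*}
Multiplying $\mathrm{Re}\,P$ pointwise by $|\lambda - A_\ast|^2 = (r - A_\ast)^2 + s^2 > 0$ and regrouping gives the algebraic identity
\begin{equation*}
\mathrm{Re}\,P(\lambda,x) \cdot |\lambda - A_\ast|^2 = (r - A_\ast)\bigl[\,r^2 - (A_\ast + D_\ast)\,r + (A_\ast D_\ast - B_\ast C_\ast)\,\bigr] + (r - D_\ast)\, s^2.
\end{equation*}
For $r \ge 0$ the three sign hypotheses give $r - A_\ast \ge c > 0$, $r - D_\ast \ge r \ge 0$, and $-(A_\ast + D_\ast) \ge c$, so the bracket is $\ge r^2 + cr + (A_\ast D_\ast - B_\ast C_\ast) > 0$ a.e.\ and the $s^2$-term is nonnegative. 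Hence $\mathrm{Re}\,P(\lambda, x) > 0$ a.e.; the energy identity then forces $\xi_2 \equiv 0$, and thus $\xi_1 \equiv 0$, contradicting that $(\xi_1, \xi_2)$ is a nontrivial eigenfunction.

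It remains to upgrade $\sigma(\mathbf{L}) \subset \{\mathrm{Re}\,\lambda < 0\}$ to the strict bound $s(\mathbf{L}) < 0$. Since $\mathbf{L}$ generates an analytic semigroup on $L^\infty(\Omega)^m \times L^p(\Omega)^k$ by Lemma \ref{Lclosed}, a standard sector estimate (cf.\ Lemma \ref{resolventestrestr}) implies $\mathrm{Re}\,\lambda \to -\infty$ as $|\mathrm{Im}\,\lambda| \to \infty$ along $\sigma(\mathbf{L})$. Hence for any $\epsilon \in (0, c)$ the set $K_\epsilon := \sigma(\mathbf{L}) \cap \{\mathrm{Re}\,\lambda \ge -\epsilon\}$ is bounded, closed, and disjoint from $\sigma(\mathbf{A}_\ast)$, hence a compact subset of $\Sigma$. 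If $K_\epsilon = \emptyset$ then $s(\mathbf{L}) \le -\epsilon$; otherwise $\mathrm{Re}$ attains its maximum on $K_\epsilon$ at some $\lambda_\ast$ with $\mathrm{Re}\,\lambda_\ast < 0$ by the previous step, so $s(\mathbf{L}) \le \max(\mathrm{Re}\,\lambda_\ast, -\epsilon) < 0$.

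The main obstacle is the algebraic verification of $\mathrm{Re}\,P(\lambda, x) > 0$ a.e.: all three sign assumptions enter simultaneously, and in particular $D_\ast \le 0$ is used precisely to ensure the imaginary-frequency contribution $(r - D_\ast)\, s^2$ is nonnegative for $r \ge 0$. A secondary subtlety is that the positivity of $\mathrm{Re}\,P$ is only pointwise a.e.\ and not uniform in $x$, which is why the final step to the strict bound $s(\mathbf{L}) < 0$ must go through the compactness-plus-sectoriality argument above rather than a direct quantitative estimate.
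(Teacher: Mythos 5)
Your proof is correct, and its core is essentially the paper's argument: the same decomposition $\sigma(\mathbf{L}) = \sigma(\mathbf{A}_\ast)\,\dot\cup\,\Sigma$ with $\rho(\mathbf{A}_\ast)$ connected, the same reduction of an eigenvalue $\lambda\in\Sigma$ to the scalar elliptic problem, and the identical algebraic identity for $\mathrm{Re}\,P(\lambda,x)\,|\lambda-A_\ast|^2$ (the paper writes it as $-A(x,\lambda)\,|\lambda-A_\ast|^2$ in \eqref{Repart}) with the same sign analysis ruling out $\mathrm{Re}\,\lambda\ge 0$. The one place you genuinely diverge is the final uniformity step, i.e., upgrading ``every $\lambda\in\Sigma$ has $\mathrm{Re}\,\lambda<0$'' to $s(\mathbf{L})<0$: the paper extracts from the \emph{imaginary} part of the variational identity \eqref{Impart} the uniform bound $\lambda_2^2\le\|B_\ast C_\ast\|_\infty$ and then invokes discreteness of $\Sigma$ together with the fact that accumulation points of $\Sigma$ lie in $\sigma(\mathbf{A}_\ast)\subset(-\infty,-c]$, whereas you localize $\sigma(\mathbf{L})\cap\{\mathrm{Re}\,\lambda\ge-\varepsilon\}$ in a compact set via the sectorial resolvent estimate of Lemma \ref{resolventestrestr} and take a maximum there. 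Both closures are valid; yours has the small advantage of not needing the imaginary-part computation at all (and of not relying on discreteness of $\Sigma$), while the paper's is self-contained within the variational identity and makes the bound on $\mathrm{Im}\,\lambda$ explicit. The only loose end in your write-up, shared with the paper, is the passing justification that $\xi_2\in\mathcal{D}(A_p)$ admits the $W^{1,2}$ weak formulation needed for the integration by parts; the paper covers this by citing Lemma \ref{domchar}.
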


\begin{proof}
By the characterization of $\sigma(A_{\ast})$ in \cite[Appendix B.2]{KMCMnonlinear}, it follows $s(A_{\ast}) < 0$.
We have $\sigma(\mathbf{L}) = \sigma(A_{\ast}) \cup \Sigma$ by Proposition \ref{RDODEspec}. Moreover, the resolvent $\rho(A_\ast)$ is connected in $\mathbb{C}$ since $A_\ast(x)\subset\mathbb{R}$. Thus, $\Sigma \subset \sigma_p(\mathbf{L})$ is discrete by Proposition \ref{RDODEspec}.
Hence, it is left to prove $\sup_{\lambda\in\Sigma} \mathrm{Re} \, \lambda < 0$.
Although $\Sigma$ is discrete, we have to find a constant $c_0>0$ such that $\mathrm{Re} \,  \lambda \le - c_0<0$ for all $\lambda \in \Sigma = \sigma_p(\mathbf{L})\cap \rho(A_{\ast})$. For each $\lambda \in \sigma_p(\mathbf{L}) \cap \rho(A_{\ast})$ we find an eigenfunction $\xi= (\xi_1, \xi_2) \in\mathcal{D}(\mathbf{L})\setminus \{(0,0)\}$ such that
\begin{align*}
	\mathbf{0} = (\mathbf{L}- \lambda I) \xi =   \begin{pmatrix}0\\D^v \Delta \xi_2\end{pmatrix}
	+ \begin{pmatrix}(A_{\ast}- \lambda I) \xi_1 +  B_{\ast} \xi_2\\C_{\ast}\xi_1 + (D_{\ast}- \lambda I) \xi_2\end{pmatrix}.
\end{align*}
Since $\lambda \in \rho(A_{\ast})$, we can solve the first equation to obtain $\xi_1 = (\lambda I - A_{\ast})^{-1} B_{\ast} \xi_2$.
This relation shows $\xi_2 \not\equiv 0$, since $\xi_2 \equiv 0$ would imply $\xi_1 \equiv 0$. However, this contradicts $\lambda \in \sigma_p(\mathbf{L})$ with eigenfunction $\xi \not \equiv \mathbf{0}$.\\
Similar to Corollary \ref{independence}, a substitution of $\xi_1$ in the second equation leads us to the following elliptic boundary value problem
\begin{align*}
	- D^v \Delta \xi_2 = 	\left( \frac{B_{\ast}C_{\ast}}{\lambda- A_{\ast}} +  D_{\ast} - \lambda \right) \xi_2 
\end{align*}
for $\xi_2 \in \mathcal{D}(A_p) \setminus \{0\}, p > n^\ast$. According to Lemma \ref{domchar}, this problem can be seen as a weak formulation for $\xi_2 \in W^{1,2}(\Omega)$. Multiplying this equation with the complex conjugate of $\xi_2$ and integrating yields
\begin{align}\label{Cor3.10equation}
	D^v \int_\Omega |\nabla \xi_2|^2 \; \mathrm{d}x = \int_\Omega  \left(\frac{B_{\ast}C_{\ast}}{\lambda- A_{\ast}} +  D_{\ast} - \lambda \right) |\xi_2|^2 \; \mathrm{d}x.
\end{align}
We write $\lambda = \lambda_1 + i \lambda_2$ for $\lambda_1, \lambda_2 \in \mathbb{R}$ and consider the imaginary and real part of equation \eqref{Cor3.10equation}, i.e.,
\begin{align}
	- \lambda_2 \int_\Omega \left( \frac{B_{\ast}C_{\ast}}{|\lambda - A_{\ast}|^2} +1 \right)|\xi_2|^2  \; \mathrm{d}x = 0, \label{Impart}\\
	\int_\Omega A(x,\lambda) |\xi_2|^2  \; \mathrm{d}x= D^v \int_\Omega |\nabla \xi_2|^2 \; \mathrm{d}x \ge 0 \label{Repart}
\end{align}
with
\begin{align*}
	A(x, \lambda) =  \frac{B_{\ast}(x)C_{\ast}(x)(\lambda_1- A_{\ast}(x))}{| \lambda-A_{\ast}(x)|^2} +D_{\ast}(x) - \lambda_1.
\end{align*}
Equation \eqref{Impart} implies that $\lambda_2$ is uniformly bounded. In fact, $\lambda_2 \not=0$ and 
\begin{align*}
	\frac{B_{\ast}C_{\ast}}{|\lambda - A_{\ast}|^2} +1 > 0 \qquad \text{a.e. in } \; \Omega
\end{align*}
leads to a contradiction since $\xi_2 \not\equiv 0$. So, in case $\lambda_2\not=0$, we may assume that the last inequality does not hold almost everywhere in $\Omega$, i.e., there is a measurable set $\Omega_1$ with $|\Omega_1|>0$ such that 
\[
\frac{B_{\ast}C_{\ast}}{|\lambda - A_{\ast}|^2} +1 \le  0 \qquad \text{a.e. in } \; \Omega_1.
\]	
Hence, boundedness of the coefficients $B_{\ast}, C_{\ast}$ implies uniform boundedness of the imaginary part $\lambda_2$ as $\lambda_2^2 \le  \|B_{\ast} C_{\ast}\|_\infty$. Since $\Sigma$ is a discrete set and the imaginary parts of $\lambda \in \Sigma$ are uniformly bounded, it remains to show $\mathrm{Re} \,  \lambda <0$ for all $\lambda \in \Sigma$.\\
For this reason, let us assume to the contrary that there exists $\lambda \in\Sigma$ with $\lambda_1 = \mathrm{Re} \, \lambda \ge 0$. We will show that such a spectral value cannot exist under above assumption \eqref{stabassumption}. Let us consider the real part of equation \eqref{Cor3.10equation}. We aim to show $A(\cdot, \lambda) <0$ a.e. in $\Omega$ to obtain a contradiction to relation \eqref{Repart}, since $\xi_2 \not\equiv 0$.
With $| \lambda-A_{\ast}|^2= (\lambda_1-A_{\ast})^2  + \lambda_2^2$, we obtain
\begin{align*}
	A(\cdot, \lambda) &= \frac{B_{\ast}C_{\ast}(\lambda_1- A_{\ast})}{| \lambda-A_{\ast}|^2} +D_{\ast} - \lambda_1 \\
	& = \frac{(A_{\ast}-\lambda_1) [ (A_{\ast}-\lambda_1)(D_{\ast}-\lambda_1) - B_{\ast} C_{\ast} ] + (D_{\ast}-\lambda_1) \lambda_2^2}{| \lambda-A_{\ast}|^2}\\
	& = \frac{(A_{\ast}-\lambda_1) [ \lambda_1^2 - (A_{\ast}+D_{\ast}) \lambda_1 + A_{\ast}D_{\ast} - B_{\ast} C_{\ast} ] + (D_{\ast}-\lambda_1) \lambda_2^2}{| \lambda-A_{\ast}|^2}.
\end{align*}
Recall that $A_{\ast}- \lambda_1 \le -c <0, D_{\ast}- \lambda_1 \le 0$ for $\lambda_1 \ge 0$. Moreover, assumption \eqref{stabassumption} guarantees that 
\[
\lambda_1^2 - (A_{\ast}+D_{\ast}) \lambda_1 + A_{\ast}D_{\ast} - B_{\ast} C_{\ast} \ge A_{\ast}D_{\ast} - B_{\ast} C_{\ast} >0.
\]
All in all, $A(x,\lambda) < 0$ for almost every $x\in \Omega$ and, using monotony of the Lebesgue integral, relation \eqref{Repart} yields a contradiction since $\xi_2\not\equiv 0$. Hence, there exists no $\lambda \in\Sigma$ with $\mathrm{Re} \, \lambda \ge 0$. 
\end{proof}

\subsection{Hysteresis model} 
\label{sec:KMCT}

We start examples with a basic reaction-diffusion-ODE model exhibiting discontinuous patterns. Introduced in \cite{KMCT20} as a prototype of such phenomena, it is called the {\it hysteresis model}, because the jump-discontinuous solutions emerge due to co-existence of multiple constant stationary solutions for the ODE subsystem. In particular, it requires a hysteresis effect in the nullclines of the model nonlinearities. A minimal model with such properties takes a two-component form of \eqref{fullsys} with the following choice of functions $f$ and $g$,
\begin{align}
f(u,v) = v - p(u), \qquad g(u,v) = \alpha u - \beta v  \label{KMCTnonlinearities}
\end{align}
for $\alpha,\beta> 0$, and a polynomial $p$ of degree three with only one real root at $u=0$. Furthermore, we assume that there are three distinct intersection points of $f=0$ and $g=0$ with non-negative coordinates
\begin{align*}
S_0 = (0,0), \quad S_1 = (u_1,v_1) \quad \text{and} \quad S_2 = (u_2,v_2). 
\end{align*}
Additionally, we prescribe the slope $p'(0)>\alpha/\beta$ of the polynomial $p$ to obtain an S-hysteresis effect which can be observed in the nullclines of the model nonlinearities regarding the $(v,u)$-plane. We choose $\Omega=(0,1) \subset \mathbb{R}$. A generalization to heterogeneous environments can be found in \cite{Takagi21}. In the remainder of this subsection, we show application of our nonlinear stability and instability results, which provides insights into the model dynamics, significantly simplifies the proofs originally provided in \cite{KMCT20} and allows extending them from the linear to nonlinear analysis.

Let $(\overline{u},\overline{v})$ be a bounded stationary solution. Then, the linear operator $\mathbf{L}$ defined in \eqref{Linear2} is given by
\begin{align*}
(\mathbf{L}\xi)(x) = \begin{pmatrix} 0 \\ D^v \Delta \xi_2(x)\end{pmatrix} + \begin{pmatrix}	-p'(\overline{u}(x)) & 1\\ \alpha & -\beta\end{pmatrix} \xi.
\end{align*}
We consider two cases.

\begin{description}
\item[\bf Bistable case] The polynomial $p$ is strictly increasing.

\item[\bf Hysteresis case] The model exhibits S-hysteresis, i.e., the polynomial $p$ is non-monotone with $\lim_{u \to \infty} p(u) = \infty$. Let $H=(u_H,v_H)$ and $T=(u_T,v_T)$ denote the local maximum and minimum of $p$, respectively. We assume that the coordinates of $H$ and $T$ are positive with $u_T>u_H$. 
\end{description}

Using \cite[Lemma 2.1]{KMCT20} and Lemma \ref{Cor3.10Steffen}, one verifies in both cases that the constant steady states $S_0, S_2$ are nonlinearly stable in the sense of Definition \ref{stability} and $S_1$ is nonlinearly unstable for system \eqref{fullsys} endowed with nonlinearities \eqref{KMCTnonlinearities}. Hence, we focus on non-homogeneous stationary solutions. Recall that in the bistable case, the equation $f(\overline{u},\overline{v})=0$ can be uniquely solved with respect to $\overline{u}$, i.e., $\overline{u}=h(\overline{v})$ for some smooth function $h$. By elliptic regularity from Lemma \ref{domchar}, the steady state $(\overline{u},\overline{v})$ is at least continuous in $\overline{\Omega}$.

\begin{theorem}[Instability of all continuous patterns]\label{bistable}
In the bistable case, every non-homogeneous stationary solution $(\overline{u}, \overline{v}) \in C(\overline{\Omega})^2$ of system \eqref{fullsys} with model nonlinearities \eqref{KMCTnonlinearities} is nonlinearly unstable in the sense of Lyapunov in $L^\infty(\Omega)^m \times C(\overline{\Omega})^k$, $C(\overline{\Omega})^{m+k}$, and in the sense of Definition \ref{stability}.
\end{theorem}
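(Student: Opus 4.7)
The plan is to reduce the linearized eigenvalue problem of $\mathbf{L}$ to a scalar elliptic problem via a Schur complement and then invoke the classical Casten--Holland/Matano instability result for non-constant steady states of a scalar reaction-diffusion equation on a one-dimensional interval. Since the polynomial $p$ is strictly increasing, we can write $\overline{u} = h(\overline{v})$ with $h = p^{-1}$ smooth; consequently $(\overline{u}, \overline{v}) \in C(\overline{\Omega})^2$ (using also Lemma \ref{domchar}), and $\overline{v}$ satisfies the scalar equation
$$-D^v \overline{v}'' = \alpha h(\overline{v}) - \beta \overline{v} =: G(\overline{v}) \quad \text{in } (0,1), \qquad \overline{v}'(0)=\overline{v}'(1)=0.$$
The target is to show $s(\mathbf{L}) > 0$ and then invoke Theorem \ref{instabtheorem} and Corollary \ref{instabC}; Assumption \ref{ass:Nc} is satisfied because $f,g$ do not depend on $x$.

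Next, I would characterise the eigenvalues of $\mathbf{L}$ lying in $\rho(\mathbf{A}_\ast)$. Since $p'(\overline{u}) > 0$ in the bistable case, $\sigma(\mathbf{A}_\ast) \subset (-\infty, 0]$. For any $\lambda > 0$ the first row of the eigenvalue equation $(\mathbf{L}-\lambda I)\xi=0$ yields $\xi_1 = \xi_2/(\lambda + p'(\overline{u}))$, and substitution into the second row reduces the spectral problem to the scalar Neumann eigenvalue problem
$$-D^v \xi_2'' = F(\lambda,x)\, \xi_2, \qquad F(\lambda,x) := \frac{\alpha}{\lambda + p'(\overline{u}(x))} - \beta - \lambda.$$
Hence $\lambda \in \sigma_p(\mathbf{L}) \cap (0,\infty)$ if and only if $0$ is an eigenvalue of the self-adjoint operator $-D^v \Delta - F(\lambda,\cdot)$ on $(0,1)$ with Neumann boundary conditions.

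Now I introduce $\mathcal{F}(\lambda)$, the principal (smallest) Neumann eigenvalue of $-D^v \Delta - F(\lambda,\cdot)$, defined for $\lambda \in (0,\infty)$. By standard perturbation theory, $\mathcal{F}$ is continuous; moreover, since $\lambda \mapsto F(\lambda,x)$ is strictly decreasing for each $x$, $\mathcal{F}$ is strictly increasing in $\lambda$ on $[0,\infty)$. At $\lambda=0$,
$$F(0,x) = \frac{\alpha}{p'(\overline{u}(x))} - \beta = \alpha h'(\overline{v}(x)) - \beta = G'(\overline{v}(x)),$$
so $\mathcal{F}(0)$ equals the principal Neumann eigenvalue of $-D^v \Delta - G'(\overline{v})$. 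The Casten--Holland / Matano theorem on the convex one-dimensional domain $(0,1)$ applied to the scalar reaction-diffusion equation for $\overline{v}$ gives $\mathcal{F}(0) < 0$ whenever $\overline{v}$ is non-constant. On the other hand, $F(\lambda,x)\to -\infty$ uniformly in $x$ as $\lambda \to +\infty$, which forces $\mathcal{F}(\lambda)\to +\infty$. By the intermediate value theorem there exists $\lambda_0 > 0$ with $\mathcal{F}(\lambda_0) = 0$, producing a positive real eigenvalue of $\mathbf{L}$. Therefore $s(\mathbf{L}) \ge \lambda_0 > 0$, and Theorem \ref{instabtheorem} together with Corollary \ref{instabC} yield nonlinear instability in all three senses claimed.

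The main obstacle is the scalar step $\mathcal{F}(0)<0$: one has to cite (or sketch) the classical fact that any non-constant Neumann steady state of a scalar reaction-diffusion equation on $(0,1)$ is linearly unstable. The standard argument I would use is that $\phi := \overline{v}'$ satisfies $-D^v \phi'' - G'(\overline{v})\phi = 0$ with Dirichlet data $\phi(0)=\phi(1)=0$, so $0$ lies in the Dirichlet spectrum of $-D^v \Delta - G'(\overline{v})$; since the principal Neumann eigenvalue of a self-adjoint elliptic operator on a domain with non-empty Lipschitz boundary is strictly less than the principal Dirichlet eigenvalue (compare Rayleigh quotients on the strictly larger test space), one obtains $\mathcal{F}(0)<0$. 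The remaining ingredients—continuity and monotonicity of $\mathcal{F}$, validity of Assumption \ref{ass:Nc}, and the application of the abstract instability theorems—are essentially routine.
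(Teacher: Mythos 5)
Your proposal is correct, and its overall architecture coincides with the paper's: verify that the ODE part satisfies $A_\ast=-p'(\overline{u})\le -c<0$, produce a positive eigenvalue $\lambda_0\in\Sigma=\sigma(\mathbf{L})\cap\rho(A_\ast)$, and then invoke Theorem \ref{instabtheorem} and Corollary \ref{instabC}. The difference is that the paper disposes of the key spectral step by citing the proof of Theorem 2.2 in \cite{KMCT20}, whereas you actually carry it out: the Schur-complement reduction to the scalar Neumann problem $-D^v\xi_2''=F(\lambda,\cdot)\xi_2$, the principal-eigenvalue function $\mathcal{F}(\lambda)$, the identification $F(0,\cdot)=G'(\overline{v})$ with $G=\alpha h-\beta\,\mathrm{id}$, the Casten--Holland-type argument via $\phi=\overline{v}'$, and the intermediate value theorem. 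This makes your write-up self-contained where the paper is not, and it exhibits the mechanism (instability of the non-constant $\overline{v}$ as a steady state of the reduced scalar equation propagating to the full system through the monotone function $\mathcal{F}$), which is exactly the content of the paper's follow-up remark that \cite[Theorem 2.8]{CMCKSregular} applies on the convex domain $(0,1)$. Two small points to tighten: (i) "compare Rayleigh quotients on the strictly larger test space" only yields $\mu_1^N\le\mu_1^D$; for the needed strict inequality $\mathcal{F}(0)<0$ you should add the standard extra step (in one dimension: if the two principal eigenvalues coincided, the Dirichlet ground state would also be a Neumann eigenfunction, hence would satisfy $\psi(0)=\psi'(0)=0$ and vanish identically by ODE uniqueness); (ii) you should state explicitly that a non-homogeneous steady state forces $\overline{v}$ to be non-constant, which follows from $\overline{u}=h(\overline{v})$ in the bistable case. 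Neither point is a genuine gap.
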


\begin{proof}
Since $p$ is strictly increasing, $K = \min_{x\in \overline{\Omega}} p'(\overline{u}(x)) > 0$ by continuity of the component $\overline{u}$. Hence, we obtain $A_{\ast}(x) \le -c < 0$ for all $x\in \overline{\Omega}$ for some $c>0$. In order to apply Corollary \ref{instabC}, one has to prove existence of $\lambda\in\Sigma = \sigma(\mathbf{L})\cap \rho(A_{\ast})\subset \sigma_p(\mathbf{L})$ with $\mathrm{Re} \, \lambda > 0$. However, this is shown in the proof of \cite[Theorem 2.2]{KMCT20} for $\Omega=(0,1) \subset \mathbb{R}$. 
\end{proof}

We note that \cite[Theorem 2.8]{CMCKSregular} applies in this context to the convex domain $\Omega=(0,1)$. Hence, the single reaction-diffusion equation destabilizes the continuous steady state. However, in the hysteresis case, there exist various stable non-homogeneous patterns which are discontinuous in space.\\

In the hysteresis case, it is shown in \cite[Section 3]{KMCT20} for the one-dimensional case that all stationary, spatially heterogeneous solutions have a jump-discontinuity in the ODE component $\overline{u}$. For existence of such solutions, we refer to \cite[Section 3]{KMCT20}. The following result originates from \cite[Theorem 3.2]{KMCT20}. Denote by $h_H,h_0$ and $h_T$ the corresponding branches of $f(\overline{u},\overline{v})=0$ solved with respect to $\overline{u}$.
For $v_j\in (v_T,\min\{v_H,v_2\})$ there exists a unique monotone increasing solution $(\overline{u},\overline{v})$ with jump at $v_j$. Hence, $\overline{u}$ is of the form
\begin{align*}
\overline{u}(x) = \begin{cases}
	h_H(\overline{v}(x)) & \text{if} \quad  \overline{v}(x)<v_j,\\
	h_T(\overline{v}(x)) & \text{if} \quad \overline{v}(x)>v_j.
\end{cases}
\end{align*}
For such a steady state one can prove $s(A_{\ast}) \le 0$. However, $s(A_{\ast}) \le 0$ is not sufficient for stability of the steady state $(\overline{u},\overline{v})$. The next theorem states a sufficient condition for stability, compare its analog \cite[Theorem 3.7]{KMCT20}.

\begin{theorem}[Nonlinear stability of a discontinuous pattern]\label{hysteresis}
Consider a stationary solution $(\overline{u},\overline{v}) \in L^\infty(\Omega) \times C(\overline{\Omega})$ of system \eqref{fullsys} with model nonlinearities \eqref{KMCTnonlinearities} in the hysteresis case. Assume that $(\overline{u},\overline{v})$ satisfies the stability condition $K := \operatornamewithlimits{essinf}_{x\in \Omega} p'(\overline{u}(x)) > \frac{\alpha}{\beta}$. Then, $(\overline{u},\overline{v})$ is nonlinearly stable in the sense of Definition \ref{stability}.
\end{theorem}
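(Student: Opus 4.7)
The plan is to reduce Theorem \ref{hysteresis} to the general nonlinear stability result Theorem \ref{stab}, whose sole hypothesis (besides Assumption \ref{ass:N}) is that the linearized operator $\mathbf{L}$ has strictly negative spectral bound $s(\mathbf{L})<0$. Since the hysteresis model \eqref{KMCTnonlinearities} involves only one ODE and one reaction–diffusion equation, I would verify this spectral bound by applying Lemma \ref{Cor3.10Steffen}, whose pointwise sign conditions on the coefficients of the Jacobian fit this scalar-times-scalar setting perfectly.

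Concretely, for the stationary solution $(\overline{u},\overline{v})\in L^\infty(\Omega)\times C(\overline{\Omega})$ the coefficient matrix in \eqref{Linear2} reads
\begin{align*}
A_\ast(x)=-p'(\overline{u}(x)),\qquad B_\ast(x)=1,\qquad C_\ast(x)=\alpha,\qquad D_\ast(x)=-\beta.
\end{align*}
Using the stability hypothesis $K=\operatornamewithlimits{essinf}_{x\in\Omega}p'(\overline{u}(x))>\alpha/\beta>0$, I would check the three conditions of Lemma \ref{Cor3.10Steffen}: first, $A_\ast(x)\le -K<0$ almost everywhere, so the constant $c=K>0$ works; second, $D_\ast(x)=-\beta<0\le 0$ trivially; and third,
\begin{align*}
A_\ast(x)D_\ast(x)-B_\ast(x)C_\ast(x)=\beta p'(\overline{u}(x))-\alpha\ge\beta K-\alpha>0
\end{align*}
almost everywhere in $\Omega$. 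Lemma \ref{Cor3.10Steffen} then yields $s(\mathbf{L})<0$.

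With the spectral bound sign established, I would invoke Theorem \ref{stab}: the nonlinearities in \eqref{KMCTnonlinearities} are polynomials independent of $x$ and hence trivially satisfy Assumption \ref{ass:N}, so Theorem \ref{stab} gives a neighborhood $\{\|(\mathbf{u}^0,\mathbf{v}^0)-(\overline{u},\overline{v})\|_\infty<\delta\}$ of initial data in $L^\infty(\Omega)^2$ for which the mild solution exists globally and converges exponentially to $(\overline{u},\overline{v})$ in the $L^\infty$-norm, which is precisely nonlinear stability in the sense of Definition \ref{stability}.

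I expect no serious obstacle: the proof is essentially a verification that the structural conditions of the abstract stability machinery are met by the hysteresis model. The only mildly subtle point is that $K>\alpha/\beta$ is used \emph{twice} — once to force $A_\ast$ uniformly negative (which needs $K>0$) and once, in a sharper way, to force the determinant condition $\beta K-\alpha>0$, which encodes that the linearization of the underlying ODE system at $(\overline{u}(x),\overline{v}(x))$ has negative trace and positive determinant for almost every $x$; this is exactly the pointwise bistable branch condition that prevents any spectral value of the multiplication operator $A_\ast$ (or any discrete eigenvalue from $\Sigma$) from crossing into the right half-plane.
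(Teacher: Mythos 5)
Your proposal is correct and follows exactly the paper's own argument: compute the Jacobian entries $A_\ast=-p'(\overline{u})$, $B_\ast=1$, $C_\ast=\alpha$, $D_\ast=-\beta$, verify the three pointwise conditions of Lemma \ref{Cor3.10Steffen} using $K>\alpha/\beta$, conclude $s(\mathbf{L})<0$, and apply Theorem \ref{stab}. No gaps.
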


\begin{proof}
By assumption, there exists $c>0$ such that there holds
\begin{align*}
	A_{\ast} &= -p'(\overline{u}) \le -c, \quad 	D_{\ast}= -\beta \le 0, \quad A_{\ast}D_{\ast}-B_{\ast}C_{\ast}  = p'(\overline{u})\beta - \alpha >0 \quad \text{a.e. in} \quad \Omega.	 
\end{align*}
Hence, we can apply Lemma \ref{Cor3.10Steffen} which yields $s(\mathbf{L}) < 0$.
Theorem \ref{stab} implies that $(\overline{u},\overline{v})$ is stable.
\end{proof}

\begin{remark} \label{rem:Lpstability}
For the hysteresis model determined by \eqref{KMCTnonlinearities}, the nonlinear remainder $\mathbf{N}$ is also locally Lipschitz continuous as an operator on $L^\infty(\Omega) \times L^p(\Omega)$ since it is given by $\mathbf{N}(\xi)= (N_1(\xi_1), 0)$. The operator $\mathbf{N}$ even satisfies the decay estimate \eqref{estimateN} in this space. Since $\mathbf{L}$ generates an analytic semigroup for $p \ge 2, p >n/2$, we may apply the results \cite[Theorem 1]{Shatah} and \cite[Proposition 4.17]{Webb} for Lyapunov instability and stability of a stationary solution in $L^\infty(\Omega) \times L^p(\Omega)$.\\
However, if we look at small perturbations of a stationary solution in the space $L^p(\Omega) \times L^p(\Omega)$, the situation is different. Although linear stability analysis is the same as for $\mathbf{L}$ on the space $L^\infty(\Omega) \times  L^p(\Omega)$, the nonlinearities are not Fr\'{e}chet differentiable, even not well-defined as an operator on $L^p(\Omega) \times L^p(\Omega)$. Using \cite[Theorem 2.6]{CMCKSirregular}, one can derive that each small neighborhood (with respect to the $L^p$ norm) of the constant, linearly stable steady state $(0,0)$ contains infinitely many discontinuous stationary solutions in the hysteresis case. Hence, the constant steady state cannot be locally asymptotically stable in $L^p(\Omega) \times L^p(\Omega)$. It remains open whether the  steady state $(0,0)$ is nonlinearly stable in $L^p(\Omega) \times L^p(\Omega)$. 
\end{remark}

\subsection{DDI-hysteresis model}
\label{sec:Steffen}

In this section, we consider the {\it DDI-hysteresis model} that, additionally to the hysteresis-based mechanism for patterns with jump-discontinuities, exhibits diffusion-driven instability (Turing instability). As the latter is a diffusion-dependent bifurcation leading to the loss of stability of a spatially constant steady state, the system describes  {\it de novo} formation of the discontinuous patterns and can be considered as a prototype of the model of Turing mechanism leading to {\it far-from-equilibrium} patterns due to instability of all regular Turing patterns.
The model is given by system \eqref{fullsys} consisting of one ordinary and one partial differential equation with nonlinearities
\begin{align}\label{RecepBased}
f(u,v) = -u-uv + m_1 \frac{u^2}{1+k u^2} \quad \text{and} \quad g(u,v) = -\mu v - uv + m_2 \frac{u^2}{1+k u^2}
\end{align}
for some constants $k,\mu, m_1,m_2 >0$ with $m_1 > 2\sqrt{k}$. The {\it DDI-hysteresis model} originates from \cite{HMCT}, for details see also \cite[Section 3.6.1]{Steffenthesis}. In the remainder of this subsection, we explore the underlying pattern formation phenomena applying the developed theory. In particular, we show nonlinear stability of the jump-discontinuous solutions, which originally was shown only for the linearized model in a restrictive setting of $(\varepsilon, A)$ stability.

For a bounded steady state $(\overline{u},\overline{v})$, the linear operator defined in Lemma \ref{Lclosed} is given by
\begin{align*}
\mathbf{L} \begin{pmatrix}\xi_1 \\ \xi_2\end{pmatrix} = \begin{pmatrix}0 \\ D^v \Delta\xi_2\end{pmatrix} + J(\overline{u},\overline{v}) \begin{pmatrix}\xi_1 \\ \xi_2\end{pmatrix},
\end{align*}
where the Jacobian reads
\begin{align*}
J(\overline{u},\overline{v}) = \begin{pmatrix}-(1+\overline{v}) + m_1 \frac{2\overline{u}}{(1+k\overline{u}^2)^2} & -\overline{u}\\ -\overline{v} +m_2\frac{2\overline{u}}{(1+k\overline{u}^2)^2} & -(\mu+\overline{v})\end{pmatrix}.
\end{align*}
Following \cite[Section 3.2]{HMCT}, we compute non-negative steady states $(\overline{u},\overline{v})$. Since a stationary solution $(\overline{u},\overline{v})$ satisfies in particular $f(\overline{u},\overline{v}) \equiv 0$, it holds $\overline{u}(x) = 0$ or 
\begin{align}
-(1+\overline{v}(x))(1+k\overline{u}(x)^2)+m_1 \overline{u}(x) = 0 \label{f=0}
\end{align}
for $x\in \Omega$. Hence, there are three branches of solutions for $\overline{u}$, namely $u_0(v) := 0$ defined for all $v\ge0$, and
\begin{align*}
u_{\pm}(v) :=  \frac{1}{2k(1+v)} \left(m_1 \pm  \sqrt{m_1^2 - 4k(1+v)^2}\right)
\end{align*}
defined for $0 \le v \le v_r := \frac{m_1}{2\sqrt{k}} - 1$. We have $v_r>0$ since $m_1 > 2\sqrt{k}$.
It holds $u_-(v) u_+(v) = \frac{1}{k}$,
\begin{align*}
u_+(v) \ge \frac{m_1}{2k(1+v)} \ge \frac{m_1}{2k(1+v_r)} = \frac{1}{\sqrt{k}},
\end{align*}
and $u_+(v) >\frac{1}{\sqrt{k}}$ for $v<v_r$. This yields $0 < u_-(v) \le \frac{1}{\sqrt{k}}$ for $0\le v\le v_r$ and $u_-(v) < \frac{1}{\sqrt{k}}$ for $0\le v < v_r$. We obtain the following instability result, compare \cite[Section 3.3]{HMCT}.

\begin{theorem}[Instability of patterns]
Let $(\overline{u},\overline{v})\in L^\infty(\Omega) \times C(\overline{\Omega})$, $\overline{u}, \overline{v}\ge0$, be a stationary solution of problem \eqref{fullsys} with the nonlinearities $f$ and $g$ given by equation \eqref{RecepBased} and $m_1>2\sqrt{k}$.
If there exists a measurable subset $M\subset \Omega$, $|M|> 0$, such that for almost every $x\in M$ the stationary solution $(\overline{u},\overline{v})$ is of class $u_-$ and $\overline{v}(x) < v_r$ resp. $\overline{u}(x) < \frac{1}{\sqrt{k}}$, then the steady state $(\overline{u},\overline{v})$ is nonlinearly unstable in the sense of Definition \ref{stability} and in the sense of Lyapunov in $L^\infty(\Omega)^m \times C(\overline{\Omega})^k$.
\end{theorem}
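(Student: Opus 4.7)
The plan is to reduce the problem to showing $s(\mathbf{L}) > 0$ and then invoke Theorem \ref{instabtheorem} (together with its counterpart statement on $L^\infty(\Omega)^m \times C(\overline{\Omega})^k$). By Proposition \ref{RDODEspec} we have $\sigma(A_\ast) \subset \sigma(\mathbf{L})$, where $A_\ast(x) = \partial_u f(\overline{u}(x),\overline{v}(x))$, and by the characterisation of the spectrum of a scalar multiplication operator from \cite[Appendix B.2]{KMCMnonlinear}, $\sigma(A_\ast)$ coincides with the essential range of $A_\ast$. Thus it suffices to exhibit a measurable set of positive measure on which $A_\ast$ is bounded below by a positive constant.

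The decisive computation uses the stationarity condition $f(\overline{u},\overline{v})=0$ on the branch $u_-$. First I would record that on this branch \eqref{f=0} yields the identity
\begin{equation*}
1+\overline{v}(x) \;=\; \frac{m_1\,\overline{u}(x)}{1+k\overline{u}(x)^2} \qquad \text{for a.e. } x\in M.
\end{equation*}
Substituting this into the explicit form of $A_\ast(x)= -(1+\overline{v}(x)) + \frac{2 m_1\overline{u}(x)}{(1+k\overline{u}(x)^2)^2}$ gives, after clearing the common factor,
\begin{equation*}
A_\ast(x) \;=\; \frac{m_1\,\overline{u}(x)}{(1+k\overline{u}(x)^2)^2}\,\bigl(1-k\overline{u}(x)^2\bigr) \qquad \text{a.e. in } M.
\end{equation*}
Since on $M$ we have $\overline{u}(x) = u_-(\overline{v}(x))$, and since the preceding analysis of the branches shows that $u_-(v) < 1/\sqrt{k}$ precisely when $v < v_r$, the factor $1-k\overline{u}(x)^2$ is strictly positive on $M$. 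Combined with $\overline{u}(x) > 0$ on $M$ (as $u_-(v) > 0$ for $0 \le v \le v_r$), this yields $A_\ast(x) > 0$ a.e. in $M$.

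From here I would argue by a standard measure-theoretic dichotomy: since $A_\ast > 0$ a.e.\ on $M$ and $|M|>0$, the sets $M_n := \{x \in M : A_\ast(x) \ge 1/n\}$ satisfy $\bigcup_n M_n = M$ up to a null set, so $|M_n|>0$ for some $n\in\mathbb{N}$. Hence $c := 1/n$ lies in the essential range of $A_\ast$, giving $s(A_\ast) \ge c > 0$, and consequently
\begin{equation*}
s(\mathbf{L}) \;\ge\; s(A_\ast) \;\ge\; c \;>\; 0.
\end{equation*}
Theorem \ref{instabtheorem} now yields nonlinear instability of $(\overline{u},\overline{v})$ both in the sense of Definition \ref{stability} on $L^\infty(\Omega)^{m+k}$ and in the sense of Lyapunov on $L^\infty(\Omega)^m \times C(\overline{\Omega})^k$.

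The only step that requires any real care is the algebraic simplification of $A_\ast$ on the branch $u_-$; once the factorisation $\frac{m_1 \overline{u}}{(1+k\overline{u}^2)^2}(1-k\overline{u}^2)$ is in hand, the positivity is immediate from the hypothesis $\overline{u} < 1/\sqrt{k}$ on $M$ and the remaining deduction is entirely spectral. No regularity of $\overline{u}$ beyond measurability on $M$ is needed, which is why the argument applies equally to discontinuous patterns generated by the DDI-hysteresis mechanism.
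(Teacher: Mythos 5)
Your proposal is correct and follows essentially the same route as the paper: substitute the stationarity relation \eqref{f=0} into $A_\ast$ on the branch $u_-$ (your factorization $\frac{m_1\overline{u}}{(1+k\overline{u}^2)^2}(1-k\overline{u}^2)$ is algebraically the same as the paper's $(1+\overline{v})\bigl(-1+\tfrac{2}{1+k\overline{u}^2}\bigr)$), deduce $A_\ast>0$ a.e.\ on $M$, conclude $s(\mathbf{A}_\ast)>0$ via the essential-range characterization of $\sigma(\mathbf{A}_\ast)$, and invoke Proposition \ref{RDODEspec} and Theorem \ref{instabtheorem}. The only nitpick is the phrase that $c=1/n$ itself ``lies in the essential range'' --- what your dichotomy actually gives is that the essential range meets $[1/n,\infty)$, which is all that is needed for $s(\mathbf{A}_\ast)\ge 1/n>0$.
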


\begin{proof}
To investigate stability of the steady state $(\overline{u},\overline{v})$, we first analyze the spectrum of the multiplication operator $A_\ast \in \mathcal{L}(L^\infty(\Omega))$ induced by
\begin{align*}
	A_{\ast}(x) = -(1+\overline{v}(x)) + m_1 \frac{2\overline{u}(x)}{(1+k\overline{u}(x)^2)^2}, \qquad x\in\Omega.
\end{align*}
By \cite[Appendix B.2]{KMCMnonlinear}, there exists a null set $N\subset \Omega$ such that 
$
\sigma(A_{\ast}) 
= \overline{\{A_{\ast}(x) \mid x\in \Omega\setminus N\}}.
$
For $x\in \Omega$, the steady state $(\overline{u}, \overline{v})$ is of the form $\overline{u}(x) = u_i(\overline{v}(x))$ for $i \in \{ 0, -, +\}$. If $\overline{u}(x) = u_0(\overline{v}(x)) = 0$, then $A_{\ast}(x) = -(1+\overline{v}(x)) \le -1 < 0$ and $s(A_\ast)<0$. For $\overline{u}(x) \ne 0$, we may use relation \eqref{f=0}.
In the case of $\overline{u}(x) = u_-(\overline{v}(x))$, this yields
\begin{align*}
	A_{\ast}(x) &= (1+\overline{v}(x)) \left( -1 + \frac{2}{1+k \overline{u}(x)^2} \right) \ge -1 + \frac{2}{1+k(\frac{1}{\sqrt{k}})^2} \ge 0
\end{align*}
and $A_{\ast}(x) > 0$ if $\overline{v}(x) \ne v_r$, i.e., $\overline{u}(x) \ne \frac{1}{\sqrt{k}}$.
In the remaining case $\overline{u}(x) = u_+(\overline{v}(x))$, we obtain $A_{\ast}(x) \le 0$ and $A_{\ast}(x) < 0$ if $\overline{v}(x) \ne v_r$, i.e., $\overline{u}(x) \ne \frac{1}{\sqrt{k}}$. In summary, it follows $s(A_{\ast}) > 0$ from above calculations and, by Proposition \ref{RDODEspec}, $s(\mathbf{L}) > 0$. Finally, apply Theorem \ref{instabtheorem}. 
\end{proof}

To get a statement about stability, we further have to study the remainder of the spectrum, i.e., $\Sigma = \sigma(\mathbf{L}) \cap \rho(A_{\ast}) \subset \sigma_p(\mathbf{L})$, which is discrete since $\rho(A_{\ast})$ connected. We aim to apply Lemma \ref{Cor3.10Steffen} in this case. The following theorem can be compared to the stability results obtained in \cite[Theorem 3.21]{Steffenthesis}.

\begin{theorem}[Nonlinear stability of patterns]
Let $(\overline{u},\overline{v}) \in L^\infty(\Omega) \times C(\overline{\Omega})$, $\overline{u}, \overline{v}\ge0$, be a stationary solution of system \eqref{fullsys} with functions $f$ and $g$ given by \eqref{RecepBased}. We assume that $\overline{u}$ is only of class $u_0$ and $u_+$ and, moreover, $u_s:= \mathrm{essinf}_{x \in \Omega, \; \overline{u}(x) \text{ of class }u_+} \overline{u}(x) > \frac{1}{\sqrt{k}}$ for $2 \sqrt{k} < m_1<m_2$. 
Then, the stationary solution $(\overline{u},\overline{v})$ is nonlinearly stable in the sense of Definition \ref{stability}.
\end{theorem}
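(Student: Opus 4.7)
The plan is to verify the three hypotheses of Lemma~\ref{Cor3.10Steffen} in order to obtain $s(\mathbf{L})<0$, after which Theorem~\ref{stab} immediately delivers the claimed nonlinear stability in the sense of Definition~\ref{stability}. First I would decompose $\Omega = \Omega_0 \mathop{\dot{\cup}} \Omega_+$ up to a null set, where $\Omega_0 := \{\overline{u}=0\}$ and $\Omega_+ := \{\overline{u}=u_+(\overline{v})\}$; the branch $u_-$ is excluded by assumption. On $\Omega_+$ the stationary equation $f(\overline{u},\overline{v})=0$ together with $\overline{u}\ne 0$ yields the key identity
\[
(1+\overline{v})(1+k\overline{u}^2) = m_1\overline{u},
\]
which drives all subsequent manipulations. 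Note that the assumption $u_s>1/\sqrt{k}$ translates into $k\overline{u}^2\ge ku_s^2>1$ almost everywhere on $\Omega_+$.

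The sign conditions on $A_\ast$ and $D_\ast$ are the routine part. The estimate $D_\ast = -(\mu+\overline{v})\le -\mu<0$ is immediate from $\mu>0$ and $\overline{v}\ge 0$. For $A_\ast$, one has $A_\ast = -(1+\overline{v})\le -1$ on $\Omega_0$, while substitution of the identity on $\Omega_+$ gives
\[
A_\ast = (1+\overline{v})\,\frac{1-k\overline{u}^2}{1+k\overline{u}^2} \le -\frac{ku_s^2-1}{1+ku_s^2} < 0,
\]
so that the uniform bound $A_\ast\le -c$ holds a.e.\ in $\Omega$ with $c:=\min\bigl\{1,(ku_s^2-1)/(1+ku_s^2)\bigr\}>0$. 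The determinant condition $A_\ast D_\ast - B_\ast C_\ast > 0$ is trivial on $\Omega_0$ since $B_\ast = 0$ there, reducing the expression to the positive quantity $(1+\overline{v})(\mu+\overline{v})$.

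The hard part will be verifying $A_\ast D_\ast - B_\ast C_\ast > 0$ pointwise on $\Omega_+$. Expanding the determinant and using the identity to rewrite the Hill-type expressions as
\[
\frac{2m_1\overline{u}}{(1+k\overline{u}^2)^2} = \frac{2(1+\overline{v})}{1+k\overline{u}^2}, \qquad \frac{2m_2\overline{u}^2}{(1+k\overline{u}^2)^2} = \frac{2m_2(1+\overline{v})^2}{m_1^2},
\]
a direct computation yields
\[
A_\ast D_\ast - B_\ast C_\ast = (1+\overline{v})(\mu+\overline{v})\,\frac{k\overline{u}^2-1}{1+k\overline{u}^2} + \frac{2m_2(1+\overline{v})^2}{m_1^2} - \overline{u}\overline{v}.
\]
The first summand is non-negative because $k\overline{u}^2>1$; replacing once more $\overline{u}\overline{v} = m_1^{-1}\overline{v}(1+\overline{v})(1+k\overline{u}^2)$ brings the remaining two terms onto a common denominator and reduces positivity to a polynomial inequality in $k\overline{u}^2$ parametrised by $\overline{v}\in[0,v_r)$, which is strict precisely when $m_2>m_1$ and $k\overline{u}^2>1$ -- exactly the standing hypotheses. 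With condition~\eqref{stabassumption} established on all of $\Omega$, Lemma~\ref{Cor3.10Steffen} yields $s(\mathbf{L})<0$ and Theorem~\ref{stab} concludes the proof.
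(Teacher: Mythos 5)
Your overall strategy coincides with the paper's: verify the three conditions \eqref{stabassumption} of Lemma \ref{Cor3.10Steffen} to get $s(\mathbf{L})<0$, then invoke Theorem \ref{stab}. Your treatment of $D_\ast$, of the branch $u_0$, and of the sign of $A_\ast$ on the branch $u_+$ (via the identity $(1+\overline{v})(1+k\overline{u}^2)=m_1\overline{u}$) is correct and matches the paper. The problem is the determinant condition on the $u_+$ branch, which is the heart of the proof and which you do not actually establish.

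Your reduction
\[
A_\ast D_\ast - B_\ast C_\ast = (1+\overline{v})(\mu+\overline{v})\,\frac{k\overline{u}^2-1}{1+k\overline{u}^2} + \frac{2m_2(1+\overline{v})^2}{m_1^2} - \overline{u}\,\overline{v}
\]
is algebraically correct, but the ensuing claim — that after discarding the non-negative first summand, the remaining two terms are positive whenever $m_2>m_1$ and $k\overline{u}^2>1$ — is false. Take $k=1$, $m_1=10$, $m_2=11$, $\overline{v}=3$; then $u_+(3)=2$, $k\overline{u}^2=4>1$, and
\[
\frac{2m_2(1+\overline{v})^2}{m_1^2} - \overline{u}\,\overline{v} = \frac{2\cdot 11\cdot 16}{100} - 6 = 3.52-6 = -2.48 < 0.
\]
(The full determinant is still positive here, $2.4\mu+4.72$, but only because the first summand compensates.) If instead you meant to keep all three terms, then you have merely asserted, not proved, the resulting inequality; after substituting $(1+\overline{v})^2=m_1^2 k\overline{u}^2/(k(1+k\overline{u}^2)^2)$ the expression is not a polynomial in $k\overline{u}^2$ alone, and its positivity is not a routine computation. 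The paper avoids this entirely by a chain-rule factorization along the branch: using $f(u_+(v),v)\equiv 0$ one gets
\[
\det J(u_+(v),v) = \partial_u f(u_+(v),v)\,\frac{\mathrm{d}}{\mathrm{d}v}\, g(u_+(v),v),
\]
and then shows $\frac{\mathrm{d}}{\mathrm{d}v} g(u_+(v),v)\le -\mu<0$ (using $m_2>m_1$ and $u_+'(v)\le -u_+(v)/(1+v)$) while $\partial_u f=A_\ast\le -\tilde c<0$, so the determinant is a product of two negative factors and is bounded below by $\tilde c\,\mu>0$. You need this factorization, or an honest verification of your asserted inequality, to close the gap.
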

\begin{proof}
In order to apply Theorem \ref{stab}, we prove $s(\mathbf{L}) < 0$ with the help of Lemma \ref{Cor3.10Steffen}. For $x\in \Omega$ with $\overline{u}(x) = u_0(x)$, we already know $A_{\ast}(x) \le -1 < 0$. Furthermore, it holds $D_{\ast}(x) \le 0$.
For the determinant of $J(\overline{u}, \overline{v})$, we obtain
\begin{align*}
	A_{\ast}(x)D_{\ast}(x) - B_{\ast}(x)C_{\ast}(x) &= (1+\overline{v}(x))(\mu+\overline{v}(x)) = \mu >0,
\end{align*}
since $\mu, D^v>0$ imply $\overline{v}=0$ by uniqueness of solutions of the stationary elliptic problem.\\
Next, let us consider $x\in \Omega$ with $\overline{u}(x) = u_+(x)$. 
Since
$
\overline{u}(x) = u_+(\overline{v}(x)) \ge u_s > 1/\sqrt{k}
$
holds for a.e. $x$, we estimate
\begin{align}\label{ADnegative}
	A_{\ast}(x) &\le -1 + \frac{2}{1+k u_s^2} =: -\tilde{c} < 0 \qquad\text{and} \qquad
	D_{\ast}(x) = -(\mu +\overline{v}(x)) \le 0.
\end{align}
It is left to investigate the determinant of $J(\overline{u},\overline{v})(x)$.
Using chain rule, we derive
\begin{align*}
	\det J(u_+(v),v) &= \partial_u f(u_+(v),v) \partial_v g(u_+(v),v) - \partial_v f(u_+(v),v) \partial_u g(u_+(v),v)\\
	&= \partial_u f(u_+(v),v) \left( \frac{\mathrm{d}}{\mathrm{d}v} g(u_+(v),v) - \frac{\mathrm{d}}{\mathrm{d}v} u_+(v) \partial_u g(u_+(v),v)\right) \\
	& \quad - \partial_v f(u_+(v),v) \partial_u g(u_+(v),v)\\
	&= -\partial_u g(u_+(v),v) \left( \partial_u f(u_+(v),v) \frac{\mathrm{d}}{\mathrm{d}v} u_+(v) + \partial_v f(u_+(v),v) \right) \\
	& \quad +\partial_u f(u_+(v),v) \frac{\mathrm{d}}{\mathrm{d}v} g(u_+(v),v)\\
	&= -\partial_u g(u_+(v),v) \frac{\mathrm{d}}{\mathrm{d}v} f(u_+(v),v) + \partial_u f(u_+(v),v) \frac{\mathrm{d}}{\mathrm{d}v} g(u_+(v),v)\\
	&= \partial_u f(u_+(v),v) \frac{\mathrm{d}}{\mathrm{d}v} g(u_+(v),v),
\end{align*}
where we used $f(u_+(v),v) = 0$ in the last step. First, we obtain
\begin{align*}
	\frac{\mathrm{d}}{\mathrm{d}v} u_+(v) &= \frac{-u_+(v)}{1+v}  - \frac{2}{\sqrt{m_1^2-4k(1+v)^2}} \le \frac{-u_+(v)}{1+v}.
\end{align*} 
Since $m_2>m_1>0$ and $v\ge0$, we have
\begin{align*}
	\frac{\mathrm{d}}{\mathrm{d}v} g(u_+(v),v) &= \frac{\mathrm{d}}{\mathrm{d}v} \left[-\mu v + u_+(v) \left( \frac{m_2}{m_1}v-v+\frac{m_2}{m_1} \right)\right]\\
	&= -\mu + \frac{\mathrm{d}}{\mathrm{d}v} u_+(v) \left(\frac{m_2}{m_1}(1+v)-v\right) + u_+(v) \left(\frac{m_2}{m_1}-1\right)\\
	&\le -\mu -  \frac{1}{1+v} u_+(v) \le - \mu <0.
\end{align*}
In view of \eqref{ADnegative}, we find a constant $c>0$ such that
\begin{align*}
	A_{\ast}(x) \le -c, \quad D_{\ast}(x) \le 0\quad\text{and}\quad A_{\ast}(x)D_{\ast}(x) - B_{\ast}(x) C_{\ast}(x) \ge c
\end{align*}
holds for a.e. $x\in \Omega$.
An application of Lemma \ref{Cor3.10Steffen} yields $s(\mathbf{L})<0$. Finally, Theorem \ref{stab} implies stability of the steady state $(\overline{u},\overline{v})$.
\end{proof}

\section*{Acknowledgments}

This work is supported by the German Research Foundation (DFG) under Germany’s Excellence Strategy EXC 2181/1 - 390900948 (the Heidelberg STRUCTURES Excellence Cluster) and through the Collaborative Research Center 1324 (SFB1324, project B6).




\begin{appendices}

\section{Heat semigroup} \label{sec:heatsemigroup} 

The preceding sections are based on properties of the heat semigroup. We define the heat semigroup $(S_\Delta(t))_{t \in \mathbb{R}_{\ge 0}}$ according to \cite{Davies, Ouhabaz} and study basic properties on the spaces $L^p(\Omega)$ for $1 \le p \le \infty$ and $C(\overline{\Omega})$. 

\begin{proposition} \label{heathom}
	Let $\Omega \subset \mathbb{R}^n$ be a bounded domain with $\partial \Omega \in C^{0,1}$, $z^0 \in L^2(\Omega)$. Then, there exists a unique variational solution $z \in C(\mathbb{R}_{\ge 0}; L^2 (\Omega ))$ of the homogeneous heat equation 
	\begin{align*}
		\frac{\partial z}{\partial t} - \Delta z & = 0 \quad \mbox{in} \quad   \Omega \times \mathbb{R}_{>0}, \qquad z(\cdot,0) = z^0   \quad \mbox{in} \quad   \Omega, \qquad
		\frac{\partial z}{\partial \mathbf{n}}  = 0 \quad \mbox{on} \quad  \partial \Omega \times \mathbb{R}_{>0}. 
	\end{align*}
	The solution is given by the heat semigroup $(S_\Delta(t))_{t \in \mathbb{R}_{\ge 0}} \subset \mathcal{L}(L^2(\Omega))$,
	\begin{align}
		z(x,t) = (S_\Delta(t) z^0)(x), \qquad x \in \Omega.  \label{heatsemigroup}
	\end{align}
	Moreover, the heat semigroup defined by \eqref{heatsemigroup} can be extended to a contraction semigroup $(S_{\Delta,p}(t))_{t \in \mathbb{R}_{\ge 0}}$ on $L^p(\Omega)$ for each $1 \le p \le \infty$, which is strongly continuous for $1 \le p < \infty$. For $1 < p <\infty$, the semigroup is analytic and $S_{\Delta, p}(t)$ is compact for $t>0$. Consequently, the generator $A_p: \mathcal{D}(A_p) \subset L^p(\Omega) \to L^p(\Omega)$ of $(S_{\Delta,p}(t))_{t \in \mathbb{R}_{\ge 0}}$ has compact resolvent for $1 < p < \infty$.
\end{proposition}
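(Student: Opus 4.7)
The plan is to work through the $L^2$ case via the form method, then extend to all $L^p$ through Beurling-Deny / submarkovian semigroup theory. First I would consider the symmetric, nonnegative, closed bilinear form
\[
a(u,w) = \int_\Omega \nabla u \cdot \nabla w \, \mathrm{d}x, \qquad u,w \in H^1(\Omega),
\]
which is densely defined in $L^2(\Omega)$. By the representation theorem (e.g.\ Ouhabaz, Chapter 1), $a$ is associated with a unique self-adjoint, nonnegative operator $A_2$ on $L^2(\Omega)$ whose domain is contained in $H^1(\Omega)$ and whose action is $-\Delta$ with a weak Neumann boundary condition. Since $A_2$ is self-adjoint and nonnegative, it generates an analytic contraction $C_0$-semigroup $(S_\Delta(t))_{t \ge 0}$ on $L^2(\Omega)$. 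Existence and uniqueness of the variational solution $z \in C(\mathbb{R}_{\ge 0}; L^2(\Omega))$ then follows from classical semigroup theory, and the representation $z(t) = S_\Delta(t)z^0$ is automatic.

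Next, to extend to $L^p(\Omega)$, I would verify that the form $a$ satisfies the Beurling--Deny criteria: positivity preservation (the projection onto $\{u \ge 0\}$ contracts the form) and $L^\infty$-contractivity (truncation at $\pm 1$ contracts the form). Both are standard for the gradient form. By Ouhabaz's theorem, these criteria imply that $(S_\Delta(t))$ is submarkovian, hence extends/restricts to a contraction semigroup $(S_{\Delta,p}(t))$ on $L^p(\Omega)$ for every $1 \le p \le \infty$. Strong continuity on $L^p$ for $1 \le p < \infty$ then follows from the contraction property together with density of $L^2(\Omega) \cap L^p(\Omega)$ in $L^p(\Omega)$ and the strong continuity on $L^2$; failure of strong continuity on $L^\infty$ is well known, so I would simply note it.

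For analyticity on $L^p$ with $1 < p < \infty$, I would appeal to the Stein interpolation theorem applied to the family of bounded operators $(S_\Delta(z))_{\mathrm{Re}\, z > 0}$, using analyticity on $L^2$ and contractivity on $L^\infty$; alternatively, the result follows directly from Ouhabaz's analyticity criterion for semigroups generated by Beurling--Deny forms. For compactness, I would first observe that $\mathcal{D}(A_2)$ embeds continuously into $H^1(\Omega)$ and hence compactly into $L^2(\Omega)$ by Rellich--Kondrachov (which requires only the Lipschitz boundary), so that $A_2$ has compact resolvent. To transfer compactness to $A_p$ for $1 < p < \infty$, I would use that $\mathcal{D}(A_p)$ embeds into an appropriate Sobolev space that is compactly embedded into $L^p(\Omega)$, combined with the consistency of the resolvents on $L^p \cap L^2$. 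Once $A_p$ has compact resolvent, compactness of $S_{\Delta,p}(t)$ for $t>0$ follows from analyticity (since analyticity gives $S_{\Delta,p}(t) = A_p \cdot R(\lambda, A_p) \cdot \mathrm{(bounded)}$ in a standard way, or more directly from the representation of the semigroup as a Dunford integral of the compact resolvent).

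The main obstacle is the compactness transfer for $A_p$ on a merely Lipschitz domain: classical $W^{2,p}$-regularity up to the boundary for Neumann problems is delicate under $C^{0,1}$ regularity, so I would rather proceed via consistency of resolvents, namely show $R(\lambda, A_p) = R(\lambda, A_2)|_{L^p \cap L^2}$ extended, and use interpolation between the compactness on $L^2$ and the boundedness on $L^\infty$ (Krasnosel'skii's interpolation theorem for compact operators) to obtain compactness of $R(\lambda, A_p)$ on $L^p$ for $1 < p < \infty$.
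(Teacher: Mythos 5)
Your argument is correct and follows essentially the same route as the paper, whose proof simply cites Davies (Theorems 1.3.9, 1.4.1, 1.4.2, 1.6.3) and Engel--Nagel for precisely the facts you reconstruct: the form method on $L^2(\Omega)$, the Beurling--Deny/submarkovian extension to $L^p(\Omega)$, Stein interpolation for analyticity, and interpolation of compactness. The only cosmetic point is that Krasnosel'skii interpolation between $L^2$ (compact) and $L^\infty$ (bounded) covers $2 \le p < \infty$; for $1 < p < 2$ you interpolate between $L^1$ and $L^2$ instead, which is the symmetric argument.
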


\begin{proof}
	Contractivity of the semigroup is shown in \cite[Theorem 1.3.9]{Davies}. By \cite[Theorems 1.4.1, 1.4.2]{Davies}, $(S_\Delta(t))_{t \in \mathbb{R}_{\ge 0}}$ is a strongly continuous semigroup for each $1 \le p < \infty$, which is even analytic for $p>1$. Compactness of the semigroup operators follows from \cite[Theorem 1.6.3]{Davies}. Finally, compactness of the resolvent $R(\lambda, A_p)$ for each $\lambda \in \rho(A_p)$ is a consequence of \cite[Ch. II, Theorem 4.29]{Engel}.
\end{proof}

For $A_2$, the domain $\mathcal{D}(A_2)$ is given in \cite{Davies} by the following characterization of a weak solution, i.e.,
\begin{align*}
	w \in \mathcal{D}(A_2) &\quad \Longleftrightarrow \quad w \in W^{1,2}(\Omega) \; \text{and there exists a function} \; f \in L^2(\Omega) \; \text{such that} \\[1ex]
	& \qquad \qquad \quad ( \nabla w, \nabla \varphi )_{L^2(\Omega)} = (f, \varphi )_{L^2(\Omega)} \qquad \forall \; \varphi \in W^{1,2}(\Omega).
\end{align*}
As usual in the weak sense, we identify $A_2w = f$ and obtain the graph norm estimate 
\begin{align*}
	\|w\|^2_{W^{1,2}(\Omega)} = \|w\|^2_{L^2(\Omega)} + (f,w)_{L^2(\Omega)} \le C\left(\|w\|^2_{L^2(\Omega)} + \|A_2w\|^2_{L^2(\Omega)} \right) \quad \forall \; w \in \mathcal{D}(A_2).
\end{align*}
From this characterization it is clear that the set 
\[
H_N^2(\Omega) := \{ w \in W^{2,2}(\Omega) \mid \partial_\mathbf{n} w = 0 \; \text{a.e. in} \; \Omega \}
\]
is included in $\mathcal{D}(A_2)$. In general, elliptic regularity is restricted for low-regular boundaries such as $\partial \Omega \in C^{0,1}$ \cite[Section 4.4.3]{Grisvard}. By the characterization of contraction semigroups via maximal dissipative generators in \cite[Theorem 3.4.5]{Arendt}
, we obtain $H_N^2(\Omega) \subsetneq \mathcal{D}(A_2)$ for certain Lipschitz boundaries. As shown in \cite[Theorem 2.15]{Yagi}, there holds $\mathcal{D}(A_2)=H_N^2(\Omega)$ in case of $\partial \Omega \in C^{1,1}$ since \cite[Theorem 2.4.1.3]{Grisvard} is used.\\

Let us next characterize the domain $\mathcal{D}(A_p)$ of the generator $A_p$ of the heat semigroup on $L^p(\Omega)$ for $p > 2$. We mention that, for Lipschitz domains, we might have elements in $\mathcal{D}(A_p)$ which are not in $W^{1,p}(\Omega)$, although these functions are H\"older continuous \cite{Nittka, Wood}.

\begin{lemma} \label{domchar}
	Let $2 \le p< \infty$ and the semigroup $(S_{\Delta,p}(t))_{t \in \mathbb{R}_{\ge 0}}$ be defined as in Proposition \ref{heathom} with generator $A_p: \mathcal{D}(A_p) \subset L^p(\Omega) \to L^p(\Omega)$. Then, $A_p$ is the part of the operator $A_2$ in $L^p(\Omega)$, i.e.,
	\begin{align}
		\mathcal{D}(A_p) = \{ w \in \mathcal{D}(A_2) \cap L^p(\Omega) \mid A_2w \in L^p(\Omega) \}. \label{DHppart}
	\end{align}
	Moreover, for $p>n/2$, the embedding $\mathcal{D}(A_p) \hookrightarrow C^{0, \gamma}(\overline{\Omega})$ is continuous for some $\gamma>0$ with 
	\begin{align}
		\|w\|_{C^{0,\gamma}(\overline{\Omega})} \le  C\left(\| w\|_{L^p(\Omega)} + \|A_pw\|_{L^p(\Omega)} \right) \qquad \forall \; w \in \mathcal{D}(A_p). \label{DHpembedding}
	\end{align}
\end{lemma}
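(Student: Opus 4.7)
The lemma has two parts: the identification of $\mathcal{D}(A_p)$ as the part of $A_2$ in $L^p(\Omega)$, and the Hölder embedding for $p>n/2$. The plan is to handle them in turn, using consistency of the heat semigroups across different $L^q$-spaces for the first part, and invoking the elliptic Neumann regularity of \cite{Nittka} for the second.

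For the identification, the key ingredient is that the semigroups $(S_{\Delta,p}(t))$ and $(S_{\Delta,2}(t))$ arise as extensions of the same object (either via the heat kernel on $\Omega$, or, as in Proposition \ref{heathom}, via the same quadratic form). In particular they are consistent on the intersection of their domains: $S_{\Delta,p}(t) w = S_{\Delta,2}(t) w$ for every $w \in L^p(\Omega) \subset L^2(\Omega)$, using boundedness of $\Omega$ and $p \ge 2$. For $w \in \mathcal{D}(A_p)$ I will observe that the difference quotient $t^{-1}(S_{\Delta,p}(t)w - w) \to A_p w$ in $L^p(\Omega)$ and, by consistency together with the continuous embedding $L^p(\Omega) \hookrightarrow L^2(\Omega)$, also in $L^2(\Omega)$; this places $w$ in $\mathcal{D}(A_2)$ with $A_2 w = A_p w \in L^p(\Omega)$. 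For the reverse inclusion I will use resolvent consistency: for $\lambda > 0$ both resolvents are given by the Laplace transform $R(\lambda,A_q) f = \int_0^\infty e^{-\lambda t} S_{\Delta,q}(t) f \, dt$, so consistency of the semigroups forces $R(\lambda,A_2)|_{L^p(\Omega)} = R(\lambda,A_p)$. Given $w$ on the right-hand side of \eqref{DHppart}, I set $f := \lambda w - A_2 w \in L^p(\Omega)$ and recover $w = R(\lambda,A_2) f = R(\lambda,A_p) f \in \mathcal{D}(A_p)$.

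For the Hölder estimate, given $w \in \mathcal{D}(A_p)$ with $p > n/2$ I will put $f := w - A_p w \in L^p(\Omega)$, so $\|f\|_{L^p(\Omega)} \le \|w\|_{L^p(\Omega)} + \|A_p w\|_{L^p(\Omega)}$. By the weak description of $\mathcal{D}(A_2)$ recalled just before the lemma, $w \in W^{1,2}(\Omega)$ is then the variational Neumann solution of $-\Delta w + w = f$ in $\Omega$ with $\partial_{\mathbf n} w = 0$ on $\partial\Omega$. For the Lipschitz domain $\Omega$ and an $L^p$ right-hand side with $p > n/2$, the elliptic De Giorgi--Nash--Moser--type regularity result for Neumann problems (in the form proved in \cite{Nittka}) yields a constant $\gamma = \gamma(n,p,\Omega) \in (0,1)$ and a constant $C>0$, independent of $w$ and $f$, such that $\|w\|_{C^{0,\gamma}(\overline{\Omega})} \le C \|f\|_{L^p(\Omega)}$. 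Chaining the two bounds gives \eqref{DHpembedding} and hence the continuous embedding $\mathcal{D}(A_p) \hookrightarrow C^{0,\gamma}(\overline{\Omega})$.

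The main obstacle is the Hölder step: because $\partial\Omega$ is only Lipschitz, one cannot appeal to classical $W^{2,p}$ theory and Sobolev embedding, so I rely on the low-regularity Neumann regularity result of \cite{Nittka}; without this input one would at best obtain a local interior Hölder estimate via Moser iteration, which is insufficient up to $\overline{\Omega}$. By contrast, the identification of $\mathcal{D}(A_p)$ is purely semigroup-theoretic and should cause no difficulty beyond making the consistency of $(S_{\Delta,p}(t))_{t \ge 0}$ across $p$ explicit.
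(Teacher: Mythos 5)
Your proposal is correct and follows essentially the same route as the paper: the domain identification rests on consistency of the heat semigroups across $L^q(\Omega)$ (you unpack by hand, via difference quotients and resolvent consistency, what the paper obtains by citing the subspace-restriction theorem \cite[Ch.~4, Theorem 5.5]{Pazy}), and the H\"older bound is reduced to the variational Neumann problem and settled by \cite[Proposition 3.6]{Nittka}, exactly as in the paper.
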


\begin{proof} Since the semigroups on $L^p(\Omega)$ are consistent due to \cite[Theorem 1.4.1]{Davies}, we obtain that the heat semigroup on $L^p(\Omega)$ is the restriction of the semigroup $(S_{\Delta}(t))_{t \in \mathbb{R}_{\ge 0}}$ defined on $L^2(\Omega)$ onto $L^p(\Omega)$ for $p \ge 2$. Hence, \cite[Ch. 4, Theorem 5.5]{Pazy} shows equality \eqref{DHppart}.\\
	The domain $\mathcal{D}(A_2)$ is implicitly defined by an elliptic problem. Hence, we focus on the problem $A_2 w = f$ for $f \in L^p(\Omega)$ in order to prove an embedding for $\mathcal{D}(A_p)$ endowed with the graph norm. We apply \cite[Proposition 3.6]{Nittka} for $p>n/2$ to obtain an embedding into the H\"older space $C^{0, \gamma}(\overline{\Omega})$ for a certain $\gamma>0$ which depends on $\Omega$. Estimate \eqref{DHpembedding} follows from the continuous embedding $L^p(\Omega) \hookrightarrow L^2(\Omega)$ for $p \ge 2$.
\end{proof}

Next we consider a restriction of the Laplace operator to the space $L^\infty(\Omega)$, i.e., we define
\begin{align*}
	A_{\infty}: \mathcal{D}(A_{\infty}) \subset L^\infty(\Omega) \to L^\infty(\Omega)
\end{align*}
as the part of $A_p$ defined in Lemma \ref{domchar} in $L^\infty(\Omega)$. From
\[
\mathcal{D}(A_{\infty}) = \{ u \in \mathcal{D}(A_p) \cap L^\infty(\Omega) \mid A_p u \in L^\infty(\Omega)\}
\]
we infer that this operator is again closed as the generator $A_p$ of the heat semigroup is closed on $L^p(\Omega)$ \cite[Section 3.10, p. 184]{Arendt}. However, $A_{\infty}$ is not densely defined due to the inclusion $\mathcal{D}(A_p) \subset C(\overline{\Omega})$ from Lemma \ref{domchar}. This means that $(A_{\infty}, \mathcal{D}(A_{\infty}))$ cannot be a generator of a strongly continuous semigroup by \cite[Ch. II, Theorem 1.4]{Engel}. Note that continuity of the heat semigroup restricted to $L^\infty(\Omega)$ only fails in $t=0$ and also analyticity is preserved if we exclude $t=0$. In fact, following \cite[Corollary 3.1.24]{Lunardi}, there exists a corresponding resolvent estimate for $A_{\infty}$ as for sectorial operators. Unfortunately, this estimate is shown under higher regularity assumptions on the boundary. But using heat kernel estimates from \cite{Ouhabaz}, we derive a corresponding resolvent estimate next. 

\begin{lemma} \label{resolvestLaplace}
	The operator $(A_{\infty}, \mathcal{D}(A_{\infty}))$ is closed on $L^\infty(\Omega)$. Moreover, there exist an angle $\frac{\pi}{2} < \omega < \pi$, a closed sector $\Sigma_\omega :=  \{ \lambda \in \mathbb{C} \mid |\mathrm{arg} \, \lambda| \le \omega\}$ and a constant $M >0$ such that $\Sigma_\omega \setminus \{0\} \subset \rho(A_{\infty})$ holds with the resolvent estimate
	\begin{align*}
		\|R(\lambda, A_{\infty})\| \le \frac{M}{|\lambda|} \qquad \forall \; \lambda \in \Sigma_\omega \setminus \{0\}. 
	\end{align*}  
\end{lemma}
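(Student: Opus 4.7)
Closedness is essentially immediate: by construction $A_\infty$ is the part of $A_p$ in $L^\infty(\Omega)$, the operator $A_p$ is closed on $L^p(\Omega)$ (as the generator of a strongly continuous semigroup by Proposition~\ref{heathom}), and the embedding $L^\infty(\Omega) \hookrightarrow L^p(\Omega)$ is continuous, so the standard ``part of a closed operator in a continuously embedded subspace is closed'' principle (the same one invoked in the proof of Lemma~\ref{restrclosed}) applies directly.

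For the resolvent estimate, the plan is to bypass any appeal to generation theorems, since $A_\infty$ fails to be densely defined, and instead to obtain a bounded analytic extension of the heat semigroup to $L^\infty(\Omega)$ directly from Gaussian kernel bounds, then read off the resolvent estimate by Laplace transform. Concretely, results of Ouhabaz yield that the Neumann heat kernel $p_t(x,y)$ on the Lipschitz domain $\Omega$ satisfies
\[
   |p_t(x,y)| \le C\, t^{-n/2}\exp\!\left(-c\,|x-y|^2/t\right) \qquad \forall\, x,y \in \Omega,\ t > 0,
\]
and, via Davies's complex-time trick, this bound extends to complex times $z$ inside a sector $\Sigma_{\theta_0}$ with some $\theta_0 \in (0,\pi/2)$. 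Integrating the kernel in $y$ gives $\|S_\Delta(z)\|_{L^\infty(\Omega)\to L^\infty(\Omega)} \le M$ uniformly in $z \in \Sigma_{\theta_0}$, so $z \mapsto S_\Delta(z)$ restricted to $L^\infty(\Omega)$ extends analytically and boundedly to this sector. The Laplace-transform representation
\[
   R(\lambda)f := \int_0^\infty e^{-\lambda t}\, S_\Delta(t) f\,\mathrm{d}t, \qquad f \in L^\infty(\Omega),\ \mathrm{Re}\,\lambda > 0,
\]
then defines a bounded operator on $L^\infty(\Omega)$ with $\|R(\lambda)\| \le M/\mathrm{Re}\,\lambda$. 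Consistency of the semigroups on the $L^p$-scale together with the analogous Laplace representation of $R(\lambda, A_p)$ identifies $R(\lambda)$ with $R(\lambda, A_\infty)$ on $L^\infty(\Omega)$. Rotating the integration contour to a ray $\{s e^{i\theta}: s > 0\}$ in $\Sigma_{\theta_0}$ and exploiting analyticity plus the uniform kernel bound on that ray extends $R(\lambda, A_\infty)$ to all $\lambda \in \Sigma_\omega \setminus \{0\}$ with $\omega := \pi/2 + \theta_0 \in (\pi/2, \pi)$, and a direct estimate of the rotated integral produces the desired bound $\|R(\lambda, A_\infty)\| \le M/|\lambda|$.

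\textbf{Main obstacle.} The chief subtlety is that $A_\infty$ is not densely defined, which disqualifies standard sectorial-generator theorems: the resolvent estimate cannot be quoted from the usual equivalence between sectoriality and generation of a bounded analytic semigroup, and must instead be produced ``by hand'' through the Laplace transform and the consistency of resolvents across the $L^p$-scale. A closely related obstacle is that the cleaner sectorial-type result [Lunardi, Corollary 3.1.24] mentioned before the lemma requires $\partial\Omega \in C^{1,1}$; under the present Lipschitz assumption this shortcut is unavailable, and the $L^\infty$-boundedness of the semigroup must be extracted from Gaussian heat kernel estimates, which do hold for Neumann Laplacians on Lipschitz domains thanks to Ouhabaz's theory.
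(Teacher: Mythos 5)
Your proposal is correct and follows essentially the same route as the paper: closedness via the ``part of a closed operator in a continuously embedded subspace'' principle, and the resolvent estimate via Gaussian heat kernel bounds extended to complex time in a sector (Davies/Ouhabaz), the Laplace-transform representation of the resolvent, consistency across the $L^p$-scale to identify $R(\lambda,A_\infty)$ as a restriction, and a contour rotation to reach the full sector $\Sigma_\omega$ with $\omega>\pi/2$. You also correctly isolate the key obstruction the paper works around, namely that $A_\infty$ is not densely defined and that the Lunardi-type shortcut would require $C^{1,1}$ boundary regularity.
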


\begin{proof}
	It is well-known that there exists such a resolvent estimate for the spaces $L^p(\Omega)$ for $p< \infty$ \cite[Theorem 2.12]{Yagi}. Nevertheless, constants in these estimates deteriorate as $p \to \infty$. Hence, we choose another strategy which is used for continuous functions in \cite[Section 5.5]{Tanabe}. Therefore, let us start with the analytic heat semigroup $(S_\Delta(t))_{t \in \mathbb{R}_{\ge 0}}$ generated by the Laplace operator defined on $L^2(\Omega)$, see Proposition \ref{heathom}. \cite[Theorem 2.4.4]{Davies} shows that $(S_\Delta(t))_{t \in \mathbb{R}_{\ge 0}}$ is hypercontractive and maps $L^2(\Omega)$ to $L^\infty(\Omega)$ for each $t >0$. Thus for each $t>0$, $x \in \Omega$, the map $z \mapsto S_\Delta(t)z(x) \in L^2(\Omega)^\ast$ can be represented by a function $p(t,x, \cdot) \in L^2(\Omega)$, the so called heat kernel, such that
	\[
	(S_\Delta(t) z)(x) =  \int_\Omega p(t,x,y) z(y) \; \mathrm{d}y \qquad \forall \; z \in L^2(\Omega), x \in \Omega.
	\]
	We use an estimation of the heat kernel from \cite[Theorem 3.2.9]{Davies}, more precisely, there exist constants $C_1$, $C_2>0$ such that
	\[
	0 \le p(t,x,y) \le C_1 m(t)^{-n/2} \exp\left( \frac{-|x-y|^2}{C_2 t}\right) \qquad \forall \; x,y \in \Omega, t \in \mathbb{R}_{> 0},
	\]
	where $m(t) = \min\{1,t\}$. By \cite[Theorem 7.2]{Ouhabaz}, this estimate can be extended to some sector $\Sigma_{\theta_1} \setminus \{0\} \subset \mathbb{C}$ for $0 < \theta_1 < \frac{\pi}{2}$, where $\Sigma_{\theta_1} =  \{ \lambda \in \mathbb{C} \mid |\mathrm{arg} \, \lambda| \le \theta_1 \}$. Indeed, we have $\cos(\theta_1) \le \cos(\theta) \le 1$ for each argument $\theta=\mathrm{arg} \, z$ of $z \in \Sigma_{\theta_1}$ with $\mathrm{Re}\, z >0$. Hence, according to \cite[Theorem 7.2]{Ouhabaz}, for each $0 < \theta_1 < \frac{\pi}{2}$ there exist constants $C_1, C_2>0$ such that
	\begin{align}
		0 \le p(z,x,y) \le C_1 |z|^{-n/2} \exp\left( \frac{-|x-y|^2}{C_2 |z|}\right) \quad \forall \; x,y \in \Omega, z \in \Sigma_{\theta_1} \setminus \{0\}. \label{heatkernelest}
	\end{align}
	Such an estimate for the heat kernel can be used to obtain resolvent estimates, since
	\begin{align}
		R(\lambda, A_2) = \int_0^\infty \mathrm{e}^{-\lambda \tau} S_\Delta(\tau) \; \mathrm{d}\tau \quad \forall \; \lambda \in \mathbb{C}, \mathrm{Re} \, \lambda>0 \label{Laplacetransform}
	\end{align}
	holds for the generator $A_2$ of the heat semigroup $(S_\Delta(t))_{t \in \mathbb{R}_{\ge 0}}$ from Proposition \ref{heathom} \cite[Ch. II, Theorem 1.10]{Engel}. Such estimates are also derived in \cite[Theorem 5.7]{Tanabe} under higher regularity assumptions. Note that the spectrum of the Laplacian $A_p$ considered on $L^p(\Omega)$ is independent of $2 \le p < \infty$ by \cite[Example 1.2]{Arendtspec}, and there holds $R(\lambda, A_2)_{|L^p(\Omega)} = R(\lambda, A_p)$ for each $\lambda \in \rho(A_2)=\rho(A_p)$. Furthermore, as in the proof of Proposition \ref{specrestricted}, one can show $\rho(A_p)=\rho(A_{\infty})$ and $R(\lambda, A_{\infty})= R(\lambda, A_p)_{|L^\infty(\Omega)}$ for $p>n^\ast$. In this way, we use the Laplace transform \eqref{Laplacetransform} of the heat semigroup to deduce estimates of $R(\lambda, A_p)$ for each $2 \le p \le \infty$. \\
	Recall from \cite[Ch. II, Theorem 3.5]{Engel} that the contraction property of the heat semigroup $(S_\Delta(t))_{t \in \mathbb{R}_{\ge 0}}$ in $L^p(\Omega)$ with $p<\infty$ implies 
	\begin{align}
		\| R(\lambda, A_2)\|_{\mathcal{L}(L^p(\Omega))} \le \frac{1}{\mathrm{Re} \, \lambda} \quad \forall \; \lambda \in \mathbb{C}, \mathrm{Re} \, \lambda >0. \label{resolventest3}
	\end{align}
	Taking the limit $p \to \infty$ in this resolvent estimate \eqref{resolventest3}, we obtain the same estimate for the resolvent restricted to $L^\infty(\Omega)$, compare also \cite[Ch. 2, Section 2.2]{Yagi}. Hence, as in the proof (b) $\Rightarrow$ (c) of \cite[Ch. 2, Theorem 5.2]{Pazy}, it remains to verify a corresponding estimate for the imaginary part of $\lambda \in \mathbb{C}$ with $\mathrm{Re} \, \lambda >0.$ Such an estimate can be deduced as in the proof (a) $\Rightarrow$ (b) of \cite[Ch. 2, Theorem 5.2]{Pazy} by shifting the path of integration in \eqref{Laplacetransform} to a ray in the complex plane. Recall that $A_p$ is sectorial in the sense of \cite[Ch. II, Definition 4.1]{Engel} since the heat semigroup is an analytic contraction semigroup \cite[Ch. II, Theorem 4.6]{Engel}. Uniform boundedness of the semigroup with respect to $L^\infty(\Omega)$ in some closed sector in $\mathbb{C}$ can be deduced from \cite[Theorem 7.4]{Ouhabaz}, using heat kernel estimate \eqref{heatkernelest}. Following the proof of \cite[Ch. 2, Theorem 5.2]{Pazy}, we find a certain angle $0 <\delta < \pi/2$ and for each small $\varepsilon>0$ there is a constant $M_\varepsilon>0$ such that 
	\begin{align}
		\|R(\lambda, A_2)\|_{\mathcal{L}(L^\infty(\Omega))} \le \frac{M_\varepsilon}{|\lambda|} \quad \forall \; \lambda \in \Sigma_{\pi/2 + \delta-\varepsilon} \setminus \{0\}. \label{resolventest2}
	\end{align}
	This is analog to \cite[pp. 207-210]{Tanabe} and \cite[p. 216]{Tanabe} for continuous functions and $\theta_0 = \frac{\pi}{2}-\delta +\varepsilon$. Finally, recall that $\rho(A_2) = \rho(A_{\infty})$ and $R(\lambda, A_{\infty})= R(\lambda, A_2)_{|L^\infty(\Omega)}$ for $\lambda \in \Sigma_{\pi/2 + \delta-\varepsilon} \setminus \{0\}$. This shows that estimate \eqref{resolventest2} holds for $R(\lambda, A_{\infty})$. 
\end{proof}

Finally, the Laplace operator can be restricted to the space of uniformly continuous functions, see \cite{Arendtsurvey, Biegert} and \cite[Corollary 3.1.24]{Lunardi}. 

\begin{lemma} \label{LaplacianC}
	Let $A_2$ be the generator of the analytic heat semigroup $(S_\Delta(t))_{t \in \mathbb{R}_{\ge 0}}$ defined in Proposition \ref{heathom} on $L^2(\Omega)$ with domain $\mathcal{D}(A_2) \subset W^{1,2}(\Omega)$. Then the part of the Laplacian $A_2$ in $C(\overline{\Omega})$, denoted by $A_c: \mathcal{D}(A_c) \subset C(\overline{\Omega}) \to C(\overline{\Omega})$, generates a strongly continuous semigroup $(S_{\Delta,c}(t))_{t \in \mathbb{R}_{\ge 0}}$ of contractions which is even analytic on $C(\overline{\Omega})$. The domain of $A_c$ is defined by the restriction
	\begin{align}
		\mathcal{D}(A_c) &= \{ w \in \mathcal{D}(A_2) \cap C(\overline{\Omega}) \mid A_2 w \in C(\overline{\Omega})\}. \label{domainAc}
	\end{align}
	The operator $A_c$ is also the part of the operator $A_p$ defined in Lemma \ref{domchar} for $p \ge 2$. Moreover, the semigroup $(S_{\Delta,c}(t))_{t \in \mathbb{R}_{\ge 0}}$ is compact, i.e., $S_{\Delta,c}(t)$ is compact for each $t \in \mathbb{R}_{> 0}$. Consequently, the operator $A_c$ has compact resolvent on $C(\overline{\Omega})$.
\end{lemma}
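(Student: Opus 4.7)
The strategy is to build $A_c$ by restricting the Laplacian from the $L^p$ scale to $C(\overline{\Omega})$, transfer the sectorial resolvent estimate of Lemma \ref{resolvestLaplace} to this restriction, and then invoke a generation theorem for analytic semigroups. First, I would observe that $C(\overline{\Omega}) \hookrightarrow L^\infty(\Omega) \hookrightarrow L^p(\Omega)$ continuously for every $p \ge 2$, so the part of $A_2$ in $C(\overline{\Omega})$ coincides with the part of $A_p$ and of $A_\infty$: this follows from Lemma \ref{domchar}, which characterizes $A_p$ as the part of $A_2$ in $L^p(\Omega)$ and supplies the elliptic embedding $\mathcal{D}(A_p) \hookrightarrow C^{0,\gamma}(\overline{\Omega})$ for $p > n/2$. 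In particular formula \eqref{domainAc} is unambiguous, and $A_c$ is closed by the standard argument that the part of a closed operator in a continuously embedded subspace is closed (as used already for Lemma \ref{restrclosed}).

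Second, I would push the resolvent estimate down to $A_c$. For $\lambda \in \Sigma_\omega \setminus \{0\}$, $R(\lambda, A_\infty)$ maps $L^\infty(\Omega)$ into $\mathcal{D}(A_\infty) \subset C(\overline{\Omega})$, so $C(\overline{\Omega})$ is invariant under $R(\lambda, A_\infty)$ and one has $R(\lambda, A_c) = R(\lambda, A_\infty)|_{C(\overline{\Omega})}$, inheriting $\|R(\lambda, A_c)\| \le M/|\lambda|$. Taking $p \to \infty$ in the $L^p$-contraction estimate derived in the proof of Lemma \ref{resolvestLaplace} gives moreover $\|R(\lambda, A_c)\| \le 1/\mathrm{Re}\,\lambda$ for $\mathrm{Re}\,\lambda > 0$, which will yield contractivity of the generated semigroup.

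The main obstacle is density of $\mathcal{D}(A_c)$ in $C(\overline{\Omega})$: unlike in the $L^p$ case with $p<\infty$, this is not automatic because the $L^\infty$-closure of $\mathcal{D}(A_\infty)$ is a proper subspace of $L^\infty(\Omega)$. The plan here is to use consistency of the heat semigroups on the $L^p$ scale together with Proposition \ref{heathom}: for $u \in C(\overline{\Omega}) \subset L^p(\Omega)$ and $t>0$, $S_{\Delta,p}(t)u \in \mathcal{D}(A_p) \hookrightarrow C^{0,\gamma}(\overline{\Omega})$ is independent of $p$, and by the Markovian/Feller property of the Neumann heat semigroup on a Lipschitz domain from \cite{Arendtsurvey, Biegert} one has $\|S_{\Delta,p}(t)u - u\|_\infty \to 0$ as $t \to 0^+$. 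This supplies the approximating sequence $S_{\Delta,p}(t_n)u \in \mathcal{D}(A_c)$ converging to $u$ in $C(\overline{\Omega})$. With density in hand, the sectorial estimate and \cite[Theorem 1.6]{Tanabe} (or \cite[Corollary 3.7.17]{Arendt}) produce the analytic semigroup generated by $A_c$, and contractivity follows from the above $1/\mathrm{Re}\,\lambda$ bound via Hille--Yosida. Identification of $(S_{\Delta,c}(t))_{t \ge 0}$ with the restriction of $(S_{\Delta,p}(t))_{t \ge 0}$ to $C(\overline{\Omega})$ then follows from \cite[Ch. 4, Theorem 5.5]{Pazy} applied to the invariant subspace $C(\overline{\Omega})$, exactly as in Lemma \ref{restrsemigroup2}.

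Finally, for compactness I would factor $R(\lambda, A_c) : C(\overline{\Omega}) \to \mathcal{D}(A_c) \hookrightarrow C^{0,\gamma}(\overline{\Omega}) \hookrightarrow\hookrightarrow C(\overline{\Omega})$ through the compact H\"older embedding \cite[Theorem 1.34]{Adams}, so $R(\lambda, A_c)$ is compact. Since $(S_{\Delta,c}(t))_{t \ge 0}$ is analytic, $S_{\Delta,c}(t)$ maps $C(\overline{\Omega})$ boundedly into $\mathcal{D}(A_c)$ for each $t > 0$ with the estimate $\|A_c S_{\Delta,c}(t)\| \le C/t$, so bounded sets in $C(\overline{\Omega})$ are mapped to sets bounded in the graph norm of $A_c$, which are precompact in $C(\overline{\Omega})$ by compactness of $R(\lambda, A_c)$. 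This gives compactness of $S_{\Delta,c}(t)$ for every $t > 0$, completing the proof.
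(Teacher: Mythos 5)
Your proposal is correct, but it is substantially more self-contained than the paper's argument, which for the generation statement is essentially a citation: the paper refers to \cite[Proposition 3, Theorem 4]{Biegert} and \cite[Ch. I, Section 2.6]{Arendtsurvey} for the fact that the Neumann Laplacian generates an analytic contraction semigroup on $C(\overline{\Omega})$, notes that $A_c$ is the part of $A_p$ via elliptic regularity and $C(\overline{\Omega})\subset L^p(\Omega)$, and obtains compact resolvent from compactness of the semigroup via \cite[Ch. II, Theorem 4.29]{Engel}. You instead rebuild the generation theorem from ingredients already in the paper: the identification of $A_c$ as the part of $A_p$ and $A_\infty$ (Lemma \ref{domchar}), the transferred sectorial estimate from Lemma \ref{resolvestLaplace} via invariance of $C(\overline{\Omega})$ under $R(\lambda,A_\infty)$, and Tanabe/Hille--Yosida. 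You correctly isolate the one genuinely nontrivial external input, namely density of $\mathcal{D}(A_c)$ in $C(\overline{\Omega})$, equivalently $\|S_\Delta(t)u-u\|_\infty\to 0$ for continuous $u$ on a Lipschitz domain; note that this is exactly the content of the Biegert/Arendt results, so your argument does not avoid that citation, it only localizes it. Your compactness argument also runs in the opposite direction from the paper's: you get compactness of $R(\lambda,A_c)$ from the factorization through the compact embedding $C^{0,\gamma}(\overline{\Omega})\hookrightarrow C(\overline{\Omega})$ using estimate \eqref{DHpembedding} and then deduce compactness of $S_{\Delta,c}(t)$ from analyticity, whereas the paper asserts compactness of the semigroup and invokes \cite[Ch. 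II, Theorem 4.29]{Engel} for the resolvent; both directions are valid since the semigroup is immediately norm continuous. What your route buys is transparency about which properties come from where (and it mirrors the structure of Lemmata \ref{restrclosed}--\ref{restrsemigroup2}); what the paper's route buys is brevity.
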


\begin{proof}
	References and proofs can be found in \cite[Proposition 3, Theorem 4]{Biegert} and \cite[Ch. I, Section 2.6]{Arendtsurvey}. That $A_c$ is a restriction of the Laplace operator $A_p$ defined on $L^p(\Omega)$ is a consequence of elliptic regularity and the fact that $C(\overline{\Omega}) \subset L^p(\Omega)$ for all $p \ge 2$, compare Lemma \ref{domchar} and Lemma \ref{resolvestLaplace}. Finally, compactness of the resolvent $R(\lambda, A_c)$ for each $\lambda \in \rho(A_c)$ is a consequence of \cite[Ch. II, Theorem 4.29]{Engel}.
\end{proof}

\section{Spectral characterization of the reaction-diffusion-ODE operator}
\label{sec:specchar}
We already characterized the spectrum of the operator $\mathbf{L}$ in Proposition \ref{RDODEspec} by splitting the spectrum into the two components
\begin{align*}
	\sigma(\mathbf{L}) = \sigma (\mathbf{A}_{\ast}) \mathop{\dot{\cup}} \Sigma
\end{align*}
where $\Sigma$ is a set of eigenvalues and $\sigma (\mathbf{A}_{\ast})$ is the spectrum of the corresponding multiplication operator $\mathbf{A}_{\ast}$ on $L^\infty(\Omega)^m$. In the case where the resolvent set $\rho(\mathbf{A}_\ast)$ is connected, then \cite[Theorem 10.1.3 (i)]{Jeribi} additionally yields that $\Sigma$ is a discrete set of eigenvalues. However, when $\rho(\mathbf{A}_{\ast})$ is not a connected set, $\Sigma$ might include a non-discrete set of the point spectrum of $\mathbf{L}$ \cite[Example 4.3]{Hardt}. We aim to further characterize the set $\Sigma$ of eigenvalues of  $\mathbf{L}$ and to show that the spectral bound of $\mathbf{L}$ is determined by the spectral bound $s(\mathbf{A}_\ast)$ and the spectral bound of a discrete set of eigenvalues within the unbounded connected component $\Lambda_\infty$ of $\rho(\mathbf{A}_\ast)$.

\begin{proposition} \label{Sigma}
	Let $m, k \in \mathbb{N}$ and $\mathbf{D}^v \in \mathbb{R}_{>0}^{k \times k}$. Let $\mathbf{L}$ be the partly diffusive operator defined in \eqref{Linear2} on 
	$L^\infty(\Omega)^m \times L^p(\Omega)^{k} $ for bounded coefficient matrices $\mathbf{A}_{\ast}, \mathbf{B}_{\ast}, \mathbf{C}_{\ast}, \mathbf{D}_{\ast}$ and a finite $n^\ast < p<\infty$. Then 
	\[
	\Sigma:= \sigma(\mathbf{L}) \cap \rho(\mathbf{A}_{\ast}) = \sigma_d(\mathbf{L}) \; \dot{\cup} \;  \Sigma_0
	\]
	where $\sigma_d(\mathbf{L})$ is the discrete spectrum of $\mathbf{L}$ and $\Sigma_0$ is a union of bounded (open) connected components of $\rho(\mathbf{A}_{\ast})$ consisting of non-isolated eigenvalues of $\mathbf{L}$. In case that $\rho(\mathbf{A}_\ast)$ is a connected set in $\mathbb{C}$, then $\Sigma_0 = \emptyset$.
\end{proposition}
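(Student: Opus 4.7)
The plan is to handle each connected component of the open set $\rho(\mathbf{A}_\ast) \subset \mathbb{C}$ separately via a Schur complement reduction and the analytic Fredholm theorem. For $\lambda \in \rho(\mathbf{A}_\ast)$, I would eliminate the ODE component from the resolvent equation $(\lambda I - \mathbf{L})\xi = \psi$ using the relation $\xi_1 = (\lambda I - \mathbf{A}_\ast)^{-1}(\psi_1 + \mathbf{B}_\ast \xi_2)$, which reduces the system to the elliptic equation
\[
\mathbf{M}(\lambda) \xi_2 = \psi_2 + \mathbf{C}_\ast(\lambda I - \mathbf{A}_\ast)^{-1}\psi_1, \qquad
\mathbf{M}(\lambda) := \lambda I - \mathbf{D}^v\Delta - \mathbf{D}_\ast - \mathbf{C}_\ast(\lambda I - \mathbf{A}_\ast)^{-1}\mathbf{B}_\ast,
\]
on $\mathcal{D}(A_p)^k \subset L^p(\Omega)^k$. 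Using the elliptic embedding $\mathcal{D}(A_p) \hookrightarrow L^\infty$ from Lemma~\ref{domchar}, one then verifies the equivalence $\lambda \in \Sigma \Leftrightarrow \mathbf{M}(\lambda)$ is not bijective as an operator $\mathcal{D}(A_p)^k \to L^p(\Omega)^k$.

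Next, I would fix some $\mu$ in the resolvent set of $\mathbf{D}^v\Delta$ and factor
\[
\mathbf{M}(\lambda) = (\mu I - \mathbf{D}^v\Delta)\bigl(I - K(\lambda)\bigr), \qquad K(\lambda) := R(\mu,\mathbf{D}^v\Delta)\bigl((\mu-\lambda)I + \mathbf{D}_\ast + \mathbf{C}_\ast(\lambda I - \mathbf{A}_\ast)^{-1}\mathbf{B}_\ast\bigr).
\]
Since $R(\mu,\mathbf{D}^v\Delta)$ is compact on $L^p(\Omega)^k$ for $1 < p < \infty$ (Proposition~\ref{heathom}) and $\lambda \mapsto (\lambda I - \mathbf{A}_\ast)^{-1}$ is operator-norm analytic on $\rho(\mathbf{A}_\ast)$, the family $\lambda \mapsto K(\lambda) \in \mathcal{L}(L^p(\Omega)^k)$ is analytic and compact-valued on each connected component $\Lambda$. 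The analytic Fredholm alternative then produces a dichotomy on every such $\Lambda$: either (a) $I - K(\lambda)$ is invertible for no $\lambda \in \Lambda$, whence $\Lambda \subset \Sigma$ (and every point of $\Lambda$ is automatically a non-isolated spectral value, since it lies in an open subset of $\sigma(\mathbf{L})$); or (b) $(I - K(\lambda))^{-1}$ is meromorphic on $\Lambda$, and $\Lambda \cap \Sigma$ is the discrete set of poles, at each of which the Laurent residue of $(I - K)^{-1}$ is a finite-rank operator, so $\lambda_0 \in \sigma_d(\mathbf{L})$ with finite algebraic multiplicity.

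To finish, I would observe that on the unbounded connected component $\Lambda_\infty$ analyticity of the semigroup $(\mathbf{T}(t))_{t\geq 0}$ generated by $\mathbf{L}$ (Lemma~\ref{Lclosed}) forces $\rho(\mathbf{L})$ to contain an unbounded sector, hence $\rho(\mathbf{L}) \cap \Lambda_\infty \neq \emptyset$; so alternative (b) must hold on $\Lambda_\infty$. Defining $\Sigma_0$ as the union of all bounded components $\Lambda$ on which alternative (a) holds, the remainder of $\Sigma$ is the disjoint union (over components in case (b)) of discrete sets of eigenvalues with finite algebraic multiplicity, which by definition is $\sigma_d(\mathbf{L})$. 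This yields the decomposition $\Sigma = \sigma_d(\mathbf{L}) \mathop{\dot\cup} \Sigma_0$, and in the connected case only $\Lambda_\infty$ occurs so $\Sigma_0 = \emptyset$.

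The main obstacle I anticipate is the careful bookkeeping around the Schur complement on the mixed space $L^\infty(\Omega)^m \times L^p(\Omega)^k$: verifying that $\mathbf{M}(\lambda)$ really captures $\Sigma$ (including preservation of the algebraic multiplicity under the $\xi_1 \leftrightarrow \xi_2$ correspondence), and that the perturbation $\mathbf{D}_\ast + \mathbf{C}_\ast(\lambda I - \mathbf{A}_\ast)^{-1}\mathbf{B}_\ast$ is a bounded multiplication operator on $L^p(\Omega)^k$ with $\lambda$-analytic dependence. Once these identifications are in place the analytic Fredholm argument is standard, but the hybrid function spaces require some care.
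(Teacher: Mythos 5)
Your proposal is correct and follows essentially the same route as the paper: Schur-complement reduction to the elliptic operator $\mathbf{M}(\lambda)$, factorization through a compact resolvent to get an analytic compact-operator family, the analytic Fredholm (Gohberg--Shmul'yan) dichotomy on each connected component of $\rho(\mathbf{A}_\ast)$, and treatment of the unbounded component via nonemptiness of $\rho(\mathbf{L})$ there. The only differences are cosmetic (the paper absorbs $\mathbf{D}_\ast$ into the base operator $L_1=\mathbf{D}^v\Delta+\mathbf{D}_\ast$, and settles the finite-multiplicity/discreteness bookkeeping you flag by combining isolation with the index-zero Fredholm property already established in Proposition~\ref{RDODEspec} rather than via the finite-rank Laurent residue).
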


\begin{proof}
	By calculations in Corollary \ref{independence} we infer that, for $\lambda \in \rho(\mathbf{A}_{\ast})$, invertibility of $\lambda I - \mathbf{L}$ is equivalent to invertibility of the elliptic operator
	\begin{align*}
		\lambda I - \mathbf{D}^v \Delta -  \mathbf{D}_{\ast} - \mathbf{C}_{\ast}(\lambda I - \mathbf{A}_{\ast})^{-1} \mathbf{B}_{\ast}.
	\end{align*}
	To distinguish discrete parts within the spectral set $\Sigma =  \sigma(\mathbf{L}) \cap \rho(\mathbf{A}_\ast)$ we apply an analytic Fredholm theorem. For this reason we use that invertibility of this elliptic operator is equivalent to invertibility of  $I - \mathbf{C}(\lambda)$ where
	\begin{align}
		\mathbf{C}(\lambda)  =  R(\lambda_0, L_1)  \left( (\lambda_0- \lambda) I +  \mathbf{C}_{\ast}(\lambda I - \mathbf{A}_{\ast})^{-1} \mathbf{B}_{\ast} \right) \label{Fredholm2}
	\end{align}
	for some $\lambda_0 \in \rho(L_1)$, $L_1:= \mathbf{D}^v \Delta + \mathbf{D}_\ast$. From \cite[Theorem 1.6.3]{Davies} and \cite[Ch. II, Theorem 4.29]{Engel}, we obtain compactness of the resolvent $R(\lambda_0, L_1)$ for $L_1$ defined on $L^p(\Omega)^k$. Hence, $\mathbf{C}(\lambda)$ is compact for each $\lambda \in \rho(\mathbf{A}_{\ast})$ and $\mathbf{C}$ is analytic since the resolvent mapping of $\mathbf{A}_{\ast}$ is. 
	Note that for $\lambda \in \rho(\mathbf{A}_{\ast}) \cap \rho(L_1)$ the operator $\mathbf{C}(\lambda)$ can be chosen similar to \cite[Theorem 4.2]{Hardt}, i.e., 
	\[
	\mathbf{C}(\lambda)  =  R(\lambda, L_1)  \mathbf{C}_{\ast}(\lambda I - \mathbf{A}_{\ast})^{-1} \mathbf{B}_{\ast}.
	\]
	In the case where $\rho(\mathbf{A}_\ast)$ is connected, we have seen in Proposition \ref{RDODEspec} that $\Sigma_0 = \emptyset$. In the case where the open set $\rho(\mathbf{A}_\ast)$ is not connected, there exists an at most countable number of open connected components $\Lambda_i$, $i\in I$, which are pairwise disjoint and form a partition of $\rho(\mathbf{A}_{\ast})$. The components are chosen to be maximal, i.e. for all $i,j\in I$ with $i\ne j$ the set $\Lambda_i\cup \Lambda_j$ is not connected. Moreover, there exists exactly one unbounded connected component denoted by $\Lambda_\infty$ since $\sigma(\mathbf{A}_{\ast})$ is bounded. The remaining $\Lambda_i$, $i\in I\setminus\{\infty\}$, are bounded components. Now, we are able to apply the analytic Fredholm theorem on each of the components $\Lambda_i$. Either there holds $1 \in \sigma(\mathbf{C}(\lambda))$ for all $\lambda \in \Lambda_i$ or there exists  $\lambda \in \Lambda_i$ such that $1 \in \rho(\mathbf{C}(\lambda))$. \\
	In the first case, we recall from Proposition \ref{RDODEspec} that $\Lambda_i \subset \Sigma$ is a set of eigenvalues of $\mathbf{L}$. As $\Lambda_i$ is an open subset of $\mathbb{C}$, $\Lambda_i  \subset \sigma_{e6}(\mathcal{L})$ is a set of non-isolated eigenvalues, hence $\Lambda_i \cap \sigma_d(\mathbf{L}) = \emptyset$. We define the open set $\Sigma_0$ as the disjoint union of all such $\Lambda_i$. \\
	In the second case where $I - \mathbf{C}(\lambda)$ is invertible for some $\lambda \in \Lambda_i$, the Gohberg-Shmul'yan theorem \cite[Theorem 2.5.13]{Jeribi} applies and shows that $\Lambda_i \cap \Sigma$ is a discrete set of eigenvalues of $\mathbf{L}$.	Due to the separation of the connected components by the closed set $\sigma(\mathbf{A}_\ast)$, eigenvalues $\lambda \in \Lambda_i \cap \Sigma$ are also isolated values of $\sigma(\mathbf{L})$. Since $\lambda I - \mathbf{L}$ is a Fredholm operator of index zero, we find $\Lambda_i \cap \Sigma \subset \sigma_d(\mathbf{L})$ for each such component $\Lambda_i$. We recall from Proposition \ref{RDODEspec} that $\sigma(\mathbf{A}_\ast) \cap \sigma_d(\mathbf{L}) = \emptyset$ and, hence, we characterized all parts of $\sigma_d(\mathbf{L})$.\\
	Note that in any case, the unbounded connected component $\Lambda_\infty$ satisfies $\Lambda_\infty \cap \sigma_p(\mathbf{L}) \subset \sigma_d(\mathbf{L})$. In fact, since the multiplication operator $\mathbf{A}_{\ast}$ is bounded, the operator $I- \mathbf{C}(\lambda)$ resp. $\lambda I - \mathbf{L}$ is invertible for some large $\lambda \in \rho(\mathbf{L}) \cap \rho(\mathbf{A}_\ast)$ by \cite[Ch. II, Theorem 1.10]{Engel}.  
\end{proof}

When $\rho(\mathbf{A}_\ast)$ is connected, then $\rho(\mathbf{A}_\ast)$ coincides with the only unbounded connected component $\Lambda_\infty$. Then it is well-known that the spectral bound of the reaction-diffusion-ODE operator $\mathbf{L}$ is determined by the spectral bound of $\mathbf{A}_{\ast}$ and the spectral bound of the discrete part $\Sigma = \Lambda_\infty \cap \sigma_p(\mathbf{L})$ \cite{MKS17}. In general, the spectrum $\sigma(\mathbf{L})$ might consist of further discrete or non-discrete bounded components of $\rho(\mathbf{A}_\ast)$. However, their spectral bound can be controlled in terms of the spectral bound of the surrounding spectrum $\sigma(\mathbf{A}_\ast)$.

\begin{corollary}
	Let the assumptions of Proposition \ref{Sigma} hold true and denote by $\Lambda_\infty$ the unbounded open connected component of $\rho(\mathbf{A}_{\ast})$. Further, let us denote by 
	\[
	s_\infty := \sup\{\mathrm{Re} \, \lambda \mid \lambda \in \Lambda_\infty\cap\sigma_p(\mathbf{L})\}
	\]
	the spectral bound of the discrete set of eigenvalues within $\Lambda_\infty$.
	Then the spectral bound of $\mathbf{L}$ is determined by
	\begin{align}
		s(\mathbf{L}) = \max\{s(\mathbf{A}_{\ast}), s_\infty\}.
	\end{align}
\end{corollary}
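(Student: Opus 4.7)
The plan is to combine the spectral decomposition from Proposition \ref{Sigma} with a short maximum-principle argument bounding $\mathrm{Re}\,\lambda$ on every bounded connected component of $\rho(\mathbf{A}_\ast)$.

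First I would establish the easy lower bound. By Proposition \ref{RDODEspec} we have $\sigma(\mathbf{A}_\ast) \subset \sigma(\mathbf{L})$, so $s(\mathbf{L}) \ge s(\mathbf{A}_\ast)$. Likewise $\Lambda_\infty \cap \sigma_p(\mathbf{L}) \subset \sigma(\mathbf{L})$, hence $s(\mathbf{L}) \ge s_\infty$, giving $s(\mathbf{L}) \ge \max\{s(\mathbf{A}_\ast),s_\infty\}$.

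For the reverse inequality I would use the refined decomposition from Proposition \ref{Sigma}:
\[
\sigma(\mathbf{L}) = \sigma(\mathbf{A}_\ast) \;\dot{\cup}\; \sigma_d(\mathbf{L}) \;\dot{\cup}\; \Sigma_0,
\]
where $\Sigma_0$ is a union of bounded connected components of $\rho(\mathbf{A}_\ast)$, and where the discrete part $\sigma_d(\mathbf{L})$ intersected with $\Lambda_\infty$ coincides with $\Lambda_\infty\cap\sigma_p(\mathbf{L})$. It therefore suffices to show that every eigenvalue of $\mathbf{L}$ lying in a \emph{bounded} connected component $\Lambda_i$ of $\rho(\mathbf{A}_\ast)$, as well as every $\lambda \in \Sigma_0$, satisfies $\mathrm{Re}\,\lambda \le s(\mathbf{A}_\ast)$.

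The key step is the following topological observation: since $\rho(\mathbf{A}_\ast)$ is open and the $\Lambda_i$ are its connected components, any boundary point $\lambda \in \partial \Lambda_i$ must belong to $\sigma(\mathbf{A}_\ast)$ (a boundary point in $\rho(\mathbf{A}_\ast)$ would be contained in some open component $\Lambda_j$, forcing $\Lambda_j \cap \Lambda_i \neq \emptyset$ and contradicting maximality). For a bounded $\Lambda_i$ the closure $\overline{\Lambda_i}$ is compact, and since $\lambda \mapsto \mathrm{Re}\,\lambda$ is a nonconstant harmonic function, the maximum principle yields
\[
\sup_{\lambda \in \Lambda_i} \mathrm{Re}\,\lambda = \sup_{\lambda \in \partial \Lambda_i} \mathrm{Re}\,\lambda \le s(\mathbf{A}_\ast).
\]
Applied to each bounded component contributing to $\Sigma_0$ and to each bounded component containing eigenvalues of $\sigma_d(\mathbf{L})$, this gives the upper bound $s(\mathbf{A}_\ast)$. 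Combining with the contribution $s_\infty$ from the unbounded component $\Lambda_\infty$ yields $s(\mathbf{L}) \le \max\{s(\mathbf{A}_\ast), s_\infty\}$, completing the proof.

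I do not anticipate a serious obstacle: the only delicate point is the identification $\partial \Lambda_i \subset \sigma(\mathbf{A}_\ast)$ together with the application of the maximum principle, both of which are standard. Everything else is already encoded in Proposition \ref{Sigma}.
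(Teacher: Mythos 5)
Your proposal is correct and follows essentially the same route as the paper: both reduce the claim to showing that the real parts of spectral values in bounded connected components of $\rho(\mathbf{A}_\ast)$ are dominated by $s(\mathbf{A}_\ast)$, using that the boundary of each such component lies in $\sigma(\mathbf{A}_\ast)$ and that $\sup \mathrm{Re}\,\lambda$ over a bounded open set is attained on its boundary. Your invocation of the maximum principle for the harmonic function $\mathrm{Re}\,\lambda$ is just a slightly dressed-up version of the paper's elementary observation, so there is no substantive difference.
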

\begin{proof}
	The spectral bound of $\mathbf{L}$ is given by $s(\mathbf{L}) = \sup\{\mathrm{Re}\, \lambda \mid \lambda\in \sigma(\mathbf{L})\}.$ Let us decompose $\rho(\mathbf{A}_\ast)$ again into open connected components $\Lambda \subset \mathbb{C}$, where exactly one of these is unbounded, denoted by $\Lambda_\infty$. We define 
	\[
	s_b= \sup\{\mathrm{Re} \, \lambda \mid \lambda \in \Lambda \cap\sigma_p(\mathbf{L}) \; \text{for some bounded connected component} \; \Lambda \; \text{of} \; \rho(\mathbf{A}_{\ast}) \}.
	\]
	By Proposition \ref{RDODEspec} and Proposition \ref{Sigma} it is clear that
	$
	s(\mathbf{L}) =	\max\{s(\mathbf{A}_{\ast}), s_\infty, s_b\}.
	$
	This proves $\max\{s(\mathbf{A}_{\ast}), s_\infty\} \le s(\mathbf{L})$. For the reverse inequality we show $s_b \le s(\mathbf{A}_{\ast})$.
	Therefore, let us take a bounded open connected component $\Lambda$ of the resolvent set $\rho(\mathbf{A}_{\ast})$ which is completely surrounded by $\sigma(\mathbf{A}_{\ast})$. Then $s(\Lambda) := \sup\{\mathrm{Re} \, \lambda \mid \lambda \in \Lambda\}$ cannot be attained within the open set $\Lambda$, however, the supremum is attained on the boundary $\partial \Lambda$ which is a subset of $\sigma(\mathbf{A}_{\ast})$. Thus, we have $s_b \le s(\mathbf{A}_{\ast})$ which proves the claim.
\end{proof}

Let us note that the discrete sets $\sigma_d(\mathbf{L})$ or $\Lambda_\infty \cap \sigma_p(\mathbf{L})$ are not necessarily closed. However, all {accumulation points in $\mathbb{C}$ lie on the boundary of the connected components in which the discrete sets are located. Since this boundary is a subset of $\partial \rho(\mathbf{A}_{\ast})$, we infer that these accumulation points of $\sigma_d(\mathbf{L})$ are} included in $\sigma(\mathbf{A}_{\ast})$. Compare also \cite[Section 2.1.2]{Klika} in the case $\Sigma_0 = \emptyset$.

\section{Existence of mild solutions} \label{sec:exmildsol}

In the case of continuous functions, existence and uniqueness of mild solutions for the corresponding equation \eqref{LNequation} is well established \cite[Ch. 6, Theorem 1.4]{Pazy}. In this section, we present a proper notion and an existence and uniqueness result of a mild solution to system \eqref{LNequation} with discontinuous components, i.e.,
\begin{align*}
	\frac{\partial \xi}{\partial t} = \mathbf{L} \xi + \mathbf{N}(\xi), \qquad \xi(0) = \xi^0 \in L^\infty(\Omega)^{m+k}. 
\end{align*}
This is in accordance with the theory developed in \cite{Rothe}. The analytic semigroup $(\mathbf{T}(t))_{t \in \mathbb{R}_{\ge 0}}$ from Lemma \ref{Lclosed} can be restricted to $L^\infty(\Omega)^{m+k}$ by Lemma \ref{restrsemigroup}. However, strong continuity of the semigroup is lost in the limit $p \to \infty$ \cite[Part I, Lemma 2]{Rothe}. Nevertheless, due to regularizing effects of the heat semigroup, a solution to equation \eqref{LNequation} can be obtained by a Picard iteration in $L^\infty(\Omega)^{m+k}$ \cite[Part II, Theorem 1]{Rothe}. The result in \cite{Rothe} is based on an integral representation 
\begin{align}
	\xi(\cdot,t) = \mathbf{S}(t)\xi^0 + \int_0^t \mathbf{S}(t-\tau) \big( \mathbf{N}(\xi(\cdot,\tau)) + \mathbf{J}\xi(\cdot, \tau) \big) \; \mathrm{d}\tau, \label{implicitS}
\end{align}
with the analytic contraction semigroup $(\mathbf{S}(t))_{t \in \mathbb{R}_{\ge 0}}$ generated by $\mathbf{D}\Delta$ on $L^\infty(\Omega)^{m} \times L^p(\Omega)^{k}$, see \eqref{semigroup}. In order to relate stability properties of the steady state to its linearization $\mathbf{L}$, it is necessary to obtain an integral representation with the analytic semigroup $(\mathbf{T}(t))_{t \in \mathbb{R}_{\ge 0}}$ generated by $\mathbf{L}$, i.e.,
\begin{align}
	\xi(\cdot,t) = \mathbf{T}(t)\xi^0 + \int_0^t \mathbf{T}(t-\tau) \mathbf{N}(\xi(\cdot,\tau)) \; \mathrm{d}\tau. \label{implicitT}
\end{align}
To show existence and uniqueness of a mild solution, we use estimate \eqref{estimateN} for the nonlinear term $\mathbf{N}$ as well as growth estimate \eqref{expgrowthTI} for the semigroup $(\mathbf{T}(t))_{t \in \mathbb{R}_{\ge 0}}$ on $L^\infty(\Omega)^{m+k}$. A solution of equation \eqref{LNequation} is understood analog to the mild sense of Rothe, compare \cite[Part II, Definition 2]{Rothe}.

\begin{definition} \label{mildsol}
	Let $0 < T \le \infty$. A mild solution of problem \eqref{LNequation} on the interval $[0, T)$ for initial datum $\xi^0 \in L^\infty(\Omega)^{m+k}$ is a measurable function
	$
	\xi: \Omega \times [0,T) \to \mathbb{R}^{m+k}
	$
	satisfying for all $t \in (0,T)$
	\begin{itemize}
		\item[(i)] $\xi(\cdot,t) \in L^{\infty}(\Omega)^{m+k}$ and \, $\sup_{s \in (0,t)} \|\xi(\cdot,s)\|_\infty < \infty$;
		
		\item[(ii)] the integral representation
		\[
		\xi(\cdot,t) = \mathbf{T}(t)\xi^0 + \int_0^t \mathbf{T}(t-\tau) \mathbf{N}(\xi(\cdot,\tau)) \; \mathrm{d}\tau,
		\]
	\end{itemize}
	where the integral is an absolutely converging Bochner integral in $L^{\infty}(\Omega)^{m+k}$.
\end{definition}

The local-in-time mild solution fulfills $\xi \in L^\infty(\Omega_T)^{m+k}$ for each finite $T < T_{\max}$, where $[0,T_{\max})$ is the maximal time interval of existence derived in the next Proposition \ref{Rothesol}. This is achieved via a Picard iteration in the Banach space $(L^\infty(\Omega)^{m+k},\|\cdot\|_\infty)$, similar to \cite[Part II, Theorem 1]{Rothe}. Boundedness of the solution and norm-continuity of the analytic semigroup $(\mathbf{T}(t))_{t \in \mathbb{R}_{\ge 0}}$ for $t>0$ yields a mild solution $\xi \in C([0, T_{\max}); L^\infty(\Omega)^m \times L^p(\Omega)^{k})$. The regularity of the semigroup for $p>n^\ast$ even implies $\xi \in C((0, T_{\max}); L^\infty(\Omega)^m \times C(\overline{\Omega})^{k})$, as a consequence of Lemma \ref{restrsemigroup}. We refer to Remark \ref{rem:regmildsol} for more comments on the regularity of solutions depending on the regularity of the initial condition.

\begin{proposition} \label{Rothesol}
	Let the nonlinear operator $\mathbf{N}$ defined in \eqref{LNequation} satisfy the local Lipschitz condition \ref{ass:Exist} and let the semigroup $(\mathbf{T}(t))_{t \in \mathbb{R}_{\ge 0}}$ defined in Lemma \ref{Lclosed} satisfy the growth estimate $\|\mathbf{T}(t)\|_\infty \le M \mathrm{e}^{w t}$ for some $M \ge 1, w \in \mathbb{R}$. Then the following assertions hold.
	\begin{itemize}
		
		\item[(i)] For all $\xi^0 \in L^{\infty}(\Omega)^{m+k}$ there exists a $0 < T \le \infty$, such that the initial value problem \eqref{LNequation} has a unique mild solution on the time interval $[0,T)$.
		
		\item[(ii)] If we consider $T=T(\xi^0)$ as a function of $\xi^0$, then it satisfies
		\[
		0 < \inf \{ T(\xi^0) \mid \xi^0 \in L^{\infty}(\Omega)^{m+k}, \|\xi^0\|_{\infty} \le U_0\}
		\]
		for all $U_0 \in [0,\infty)$.
		
		\item[(iii)] The existence interval can be chosen to be maximal, i.e., $\mathrm{(i)}$ holds for $T \le T_{\max}$, $T_{\max}  \le \infty$, and not for $T>T_{\max}$. If $T_{\max} < \infty$, the solution $\xi$ satisfies
		\[
		\|\xi(\cdot,t)\|_{\infty}  \to \infty  \qquad (t \nearrow T_{\max}).
		\]
	\end{itemize}
\end{proposition}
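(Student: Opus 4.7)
The plan is to run a Picard iteration directly on $L^\infty(\Omega)^{m+k}$ for the integral equation \eqref{implicitT}, using the exponential growth bound \eqref{expgrowthTI} on $(\mathbf{T}(t))_{t\ge0}$ together with the local Lipschitz property of $\mathbf{N}$ (Lemma \ref{nonlinearity}). The main technical subtlety will be that $(\mathbf{T}_\infty(t))_{t\ge 0}$ is not strongly continuous at $t=0$ on $L^\infty(\Omega)^{m+k}$, so the iteration must be set up in a class of functions that are merely bounded and measurable up to $t=0$ but norm-continuous for $t>0$; this is the main obstacle.

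\textbf{Setup of the iteration (part (i)).} Fix $\xi^0\in L^\infty(\Omega)^{m+k}$ and, for numbers $R>0$ and $T>0$ to be chosen, let
\[
X_{T,R}:=\Bigl\{\xi:[0,T)\to L^\infty(\Omega)^{m+k}\ \text{measurable}\ \Bigm|\ \sup_{t\in[0,T)}\|\xi(t)\|_\infty\le R\Bigr\}
\]
endowed with $\|\xi\|_{X}=\sup_{t\in[0,T)}\|\xi(t)\|_\infty$. Define
\[
\Phi(\xi)(t):=\mathbf{T}(t)\xi^0+\int_0^t \mathbf{T}(t-\tau)\mathbf{N}(\xi(\tau))\,\mathrm{d}\tau.
\]
By Lemma \ref{restrsemigroup}, $\mathbf{T}(t)\xi^0$ is norm-continuous on $(0,T)$ and bounded, and the map $\tau\mapsto \mathbf{T}(t-\tau)\mathbf{N}(\xi(\tau))$ is strongly measurable (the operator-valued map is norm-continuous on $[0,t)$ and $\tau\mapsto \mathbf{N}(\xi(\tau))$ is measurable as the composition of the continuous map $\mathbf{N}$ with a measurable $\xi$), hence the Bochner integral is well-defined as soon as it is norm-bounded. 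Choose $R:=2M\|\xi^0\|_\infty+1$, denote by $L=L(R)$ the Lipschitz constant of $\mathbf{N}$ on the ball of radius $R$, and use $\mathbf{N}(\mathbf{0})=\mathbf{0}$ to bound $\|\mathbf{N}(\xi(\tau))\|_\infty\le LR$. Then
\[
\|\Phi(\xi)(t)\|_\infty\le M\mathrm{e}^{|w|T}\|\xi^0\|_\infty+M\mathrm{e}^{|w|T}LR\,T,
\]
so $\Phi(X_{T,R})\subset X_{T,R}$ for $T$ small enough. A similar estimate, using Lipschitz continuity of $\mathbf{N}$ on the $R$-ball, gives
\[
\|\Phi(\xi_1)-\Phi(\xi_2)\|_X\le M\mathrm{e}^{|w|T}L\,T\,\|\xi_1-\xi_2\|_X,
\]
and shrinking $T$ further makes this a strict contraction. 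Banach's fixed point theorem yields a unique fixed point $\xi\in X_{T,R}$, which by construction satisfies Definition \ref{mildsol}.

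\textbf{Uniform lower bound on $T(\xi^0)$ (part (ii)).} The threshold on $T$ obtained above depends on $\xi^0$ only through $R=2M\|\xi^0\|_\infty+1$. Hence if $\|\xi^0\|_\infty\le U_0$, we may take $R_0:=2MU_0+1$ and use $L=L(R_0)$ uniformly, producing a time $T_0=T_0(U_0)>0$ such that the iteration converges for every such $\xi^0$, giving $\inf\{T(\xi^0):\|\xi^0\|_\infty\le U_0\}\ge T_0>0$.

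\textbf{Maximal existence and blow-up (part (iii)).} Uniqueness in $L^\infty$ on $[0,T)$ follows from a Gronwall argument applied to two putative mild solutions: on any subinterval where both stay bounded by some $R'$, Lipschitz continuity of $\mathbf{N}$ on the $R'$-ball together with the growth bound on $\mathbf{T}$ gives
\[
\|\xi_1(t)-\xi_2(t)\|_\infty\le M\mathrm{e}^{|w|T}L(R')\int_0^t\|\xi_1(\tau)-\xi_2(\tau)\|_\infty\,\mathrm{d}\tau,
\]
hence $\xi_1\equiv\xi_2$. Setting $T_{\max}:=\sup\{T>0\mid\text{a mild solution exists on }[0,T)\}$, uniqueness allows glueing to obtain a solution on the full interval $[0,T_{\max})$. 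If $T_{\max}<\infty$ and $\limsup_{t\nearrow T_{\max}}\|\xi(\cdot,t)\|_\infty<\infty$, take $U_0$ strictly above that $\limsup$; by (ii) there is a uniform time $T_0=T_0(U_0)>0$ such that any initial datum with norm $\le U_0$ admits a mild solution of length $T_0$. Pick $t^\ast\in(T_{\max}-T_0,T_{\max})$ with $\|\xi(t^\ast)\|_\infty\le U_0$, solve from $t^\ast$ with initial value $\xi(t^\ast)$, and paste the two solutions (using the semigroup property of $\mathbf{T}$ inside \eqref{implicitT}) to extend past $T_{\max}$, contradicting maximality. Therefore $\|\xi(\cdot,t)\|_\infty\to\infty$ as $t\nearrow T_{\max}$ whenever $T_{\max}<\infty$.
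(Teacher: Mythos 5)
Your proof is correct and rests on the same core idea as the paper's: a Picard iteration for the integral equation \eqref{implicitT} on $L^\infty(\Omega)^{m+k}$, driven by the exponential bound \eqref{expgrowthTI} and the local Lipschitz continuity of $\mathbf{N}$. The packaging differs: you invoke Banach's fixed point theorem on a ball of bounded measurable functions with a small-time contraction, whereas the paper (following Rothe) writes out the iterates explicitly and sums a telescoping series with factorial decay, which lets it work on the a priori longer interval determined by the condition \eqref{choiceT} rather than by a contraction condition --- immaterial for the statement, since part (iii) restores maximality either way. You also supply the arguments for (ii) and (iii) in full, which the paper delegates to \cite[p. 114]{Rothe}; your continuation argument is the standard one and is sound, including the observation that pasting two solutions works because of the semigroup property applied inside \eqref{implicitT}. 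One small slip in (iii): assuming $\limsup_{t\nearrow T_{\max}}\|\xi(\cdot,t)\|_\infty<\infty$ and deriving a contradiction only shows that the $\limsup$ is infinite; to obtain the asserted full limit $\|\xi(\cdot,t)\|_\infty\to\infty$ you should instead assume $\liminf_{t\nearrow T_{\max}}\|\xi(\cdot,t)\|_\infty<\infty$, extract a sequence $t_n\nearrow T_{\max}$ with $\|\xi(\cdot,t_n)\|_\infty\le U_0$, and restart from $t_n$ for $n$ large enough that $T_{\max}-t_n<T_0(U_0)$ --- the identical argument then yields the contradiction.
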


\begin{proof}
	Let $\xi^0 \in L^\infty(\Omega)^{m+k}$ satisfy $\|\xi^0\|_{\infty} \le U_0$ for some $U_0\ge0$.
	Choose an arbitrary real number $U >M U_0$ and consider the bounded set $B= \overline{\Omega} \times \overline{B_U(\mathbf{0})} \subset \mathbb{R}^n \times \mathbb{R}^{m+k}$ on which the nonlinearity is bounded and Lipschitz continuous with some constant $L(B)>0$ by Assumption \ref{ass:Exist}. Herein, we denote $B_U(\mathbf{0})$ the open ball with radius $U$ around $\mathbf{0} \in \mathbb{R}^{m+k}$. Choose $T>0$ such that 
	\begin{equation}
		M U_0 \mathrm{e}^{w T} + \mathrm{e}^{(L(B)M + w)T} -1 \le U, \label{choiceT}
	\end{equation}
	where this particular choice is due to the following proof. As the case $w \le 0$ is already considered in \cite[Part II, Theorem 1]{Rothe}, we assume $w \ge 0$ in the following.\\
	We use a Picard iteration to obtain the mild solution $\xi$. Define 
	$(\xi_i)_{i \in \mathbb{N}} \subset L^{\infty}(\Omega_T)^{m+k}$ recursively by
	\begin{align}
		\xi_1(\cdot,t)  = \mathbf{T}(t)\xi^0, \qquad
		\xi_{i+1}(\cdot,t)  = \mathbf{T}(t)\xi^0 + \int_0^t \mathbf{T}(t-s) \mathbf{N}(\xi_i(\cdot,s)) \; \mathrm{d}s \label{Rotheiteration}
	\end{align}
	for $i \in \mathbb{N}, t \in [0,T]$. Recall that we use in fact the restricted semigroup $(\mathbf{T}_\infty(t))_{t \in \mathbb{R}_{\ge 0}}$ on $L^{\infty}(\Omega)^{m+k}$ from Lemma \ref{restrsemigroup}. In order to show well-definedness of the approximating sequence $(\xi_i)_{i \in \mathbb{N}}$, define the differences 
	\[
	\eta_i: [0,T] \to \mathbb{R}_{\ge 0},\quad t \mapsto \|(\xi_{i+1}-\xi_i)(\cdot,t)\|_{\infty}
	\]
	for each $i \in \mathbb{N}$. Our aim is to show the following inequalities
	\begin{align} \label{R4}
		\begin{split}
			\|\xi_i(\cdot,t) \|_{\infty} & \le U, \\
			\eta_i(t) & \le \alpha \int_0^t \mathrm{e}^{w(t-s)} \eta_{i-1}(s) \; \mathrm{d}s \qquad \text{where} \quad \eta_0:= 1,\\
			\sum_{j=1}^i \eta_j(t)  & \le \mathrm{e}^{(\alpha + w)t} -1,\\
			\eta_i(t)  & \le \alpha \int_0^t \frac{(\alpha s)^{i-1}}{(i-1)!} \mathrm{e}^{w s} \; \mathrm{d}s
		\end{split}
	\end{align}
	for all $t \in [0,T], i \in \mathbb{N}$ and $\alpha:= M L(B)$ via induction. Norm-continuity of the semigroup $(\mathbf{T}_\infty(t))_{t \in \mathbb{R}_{> 0}}$ implies (Bochner) measurability of $\xi_1$ by \cite[Corollary 1.1.2]{Arendt}. The growth estimate \eqref{expgrowthTI} of the semigroup $(\mathbf{T}_\infty(t))_{t \in \mathbb{R}_{\ge 0}}$ yields 
	\[
	\|\xi_1(\cdot,t)\|_{\infty} = \|\mathbf{T}(t)\xi^0\|_{\infty} \le M\mathrm{e}^{w t} U_0 <U \qquad \text{for} \quad t\in [0,T],
	\]
	where we used the choice of $T$ in \eqref{choiceT}. Continuity of $\mathbf{N}$ implies that $\mathbf{N} \circ \xi_1$ is also (Bochner) measurable. Estimates for the nonlinearity from Assumption \ref{ass:Exist} lead us to integrability, i.e., $s \mapsto \mathbf{N}(\xi_1(\cdot,s)) \in L^1([0,T]; L^{\infty}(\Omega)^{m+k})$. By \cite[Proposition 1.3.4]{Arendt} and continuity of the semigroup from Lemma \ref{restrsemigroup}, the convolution in definition of $\xi_2$ is well-defined as a Bochner integral in $L^{\infty}(\Omega)^{m+k}$ or $L^\infty(\Omega)^m \times C(\overline{\Omega})^k$ with the estimate
	\begin{align*}
		\eta_1(t) &\le \int_0^t M \mathrm{e}^{w(t-s)} \|\mathbf{N}(\xi_1(\cdot,s))\|_{\infty} \; \mathrm{d}s \le L(B) M \int_0^t \mathrm{e}^{w(t-s)} \; \mathrm{d}s = \alpha \int_{0}^{t} \mathrm{e}^{ws} \; \mathrm{d}s\\
		&\le \alpha \int_0^t \mathrm{e}^{(\alpha+w)s} \; \mathrm{d}s  \le \mathrm{e}^{(\alpha + w)t}-1
	\end{align*}
	since $\alpha =M L(B) \ge 0$ and $ w \ge 0$.
	This proves the base case for estimates \eqref{R4}. If the estimates hold true for a particular $i\in \mathbb{N}$, this implies
	\begin{align*}
		\eta_{i+1}(t) & \le \int_0^t M \mathrm{e}^{w(t-s)} \|\mathbf{N}(\xi_{i+1}(\cdot,s))-\mathbf{N}(\xi_i(\cdot,s)) \|_{\infty} \; \mathrm{d}s\\
		& \le \alpha \int_0^t \mathrm{e}^{w(t-s)} \eta_i(s) \; \mathrm{d}s \le\alpha^2 \int_0^t \mathrm{e}^{w(t-s)} \int_0^s \frac{(\alpha r)^{i-1}}{(i-1)!} \mathrm{e}^{w r} \; \mathrm{d}r \; \mathrm{d}s\\
		& = \frac{\alpha^2}{w} \mathrm{e}^{w t} \int_0^t  \frac{(\alpha r)^{i-1}}{(i-1)!}  \; \mathrm{d}r - \frac{\alpha^2}{w} \int_0^t \frac{(\alpha r)^{i-1}}{(i-1)!} \mathrm{e}^{w r} \; \mathrm{d}r = \alpha \int_0^t \frac{(\alpha s)^{i}}{i!} \mathrm{e}^{w s} \; \mathrm{d}s,
	\end{align*}
	where we used partial integration twice. We demonstrated only the case $w>0$, $w=0$ is even simpler.
	By definition of $\xi_{i+1}$ in \eqref{Rotheiteration} and $T$ in \eqref{choiceT}, we obtain
	\begin{align*}
		\|\xi_{i+1}(\cdot,t)\|_{\infty} &\le M\mathrm{e}^{wt}U_0 + ML(B) \int_{0}^{t} \mathrm{e}^{w(t-s)}\; \mathrm{d}s \\
		& \le M\mathrm{e}^{wT}U_0 + ML(B) \int_{0}^{t} \mathrm{e}^{(ML(B)+w)s}\; \mathrm{d}s\\
		&\le M\mathrm{e}^{wT}U_0 + \mathrm{e}^{(L(B)M+w)T} - 1 \le U
	\end{align*}
	for all $t\in[0,T]$, since $\alpha=ML(B) \ge 0$ and $w \ge 0$. Finally, it is left to prove the estimate for the sum of $\eta_j$ in \eqref{R4}. The induction step follows from
	\begin{align*}
		\sum_{j=1}^{i+1} \eta_j(t) \le  \alpha \int_0^t \sum_{j=1}^{i+1}\frac{(\alpha s)^{j-1}}{(j-1)!} \mathrm{e}^{w s} \; \mathrm{d}s \le \alpha \int_0^t \mathrm{e}^{(\alpha+ w) s} \; \mathrm{d}s  \le \mathrm{e}^{(\alpha + w)t}-1.
	\end{align*}
	For fixed $t \in [0,T]$, these results imply convergence of 
	\[
	\xi_{i+1}(\cdot,t) = \sum_{j=1}^i (\xi_{j+1}-\xi_j)(\cdot,t) + \xi_1(\cdot,t) \to \xi(\cdot,t):= \sum_{j \in \mathbb{N}} (\xi_{j+1}-\xi_j)(\cdot,t) + \xi_1(\cdot,t)
	\]
	as $i \to \infty$. The latter series converges absolutely in $L^{\infty}(\Omega)^{m+k}$, i.e.,
	\[
	\sum_{j \in \mathbb{N}} \|(\xi_{j+1}-\xi_j)(\cdot,t)\|_{\infty} = \sum_{j \in \mathbb{N}} \eta_j(t) \le \mathrm{e}^{(\alpha + w)t}-1,
	\]
	by convergence of the partial sums in \eqref{R4}. We even have convergence $\xi_i \to \xi$ in the Banach space $L^{\infty}(\Omega_T)^{m+k}$ and $L^\infty([0,T]; L^\infty(\Omega)^{m+k})$. This is a consequence of the estimates
	\begin{align*}
		\sup_{t \in [0,T]} \|(\xi_i-\xi)(\cdot,t)\|_{\infty} & \le \sup_{t \in [0,T]} \sum_{j=i}^{\infty} \eta_j(t) \le \sup_{t \in [0,T]} \sum_{j=i}^{\infty} \alpha \int_{0}^{t} \frac{(\alpha s)^{j-1}}{(j-1)!} \mathrm{e}^{ws} \; \mathrm{d}s\\
		&  \le \alpha \int_{0}^{T} \left(\mathrm{e}^{\alpha s} - \sum_{j=0}^{i-2} \frac{(\alpha s)^{j}}{(j)!}\right) \mathrm{e}^{ws} \; \mathrm{d}s \to 0 \quad (i \to \infty).
	\end{align*}
	where the exponential series converges uniformly on the bounded interval $[0,T]$.
	Using the above estimate of $\xi_1$ and inequalities \eqref{R4}, we obtain an estimate for $\xi$,
	\[
	\sup_{t \in [0,T]} \|\xi(\cdot,t)\|_{\infty} \le \sup_{t \in [0,T]} \left( \mathrm{e}^{(\alpha+ w)t} -1 + M\mathrm{e}^{w t} U_0 \right) \le U.
	\]
	Due to uniform convergence, the limit $\xi$ is (Bochner) measurable and a mild solution of the integral equation
	\[
	\xi(\cdot,t) = \mathbf{T}(t)\xi^0 + \int_0^t \mathbf{T}(t-s) \mathbf{N}(\xi(\cdot,s)) \; \mathrm{d}s \qquad \forall \; t \in [0,T].
	\]
	Uniqueness of mild solutions follows from Gronwall's inequality and local Lipschitz continuity from Assumption \ref{ass:Exist}. For the proof of (ii) and (iii), we refer to \cite[p. 114]{Rothe}. 
\end{proof}

Equivalence of the implicit integral formulations \eqref{implicitS} and \eqref{implicitT} can be shown using \cite[Equation (4.100)]{Webb} for $X=L^p(\Omega)^{m+k}$. Indeed, by \cite[Section 5.2.2]{Kowall}, $\mathbf{D} \Delta$ and $\mathbf{D} \Delta + \mathbf{J}$ generate analytic semigroups on $L^p(\Omega)^{m+k}$ and it can be verified that its restrictions to the subspace $L^\infty(\Omega)^m \times L^p(\Omega)^k$ coincide with $(\mathbf{S}(t))_{t \in \mathbb{R}_{\ge 0}}$ defined in \eqref{semigroup} and $(\mathbf{T}(t))_{t \in \mathbb{R}_{\ge 0}}$, respectively \cite[Ch. 4, Theorem 5.5]{Pazy}. Continuity of $\mathbf{N}$ from Lemma \ref{nonlinearity} implies that \[
h:= \mathbf{N} \circ \xi \in C([0,T]; L^p(\Omega)^{m+k})
\]
for a solution $\xi$ of problem \eqref{LNequation} and  \cite[Equation (4.100)]{Webb} applies to the bounded perturbation $\mathbf{J}$ on $L^p(\Omega)^{m+k}$. 

\section{Multiplication operator on uniformly continuous functions} 

Each matrix-valued function $\mathbf{A} \in C(\overline{\Omega})^{m \times m}$ induces a corresponding multiplication operator
\[
\mathbf{M}_{\mathbf{A}}: C(\overline{\Omega})^{m} \to C(\overline{\Omega})^{m}, \qquad \mathbf{z} \mapsto \mathbf{A} \mathbf{z}
\]
where $(\mathbf{A}\mathbf{z})(x) :=\mathbf{A}(x)\mathbf{z}(x)$ for each $\mathbf{z} \in C(\overline{\Omega})^{m}$. Since $\|\mathbf{M}_{\mathbf{A}}\| \le \|\mathbf{A}\|_\infty$, this is a bounded, linear operator. Let us simply write $\mathbf{A}$ instead of $\mathbf{M}_{\mathbf{A}}$ in the following. The knowledge of the spectrum of a multiplication operator $\mathbf{A}$ allows us to characterize the spectrum of the partly diffusive operator $\mathbf{L}^c$ in Proposition \ref{RDODEspecC}.

\begin{lemma} \label{essspecC}
	Let $\mathbf{A} \in C(\overline{\Omega})^{m \times m}$ for $m \in \mathbb{N},$ and let $\mathbf{A}$ denote its corresponding bounded multiplication operator on $C(\overline{\Omega})^{m}$. Then the spectrum $\sigma(\mathbf{A})$ is essential in the sense that each $\lambda \in \sigma(\mathbf{A})$ is characterized by the property 
	\begin{align}
		\forall \, \varepsilon>0 \; \exists \, \Omega_\varepsilon \subset \Omega \; \text{open set} \; \exists \,  \mathbf{v}_\varepsilon  \in \mathbb{R}^m \setminus \{\mathbf{0}\} :  \|(\lambda I - \mathbf{A}) \mathbf{v}_{\varepsilon}\|_{C(\overline{\Omega_\varepsilon})^m} \le \varepsilon |\mathbf{v}_{\varepsilon}|. \label{essspectrumC2}
	\end{align}
	This means $\sigma(\mathbf{A}) =   \{ \lambda \in \mathbb{C} \mid \text{condition} \;  \eqref{essspectrumC2} \; \text{is satisfied} \} =: A_{\mathrm{ess}}$ and, furthermore, $\sigma(\mathbf{A})$ consists of approximate eigenvalues only.
\end{lemma}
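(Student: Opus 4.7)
The plan is to prove the two inclusions separately, using as the key intermediate step the fact that
$\sigma(\mathbf{A}) = \bigcup_{x \in \overline{\Omega}} \sigma(\mathbf{A}(x))$.
For this identity, if $\lambda$ avoids every $\sigma(\mathbf{A}(x))$, then by continuity of matrix inversion off the spectrum, the map $x \mapsto (\lambda I - \mathbf{A}(x))^{-1}$ is continuous on the compact set $\overline{\Omega}$, uniformly bounded, and induces a bounded multiplication operator on $C(\overline{\Omega})^m$ that inverts $\lambda I - \mathbf{A}$; thus $\lambda \in \rho(\mathbf{A})$. Combined with the pointwise direction handled in Step~3 below, this yields the claimed identity. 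Note that $\bigcup_{x \in \overline{\Omega}} \sigma(\mathbf{A}(x))$ is automatically closed by continuity of $\mathbf{A}$ and compactness of $\overline{\Omega}$, so no closure is required.

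The inclusion $A_{\mathrm{ess}} \subset \sigma(\mathbf{A})$, together with the "approximate eigenvalues only" assertion, follows from a cutoff construction. Given $\lambda \in A_{\mathrm{ess}}$, choose sequences $\varepsilon_n \downarrow 0$, open sets $\Omega_n \subset \Omega$ and vectors $\mathbf{v}_n \neq \mathbf{0}$ with $\|(\lambda I - \mathbf{A})\mathbf{v}_n\|_{C(\overline{\Omega_n})^m} \le \varepsilon_n |\mathbf{v}_n|$. Pick any $x_n \in \Omega_n$; by Urysohn's lemma applied to the disjoint closed sets $\{x_n\}$ and $\overline{\Omega} \setminus \Omega_n$, there is $\varphi_n \in C(\overline{\Omega})$ with $0 \le \varphi_n \le 1$, $\varphi_n(x_n) = 1$, and $\mathrm{supp}\,\varphi_n \subset \overline{\Omega_n}$. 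Define $\mathbf{z}_n := \varphi_n\, \mathbf{v}_n / |\mathbf{v}_n| \in C(\overline{\Omega})^m$. Then $\|\mathbf{z}_n\|_\infty \ge |\mathbf{z}_n(x_n)| = 1$, while
\begin{align*}
\|(\lambda I - \mathbf{A})\mathbf{z}_n\|_\infty = \sup_{x \in \overline{\Omega_n}} \varphi_n(x)\, |(\lambda I - \mathbf{A}(x))\mathbf{v}_n|/|\mathbf{v}_n| \le \varepsilon_n.
\end{align*}
Hence $\lambda$ is an approximate eigenvalue and $\lambda \in \sigma(\mathbf{A})$. Applying the same construction to every $\lambda \in \sigma(\mathbf{A})$ (after Step~3) yields the final claim.

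For the reverse inclusion $\sigma(\mathbf{A}) \subset A_{\mathrm{ess}}$, fix $\lambda \in \sigma(\mathbf{A})$. By the identity of Step~1, there exists $x_0 \in \overline{\Omega}$ with $\lambda \in \sigma(\mathbf{A}(x_0))$, and since $\mathbf{A}(x_0)$ is a finite matrix we find $\mathbf{v}_0 \neq \mathbf{0}$ with $\mathbf{A}(x_0)\mathbf{v}_0 = \lambda \mathbf{v}_0$. For $\varepsilon > 0$, continuity of $\mathbf{A}$ at $x_0$ provides an open neighborhood $U$ of $x_0$ in $\overline{\Omega}$ with $\|\mathbf{A}(x) - \mathbf{A}(x_0)\|_{\mathbb{R}^{m \times m}} \le \varepsilon$ for all $x \in \overline{U}$. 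The set $\Omega_\varepsilon := U \cap \Omega$ is open and nonempty since $x_0 \in \overline{\Omega}$ and $\Omega$ is dense in $\overline{\Omega}$. With $\mathbf{v}_\varepsilon := \mathbf{v}_0$ we estimate
\begin{align*}
\|(\lambda I - \mathbf{A})\mathbf{v}_0\|_{C(\overline{\Omega_\varepsilon})^m} = \sup_{x \in \overline{\Omega_\varepsilon}} |(\mathbf{A}(x_0) - \mathbf{A}(x))\mathbf{v}_0| \le \varepsilon |\mathbf{v}_0|,
\end{align*}
confirming \eqref{essspectrumC2}.

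The main obstacle is the mild topological care required in Step~2 -- producing a continuous cutoff function supported in the (possibly oddly shaped) open set $\Omega_n$ and peaking at an interior point, so that the cutoff localizes the vector $\mathbf{v}_n$ without destroying its normalization -- and the symmetric subtlety in Step~3 that the spectral witness $x_0$ may lie on $\partial\Omega$, which is handled by intersecting the neighborhood $U$ with $\Omega$. Everything else is a routine consequence of the continuity of matrix inversion and the compactness of $\overline{\Omega}$.
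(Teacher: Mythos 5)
Your proof is correct, but it routes the inclusion $\sigma(\mathbf{A})\subset A_{\mathrm{ess}}$ differently from the paper. The forward inclusion $A_{\mathrm{ess}}\subset\sigma_{ap}(\mathbf{A})$ is essentially the paper's argument: both localize the near-eigenvector $\mathbf{v}_\varepsilon$ by a cutoff supported in $\Omega_\varepsilon$ and normalized at an interior point (the paper takes $\eta_\varepsilon\in C_c^\infty(\Omega_\varepsilon)$ equal to $1$ on a small ball, your Urysohn function does the same job). For the reverse inclusion the paper stays entirely inside the lemma and argues by contraposition: negating \eqref{essspectrumC2} gives a uniform lower bound $\|(\lambda I-\mathbf{A})(\cdot)\mathbf{v}\|_{C(\overline{\Omega_0})^m}\ge\varepsilon|\mathbf{v}|$ over all open $\Omega_0$ and all $\mathbf{v}$, which is upgraded to the pointwise bound $|(\lambda I-\mathbf{A}(x))\mathbf{v}|\ge\varepsilon|\mathbf{v}|$ and hence to a bounded multiplicative inverse. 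You instead first establish the pointwise spectral identity $\sigma(\mathbf{A})=\bigcup_{x\in\overline{\Omega}}\sigma(\mathbf{A}(x))$ --- which in the paper is the content of the \emph{subsequent} Proposition \ref{specunion}, proved there \emph{using} this lemma --- and then read off \eqref{essspectrumC2} from a pointwise eigenvector of $\mathbf{A}(x_0)$ via continuity. Your chain $\sigma(\mathbf{A})\subset\bigcup_{x}\sigma(\mathbf{A}(x))\subset A_{\mathrm{ess}}\subset\sigma_{ap}(\mathbf{A})\subset\sigma(\mathbf{A})$ closes without circularity, because the only half of the identity that Step~3 actually consumes is the inclusion you prove self-containedly in Step~1 from continuity of matrix inversion and compactness of $\overline{\Omega}$; your bookkeeping about which step supplies which half could be stated more cleanly, but the logic is sound. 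The payoff of your ordering is that Lemma \ref{essspecC} and Proposition \ref{specunion} come out in one pass (your closedness observation for $\bigcup_{x\in\overline{\Omega}}\sigma(\mathbf{A}(x))$ is correct by projection along the compact factor); the paper's ordering keeps the lemma independent of the union formula. The one cosmetic blemish, inherited from the paper's own notation, is that an eigenvector of $\mathbf{A}(x_0)$ for complex $\lambda$ lives in $\mathbb{C}^m$ rather than $\mathbb{R}^m$.
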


\begin{proof}
	First, let us verify $A_{\mathrm{ess}} \subset \sigma_{ap}(\mathbf{A})$. We assume $\lambda \in A_{\mathrm{ess}}$ and consider the sequence $(\xi_\varepsilon)_{\varepsilon >0} \subset C(\overline{\Omega})^m$ defined by
	$
	\xi_\varepsilon =  \eta_\varepsilon \mathbf{v}_\varepsilon/|\mathbf{v}_\varepsilon|
	$
	for a $\eta_\varepsilon \in C_c^\infty(\Omega_\varepsilon)$ with $0 \le \eta_\varepsilon \le 1$ and $\eta_\varepsilon \equiv 1$ on some small ball $\overline{B_\delta(y)} \subset \Omega_\varepsilon$ \cite[Lemma 9.3]{Brezis}. This construction implies $\|\xi_\varepsilon\|_{C(\overline{\Omega})^m} =1$ and 
	$
	\| (\lambda I - \mathbf{A}) \xi_\varepsilon \|_{C(\overline{\Omega})^m} =  \| (\lambda I - \mathbf{A}) \xi_\varepsilon \|_{C(\overline{\Omega_\varepsilon})^m} \le \varepsilon. 
	$
	Thus, $\lambda \in A_{\mathrm{ess}}$ is also an approximate eigenvalue of $\mathbf{A}$.\\
	In order to show $\sigma(\mathbf{A}) \subset  A_{\mathrm{ess}}$, we verify the converse inclusion $\mathbb{C} \setminus A_{\mathrm{ess}} \subset \rho(\mathbf{A})$. For $\lambda \notin A_{\mathrm{ess}}$, we find a constant $\varepsilon>0$ such that for all open sets $\Omega_0 \subset \Omega$ and vectors $\mathbf{v} \in \mathbb{R}^m \setminus \{\mathbf{0}\}$ there holds
	\[
	\|(\lambda I - \mathbf{A})(\cdot) \mathbf{v}\|_{C(\overline{\Omega_0})^m} \ge \varepsilon |\mathbf{v}|.
	\]
	By contradiction, this implies the pointwise estimate
	\[
	|(\lambda I - \mathbf{A})(x) \mathbf{v} | \ge \varepsilon |\mathbf{v}| \qquad \forall \; x \in \overline{\Omega}, \mathbf{v} \in \mathbb{R}^m. 
	\] 
	Consequently, the matrices $(\lambda I - \mathbf{A}(x))^{-1}$ imply a bounded multiplication operator on $C(\overline{\Omega})^m$ which turns out to be the inverse of $\lambda I - \mathbf{A}$, hence $\lambda \in \rho(\mathbf{A})$. 
\end{proof}

\begin{proposition} \label{specunion}
	Let $\mathbf{A} \in C(\overline{\Omega})^{m \times m}$ for $m \in \mathbb{N}$. Then the spectrum of the bounded multiplication operator $\mathbf{A}$ on $C(\overline{\Omega})^{m}$ is given by  
	\begin{align}
		\sigma(\mathbf{A}) =  \overline{ \bigcup_{x \in \Omega} \sigma(\mathbf{A}(x))}. 
		\label{specmultC}
	\end{align}
\end{proposition}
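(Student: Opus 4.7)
The plan is to prove both inclusions in \eqref{specmultC} by exploiting the essential-spectrum characterization from Lemma \ref{essspecC} together with continuity of $\mathbf{A}$ and a standard matrix-determinant estimate.

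For the inclusion $\overline{\bigcup_{x\in\Omega}\sigma(\mathbf{A}(x))} \subset \sigma(\mathbf{A})$, I would first fix $x_0\in\Omega$ and $\lambda \in \sigma(\mathbf{A}(x_0))$. Finite-dimensional spectral theory gives an eigenvector $\mathbf{v}\in\mathbb{R}^m\setminus\{\mathbf{0}\}$ with $\mathbf{A}(x_0)\mathbf{v}=\lambda\mathbf{v}$. Continuity of $\mathbf{A}$ at $x_0$ provides, for any $\varepsilon>0$, an open neighborhood $\Omega_\varepsilon \subset \Omega$ of $x_0$ on which $\|\mathbf{A}(y)-\mathbf{A}(x_0)\|\le \varepsilon$. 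Hence the constant vector $\mathbf{v}_\varepsilon:=\mathbf{v}$ satisfies the essential-spectrum condition \eqref{essspectrumC2}, so Lemma \ref{essspecC} yields $\lambda\in\sigma(\mathbf{A})$. Since $\sigma(\mathbf{A})$ is closed, passing to the closure concludes this direction.

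The more delicate direction is $\sigma(\mathbf{A}) \subset \overline{\bigcup_{x\in\Omega}\sigma(\mathbf{A}(x))}$. Given $\lambda\in\sigma(\mathbf{A})$, Lemma \ref{essspecC} yields, for each $n\in\mathbb{N}$, an open set $\Omega_n\subset \Omega$ and a vector $\mathbf{v}_n\in\mathbb{R}^m$ with $|\mathbf{v}_n|=1$ such that $\|(\lambda I-\mathbf{A})\mathbf{v}_n\|_{C(\overline{\Omega_n})^m}\le 1/n$. Selecting any $x_n\in \Omega_n$ gives the pointwise estimate $|(\lambda I-\mathbf{A}(x_n))\mathbf{v}_n|\le 1/n$. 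The main obstacle is to translate this approximate-eigenvalue bound for the (in general non-normal) finite matrix $M_n:=\lambda I-\mathbf{A}(x_n)$ into a genuine eigenvalue estimate. I would use the singular-value identity $|\det M_n|=\prod_{i=1}^m \sigma_i(M_n)$ together with $\sigma_{\min}(M_n)\le |M_n\mathbf{v}_n|\le 1/n$ and $\|M_n\|\le |\lambda|+\|\mathbf{A}\|_\infty=:C$ to deduce
\begin{equation*}
|\det M_n| \le C^{m-1}\,\sigma_{\min}(M_n)\le C^{m-1}/n.
\end{equation*}

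Finally, factoring the characteristic polynomial as $\det M_n=\prod_{i=1}^m(\lambda-\mu_i(x_n))$ with $\mu_i(x_n)\in\sigma(\mathbf{A}(x_n))$ and estimating each factor from below by $\min_i|\lambda-\mu_i(x_n)|$ gives $(\min_i|\lambda-\mu_i(x_n)|)^m \le C^{m-1}/n$. Choosing $\mu_n\in\sigma(\mathbf{A}(x_n))$ realizing this minimum yields $\mu_n\to\lambda$ with $\mu_n\in\bigcup_{x\in\Omega}\sigma(\mathbf{A}(x))$, so $\lambda$ belongs to the closure. This completes both inclusions and establishes \eqref{specmultC}.
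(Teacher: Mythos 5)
Your proof is correct and follows the same basic strategy as the paper: both directions are reduced to the essential-spectrum characterization \eqref{essspectrumC2} of Lemma \ref{essspecC}, and the first inclusion (continuity at $x_0$ plus a constant test vector, then closedness of $\sigma(\mathbf{A})$) is identical to the paper's. The only substantive difference is in the second inclusion: the paper argues contrapositively, showing that $\lambda\notin\overline{\bigcup_{x}\sigma(\mathbf{A}(x))}$ forces a uniform lower bound $|(\lambda I-\mathbf{A}(x))\mathbf{v}|\ge C|\mathbf{v}|$ and hence $\lambda\notin A_{\mathrm{ess}}$, and at the crucial point it merely asserts that a sequence of unit vectors with $|(\lambda I-\mathbf{A}(x_\ell))\mathbf{v}_\ell|\to 0$ would place $\lambda$ arbitrarily close to $\sigma(\mathbf{A}(x_\ell))$. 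For non-normal matrices that assertion is exactly the nontrivial quantitative step, and your singular-value/determinant estimate
\[
\Bigl(\min_i|\lambda-\mu_i(x_n)|\Bigr)^m \le |\det(\lambda I-\mathbf{A}(x_n))| \le \|\lambda I-\mathbf{A}(x_n)\|^{m-1}\,\sigma_{\min}(\lambda I-\mathbf{A}(x_n))
\]
supplies it explicitly (and yields the inclusion directly rather than by contradiction). So your argument is not a different method so much as a completed version of the paper's: it buys an explicit modulus, $\mathrm{dist}(\lambda,\sigma(\mathbf{A}(x_n)))\le (C^{m-1}/n)^{1/m}$, at the cost of a slightly longer write-up. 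One cosmetic remark applying equally to the paper: since $\lambda$ and the eigenvectors of the real matrices $\mathbf{A}(x)$ may be complex, the test vectors in \eqref{essspectrumC2} should be read in $\mathbb{C}^m$.
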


\begin{proof}
	In order to verify equality \eqref{specmultC}, we use a continuity argument. Recall that $\sigma(\mathbf{A})$ is a closed set in $\mathbb{C}$ by \cite[Corollary B.3]{Arendt}. In view of the closedness of $A_{\mathrm{ess}}=\sigma(\mathbf{A})$, it suffices to show
	\[
	\bigcup_{x \in \Omega} \sigma(\mathbf{A}(x)) \subset A_{\mathrm{ess}} 
	\]
	to obtain one inclusion in \eqref{specmultC}. For this reason consider $\lambda \in \sigma(\mathbf{A}(x_0))$ for some $x_0 \in \Omega$. Then there exists a normalized eigenvector $\mathbf{v}_0 \in \mathbb{R}^m$ with $|\mathbf{v}_0|=1$ such that $(\lambda I - \mathbf{A}(x_0))\mathbf{v}_0 = \mathbf{0}$. By continuity, we obtain a small neighborhood $\Omega_\varepsilon \subset \Omega$ of $x_0$ such that
	\[
	\sup_{x \in \Omega_\varepsilon} | (\lambda I - \mathbf{A}(x)) \mathbf{v}_{0}|  \le \varepsilon = \varepsilon |\mathbf{v}_0|,
	\]
	hence $\lambda \in A_{\mathrm{ess}}$.\\
	For the reverse inclusion in \eqref{specmultC}, we show that $\lambda \notin  \overline{ \bigcup_{x \in \Omega} \sigma(\mathbf{A}(x)) }$ implies $\lambda\notin A_{\mathrm{ess}}$
	. Indeed, if $\lambda$ is not in the closed set $\overline{ \bigcup_{x \in \Omega} \sigma(\mathbf{A}(x)) }$ and $x \in \Omega$, every matrix $\lambda I - \mathbf{A}(x)$ is invertible in $\mathbb{R}^{m \times m}$, i.e., we find a constant $C_x>0$ 
	such that
	\[
	|( \lambda I - \mathbf{A}(x)) \mathbf{v}| \ge C_x |\mathbf{v}| \qquad \forall \; \mathbf{v} \in \mathbb{R}^m.
	\]
	Let us verify that there exists a lower bound $C_x  \ge C > 0$ which is independent of $x \in \Omega$. We assume to the contrary that there exists a sequence $(x_{\ell})_{\ell \in \mathbb{N}} \subset \Omega$ and a sequence of normalized eigenvectors $(\mathbf{v}_{\ell})_{\ell \in \mathbb{N}} \subset \mathbb{R}^m$ with $|\mathbf{v}_\ell|=1$ satisfying $|( \lambda I - \mathbf{A}(x_\ell)) \mathbf{v}_\ell| \to 0$ as $\ell \to \infty$. However, this yields a contradiction, since $\lambda$ is arbitrarily close to the spectrum of the matrices $\mathbf{A}(x_\ell)$ as $\ell \to \infty$. Uniformity of the constant $C>0$ implies that $\lambda I - \mathbf{A}$ is bounded from below on $C(\overline{\Omega})^m$ and, thus, $\lambda\notin A_{\mathrm{ess}}$ by definition \eqref{essspectrumC2}. 
\end{proof}

In order to characterize the spectrum of the part of the operator $\mathbf{L}$ on $C(\overline{\Omega})^{m+k}$ in Proposition \ref{RDODEspecC}, we use again that the spectrum of the multiplication operator $\mathbf{A}_\ast$ is essential in the sense of Wolf.
This is a consequence of the following refined characterization of the spectrum of the multiplication operator $\mathbf{A}$.

\begin{proposition}
\label{essspectrumC}
Let $\mathbf{A} \in C(\overline{\Omega})^{m \times m}$ for $m \in \mathbb{N},$ and let $\mathbf{A}$ denote its corresponding bounded multiplication operator on $C(\overline{\Omega})^{m}$. Then the point spectrum of $\mathbf{A}$ is given by\begin{equation}
	\sigma_p(\mathbf{A}) = \{  \lambda \in \mathbb{C} \mid \Gamma_\lambda \; \text{has non-empty interior} \; \Gamma_\lambda^\circ \}, \label{pointspecC}
\end{equation}
where the compact set $\Gamma_\lambda$ is defined by
$
\Gamma_\lambda = \{x \in \overline{\Omega} \mid \det(\lambda I -\mathbf{A}(x))=0\}. 
$
Moreover, the spectrum $\sigma(\mathbf{A})$ is essential in the sense of Wolf, i.e., \[
\sigma(\mathbf{A})= \sigma_{\mathrm{ess}} (\mathbf{A}) := \{\lambda \in \mathbb{C} \mid \lambda I - \mathbf{A} \; \text{is not a  Fredholm operator} \}.
\]
\end{proposition}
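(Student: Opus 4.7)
The plan splits into two parts corresponding to the two claims. For the inclusion $\sigma_p(\mathbf{A}) \subset \{\lambda \mid \Gamma_\lambda^\circ \neq \emptyset\}$, I would start from a non-zero eigenfunction $\xi \in C(\overline{\Omega})^m$ and use that $\{x \in \overline{\Omega} \mid \xi(x) \neq \mathbf{0}\}$ is a non-empty open subset of $\overline{\Omega}$ contained in $\Gamma_\lambda$, since $(\lambda I - \mathbf{A}(x)) \xi(x) = 0$ forces $x \in \Gamma_\lambda$ wherever $\xi(x) \neq \mathbf{0}$; hence $\Gamma_\lambda$ has non-empty interior.

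The reverse inclusion is the main obstacle, since it requires building a non-trivial continuous eigenfunction from the mere knowledge that $\Gamma_\lambda^\circ \neq \emptyset$. My approach is to exploit the rank function $r(x) := \operatorname{rank}(\lambda I - \mathbf{A}(x))$. Since the sets $\{r \geq k\}$ are open as zero-complements of $k \times k$ minors of a continuous matrix-valued function, $r$ is lower semicontinuous, and $r \leq m-1$ throughout $\Gamma_\lambda^\circ$. I will fix a point $x_1 \in \Gamma_\lambda^\circ$ at which $r$ attains its maximum $r^* \leq m-1$ on $\Gamma_\lambda^\circ$; lower semicontinuity then provides an open neighborhood $V \subset \Gamma_\lambda^\circ$ of $x_1$ on which $r \equiv r^*$. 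On a smaller neighborhood $V' \subset V$ some fixed $r^* \times r^*$ minor of $\lambda I - \mathbf{A}(x)$ stays non-zero, and applying Cramer's rule through this minor yields a continuous non-vanishing section $\mathbf{v} : V' \to \mathbb{R}^m \setminus \{\mathbf{0}\}$ of the kernel bundle, i.e.\ $(\lambda I - \mathbf{A}(x)) \mathbf{v}(x) = \mathbf{0}$ on $V'$. Multiplying by a cutoff $\eta \in C(\overline{\Omega})$ supported in a set compactly contained in $V'$ with $\eta(x_1) = 1$ and extending by zero produces the desired eigenfunction $\xi := \eta \, \mathbf{v}$, so $\lambda \in \sigma_p(\mathbf{A})$.

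For the essentiality claim $\sigma(\mathbf{A}) \subset \sigma_{\mathrm{ess}}(\mathbf{A})$, the plan is to construct, for each $\lambda \in \sigma(\mathbf{A})$, a bounded approximate-eigenfunction sequence $(\xi_n) \subset C(\overline{\Omega})^m$ that admits no convergent subsequence, thereby obstructing Fredholmness. Joint continuity of the characteristic polynomial together with compactness of $\overline{\Omega}$ shows that $\bigcup_{x \in \overline{\Omega}} \sigma(\mathbf{A}(x))$ is closed in $\mathbb{C}$, so by Proposition~\ref{specunion} there exists $x_0 \in \overline{\Omega}$ with $\lambda \in \sigma(\mathbf{A}(x_0))$. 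I would pick a unit eigenvector $\mathbf{v}_0 \in \mathbb{R}^m$ of $\mathbf{A}(x_0)$ and, using continuity of $\mathbf{A}$, cutoffs $\eta_n \in C(\overline{\Omega})$ with $\eta_n(x_0) = 1$, $0 \le \eta_n \le 1$, and $\operatorname{supp} \eta_n \subset B(x_0, \delta_n) \cap \overline{\Omega}$ for a sequence $\delta_n \downarrow 0$ chosen so small that $|(\lambda I - \mathbf{A}(x)) \mathbf{v}_0| \leq 1/n$ on $\operatorname{supp} \eta_n$. The functions $\xi_n := \eta_n \mathbf{v}_0$ then satisfy $\|\xi_n\|_\infty = 1$, $\|(\lambda I - \mathbf{A}) \xi_n\|_\infty \to 0$, and $\xi_n(x) \to 0$ for every $x \neq x_0$. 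Under the contradiction assumption that $\lambda I - \mathbf{A}$ is Fredholm, boundedness from below modulo the finite-dimensional kernel provides $\zeta_n \in \ker(\lambda I - \mathbf{A})$ with $\|\xi_n - \zeta_n\|_\infty \to 0$ and $\|\zeta_n\|_\infty \to 1$, and extracting a convergent subsequence $\zeta_{n_k} \to \eta$ from this bounded sequence in the finite-dimensional kernel gives $\xi_{n_k} \to \eta$ uniformly with $\|\eta\|_\infty = 1$. Yet the pointwise vanishing of $\xi_{n_k}$ outside $\{x_0\}$ forces $\eta \equiv 0$ by continuity, the sought contradiction.
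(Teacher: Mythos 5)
Your proposal is correct, and it diverges from the paper's proof in two instructive places. For the characterization of $\sigma_p(\mathbf{A})$ the forward inclusion is identical (non-vanishing set of an eigenfunction is open and sits inside $\Gamma_\lambda$), but for the converse the paper merely asserts that a continuous kernel section $\mathbf{v}$ on a ball in $\Gamma_\lambda^\circ$ ``follows from Gaussian elimination using elementary row operations''; your argument via the lower semicontinuous rank function --- attaining its maximum $r^\ast$ on $\Gamma_\lambda^\circ$ since it takes finitely many values, hence being locally constant near a maximizer, so that a fixed non-vanishing $r^\ast\times r^\ast$ minor persists and Cramer's rule yields a continuous non-vanishing kernel section --- is a more careful version of the same construction and actually supplies the locally-constant-rank hypothesis that makes the paper's appeal to Gaussian elimination legitimate (row swaps and changing pivot positions are not continuous operations without it). For the Wolf-essentiality claim the paper argues by cases: if $\lambda\in\sigma_p(\mathbf{A})$ the kernel contains a copy of $C(\overline{B_{\delta/2}(y)})$ and is infinite-dimensional; if $\lambda\in\sigma(\mathbf{A})\setminus\sigma_p(\mathbf{A})$ the operator is injective but, being an approximate eigenvalue by Lemma \ref{essspecC}, not bounded below, hence without closed range. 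You instead give a single singular-Weyl-sequence argument: approximate eigenfunctions concentrating at a point $x_0$ with $\lambda\in\sigma(\mathbf{A}(x_0))$ cannot be uniformly approximated by elements of a finite-dimensional kernel, since any uniform limit would vanish pointwise off $\{x_0\}$ and hence identically. This is a valid and arguably cleaner unified argument (the Fredholm estimate $\|T\xi\|\ge c\,\mathrm{dist}(\xi,\ker T)$ does the work of both of the paper's cases), at the cost of not exhibiting the infinite-dimensionality of the kernel in the point-spectrum case, which the paper's version records as a by-product. The only cosmetic issue, shared with the paper, is that for non-real $\lambda$ the eigenvector $\mathbf{v}_0$ lives in $\mathbb{C}^m$ rather than $\mathbb{R}^m$, which changes nothing in the argument.
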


\begin{proof} 
Note that $\Gamma_\lambda$ is compact by continuity of the determinant and entries of the matrix $\mathbf{A}(x)$. First, let us verify characterization \eqref{pointspecC} for $\sigma_p(\mathbf{A})$. On the one hand, let $\lambda \in \sigma_p(\mathbf{A})$ with a corresponding eigenfunction $\mathbf{f} \in C(\overline{\Omega})^m \setminus \{0\}$. Then $\mathbf{f}(x) \not=\mathbf{0}$ for some $x \in \overline{\Omega}$. By continuity of $\mathbf{f}$ and (path-)connectedness of $\Omega \subset \mathbb{R}^n$, we may choose a non-empty open set $\tilde{\Omega} \subset \Omega$ for which $\mathbf{f}(x) \not= \mathbf{0}$ holds for all $x \in \tilde{\Omega}$. Then, for each $x \in \tilde{\Omega}$, the vector $\mathbf{f}(x) \in \mathbb{C}^m \setminus \{\mathbf{0}\}$ is an eigenvector of the matrix $\lambda I - \mathbf{A}(x)$. Hence, $\tilde{\Omega} \subset \Gamma_\lambda$ and $\Gamma_\lambda$ has non-empty interior with $\tilde{\Omega} = \tilde{\Omega}^\circ \subset \Gamma_\lambda^\circ$.\\
On the other hand, if $\lambda \in \mathbb{C}$ satisfies $\Gamma_\lambda^\circ \not= \emptyset$, we can construct eigenvectors $\mathbf{v}(x)$ corresponding to $\mathbf{A}(x)$ and the constant eigenvalue $\lambda$ such that the function $\mathbf{v}$ is 
continuous on some closed ball $\overline{B_\delta(y)} \subset \Gamma_\lambda^\circ$. This follows from Gaussian elimination using elementary row operations which consists of multiplying, adding or swapping continuous entries. Once we obtained such eigenvectors, we use the idea of proof of \cite[Theorem 2.5]{Hardt}. For arbitrary $\phi \in C(\overline{B_\delta(y)})$ with $\phi \not\equiv 0$ on the open ball $B_{\delta/2}(y)$, we define the function
\begin{align*}
	\mathbf{f}: \overline{\Omega} \to \mathbb{R}^m \setminus \{\mathbf{0}\}, \quad \mathbf{f}(x) = \begin{cases}
		\mathbf{v}(x) \phi(x) \eta(x) & \text{for} \quad x \in B_\delta(y),\\
		\mathbf{0} & \text{else}
	\end{cases}
\end{align*}
for a cut-off function $\eta \in C_c^\infty(B_\delta(y))$ with $\eta_{|B_{\delta/2}(y)} \equiv 1$. Then, $\mathbf{f} \in C(\overline{\Omega})^m \setminus \{\mathbf{0}\}$ satisfies $(\lambda I - \mathbf{A})\mathbf{f} = \mathbf{0}$, i.e., $\lambda \in \sigma_p(\mathbf{A})$.

It remains to show that $\lambda I - \mathbf{A}$ is not Fredholm for all $\lambda \in \sigma(\mathbf{A})$.\\
If $\lambda \in \sigma_p(\mathbf{A})$, we infer $\sigma_p(\mathbf{A}) \subset \sigma_{\mathrm{ess}}(\mathbf{A})$ from an infinite-dimensional kernel of $\lambda I - \mathbf{A}$, analog to \cite[Proposition 3.2]{Hardt}. Indeed, from above calculations we see that this kernel contains a subspace which is isomorphic to 
$C(\overline{B_{\delta/2}(y)})$ and, hence, is infinite-dimensional.\\
If $\lambda \in \sigma(\mathbf{A}) \setminus \sigma_p(\mathbf{A})$, the injective operator $\lambda I -\mathbf{A}$ cannot be bounded from below since $\lambda \in \sigma_{ap}(\mathbf{A})$ by Lemma \ref{essspecC}. Thus, $\lambda I - \mathbf{A}$ cannot have closed range by \cite[Theorem 2.19, Remark 18]{Brezis} and $\lambda \in \sigma_{\mathrm{ess}}(\mathbf{A})$. 
\end{proof}

\end{appendices}

\end{document}